\documentclass[11pt]{article}
\usepackage[alphabetic,maxbibnames=9,isbn=false,url=false]{mypreambles}
\usepackage{stackengine}
\usetikzlibrary{intersections}
\crefname{appsec}{Appendix}{Appendices}
\usepackage{listings}
\usepackage{xcolor}
\definecolor{maccolor}{rgb}{0.3,0.3,0.8}
\lstdefinelanguage{Macaulay2}
{
basicstyle={\ttfamily},
keywordstyle={\color{maccolor!80!black}},
commentstyle={\color{gray}},
stringstyle={\color{red!40!black}},
rulecolor=\color{maccolor},
basewidth={1.2ex}, 
sensitive=false,
morecomment=[l]{--},
morecomment=[s]{-*}{*-},
morestring=[b]",
escapechar={`},
escapebegin={\rmfamily},
morekeywords={about,abs,AbstractToricVarieties,accumulate,Acknowledgement,acos,acosh,acot,addCancelTask,addDependencyTask,addEndFunction,addHook,AdditionalPaths,addStartFunction,addStartTask,Adjacent,adjoint,AdjointIdeal,AffineVariety,AfterEval,AfterNoPrint,AfterPrint,agm,AInfinity,alarm,AlgebraicSplines,Algorithm,Alignment,all,AllCodimensions,allowableThreads,ambient,analyticSpread,Analyzer,AnalyzeSheafOnP1,ancestor,ancestors,ANCHOR,and,andP,AngleBarList,ann,annihilator,antipode,any,append,applicationDirectory,applicationDirectorySuffix,apply,applyKeys,applyPairs,applyTable,applyValues,apropos,argument,Array,arXiv,Ascending,ascii,asin,asinh,ass,assert,associatedGradedRing,associatedPrimes,AssociativeAlgebras,AssociativeExpression,atan,atan2,atEndOfFile,Authors,autoload,AuxiliaryFiles,backtrace,Bag,Bareiss,baseFilename,BaseFunction,baseName,baseRing,baseRings,BaseRow,BasicList,basis,BasisElementLimit,Bayer,BeforePrint,beginDocumentation,BeginningMacaulay2,Benchmark,benchmark,Bertini,BesselJ,BesselY,betti,BettiCharacters,BettiTally,between,BGG,BIBasis,Binary,BinaryOperation,Binomial,binomial,BinomialEdgeIdeals,Binomials,BKZ,BlockMatrix,BLOCKQUOTE,BODY,Body,BoijSoederberg,BOLD,Book3264Examples,Boolean,BooleanGB,borel,Boxes,BR,break,Browse,Bruns,cache,CacheExampleOutput,CacheFunction,CacheTable,cacheValue,CallLimit,cancelTask,capture,catch,Caveat,CC,CDATA,ceiling,Center,centerString,Certification,ChainComplex,chainComplex,ChainComplexExtras,ChainComplexMap,ChainComplexOperations,ChangeMatrix,char,CharacteristicClasses,characters,charAnalyzer,check,CheckDocumentation,chi,Chordal,class,Classic,clean,clearAll,clearEcho,clearOutput,close,closeIn,closeOut,ClosestFit,CODE,code,codim,CodimensionLimit,coefficient,CoefficientRing,coefficientRing,coefficients,Cofactor,CohenEngine,CohenTopLevel,CoherentSheaf,CohomCalg,cohomology,coimage,CoincidentRootLoci,coker,cokernel,collectGarbage,columnAdd,columnate,columnMult,columnPermute,columnRankProfile,columnSwap,combine,Command,commandInterpreter,commandLine,COMMENT,commonest,commonRing,comodule,CompactMatrix,compactMatrixForm,CompiledFunction,CompiledFunctionBody,CompiledFunctionClosure,Complement,complement,complete,CompleteIntersection,CompleteIntersectionResolutions,Complexes,ComplexField,components,compose,compositions,compress,concatenate,conductor,ConductorElement,cone,Configuration,ConformalBlocks,conjugate,connectionCount,Consequences,Constant,Constants,constParser,content,continue,contract,Contributors,ConvexInterface,conwayPolynomial,ConwayPolynomials,copy,copyDirectory,copyFile,copyright,Core,CorrespondenceScrolls,cos,cosh,cot,CotangentSchubert,cotangentSheaf,coth,cover,coverMap,cpuTime,createTask,Cremona,csc,csch,current,currentColumnNumber,currentDirectory,currentFileDirectory,currentFileName,currentLayout,currentLineNumber,currentPackage,currentString,currentTime,Cyclotomic,Database,Date,DD,dd,deadParser,debug,debugError,DebuggingMode,debuggingMode,debugLevel,DecomposableSparseSystems,Decompose,decompose,deepSplice,Default,default,defaultPrecision,Degree,degree,degreeLength,DegreeLift,DegreeLimit,DegreeMap,DegreeOrder,DegreeRank,Degrees,degrees,degreesMonoid,degreesRing,delete,demark,denominator,Dense,Density,Depth,depth,Descending,Descent,Describe,describe,Description,det,determinant,DeterminantalRepresentations,DGAlgebras,diagonalMatrix,diameter,Dictionary,dictionary,dictionaryPath,diff,DiffAlg,difference,dim,directSum,disassemble,discriminant,dismiss,Dispatch,distinguished,DIV,Divide,divideByVariable,DivideConquer,DividedPowers,Divisor,DL,Dmodules,do,doc,docExample,docTemplate,document,DocumentTag,Down,drop,DT,dual,eagonNorthcott,EagonResolution,echoOff,echoOn,EdgeIdeals,edit,EigenSolver,eigenvalues,eigenvectors,eint,EisenbudHunekeVasconcelos,elapsedTime,elapsedTiming,elements,Eliminate,eliminate,Elimination,EliminationMatrices,EllipticCurves,EllipticIntegrals,else,EM,Email,End,end,endl,endPackage,Engine,engineDebugLevel,EngineRing,EngineTests,entries,EnumerationCurves,environment,Equation,EquivariantGB,erase,erf,erfc,error,errorDepth,euler,EulerConstant,eulers,even,EXAMPLE,ExampleFiles,ExampleItem,examples,ExampleSystems,Exclude,exec,exit,exp,expectedReesIdeal,expm1,exponents,export,exportFrom,exportMutable,Expression,expression,Ext,extend,ExteriorIdeals,ExteriorModules,exteriorPower,Factor,factor,false,Fano,FastMinors,FastNonminimal,FGLM,File,fileDictionaries,fileExecutable,fileExists,fileExitHooks,fileLength,fileMode,FileName,FilePosition,fileReadable,fileTime,fileWritable,fillMatrix,findFiles,findHeft,FindOne,findProgram,findSynonyms,FiniteFittingIdeals,First,first,firstkey,FirstPackage,fittingIdeal,flagLookup,FlatMonoid,flatten,flattenRing,Flexible,flip,floor,flush,fold,FollowLinks,for,forceGB,fork,FormalGroupLaws,Format,format,formation,FourierMotzkin,FourTiTwo,fpLLL,frac,fraction,FractionField,frames,FrobeniusThresholds,from,fromDividedPowers,fromDual,Function,FunctionApplication,FunctionBody,functionBody,FunctionClosure,FunctionFieldDesingularization,fusePairs,futureParser,GaloisField,gb,GBDegrees,gbRemove,gbSnapshot,gbTrace,gcd,gcdCoefficients,gcdLLL,GCstats,genera,GeneralOrderedMonoid,GenerateAssertions,generateAssertions,generator,generators,Generic,GenericInitialIdeal,genericMatrix,genericSkewMatrix,genericSymmetricMatrix,gens,genus,get,getc,getChangeMatrix,getenv,getGlobalSymbol,getNetFile,getNonUnit,getPrimeWithRootOfUnity,getSymbol,getWWW,GF,gfanInterface,Givens,GKMVarieties,GLex,Global,global,globalAssign,globalAssignFunction,GlobalAssignHook,globalAssignment,globalAssignmentHooks,GlobalDictionary,GlobalHookStore,globalReleaseFunction,GlobalReleaseHook,Gorenstein,GradedLieAlgebras,GradedModule,gradedModule,GradedModuleMap,gradedModuleMap,gramm,GraphicalModels,GraphicalModelsMLE,Graphics,graphIdeal,graphRing,Graphs,Grassmannian,GRevLex,GroebnerBasis,groebnerBasis,GroebnerBasisOptions,GroebnerStrata,GroebnerWalk,groupID,GroupLex,GroupRevLex,GTZ,Hadamard,handleInterrupts,HardDegreeLimit,hash,HashTable,hashTable,HEAD,HEADER1,HEADER2,HEADER3,HEADER4,HEADER5,HEADER6,HeaderType,Heading,Headline,Heft,heft,Height,height,help,Hermite,hermite,Hermitian,HH,hh,HigherCIOperators,HighestWeights,Hilbert,hilbertFunction,hilbertPolynomial,hilbertSeries,HodgeIntegrals,hold,Holder,Hom,homeDirectory,HomePage,Homogeneous,Homogeneous2,homogenize,homology,homomorphism,HomotopyLieAlgebra,hooks,horizontalJoin,HorizontalSpace,HR,HREF,HTML,html,httpHeaders,Hybrid,HyperplaneArrangements,Hypertext,hypertext,HypertextContainer,HypertextParagraph,icFracP,icFractions,icMap,icPIdeal,id,Ideal,ideal,idealizer,identity,if,IgnoreExampleErrors,ii,image,imaginaryPart,IMG,ImmutableType,importFrom,in,incomparable,Increment,independentSets,indeterminate,IndeterminateNumber,Index,index,indexComponents,IndexedVariable,IndexedVariableTable,indices,inducedMap,inducesWellDefinedMap,InexactField,InexactFieldFamily,InexactNumber,InfiniteNumber,infinity,info,InfoDirSection,infoHelp,Inhomogeneous,input,Inputs,insert,installAssignmentMethod,installedPackages,installHilbertFunction,installMethod,installMinprimes,installPackage,InstallPrefix,instance,instances,IntegralClosure,integralClosure,integrate,IntermediateMarkUpType,interpreterDepth,intersect,intersectInP,Intersection,intersection,interval,InvariantRing,inverse,InverseMethod,inversePermutation,Inverses,inverseSystem,InverseSystems,Invertible,InvolutiveBases,irreducibleCharacteristicSeries,irreducibleDecomposition,isAffineRing,isANumber,isBorel,isCanceled,isCommutative,isConstant,isDirectory,isDirectSum,isEmpty,isField,isFinite,isFinitePrimeField,isFreeModule,isGlobalSymbol,isHomogeneous,isIdeal,isInfinite,isInjective,isInputFile,isIsomorphism,isLinearType,isListener,isLLL,isMember,isModule,isMonomialIdeal,isNormal,isOpen,isOutputFile,isPolynomialRing,isPrimary,isPrime,isPrimitive,isPseudoprime,isQuotientModule,isQuotientOf,isQuotientRing,isReady,isReal,isReduction,isRegularFile,isRing,isSkewCommutative,isSorted,isSquareFree,isStandardGradedPolynomialRing,isSubmodule,isSubquotient,isSubset,isSupportedInZeroLocus,isSurjective,isTable,isUnit,isWellDefined,isWeylAlgebra,ITALIC,Iterate,Jacobian,jacobian,jacobianDual,Jets,Join,join,Jupyter,K3Carpets,K3Surfaces,Keep,KeepFiles,KeepZeroes,ker,kernel,kernelLLL,kernelOfLocalization,Key,keys,Keyword,Keywords,kill,koszul,Kronecker,KustinMiller,LABEL,last,lastMatch,LATER,LatticePolytopes,Layout,lcm,leadCoefficient,leadComponent,leadMonomial,leadTerm,Left,left,length,LengthLimit,letterParser,Lex,LexIdeals,LI,Licenses,LieTypes,lift,liftable,Limit,limitFiles,limitProcesses,Linear,LinearAlgebra,LinearTruncations,lineNumber,lines,LINK,linkFile,List,list,listForm,listLocalSymbols,listSymbols,listUserSymbols,LITERAL,LLL,LLLBases,lngamma,load,loadDepth,LoadDocumentation,loadedFiles,loadedPackages,loadPackage,Local,local,localDictionaries,LocalDictionary,localize,LocalRings,locate,log,log1p,LongPolynomial,lookup,lookupCount,LowerBound,LUdecomposition,M0nbar,M2CODE,Macaulay2Doc,makeDirectory,MakeDocumentation,makeDocumentTag,MakeHTML,MakeInfo,MakeLinks,makePackageIndex,MakePDF,makeS2,Manipulator,map,MapExpression,MapleInterface,markedGB,Markov,MarkUpType,match,mathML,Matrix,matrix,MatrixExpression,Matroids,max,maxAllowableThreads,maxExponent,MaximalRank,maxPosition,MaxReductionCount,MCMApproximations,member,memoize,memoizeClear,memoizeValues,MENU,merge,mergePairs,META,method,MethodFunction,MethodFunctionBinary,MethodFunctionSingle,MethodFunctionWithOptions,methodOptions,methods,midpoint,min,minExponent,mingens,mingle,minimalBetti,MinimalGenerators,MinimalMatrix,minimalPresentation,minimalPresentationMap,minimalPresentationMapInv,MinimalPrimes,minimalPrimes,minimalReduction,Minimize,minimizeFilename,MinimumVersion,minors,minPosition,minPres,minprimes,Minus,minus,Miura,MixedMultiplicity,mkdir,mod,Module,module,ModuleDeformations,modulo,MonodromySolver,Monoid,monoid,MonoidElement,Monomial,MonomialAlgebras,monomialCurveIdeal,MonomialIdeal,monomialIdeal,MonomialIntegerPrograms,MonomialOrbits,MonomialOrder,Monomials,monomials,MonomialSize,monomialSubideal,moveFile,multidegree,multidoc,multigraded,MultigradedBettiTally,MultiGradedRationalMap,multiplicity,MultiplicitySequence,MultiplierIdeals,MultiplierIdealsDim2,MultiprojectiveVarieties,mutable,MutableHashTable,mutableIdentity,MutableList,MutableMatrix,mutableMatrix,NAGtypes,Name,nanosleep,Nauty,NautyGraphs,NCAlgebra,NCLex,needs,needsPackage,Net,net,NetFile,netList,new,newClass,newCoordinateSystem,NewFromMethod,newline,NewMethod,newNetFile,NewOfFromMethod,NewOfMethod,newPackage,newRing,nextkey,nextPrime,nil,NNParser,NoetherianOperators,NoetherNormalization,NonminimalComplexes,nonspaceAnalyzer,NoPrint,norm,normalCone,Normaliz,NormalToricVarieties,not,Nothing,notify,notImplemented,NTL,null,nullaryMethods,nullhomotopy,nullParser,nullSpace,Number,number,NumberedVerticalList,numcols,numColumns,numerator,numeric,NumericalAlgebraicGeometry,NumericalCertification,NumericalImplicitization,NumericalLinearAlgebra,NumericalSchubertCalculus,numericInterval,NumericSolutions,numgens,numRows,numrows,odd,oeis,of,ofClass,OL,OldPolyhedra,OldToricVectorBundles,on,OneExpression,OnlineLookup,OO,oo,ooo,oooo,openDatabase,openDatabaseOut,openFiles,openIn,openInOut,openListener,OpenMath,openOut,openOutAppend,operatorAttributes,Option,OptionalComponentsPresent,optionalSignParser,Options,options,OptionTable,optP,or,Order,order,OrderedMonoid,orP,OutputDictionary,Outputs,override,pack,Package,package,PackageCitations,PackageDictionary,PackageExports,PackageImports,PackageTemplate,packageTemplate,pad,pager,PairLimit,pairs,PairsRemaining,PARA,Parametrization,parent,Parenthesize,Parser,Parsing,part,Partition,partition,partitions,parts,path,pdim,peek,PencilsOfQuadrics,Permanents,permanents,permutations,pfaffians,PHCpack,PhylogeneticTrees,pi,PieriMaps,pivots,PlaneCurveSingularities,plus,poincare,poincareN,Points,polarize,poly,Polyhedra,Polymake,PolynomialRing,Posets,Position,position,positions,PositivityToricBundles,POSIX,Postfix,Power,power,powermod,PRE,Precision,precision,Prefix,prefixDirectory,prefixPath,preimage,prepend,presentation,pretty,primaryComponent,PrimaryDecomposition,primaryDecomposition,PrimaryTag,PrimitiveElement,Print,print,printerr,printingAccuracy,printingLeadLimit,printingPrecision,printingSeparator,printingTimeLimit,printingTrailLimit,printString,printWidth,processID,Product,product,ProductOrder,profile,profileSummary,Program,programPaths,ProgramRun,Proj,Projective,ProjectiveHilbertPolynomial,projectiveHilbertPolynomial,ProjectiveVariety,promote,protect,Prune,prune,PruneComplex,pruningMap,Pseudocode,pseudocode,pseudoRemainder,Pullback,PushForward,pushForward,Python,QQ,QQParser,QRDecomposition,QthPower,Quasidegrees,QuaternaryQuartics,QuillenSuslin,quit,Quotient,quotient,quotientRemainder,QuotientRing,Radical,radical,RadicalCodim1,radicalContainment,RaiseError,random,RandomCanonicalCurves,RandomComplexes,RandomCurves,RandomCurvesOverVerySmallFiniteFields,RandomGenus14Curves,RandomIdeals,randomKRationalPoint,RandomMonomialIdeals,randomMutableMatrix,RandomObjects,RandomPlaneCurves,RandomPoints,RandomSpaceCurves,Range,rank,RationalMaps,RationalPoints,RationalPoints2,ReactionNetworks,read,readDirectory,readlink,readPackage,RealField,RealFP,realPart,realpath,RealQP,RealQP1,RealRoots,RealRR,RealXD,recursionDepth,recursionLimit,Reduce,reducedRowEchelonForm,reduceHilbert,reductionNumber,ReesAlgebra,reesAlgebra,reesAlgebraIdeal,reesIdeal,References,ReflexivePolytopesDB,regex,regexQuote,registerFinalizer,regSeqInIdeal,Regularity,regularity,relations,RelativeCanonicalResolution,relativizeFilename,Reload,remainder,RemakeAllDocumentation,remove,removeDirectory,removeFile,removeLowestDimension,reorganize,replace,RerunExamples,res,reshape,ResidualIntersections,ResLengthThree,Resolution,resolution,ResolutionsOfStanleyReisnerRings,restart,Result,resultant,Resultants,return,returnCode,Reverse,reverse,RevLex,Right,right,Ring,ring,RingElement,RingFamily,ringFromFractions,RingMap,rootPath,roots,rootURI,rotate,round,rowAdd,RowExpression,rowMult,rowPermute,rowRankProfile,rowSwap,RR,RRi,rsort,run,RunDirectory,RunExamples,RunExternalM2,runHooks,runLengthEncode,runProgram,same,saturate,Saturation,scan,scanKeys,scanLines,scanPairs,scanValues,schedule,schreyerOrder,Schubert,Schubert2,SchurComplexes,SchurFunctors,SchurRings,SCRIPT,scriptCommandLine,ScriptedFunctor,SCSCP,searchPath,sec,sech,SectionRing,SeeAlso,seeParsing,SegreClasses,select,selectInSubring,selectVariables,SelfInitializingType,SemidefiniteProgramming,Seminormalization,separate,SeparateExec,separateRegexp,Sequence,sequence,Serialization,serialNumber,Set,set,setEcho,setGroupID,setIOExclusive,setIOSynchronized,setIOUnSynchronized,setRandomSeed,setup,setupEmacs,sheaf,SheafExpression,sheafExt,sheafHom,SheafOfRings,shield,ShimoyamaYokoyama,short,show,showClassStructure,showHtml,showStructure,showTex,showUserStructure,SimpleDoc,simpleDocFrob,SimplicialComplexes,SimplicialDecomposability,SimplicialPosets,SimplifyFractions,sin,singularLocus,sinh,size,size2,SizeLimit,SkewCommutative,SlackIdeals,sleep,SLnEquivariantMatrices,SLPexpressions,SMALL,smithNormalForm,solve,someTerms,Sort,sort,sortColumns,SortStrategy,source,SourceCode,SourceRing,SPACE,SpaceCurves,SPAN,span,SparseMonomialVectorExpression,SparseResultants,SparseVectorExpression,Spec,SpechtModule,SpecialFanoFourfolds,specialFiber,specialFiberIdeal,SpectralSequences,splice,splitWWW,sqrt,SRdeformations,stack,stacksProject,Standard,standardForm,standardPairs,StartWithOneMinor,stashValue,StatePolytope,StatGraphs,status,stderr,stdio,step,StopBeforeComputation,stopIfError,StopWithMinimalGenerators,Strategy,String,STRONG,StronglyStableIdeals,STYLE,Style,style,SUB,sub,SubalgebraBases,sublists,submatrix,submatrixByDegrees,Subnodes,subquotient,SubringLimit,Subscript,subscript,SUBSECTION,subsets,substitute,substring,subtable,Sugarless,Sum,sum,SumOfTwists,SumsOfSquares,SUP,super,SuperLinearAlgebra,Superscript,superscript,support,SVD,SVDComplexes,switch,SwitchingFields,sylvesterMatrix,Symbol,symbol,SymbolBody,symbolBody,SymbolicPowers,symlinkDirectory,symlinkFile,symmetricAlgebra,symmetricAlgebraIdeal,symmetricKernel,SymmetricPolynomials,symmetricPower,synonym,SYNOPSIS,syz,Syzygies,SyzygyLimit,SyzygyMatrix,SyzygyRows,syzygyScheme,TABLE,Table,table,take,Tally,tally,tan,TangentCone,tangentCone,tangentSheaf,tanh,target,Task,taskResult,TateOnProducts,TD,temporaryFileName,tensor,tensorAssociativity,TensorComplexes,terminalParser,terms,TEST,Test,testExample,testHunekeQuestion,TestIdeals,TestInput,tests,TEX,tex,TeXmacs,texMath,Text,TH,then,Thing,ThinSincereQuivers,ThreadedGB,threadVariable,Threshold,throw,Time,time,times,timing,TITLE,TO,to,TO2,toAbsolutePath,toCC,toDividedPowers,toDual,toExternalString,toField,TOH,toList,toLower,top,top,topCoefficients,Topcom,topComponents,topLevelMode,Tor,TorAlgebra,Toric,ToricInvariants,ToricTopology,ToricVectorBundles,toRR,toRRi,toSequence,toString,TotalPairs,toUpper,TR,trace,transpose,TriangularSets,Tries,Trim,trim,Triplets,Tropical,true,Truncate,truncate,truncateOutput,Truncations,try,TSpreadIdeals,TT,tutorial,Type,TypicalValue,typicalValues,UL,ultimate,unbag,uncurry,Undo,undocumented,uniform,uninstallAllPackages,uninstallPackage,Unique,unique,Units,Unmixed,unsequence,unstack,Up,UpdateOnly,UpperTriangular,URL,urlEncode,Usage,use,UseCachedExampleOutput,UseHilbertFunction,UserMode,userSymbols,UseSyzygies,utf8,utf8check,validate,value,values,Variable,VariableBaseName,Variables,Variety,variety,vars,Vasconcelos,Vector,vector,VectorExpression,VectorFields,VectorGraphics,Verbose,Verbosity,Verify,VersalDeformations,versalEmbedding,Version,version,VerticalList,VerticalSpace,viewHelp,VirtualResolutions,VirtualTally,VisibleList,Visualize,wait,WebApp,wedgeProduct,weightRange,Weights,WeylAlgebra,WeylGroups,when,whichGm,while,width,wikipedia,Wrap,wrap,WrapperType,XML,xor,youngest,zero,ZeroExpression,zeta,ZZ,ZZParser,
makeGWClass,getDiagonalClass,makeDiagonalForm,getSignature,isAnisotropic,isIsotropic,getAnisotropicPart,getSumDecompositionString,getGlobalA1Degree,getLocalA1Degree,isIsomorphicForm,addGW,getSumDecomposition}
}
\lstalias{Macaulay2output}{Macaulay2}

\newcommand{\Bl}{Bl}

\newcommand{\Mn}{\overline{M}_{0,n}}
\newcommand{\Fplus}{{\;+_F\;}}
\newcommand{\Fminus}{{\; -_F\;}}
\newcommand{\SH}{\mathsf{SH}}
\newcommand{\Sm}{\mathsf{Sm}}
\newcommand{\Sp}{\mathsf{Sp}}
\newcommand{\MS}{\mathsf{MS}}
\newcommand{\Fsum}{%
  \mathop{\ensurestackMath{\stackinset{c}{3pt}{c}{0pt}{\scriptscriptstyle F}{\sum}}}
}
\newcommand{\ABl}{A^\bullet(Bl_ZX)}
\newcommand{\Kont}{\overline{{M}}_{0,0}(\PP^2, 2)}
\newcommand{\pointring}{A^\bullet(\operatorname{pt})}
\newcommand{\symtangent}{\operatorname{Sym}^2(\T_{\mathbb P^2})}

\DeclareMathOperator{\pt}{pt}

\title{Oriented Cohomology Rings of Some Moduli Spaces via Blowups}
\author{Arkamouli Debnath \and Michael Ruofan Zeng }
\date{\today}

\begin{document}

\maketitle

\begin{abstract}
    Oriented cohomology theories provide a general framework to perform intersection-theory-type calculus. The Chow ring, algebraic $K$-theory, and Levine--Morel's algebraic cobordism are all instances of such theories satisfying $\AA^1$-invariance. Topological Hochschild homology, topological cyclic homology, and Hodge cohomology are important examples of theories without $\AA^1$-invariance. In this paper, we prove an additive blowup formula for oriented cohomology theories in the non-$\AA^1$-invariant category of motivic spectra, developed by Annala, Hoyois, and Iwasa. Then, we specialize to $\AA^1$-invariant theories and give presentations of oriented cohomology rings of the blowup of a smooth scheme along a smooth center. We compute explicit examples of such presentations for the cases of del Pezzo surfaces, the blowup of $\PP^3$ along the twisted cubic, and the blowup of $\PP^5$ along the Veronese surface, which can be identified with the moduli space of complete conics. We demonstrate that one can recover solutions to classical enumerative geometry problems, such as Steiner's $3264$ conics, using arbitrary oriented cohomology theories. Finally, we give a presentation of oriented cohomology rings of $\Mn$, which generalizes Keel's presentation of the Chow ring. 
\end{abstract}

\tableofcontents

\section{Introduction}

A central theme in algebraic geometry is to use algebraic invariants to distinguish schemes. Some examples of such invariants include the Chow ring $CH^\bullet$ and the Grothendieck ring of vector bundles $K_0$. In \cite{LevineMorel}, Levine--Morel provided an axiomatic framework for \textit{oriented cohomology theory}, which unifies both the above examples of invariants. In particular, they constructed the geometric theory of algebraic cobordism $\Omega^\bullet$ and showed that it is the \textit{universal} oriented cohomology theory. 

The axioms of oriented cohomology theory provide the \textit{``intersection theory package''}, which includes functorial pullback and pushforward, projection formula, projective bundle formula, extended homotopy property, a theory of Chern classes governed by a formal group law, localization sequence, fundamental classes, and the excess intersection formula. Hence, we can extend many familiar intersection theory calculations (for example, the ones in Fulton \cite{Fulton} and Eisenbud--Harris \cite{EH3264}) to a general oriented cohomology theory. 

An important class of schemes to which we want to extend these calculations is moduli spaces. A \textit{moduli problem} asks one to parametrize a \textit{family} of objects with some defining properties. Under suitable conditions, solutions to many such moduli problems are parametrized by schemes called \textit{fine/coarse moduli spaces} (see \cite{Newstead2013IntroductionTM}, \cite{Alper2026StacksModuli}). Calculating intersection numbers on moduli spaces has been a central theme in enumerative geometry \cite{Kontsevich1995Enumeration,KontsevichManin1994GW}. Subsequent research by e.g. Lee and Coates--Givental extended these calculations to $K$-theory and cobordism-valued Gromov--Witten theory with rational coefficients \cite{Lee2004QuantumKTheory,CoatesGivental2006QuantumCobordisms}. Very recently, Ellis--Bloor extended $\psi$-class intersections on $\Mn$ to algebraic cobordism with integral coefficients \cite{EllisBloor2026CobordismIntersectionTheory}. Intersection numbers in general oriented cohomology theories often contain refined geometric information that is not witnessed by the Chow ring. 

On the other hand, many of these moduli spaces, such as the moduli space of stable maps $\overline{M}_{0,0}(\mathbb P^2,2)$ and the moduli space of stable rational $n$-pointed curves $\Mn$,  have descriptions as blowups of smooth schemes at smooth centers. In order to understand the oriented cohomology of such spaces with \textit{integer} coefficients, we develop a \textit{Blowup Presentation}, which refers to a presentation of the oriented cohomology ring of a blowup in terms of the cohomology rings of the base scheme and the blowup center. Such presentations of the integral Chow ring are well-known, and Fulton--Lang developed the blowup formalism for the $K_0$-ring \cite{FultonLang1985RiemannRochAlgebra}.

The goal of this paper is threefold.  First,  we provide a presentation in terms of generators and relations for the oriented cohomology ring of blowups of smooth schemes at smooth centers and apply this presentation to specific examples.
Secondly, we demonstrate that we can recover classical enumerative geometry results, using our Blowup Presentation for algebraic cobordism. Such examples include counting the number of conics tangent to $5$ general conics in $\mathbb P^2$ and counting the number of secant lines to the twisted cubic in $\mathbb P^3$ meeting two general lines. 
Thirdly,  we use the iterated blowup description of $\Mn$ in \cite{Keel}, to give a presentation of its oriented cohomology rings, which generalizes Keel's presentation of the Chow ring. We highlight the main results of our paper below.

\paragraph{Blowup Formulas.}
 Let $X$ be a smooth $k$-scheme and let $i\colon Z\hookrightarrow X$ be a regular closed immersion of codimension $r\ge 2$.
Let $\pi\colon \Bl_Z X\to X$ be the blowup of $X$ along $Z$. Let $j\colon E\hookrightarrow \Bl_Z X$ be the exceptional divisor with projection $p:E\to Z$. The \textit{blowup square} 
\[
\begin{tikzcd}
E \ar[r,"j"] \ar[d,"p"'] & \Bl_Z X \ar[d,"\pi"] \\
Z \ar[r,"i"'] & X.
\end{tikzcd}
\]
is a cartesian diagram in the category of smooth $k$-schemes $\Sm_k$.
 Our first contribution is an additive decomposition for the oriented cohomology of a blowup. While this decomposition is well-known for $\AA^1$-invariant cohomology theories, algebraic $K$-theory over non-regular schemes does not satisfy $\AA^1$-invariance. Topological Hochschild homology, topological cyclic homology, and Hodge cohomology are examples of oriented theories that fail to be $\AA^1$-invariant, and this failure already occurs over regular schemes in characteristic zero \cite{Elmanto2021THHandTC}. To prove the additive decomposition in its full generality, we work in the non-$\AA^1$-invariant category $\mathsf{MS}_S$ of motivic spectra over an arbitrary base scheme $S$, constructed by Annala--Hoyois--Iwasa \cite{AnnalaIwasa2022MotivicSpectra, AHI2025CobordismConnerFloyd, AHI2024AtiyahDuality}. In this context, we prove the following blowup formula, which recovers, for example, the blowup formula for Hodge cohomology \cite[Theorem 1.2]{RaoEtAl2023HodgeCohomology}.
\begin{theorem}[Blowup Formula for motivic spectra in $\mathsf{MS}_S$, \Cref{thm:blowup-formula-additive-bigraded}]
Let $\mathbb E$ be an oriented motivic ring spectrum in $\MS_S$.
For every pair of integers $p,q\in \ZZ$, there is a direct sum decomposition
\[
\mathbb E^{p,q}(\Bl_Z X)\cong \mathbb E^{p,q}(X)\oplus \bigoplus_{k=1}^{r-1}\mathbb E^{p-2k,q-k}(Z). \footnote{\text{Examples of these higher $\EE^{p,q}$ groups include higher Chow groups and higher $K$-groups. }}
\]
\end{theorem}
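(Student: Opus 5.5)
We would prove this at the level of motives in $\MS_S$ and only then apply $\mathbb E^{p,q}$. The goal is an equivalence of motivic spectra
\[
\Sigma^\infty_+\Bl_Z X\simeq \Sigma^\infty_+ X\oplus\bigoplus_{k=1}^{r-1}\Sigma^{2k,k}\Sigma^\infty_+ Z
\]
after smashing with $\mathbb E$, that is, in $\mathbb E$-modules. Since $\mathbb E^{p,q}(Y)=[\Sigma^\infty_+Y,\Sigma^{p,q}\mathbb E]$, the stated decomposition follows at once. The input we would use from the Annala--Hoyois--Iwasa theory is the following list of facts: elementary blowup excision holds in $\MS_S$; $\PP^1$ is invertible there; the projective bundle formula holds for oriented $\mathbb E$, even without $\AA^1$-invariance; and Thom spaces of vector bundles are $\mathbb E$-oriented equivalent to Tate twists. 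Blowup excision says the blowup square is a pushout of motivic spectra, which gives a cofiber sequence
\[
\Sigma^\infty_+E\to\Sigma^\infty_+\Bl_ZX\oplus\Sigma^\infty_+Z\to\Sigma^\infty_+X.
\]

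We carry out the proof in three steps.

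First, we apply $\mathbb E^{*,*}$ to the excision sequence to get a Mayer--Vietoris long exact sequence
\[
\cdots\to\mathbb E^{p,q}(X)\xrightarrow{(\pi^*,i^*)}\mathbb E^{p,q}(\Bl_ZX)\oplus\mathbb E^{p,q}(Z)\xrightarrow{j^*-p^*}\mathbb E^{p,q}(E)\to\mathbb E^{p+1,q}(X)\to\cdots.
\]

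Second, we use that $E=\PP(N_{Z/X})$ is a projective bundle of rank $r-1$. The projective bundle formula then gives
\[
\mathbb E^{p,q}(E)\cong\bigoplus_{k=0}^{r-1}\mathbb E^{p-2k,q-k}(Z)\cdot\xi^k,
\]
where $\xi=c_1(\mathcal O_E(-1))$. In this decomposition $p^*$ is the inclusion of the $k=0$ summand, so $p^*$ is split injective. Consequently $\mathbb E^{p,q}(E)/p^*\cong\bigoplus_{k=1}^{r-1}\mathbb E^{p-2k,q-k}(Z)$.

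Third, we show the long exact sequence splits into short exact sequences. Since $p^*$ is split injective, the connecting maps vanish provided the map $j^*-p^*$ restricted to the $\Bl_ZX$ summand hits every class modulo $p^*$. This follows from $j^*(\zeta^k)=\xi^k$, where $\zeta=c_1(\mathcal O(E))$ restricted to $E$ gives $\xi$ (up to the sign convention on $\mathcal O(-1)$). By $\mathbb E^{*,*}(Z)$-linearity via the projection formula, every $p^*(a)\xi^k$ lies in the image once $k\ge1$. To ensure the $Z$-coefficients can be lifted, we work with the classes $j_*(p^*a\cdot\xi^{k-1})$ rather than powers of $\zeta$ alone. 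Their pullback is $j^*j_*(p^*a\,\xi^{k-1})=p^*a\,\xi^{k-1}\cdot c_1(N_E)=\pm p^*a\,\xi^k$, using the self-intersection formula, and this holds modulo higher FGL terms, which also lie in $\bigoplus_{k'\ge k}$. A triangular argument then shows surjectivity onto $\bigoplus_{k\ge1}$. Exactness yields the short exact sequence
\[
0\to\mathbb E^{p,q}(X)\to\mathbb E^{p,q}(\Bl_ZX)\to\bigoplus_{k=1}^{r-1}\mathbb E^{p-2k,q-k}(Z)\to0.
\]
This sequence is split by the explicit map $(a_k)\mapsto\sum_k j_*(p^*a_k\,\xi^{k-1})$, adjusted triangularly. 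Alternatively, $\pi^*$ is split by $\pi_*$, since $\pi_*\pi^*=\mathrm{id}$ because $\pi_*1=1$ for a blowup. Either splitting gives the direct sum decomposition.

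The main obstacle is that without $\AA^1$-invariance we must make sure each tool is genuinely available in $\MS_S$. These tools are: the pushforward $j_*$ along the divisor, defined via the Thom class of $N_E$ and the deformation-to-normal-cone or Gysin construction; the self-intersection formula $j^*j_*=c_1(N_E)\cdot$; and $\pi_*\pi^*=\mathrm{id}$. To handle this we would try to phrase everything motivically: write the Gysin cofiber sequence $E\to\Bl_ZX\to\mathrm{Th}(N_E)$ and its $Z$-analogue using AHI's purity and Atiyah duality, and identify $\mathrm{Th}(N_E)\simeq\Sigma^{2,1}E$ via the orientation. With these identifications the splitting can be checked directly on the motivic level by comparing the two cofiber sequences through the projective bundle decomposition of $\Sigma^\infty_+E$. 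Doing it this way avoids needing a pushforward $\pi_*$ for the non-$\AA^1$-invariant theory.
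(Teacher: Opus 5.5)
Your argument is essentially the paper's: the Mayer--Vietoris sequence from blowup excision, the projective bundle formula for $p\colon E\to Z$, and the self-intersection formula $j^*j_*(y)=c_1(\mathcal O_E(-1))\cdot y$ to show that $\mathbb E^{p,q}(\Bl_ZX)\oplus\mathbb E^{p,q}(Z)\to\mathbb E^{p,q}(E)$ is surjective, so the connecting maps vanish; your explicit section $(a_k)\mapsto\sum_k j_*(p^*a_k\,\xi^{k-1})$ usefully spells out the splitting that the paper leaves implicit in ``cancelling the summand $p^*$.'' No sign, formal-group-law correction or triangular argument is needed there, since $\mathcal N_{E/\Bl_ZX}\cong\mathcal O_E(-1)$ and $\xi=c_1(\mathcal O_E(-1))$ give $j^*j_*(p^*a\,\xi^{k-1})=p^*a\,\xi^k$ on the nose. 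You should drop the alternative splitting by $\pi_*$, however: it needs a proper pushforward you do not have in $\MS_S$, and the claim $\pi_*1=1$ is false for general oriented theories---in algebraic cobordism, pushing $\pi_*1$ for $\Bl_p\PP^2\to\PP^2$ forward to a point gives $[\Bl_p\PP^2]\neq[\PP^2]$ in $\mathbb L$, since their Chern numbers $(c_1^2,c_2)$ are $(8,4)$ versus $(9,3)$.
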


We turn to give a presentation of the $A^\bullet(\pt)$-algebra $A^\bullet(Bl_ZX)$ in terms of generators and relations, for theories represented in the $\AA^1$-invariant stable motivic category. The absence of certain formal properties for general non-$\AA^1$-invariant theories from the current literature prompted us to make such a switch. Throughout the rest of this introduction, let $k$ be an algebraically closed field of characteristic zero. Specializing the Blowup Formula to $\AA^1$-invariant oriented theories, we recover the well-known result that $A^\bullet(Bl_ZX)$ is generated by pullbacks of classes from $X$ and pushforwards of classes from $E \cong \PP(\N_{Z/X})$. We work out four kinds of relations among these generators via functorial properties of $A^\bullet$ and show that these relations are exhaustive. We refer to this particular presentation of $A^\bullet(Bl_ZX)$ as the \textit{Blowup Presentation}. 

\begin{theorem}[Blowup Presentation, \Cref{thm:blowup-presentation}]
Let $A^\bullet$ be an oriented cohomology theory on $\mathsf{Sm}_k$ satisfying the formal group law $F_A$. Let $\zeta=c_1^A(\mathcal{O}_E(1))\in A^1(E)$. The oriented cohomology ring $A^\bullet(\Bl_Z X)$ is generated over $A^\bullet(\pt)$ by pullbacks $\pi^*\alpha$ for $\alpha\in A^\bullet(X)$ and pushforwards $j_*\gamma$ for $\gamma\in A^\bullet(E)$, subject to the following families of relations.

\begin{enumerate}[(i)]
    \item (\textit{Intersection on $X$}) For $\alpha, \beta\in A^\bullet(X)$, $\pi^*\alpha \cdot \pi^*\beta = \pi^*(\alpha\cdot \beta) $.
    \item (\textit{Intersection between $X $ and $E$}) For $\alpha \in A^\bullet(X)$ and $\gamma\in A^\bullet(E)$, $\pi^*\alpha \cdot j_*\gamma = j_*(\gamma \cdot p^*i^*\alpha)$.
    \item (\textit{Intersection on $E$}) For $\gamma, \delta\in A^\bullet(E)$, $j_*\gamma \cdot j_*\delta = j_*(\gamma\cdot \delta\cdot \chi_A(\zeta))$, where $\chi_A(\zeta)$ is the formal inverse of $\zeta$ under the group law $F_A$. 
    \item (\textit{Excess intersection formula}) For every $\theta \in A^\bullet(Z)$, we have
$$
\pi^* i_*(\theta)=j_*\left(c_{r-1}^A(\mathcal{Q}) \cdot p^* \theta\right), 
$$ where $\mathcal{Q}:= p^* \mathcal{N}_{Z / X} / \mathcal{O}_E(-1).$
\end{enumerate}

\end{theorem}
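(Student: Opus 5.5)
We prove the Blowup Presentation in three stages. First we establish generation. Then we verify that relations (i)--(iv) hold. Finally we show that these relations are exhaustive, by comparing the resulting abstract ring with the additive decomposition of the Blowup Formula specialized to the $\AA^1$-invariant setting, $A^\bullet(\Bl_Z X)\cong A^\bullet(X)\oplus\bigoplus_{k=1}^{r-1}A^{\bullet-k}(Z)$.

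Concretely, that decomposition is realized by the map $(\alpha,\theta_1,\dots,\theta_{r-1})\mapsto \pi^*\alpha+\sum_k j_*(\zeta^{k-1}p^*\theta_k)$, with the sign and twist conventions for $\zeta$ matched to those used in the Blowup Formula. Since this map is an isomorphism, the classes $\pi^*\alpha$ and $j_*\gamma$ generate $A^\bullet(\Bl_ZX)$ as an $A^\bullet(\pt)$-module, and hence as an $A^\bullet(\pt)$-algebra. Indeed, the projective bundle formula for $E\cong\PP(\N_{Z/X})$ already expresses every $\gamma\in A^\bullet(E)$ as $\sum_{k=0}^{r-1}\zeta^k p^*\theta_k$.

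The relations are checked as follows.
\begin{enumerate}[(i)]
\item This holds because $\pi^*$ is a ring homomorphism.
\item This is the projection formula $\pi^*\alpha\cdot j_*\gamma=j_*(j^*\pi^*\alpha\cdot\gamma)$, combined with $j^*\pi^*=p^*i^*$ from commutativity of the blowup square.
\item Apply the projection formula to get $j_*\gamma\cdot j_*\delta=j_*(\gamma\cdot j^*j_*\delta)$. Then use the self-intersection formula $j^*j_*\delta=\delta\cdot c_1^A(\N_{E/\Bl_ZX})$. Since $\N_{E/\Bl_ZX}=\mathcal O_E(-1)$, we have $c_1^A(\mathcal O_E(-1))=\chi_A(\zeta)$.
\item This is the excess intersection formula for the cartesian blowup square, whose excess bundle is $p^*\N_{Z/X}/\N_{E/\Bl_ZX}=\mathcal Q$ of rank $r-1$.
\end{enumerate}

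For exhaustiveness, let $R$ be the $A^\bullet(\pt)$-algebra generated by symbols $[\alpha]$ and $\{\gamma\}$, subject to (i)--(iv) together with the implicit additivity and $A^\bullet(\pt)$-linearity of $\alpha\mapsto[\alpha]$ and $\gamma\mapsto\{\gamma\}$. There is a surjection $R\to A^\bullet(\Bl_ZX)$. We show that $R$ is spanned as a module by the elements $[\alpha]$ and $\{\zeta^{k-1}p^*\theta\}$ for $1\le k\le r-1$, $\alpha\in A^\bullet(X)$, and $\theta\in A^\bullet(Z)$. Relations (i)--(iii) reduce any monomial in the generators to a single $[\alpha]$ or a single $\{\gamma\}$. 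By the projective bundle formula, $\gamma=\sum_{k=0}^{r-1}\zeta^kp^*\theta_k$, so it remains to eliminate the top term $\{\zeta^{r-1}p^*\theta\}$.

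This elimination is the main obstacle. The plan is to expand $c^A_{r-1}(\mathcal Q)$ using the Whitney formula applied to $0\to\mathcal O_E(-1)\to p^*\N_{Z/X}\to\mathcal Q\to 0$. Inverting this formula gives $c^A_{r-1}(\mathcal Q)=\sum_{k} c^A_{r-1-k}(p^*\N_{Z/X})\cdot(-\chi_A(\zeta))^{k}\cdot u$ for some unit adjustment $u$ coming from the formal group law. The leading coefficient in $\zeta^{r-1}$ is a unit: it equals $\pm1$ modulo higher-order terms, because $\chi_A(\zeta)=-\zeta+O(\zeta^2)$. Relation (iv) therefore expresses $\{\zeta^{r-1}p^*\theta\}$ as $[i_*\theta]$ minus lower $\zeta$-powers. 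The higher terms in $\zeta^{\ge r}$ are reduced back via the relation $\sum_{i}(-1)^i c_i^A(p^*\N_{Z/X})\,\zeta^{r-i}=0$ from the projective bundle formula.

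Some care is needed here, since an infinite formal series in $\zeta$ must be truncated using nilpotence of $\zeta$ on $E$ together with that relation. Making this rigorous requires an induction on the $\zeta$-degree, or more cleanly, a filtration argument showing that the coefficient of $\zeta^{r-1}$ modulo lower terms is invertible in $A^\bullet(E)$ as an $A^\bullet(Z)$-algebra.

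Once $R$ is shown to be spanned by these elements, the map $R\to A^\bullet(\Bl_ZX)$ sends the spanning set bijectively onto the direct-sum decomposition above. The composite $A^\bullet(X)\oplus\bigoplus_{k=1}^{r-1}A^{\bullet-k}(Z)\to R\to A^\bullet(\Bl_ZX)$ is then an isomorphism and the first map is surjective, so $R\to A^\bullet(\Bl_ZX)$ is injective and hence an isomorphism.
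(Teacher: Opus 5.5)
Your proposal is correct and is essentially the paper's argument: the relations come from $\pi^*$ being a ring map, the projection formula, self-intersection and excess intersection. Exhaustiveness then follows by using (iv) to kill the $\zeta^{r-1}$-component and invoking uniqueness in the additive decomposition; the paper phrases this as computing $\ker\Phi$ for the star-product ring $A^\bullet(X)\oplus A^\bullet(E)$, which is equivalent to your spanning argument for $R$. At the crux you are more accurate than the paper, which claims $c^A_{r-1}(\mathcal Q)$ is monic in $\zeta$. In fact its $\zeta^{r-1}$-coefficient is only $1$ plus an element of the nilpotent ideal of $A^\bullet(Z)$ generated by the $c^A_k(\mathcal N_{Z/X})$, $k\ge 1$ (e.g.\ $1+9a_{11}\eta+78a_{11}^2\eta^2$ for $\Bl_V\PP^5$), so one must divide by this unit $v$ and get $\{\zeta^{r-1}p^*\theta\}=[i_*(v^{-1}\theta)]-(\text{lower terms})$ rather than $[i_*\theta]-(\text{lower terms})$. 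Note also that the Whitney inversion $c^A_{r-1}(\mathcal Q)=\sum_k c^A_{r-1-k}(p^*\mathcal N_{Z/X})(-\chi_A(\zeta))^k$ is exact, so no extra unit $u$ is needed there.
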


Given the additional assumption that the pullback from $A^\bullet(X)$ to $A^\bullet(Z)$ is surjective, the Blowup Presentation collapses to a rather simple form that generalizes Keel's formula for the Chow ring \cite[Appendix Theorem A]{Keel}. We work out full applications of both presentations in \Cref{sec:first-examples} and \Cref{sec:oriented-cohomology-of-complete-conics}.

\begin{theorem}[Surjective Blowup Presentation, \Cref{thm:blowup-presentation-surjective}]
If the pullback $i^*\colon A^\bullet(X)\to A^\bullet(Z)$ is surjective, $A^\bullet(\Bl_Z X)$ is generated over $A^\bullet(X)$ by the exceptional class $\zeta=c_1^A(\mathcal{O}_{\Bl_Z X}(E))$. It admits the quotient presentation:
\[
A^\bullet(Bl_ZX)\;\cong\;
\frac{A^\bullet(X)[\zeta]}{\Bigl(\zeta^r-\widetilde c_1\zeta^{r-1}+\cdots+ (-1)^{r-1}\widetilde c_{r-1}\zeta + (-1)^ri_*1,\ \zeta\cdot\ker(i^*)\Bigr)}.
\]
where the elements $\widetilde c_k \in A^k(X)$ are chosen lifts of the Chern classes $ c_k^A(\mathcal{N}_{Z/X})$.
\end{theorem}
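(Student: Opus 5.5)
We deduce the statement from the Blowup Presentation (\Cref{thm:blowup-presentation}) together with the additive Blowup Formula. Throughout, $\zeta$ denotes $c_1^A(\mathcal{O}_{\Bl_Z X}(E))$, and the same symbol is used for its restriction $j^*\zeta=c_1^A(\mathcal{O}_E(-1))$. One could normalise the restriction to be $\mathcal{O}_E(1)$ by passing to $\chi_A$; this only changes generators by units times powers of $\zeta$.

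\textbf{Step 1: generation.} The key identity is $j_*(j^*\beta)=\zeta\cdot\beta$ for $\beta\in A^\bullet(\Bl_Z X)$, which follows from the projection formula and $j_*1=c_1^A(\mathcal{O}(E))$. By the projective bundle formula, $A^\bullet(E)$ is generated over $p^*A^\bullet(Z)$ by powers of $j^*\zeta$. Surjectivity of $i^*$ gives $p^*i^*=j^*\pi^*$ surjective onto $p^*A^\bullet(Z)$. Hence every $\gamma\in A^\bullet(E)$ has the form $j^*\beta$ with $\beta\in A^\bullet(X)[\zeta]$, and so $j_*\gamma=\zeta\beta$. By \Cref{thm:blowup-presentation} the ring is generated by the $\pi^*\alpha$ and the $j_*\gamma$, so $A^\bullet(X)[\zeta]\to A^\bullet(\Bl_Z X)$ is surjective.

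\textbf{Step 2: the relations hold.} First, for $\kappa\in\ker(i^*)$, relation (ii) gives $\zeta\cdot\pi^*\kappa=j_*(p^*i^*\kappa)=0$. Second, on $E$ we have the Grothendieck relation $\sum_k (-1)^k c_k(p^*\mathcal{N})\,\xi^{r-k}=0$ for the tautological line $\mathcal{O}_E(-1)\subset p^*\mathcal{N}$, with $\xi$ its first Chern class. For the multiplicative sign convention to come out as stated, I will use the splitting $c(p^*\mathcal N)=c(\mathcal{O}(-1))\,c(\mathcal Q)$, so $c_{r}(p^*\mathcal N\otimes\mathcal O(1))=0$ is not needed. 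Concretely, $c_{r-1}(\mathcal Q)=\sum_{k}(-1)^{?}c_k(p^*\mathcal N)\,\xi^{r-1-k}$, obtained by formally dividing the total Chern class (this is valid in any oriented theory, since Chern classes are Whitney-multiplicative).

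Apply (iv) with $\theta=1$: $\pi^*i_*1=j_*(c_{r-1}(\mathcal Q))$. Substitute $c_k(p^*\mathcal N)=j^*\pi^*\widetilde c_k$ and $\xi^m=j^*\zeta^m$. Then use $j_*(j^*\beta)=\zeta\beta$ to get
\[
i_*1=\zeta\sum_{k=0}^{r-1}(\pm)\,\widetilde c_k\,\zeta^{r-1-k},
\]
which is the displayed monic relation. I must track signs carefully, since $c_1(\mathcal O(-1))$ is $\zeta$ (not its formal inverse) under the chosen convention. Different lifts $\widetilde c_k$ differ by elements of $\ker i^*$, and these are killed by $\zeta$, so the ideal is independent of the choice of lifts.

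\textbf{Step 3: no further relations.} This is the main obstacle. Let $R$ be the quotient ring on the right-hand side. Using the monic relation together with $\zeta\ker(i^*)=0$, every element of $R$ can be written as $\alpha+\sum_{m=1}^{r-1}\zeta^m\,\widetilde{\theta_m}$, where $\widetilde{\theta_m}$ is well-defined modulo $\ker i^*$. That is, $R$ is spanned by $A^\bullet(X)\oplus\bigoplus_{m=1}^{r-1}A^\bullet(Z)$. It remains to show that this spanning map is compatible with the isomorphism of the additive Blowup Formula (\Cref{thm:blowup-formula-additive-bigraded}), whose summands are $\pi^*A(X)$ and $j_*(\xi^{m-1}p^*A(Z))$. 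Under our map, $\zeta^m\widetilde\theta$ goes to $j_*(\xi^{m-1}p^*\theta)$, so the composite of the spanning map with $R\to A^\bullet(\Bl_Z X)$ is exactly that isomorphism. Hence the spanning map is injective, and $R\to A^\bullet(\Bl_Z X)$ is an isomorphism.

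The delicate points are two. First, one must check that the additive decomposition uses precisely the classes $j_*(\xi^{m-1}p^*\theta)$ for $m=1,\dots,r-1$. If it uses the classes $\xi^{m}$ instead, a triangular change of basis over $A(\pt)$ fixes this. Second, one must get the sign and formal-inverse conventions right in Step 2.
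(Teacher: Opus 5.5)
Your argument is correct and is essentially the one the paper intends, since the paper's proof simply defers to Keel's appendix argument "verbatim": surjectivity from $j_*j^*\beta=\zeta\beta$ plus the projective bundle formula, the two families of relations from relation (ii) and the excess intersection formula, and injectivity by showing that the normal forms $\alpha+\sum_{m=1}^{r-1}\zeta^m\widetilde\theta_m$ map isomorphically onto the additive blowup decomposition. Both points you flagged resolve cleanly. First, the Whitney formula gives $c_{r-1}(\mathcal{Q})=\sum_{k=0}^{r-1}(-\xi)^{r-1-k}c_k(p^*\mathcal{N})$, which produces exactly the displayed signs. Second, the splitting in \Cref{lem:surjectivity-on-exceptional-divisor} already uses the classes $j_*(\xi^{k-1}p^*x_k)$ with $\xi=c_1^A(\mathcal{O}_E(-1))$, so no change of basis is needed: injectivity of your composite follows directly by applying $j^*$ and then using that $A^\bullet(X)\to A^\bullet(\Bl_ZX)\oplus A^\bullet(Z)$ is injective.
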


\paragraph{Enumerative Geometry via Cobordism.}
Equipped with the above formulae, we now demonstrate that we can recover well-known results in enumerative geometry via intersection theory in arbitrary oriented cohomology rings---in particular, algebraic cobordism. First, we consider the blowup $Bl_C\PP^3$ of $\mathbb{P}^3$ along the twisted cubic $C$, which is the universal secant line to the twisted cubic in $\PP^3$. 
\begin{theorem}[Blowup Presentation of $\Bl_C{\mathbb P^3}$, \cref{thm:oriented-coh-of-BLCP3}]
Let $A^\bullet$ be any oriented cohomology theory on $\Sm_k$ with formal group law $F_A(x,y)=x+y+a_{11}xy + \text{ higher order terms}$. Let $\alpha$ be the pullback of the hyperplane class $c_1^A(\O_{\PP^3}(1))$ from $\mathbb{P}^3$. Define the pushforwards $e:=j_*(1), x:=j_*(\eta), z:=j_*(\zeta)$, where $\eta = c_1^A(p^*\O_C(1))$ and $\zeta = c_1^A(\O_E(1))\in A^1(E)$. As an $A^\bullet(\pt)$-algebra, the oriented cohomology ring $A^\bullet(\Bl_C\mathbb{P}^3)$ has the presentation \[
A^\bullet(\Bl_{C}\mathbb P^3)\ \cong\
\frac{A^\bullet(\pt)[\alpha,e,x,z]}{
\begin{aligned}
&(\alpha^4,\ \alpha^2e,\ \alpha e-3x,\ \alpha x,\ \alpha z-3\alpha^3,\ z^2,\ x^2, \ xz,\\
&\ e^2+z+10a_{11}\alpha^3,\ ex+\alpha^3,\ ez-10\alpha^3,\ 3\alpha^2-z-10x-8a_{11}\alpha^3).
\end{aligned}}
\]
\end{theorem}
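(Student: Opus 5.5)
The plan is to specialize the Blowup Presentation (\Cref{thm:blowup-presentation}) to $X=\PP^3$, $Z=C\cong\PP^1$, $r=2$. The inclusion $i$ restricts the hyperplane class to $c_1^A(\O_{\PP^1}(3))=[3]_F\eta=3\eta$, using $\eta^2=0$. This restriction is not surjective integrally, so the Surjective Blowup Presentation does not apply and I work with all four families of relations.

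\textbf{Step 1: the cohomology of $E$.} The normal bundle of the twisted cubic is $\N_{C/\PP^3}\cong\O_{\PP^1}(5)^{\oplus 2}$. By the projective bundle formula,
\[
A^\bullet(E)=A^\bullet(\pt)[\eta,\zeta]\big/\bigl(\eta^2,\ \zeta^2-c_1^A(p^*\N)\zeta+c_2^A(p^*\N)\bigr).
\]
Here $c_2^A(p^*\N)=0$ and $c_1^A(p^*\N)=10\eta$. This computation uses $\eta^2=0$, so that $[5]_F\eta=5\eta$; the sign conventions for $\O_E(\pm1)$ have to be fixed carefully. Hence $A^\bullet(E)$ is free over $A^\bullet(\pt)$ on $1,\eta,\zeta,\eta\zeta$. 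The class $\eta\zeta$ restricts to a point class on a fibre-section, so $p_*$ and $j_*$ of it should be computable.

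\textbf{Step 2: generating the relations.} By \Cref{thm:blowup-presentation}, $A^\bullet(\Bl_C\PP^3)$ is generated by $\alpha$ and by $j_*$ of the basis $1,\eta,\zeta,\eta\zeta$, that is, by $e,x,z$ and $j_*(\eta\zeta)$.
\begin{itemize}
\item Relation (ii) with $p^*i^*\alpha=3\eta$ gives $\alpha e=3x$, $\alpha x=0$, and $\alpha z=3j_*(\eta\zeta)$.
\item Relation (iii) uses $\chi_A(\zeta)=-\zeta+a_{11}\zeta^2+\cdots$, reduced with the quadratic relation $\zeta^2=10\eta\zeta$. It gives $e^2$, $ex$, $ez$, $x^2$, $xz$ and $z^2$ as multiples of $j_*(\eta\zeta)$ and $z$, with $a_{11}$-corrections.
\item Relation (iv) with $\theta=1$ reads $\pi^*[C]=j_*(c_1^A(\mathcal Q))$. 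Since $\mathcal Q=p^*\N/\O_E(-1)$, the class $c_1^A(\mathcal Q)$ is $c_1^A(p^*\N)\Fminus c_1^A(\O_E(-1))$, which expands to $10\eta+\zeta+(\text{$a_{11}$-terms in }\eta\zeta)$.
\item Relation (iv) with $\theta=\eta$ gives $\pi^*(\alpha^3)=\pm j_*(\eta\zeta)$. This lets me eliminate $j_*(\eta\zeta)$ in favour of $\alpha^3$, producing $ex+\alpha^3$, $ez-10\alpha^3$ and the other relations.
\end{itemize}
For $[C]=i_*1\in A^2(\PP^3)$ I write $[C]=3\alpha^2+c\,\alpha^3$. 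I determine $c$ by pushing $\alpha\cdot[C]$ to a point, which by the projection formula equals $\deg_A(3\eta)$ on $\PP^1$. I also use that $[\PP^1]=-a_{11}$ in $A^\bullet(\pt)$ and that the pushforward of $\alpha^k$ from $\PP^3$ has known $a_{11}$-terms. This should produce the last relation $3\alpha^2-z-10x-8a_{11}\alpha^3$.

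\textbf{Step 3: exhaustiveness.} By the additive Blowup Formula, $A^\bullet(\Bl_C\PP^3)$ is free over $A^\bullet(\pt)$ of rank $4+2=6$, for instance on $1,\alpha,\alpha^2,\alpha^3,e,x$. I show that the displayed quotient ring $R$ is spanned over $A^\bullet(\pt)$ by these six monomials: every monomial in $\alpha,e,x,z$ reduces to them using the relations, and $z$ is eliminated by the last relation. So there is a surjection $A^\bullet(\pt)^6\to R$, and composing with the surjection $R\to A^\bullet(\Bl_C\PP^3)$ gives a surjective endomorphism of a finitely generated free module. Such an endomorphism is an isomorphism, so $R\to A^\bullet(\Bl_C\PP^3)$ is injective as well.

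\textbf{Main obstacle.} I expect the hardest part to be the formal-group-law bookkeeping in Step 2. This means getting the signs of $\O_E(\pm1)$ right, expanding $\chi_A$ and $\Fminus$ to the needed order, and pinning down the $a_{11}$-coefficients $10$ and $8$ in $e^2$ and in $[C]$. To do this I would first compute everything in the Chow ring as a check, and then track only the linear $a_{11}$-corrections. Higher-order terms vanish for dimension reasons, since every relevant product lands in degree at most $3$ on a $3$-fold.
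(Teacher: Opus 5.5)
Your overall route is the paper's. You specialize the four families of \Cref{thm:blowup-presentation} using $\N_{C/\PP^3}\cong\O_{\PP^1}(5)^{\oplus2}$, reduce $\chi_A(\zeta)$ on $E$, compute $[C]_A$ by pushing forward to a point, and use $\theta=\eta$ to eliminate $j_*(\eta\zeta)$. The sign there is $+$, since $i_*\eta$ is the point class $\alpha^3$ and $c_1^A(\mathcal Q)\cdot\eta=\eta\zeta$.

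However, Step 1 commits to the wrong quadratic relation on $E$, and this breaks two of the relations you are asked to prove. Relation (iii) rests on $\N_{E/\Bl_C\PP^3}\cong\O_E(-1)$, the tautological line subbundle of $p^*\N$. With that convention, the Whitney formula $c_t(p^*\N)=(1+\chi_A(\zeta)t)\,c_t(\mathcal Q)$ with $\operatorname{rk}\mathcal Q=1$ gives $\zeta^2+10\eta\zeta=0$, not $\zeta^2=10\eta\zeta$. This is also the convention of \Cref{thm:projective-bundle-formula}.

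With your sign you get $\chi_A(\zeta)=-\zeta+10a_{11}\eta\zeta$. Relation (iii) then yields $e^2=-z+10a_{11}\alpha^3$ and $ez=-10\alpha^3$, contradicting $e^2+z+10a_{11}\alpha^3$ and $ez-10\alpha^3$ in the statement. Your planned Chow check exposes this only if you compare against an independent value such as the classical $E^3=-\deg\N_{C/\PP^3}=-10$; your relations give $e^3=+10\alpha^3$. With the correct sign, $\chi_A(\zeta)=-\zeta-10a_{11}\eta\zeta$ and every relation comes out as stated.

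There is a smaller slip in the fundamental class computation. Pushing $\alpha\cdot[C]$ to a point cannot determine $c$, because $\alpha\cdot[C]=3\alpha^3$ regardless of $c$ (as $\alpha^4=0$). You need to push forward $[C]$ itself. This gives $3[\PP^1]+c=[\PP^1]$ in $A^{-1}(\pt)$, hence $c=-2[\PP^1]=2a_{11}$, as in \Cref{prop:fundamental-class-of-twisted-cubic}.

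Your Step 3 is a genuine improvement over the paper. The paper leaves implicit that its finite list of twelve relations generates all instances of (i)--(iv). You instead show the displayed quotient is spanned by $1,\alpha,\alpha^2,\alpha^3,e,x$ and compare with the basis $\pi^*A^\bullet(\PP^3)\oplus j_*p^*A^\bullet(C)$ from the additive decomposition. That proves exhaustiveness directly, using only that the relations hold in $A^\bullet(\Bl_C\PP^3)$.
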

The product $([4]_F \alpha \Fminus [2]_F e)^2 \cdot \alpha = 4\alpha^3 = 4[\pt]_A$ \footnote{The notation $[\pt]_A$ means the fundamental class of a point in $A$-theory. For the notation $-_F$, see \Cref{subsec:formal-group-laws}.} in this ring recovers the classical enumerative result (\Cref{thm:4-secant-lines}), which states there are exactly $4$ secant lines to the twisted cubic meeting two general lines in $\mathbb{P}^3$.

Next, we apply our formalism to the moduli space of complete conics, $\Bl_V\mathbb P^5$, which is famously used in the resolution of Steiner's conic problem. Here $V$ is the Veronese surface inside $\mathbb P^5$, and it is known that the moduli space of complete conics is isomorphic to the moduli space of stable maps $\Kont$. To state the ring presentation, let $A^\bullet$ be any oriented cohomology theory over $\Sm_k$, and let $a_{ij}\in A^{1-i-j}(\pt)$ be the coefficient in front of the monomial $x^iy^j$ in $F_A$. Let $\alpha := \pi^*c_1^A(\O_{\PP^5}(1))$. Define 9 exceptional classes $e_{a,b}:=j_*(\eta^a\zeta^b)$ for $0 \leq a,b \leq 2$, where $\eta := p^*c_1^A(\O_V(1))$ and $\zeta:= c_1^A(\O_{E}(1))\in A^1(E)$.

\begin{theorem}[Blowup Presentation of $Bl_V\PP^5$, \Cref{thm:oriented-coh-of-BlVP5}]
The oriented cohomology ring $A^\bullet(Bl_V\PP^5)$ is generated over $A^\bullet(\pt)$ by the classes $\alpha$ and $e_{a,b}$, modulo the following families of relations.
\begin{enumerate}[(i)]
    \item $\alpha^6 = 0$.
    \item $\alpha \cdot e_{a,b} = 2e_{a+1,b} + a_{11}e_{a+2,b}$.
    \item $e_{a,b} \cdot e_{c,d} = -e_{a+c, b+d+1} + a_{11}e_{a+c, b+d+2} + 30a_{11}^2e_{a+c+2, b+d+1} + 9a_{11}^2e_{a+c+1, b+d+2} + s e_{a+c+2, b+d+2}$, where $s := 57a_{11}^3 + 51a_{11}a_{12} - 51a_{22} + 102a_{13}$.
    \item For $k = 0,1,2$, $\pi^*i_*(\eta^k) = 30e_{k+2,0} + 9e_{k+1,1} + e_{k,2} + 66a_{11}e_{k+2,1} + 9a_{11}e_{k+1,2} + 78a_{11}^2e_{k+2,2}$, where $\pi^*i_*1 = 4\alpha^3+3a_{11}\alpha^4+3a_{21}\alpha^5$, $\pi^*i_*(\eta) = 2\alpha^4+a_{11}\alpha^5$, and $\pi^*i_*(\eta^2) = \alpha^5$,
\end{enumerate}
These are subject to the reductions $e_{a,3} = -9e_{a+1,2}-6a_{11}e_{a+2,2}-30e_{a+2,1}$ and $e_{a,4}=51e_{a+2,2}$
\end{theorem}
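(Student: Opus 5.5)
We apply the general Blowup Presentation (\Cref{thm:blowup-presentation}) with $X=\PP^5$, $Z=V\cong\PP^2$ embedded by $\O_{\PP^2}(2)$, and $r=3$. We use the standard facts
\[
A^\bullet(\PP^5)=A^\bullet(\pt)[\alpha]/(\alpha^6), \qquad A^\bullet(V)=A^\bullet(\pt)[h]/(h^3),
\]
where $h=c_1^A(\O_{\PP^2}(1))$. The exceptional divisor is $E=\PP(\N_{V/\PP^5})$, so by the projective bundle formula $A^\bullet(E)$ is free over $A^\bullet(V)$ on $1,\zeta,\zeta^2$. Consequently every pushforward $j_*\gamma$ is an $A^\bullet(\pt)$-combination of the nine classes $e_{a,b}$ with $0\le a,b\le 2$, and these together with $\alpha$ generate. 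Note that $\eta=p^*c_1^A(\O_V(1))=p^*c_1^A(\O_{\PP^2}(2))=p^*[2]_F h$; we can express $h$ in terms of $\eta$ after inverting the power series $[2]_F$ formally, or simply work with $\eta$ throughout, since $\eta^3=0$ as well.

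The relations are then obtained one family at a time.
\begin{itemize}
\item Relation (i) is the restriction of relation (i) of \Cref{thm:blowup-presentation}.
\item Relation (ii) comes from family (ii). Here $i^*\alpha=c_1^A(\O_V(1)|)=c_1^A(\O_{\PP^2}(2))$ restricted appropriately, and $\pi^*\alpha\cdot j_*(\eta^a\zeta^b)=j_*(\eta^a\zeta^b\, p^*i^*\alpha)$. We need $p^*i^*\alpha$ written in the variable $\eta$; the stated form $2\eta+a_{11}\eta^2$ indicates that $i^*\alpha$ is $[2]_F$ of the hyperplane of $\PP^2$, with $\eta$ the hyperplane class. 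We will fix conventions so that $\eta$ is the pullback of $c_1$ of the $\PP^2$ hyperplane, which gives $[2]_F\eta=2\eta+a_{11}\eta^2$ modulo $\eta^3$.
\item Relation (iii) comes from family (iii). Expand $\chi_A(\zeta)=-\zeta+a_{11}\zeta^2-(\ldots)\zeta^3+\cdots$ from the formal group law, and reduce powers $\zeta^{\ge3}$ using the Grothendieck relation on $E$:
\[
\sum_k(-1)^k c_k^A(\N)\,\zeta^{3-k}=0,
\]
suitably twisted by the sign convention for $\O_E(1)$ versus $\O_E(-1)$. This Grothendieck relation is also the source of the stated reductions for $e_{a,3}$ and $e_{a,4}$.
\item Relation (iv) is the excess formula (iv), with $c_2^A(\mathcal Q)$ expanded via the Whitney formula and the formal group law from $c^A(p^*\N)$ and $\zeta$, together with a separate computation of $\pi^*i_*(\eta^k)=\pi^*(i_*1\cdot\alpha^k)$ up to the $[2]_F$ conversion. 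The class $i_*1\in A^3(\PP^5)$ is determined by pushing $[V]$ forward. We compute it by writing $i_*\eta^k=i_*(i^*\beta_k)=\beta_k\, i_*1$, and obtain $i_*1$ by pairing with $\alpha^2,\alpha$ using $\deg$: for instance via the cobordism class of $V$, or via the Quillen formula for $i_*$ of a degree-$4$ surface. Comparing coefficients of $\alpha^3,\alpha^4,\alpha^5$ then gives $4$, $3a_{11}$, $3a_{21}$.
\end{itemize}
Exhaustiveness of the relations follows from \Cref{thm:blowup-presentation}, since every relation there reduces, via the projective bundle basis, to a linear combination of the listed ones.

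The main obstacle is computing the Chern classes $c_k^A(\N_{V/\PP^5})$ in $A^\bullet(\PP^2)$ with all formal-group-law corrections. The plan is to use the exact sequences
\[
0\to\T_{\PP^2}\to \T_{\PP^5}|_V\to\N\to0, \qquad \T_{\PP^5}|_V \text{ from the Euler sequence }\O\to\O(2)^{6},
\]
while $\T_{\PP^2}$ comes from $\O\to\O(1)^3$. These give
\[
c(\N)=\frac{c(\O(2))^6}{c(\O(1))^3},
\]
computed with $c_1(\O(2))=[2]_F h$, which yields the integers $30$, $9$, $51$ and the $a_{ij}$-corrections. Note that $\N\cong\symtangent\otimes$ twist could be used as a cross-check. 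The constant $s$ and the coefficients $66$ and $78$ require careful expansion of $\chi_A$ to order $\zeta^4$ together with $F_A$ to third order; we would verify these by Macaulay2 as in the paper's other examples, and check them against the Chow specialization $a_{ij}=0$, which must reproduce the classical presentation used to derive $3264$.
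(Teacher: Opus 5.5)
Your outline is the same as the paper's: you apply \Cref{thm:blowup-presentation} with $X=\PP^5$, $Z=V$, $r=3$. Your computation $c^A(\mathcal N)=(1+2\eta+a_{11}\eta^2)^6(1+\eta)^{-3}=1+(9\eta+6a_{11}\eta^2)+30\eta^2$ from the two Euler sequences is a valid, shorter substitute for the paper's splitting-principle computation on $\operatorname{Sym}^2\mathcal T_{\PP^2}$. However, two steps fail as written.

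\textbf{First gap: the pushforwards $i_*\eta^k$.} Writing $i_*\eta^k=i_*(i^*\beta_k)=\beta_k\,i_*1$ requires $\eta^k\in\operatorname{im}(i^*)$, and integrally this is false. Since $i^*\alpha=2\eta+a_{11}\eta^2$ and $i^*\alpha^2=4\eta^2$, neither $\eta$ nor $\eta^2$ is a pullback; this non-surjectivity is exactly what forces the full presentation. Instead, determine $i_*\eta^k$ by the same pairing you use for $i_*1$, namely $\pi_{5,*}(i_*\eta^k\cdot\alpha^m)=\pi_{2,*}(\eta^k\, i^*\alpha^m)$ together with \Cref{cor:projective-space-residue}. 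Alternatively, argue geometrically as the paper does: a line maps to a conic cut out transversally by three hyperplanes and a quadric, and a point to five hyperplanes. Either way you get $i_*\eta=2\alpha^4+a_{11}\alpha^5$ and $i_*\eta^2=\alpha^5$.

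\textbf{Second gap: the relation on $E$.} This is not a matter of ``suitably twisting signs''. Take $E=\PP(\mathcal N)$ with $\mathcal O_E(-1)\subset p^*\mathcal N$ and $\zeta=c_1^A(\mathcal O_E(1))$. Whitney applied to $0\to\mathcal Q^\vee\to p^*\mathcal N^\vee\to\mathcal O_E(1)\to 0$ gives $\zeta^3-c_1^A(\mathcal N^\vee)\zeta^2+c_2^A(\mathcal N^\vee)\zeta-c_3^A(\mathcal N^\vee)=0$. Equivalently, $u^3+c_1^A(\mathcal N)u^2+c_2^A(\mathcal N)u+c_3^A(\mathcal N)=0$ with $u=-\chi_A(\zeta)$, which is the variable the paper itself uses in \Cref{lem:blowup-presentation-implicit}. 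For a non-additive law, $c_k^A(\mathcal N^\vee)\neq(-1)^kc_k^A(\mathcal N)$ in general. Rank one already shows this: $\PP(\mathcal L)$ is the base with $\mathcal O(-1)=\mathcal L$, so $\zeta=\chi_A(c_1^A\mathcal L)\neq -c_1^A(\mathcal L)$.

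Summing the dual roots $\chi_A([2]_Fx)$, $\chi_A(x+_Fy)$, $\chi_A([2]_Fy)$ gives $c_1^A(\mathcal N^\vee)=-9\eta+15a_{11}\eta^2$. You can cross-check this via $\mathcal N^\vee\cong\mathcal N\otimes\mathcal O(-6)$. Hence $\zeta^3=-(9\eta-15a_{11}\eta^2)\zeta^2-30\eta^2\zeta$, i.e.\ $e_{a,3}=-9e_{a+1,2}+15a_{11}e_{a+2,2}-30e_{a+2,1}$.

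The paper's proof instead substitutes $c^A(\mathcal N)$ directly into $\zeta^3+c_1\zeta^2+c_2\zeta+c_3=0$, which is only correct for the additive law. That is the source of the stated $-6a_{11}$, and the error propagates. Redoing the expansions of $\chi_A(\zeta)$ and $c_2^A(\mathcal Q)$, I find that the $a_{11}^3$-part of $s$ becomes $36a_{11}^3$ and the coefficient $78a_{11}^2$ in (iv) becomes $36a_{11}^2$; the other coefficients are unchanged. Indeed, with the stated reduction one gets $c_3^A(\mathcal Q)=-21a_{11}\eta^2\zeta^2\neq 0$, which is impossible for a rank-$2$ bundle.

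So a correct execution of your plan does not reproduce the stated $a_{11}$-coefficients. Your proposed safeguard, specializing to Chow ($a_{ij}=0$), is blind to exactly these terms. A meaningful check is the vanishing of $c_3^A(\mathcal Q)$, or a computation in $K$-theory.
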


This ring presentation enables us to lift the solution to Steiner's conic problem from Chow ring to algebraic cobordism. Computing the intersection of the pullbacks of five general sextic hypersurfaces in $\Omega^\bullet(Bl_V\PP^5)$ amounts to evaluating the class $([6]_\Omega \alpha \Fminus [2]_\Omega e_{0,0})^5$. Remarkably, we observe that all terms carrying formal group law coefficients $a_{ij}$ vanish modulo the above relations, yielding exactly $3264[\pt]_\Omega$. We demonstrate through this example that one can recover classical enumerative results from cobordism-valued intersection theory.

\paragraph{Oriented Cohomology of $\Mn$.}
Finally, we turn to the moduli space of stable rational $n$-pointed curves $\Mn$. Following Keel's iterated blowup description of $\Mn$, we give a presentation of its oriented cohomology ring. This presentation has a similar structure to Keel's presentation of the Chow ring $CH^\bullet(\Mn)$ \cite[Theorem 1]{Keel}.
In \cref{def:ring-R0n}, we define the ring $R_{0,n}^A$ as the quotient of the free $A^\bullet(\pt)$ algebra $S_{0,n}^A := A^\bullet(\pt)[x_S\mid 2\le |S| \le n-2]$, modulo the ideal generated by the following four kinds of relations.\footnote{These four relations do not correspond to the four families of relations in the Blowup Presentation (\Cref{thm:blowup-presentation})}
\begin{enumerate}[(1)]
    \item (\textit{Complements}) $x_S = x_{S^c}$.
    \item (\textit{Fundamental relations}) For all distinct $i,j,k,l \in [n]$, 
    \[\Fsum_{\substack{S\subset [n] \\ i, j \in S, k, \ell \notin S}} x_S=\Fsum_{\substack{S\subset [n] \\ i, k \in S, j, \ell \notin S}} x_S=\Fsum_{\substack{S\subset [n] \\ i, \ell \in S, j, k \notin S}} x_S. \footnote{\text{The notation $\Fsum$ means summation under the formal group law $F$. See \Cref{subsec:formal-group-laws} for the precise definition.}}\]
    \item (\textit{Incompatibility}) If $S$ and $T$ are geometrically incompatible, $x_Sx_T = 0$.
    \item (\textit{Nilpotency}) $x_S^{n-2}$, for all $S$ with $2\le |S|\le n-2$
\end{enumerate}

\begin{theorem}[Oriented Cohomology of $\Mn$, \Cref{thm:oriented-cohomology-of-M0nbar}]
Let $A^\bullet$ be an oriented cohomology theory satisfying the dimension axiom. Then, there is an isomorphism of graded rings $R_{0,n}^A \cong A^\bullet(\Mn)$
\end{theorem}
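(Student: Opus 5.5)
We would follow Keel's strategy. Realize $\Mn$ as an iterated blowup of $\Mn[n-1]\times\PP^1$ (equivalently of $(\PP^1)^{n-3}$) along smooth centers, and induct on $n$. The base case $n=4$ is $\overline M_{0,4}\cong\PP^1$. Here $R_{0,4}^A$ has generators $x_{12}=x_{34}$, $x_{13}=x_{24}$, $x_{14}=x_{23}$. The fundamental relation (with no formal sums needed, since each side is a single class) forces all three to be equal, and nilpotency gives $x^2=0$. So $R^A_{0,4}\cong A^\bullet(\pt)[x]/(x^2)=A^\bullet(\PP^1)$ by the projective bundle formula, with $x$ the class of a point.

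First we define the map $R_{0,n}^A\to A^\bullet(\Mn)$ by sending $x_S\mapsto c_1^A(\O(D_S))$, where $D_S$ is the boundary divisor. We then check the four families of relations.
\begin{enumerate}
\item Complements hold trivially, since $D_S=D_{S^c}$.
\item Incompatibility holds because $D_S\cap D_T=\emptyset$, and for line bundles with disjoint smooth divisors the product of their first Chern classes vanishes.
\item For the fundamental relation, we pull back along the forgetful map $\Mn\to\overline M_{0,\{i,j,k,l\}}\cong\PP^1$. The three boundary points pull back to the reduced simple normal crossing divisors $\sum_{S} D_S$, and all three points have linearly equivalent line bundles $\O(1)$. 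Hence $c_1^A$ of each is the formal-group sum $\Fsum x_S$, by the standard computation of $c_1^A$ of a snc divisor as an $F_A$-sum.
\item Nilpotency follows from the dimension axiom: $x_S^{n-2}$ lies in $A^{n-2}$ of the $(n-3)$-dimensional $D_S$ after factoring one copy of $x_S$ through $j_{S*}$. We expect instead to use that $x_S^{n-2}$ has degree exceeding $\dim\Mn=n-3$ in the generating-class sense. Concretely, we would write $x_S^{n-2}=j_*(c_1(N)^{n-3})$ and reduce to the nilpotency of first Chern classes on $(n-3)$-dimensional schemes, which holds under the dimension axiom.
\end{enumerate}

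Surjectivity follows inductively from the Blowup Presentation (\Cref{thm:blowup-presentation}). At each blowup step, $A^\bullet$ of the blowup is generated by pullbacks and by $j_*$ of classes on the exceptional divisor. In Keel's setup the centers are intersections of boundary divisors (products of smaller $\overline M_{0,m}$), and the restriction maps are surjective. So the Surjective Blowup Presentation (\Cref{thm:blowup-presentation-surjective}) applies and gives generation by the new exceptional class $\zeta$, which is some $x_S$. Together with the inductive generation of $A^\bullet(\Mn[n-1]\times\PP^1)$ by boundary classes, this shows the map is surjective.

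Injectivity is the main obstacle. We would not try to construct an additive basis of $R^A_{0,n}$ directly. Instead we compare $R^A_{0,n}$ with the iterated presentation $\frac{A^\bullet(X)[\zeta]}{(\text{poly}(\zeta),\ \zeta\ker i^*)}$ coming from \Cref{thm:blowup-presentation-surjective}: we would show that the relations there are consequences of (1)--(4). The kernel of $i^*$ is generated by boundary classes incompatible with the center, so $\zeta\cdot\ker i^*$ lies in the ideal of the incompatibility relations. The monic polynomial relation in $\zeta$ should come from the fundamental relations combined with nilpotency, as in Keel's argument, with the Chern classes of the normal bundle expressed through $F_A$-sums.

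Alternatively, a cleaner route avoids this comparison. Since both sides are free $A^\bullet(\pt)$-modules, we would use the universality of $\Omega^\bullet$ and Levine--Morel's $A^\bullet=\Omega^\bullet\otimes_{\mathbb L}A^\bullet(\pt)$ for $\Mn$ (which is cellular). This reduces the statement to $A=\Omega$. Then we would filter by degree in $\mathbb L$, since the associated graded of $R^\Omega_{0,n}$ surjects onto $R^{CH}_{0,n}\otimes\mathbb L$. The rank comparison with Keel's theorem then gives injectivity via a Nakayama-type argument on the graded pieces.
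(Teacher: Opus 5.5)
\textbf{Verdict: correct, by a different route for the rank bound.} Your relation checks and your surjectivity argument are the paper's. Both proofs close the same way: $\bar\phi\colon R^A_{0,n}\twoheadrightarrow A^\bullet(\Mn)$ lands in a free $A^\bullet(\pt)$-module of rank $N=\operatorname{rk}_{\ZZ}CH^\bullet(\Mn)$, so it suffices to show that $R^A_{0,n}$ is spanned by $N$ elements. You and the paper obtain that bound differently.

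\textbf{The paper's route.} It filters $R^A_{0,n}$ by powers of the ideal $J=(x_S)$ and compares $\operatorname{gr}(R^A_{0,n})$ with $\widetilde R^{CH}_{0,n}\cong CH^\bullet(\Mn)\otimes_{\ZZ}A^\bullet(\pt)$. In effect it kills the higher-order terms of the $F$-sums.

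\textbf{Your route.} You pass to the universal theory and kill the coefficients instead, using $R^\Omega_{0,n}\otimes_{\mathbb L}\ZZ\cong R^{CH}_{0,n}\cong CH^\bullet(\Mn)$ (Keel). This works, but the surjection you need goes the other way from what you wrote: $R^{CH}_{0,n}\otimes_{\ZZ}\mathbb L\twoheadrightarrow\operatorname{gr}(R^\Omega_{0,n})$. A surjection out of $\operatorname{gr}(R^\Omega_{0,n})$ would only give a lower bound, and the surjection onto $\Omega^\bullet(\Mn)$ already supplies that. The correct argument runs as follows.
\begin{itemize}
\item $R^\Omega_{0,n}$ is graded with degrees bounded above, since it has finitely many nilpotent degree-one generators.
\item $\mathbb L$ lives in degrees $\le 0$ with $\mathbb L^0=\ZZ$.
\item So graded Nakayama lifts a homogeneous $\ZZ$-basis of $R^\Omega_{0,n}/\mathbb L^{<0}R^\Omega_{0,n}$ to an $N$-element $\mathbb L$-spanning set of $R^\Omega_{0,n}$.
\item $R^A_{0,n}=R^\Omega_{0,n}\otimes_{\mathbb L}A^\bullet(\pt)$, because the relations, including the truncated $F$-sums, are base changes of those for $\Omega$. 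So the image of this set spans $R^A_{0,n}$, and you are done.
\end{itemize}
You do not even need $A^\bullet(\Mn)\cong\Omega^\bullet(\Mn)\otimes_{\mathbb L}A^\bullet(\pt)$ or any cellularity input.

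\textbf{What each approach buys.} The detour through $\Omega$ is what makes Nakayama available. For example, for periodic $K$-theory $A^\bullet(\pt)=\ZZ[\beta^{\pm1}]$ contains the unit $\beta$ of nonzero degree, so there is no graded way to kill coefficients directly over $A^\bullet(\pt)$. The paper's $J$-adic filtration avoids this detour and works over $A^\bullet(\pt)$ directly, since the filtration is finite by nilpotency. On the other hand, your version asks only for the inequality that is actually used. The paper identifies $\operatorname{in}(I_A)$ from the initial forms of the generators alone, which by itself only yields a surjection $\widetilde R^{CH}_{0,n}\twoheadrightarrow\operatorname{gr}(R^A_{0,n})$. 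Fortunately that is exactly the direction the rank argument needs.

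\textbf{Two smaller points.}
\begin{itemize}
\item Nilpotency is immediate from (Dim) applied to $\Mn$ itself, which has dimension $n-3$, so $x_S^{n-2}=0$. Your detour through $j_*$ misstates $\dim D_S$, which is $n-4$.
\item Your first injectivity route would require deriving the monic relation in $\zeta$ and the relations $\zeta\cdot\ker(i^*)$ from (1)--(4) at every intermediate stage $B_k$, with $F$-sums throughout. That is where the real work in Keel's original argument lies, and as written it is only a sketch. It is unnecessary given your second route.
\end{itemize}
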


We conclude the paper by comparing these resulting ring structures. By specializing $A^\bullet$ to $K$-theory, our theorem explicitly recovers a presentation of $K_0(\overline{M}_{0,n})$ developed by Larson, Li, Payne, and Proudfoot \cite{LLPP2024WonderfulVarieties}. While they showed that $K_0(\Mn)$ and $CH^\bullet(\Mn)$ are abstractly isomorphic as rings, we prove that the naive bijection of boundary divisor generators fails to be a ring homomorphism for $n \ge 6$ (\Cref{lem:naive-map-fails}). However, for $n=5$, we note that there are generalized \textit{exceptional isomorphisms} in the form $A^\bullet(\overline{M}_{0,5}) \cong CH^\bullet(\overline{M}_{0,5}) \otimes_\ZZ A^\bullet(\pt)$, in the sense of \cite{LLPP2024WonderfulVarieties}. 

\paragraph{Outline of the paper.} 
In \Cref{subsec:blowup-formula-additive}, we review oriented cohomology theories, formal group laws, oriented motivic spectra, fundamental classes, and prove the additive blowup decomposition. \Cref{sec:multiplicative-blowup-formula} establishes the ring-level Blowup Presentation. In \Cref{sec:first-examples}, we apply this formalism to compute the oriented cohomology rings of del Pezzo surfaces and the blowup of $\mathbb{P}^3$ along the twisted cubic. In \Cref{sec:oriented-cohomology-of-complete-conics}, we give an explicit presentation of the oriented cohomology of the moduli space of complete conics and recover the count of $3264$ conics in algebraic cobordism. In \Cref{sec:oriented-cohomology-of-M0nbar}, we establish a Keel-style presentation for oriented cohomology rings of $\Mn$. We compare the resulting ring structures and note that there exist generalized exceptional isomorphisms for $\overline{M}_{0,5}$.

\paragraph{Acknowledgements.}

The authors would like to thank Toni Annala for invaluable comments and for suggesting various arguments used in the proofs of \Cref{subsec:blowup-square-and-additive}.  The authors would like to thank Shiyue Li for making them aware of a very recent work by Gandhi and Partida \cite{GandhiPartida2026Wonderful}, which is relevant to the current paper. The authors would like to thank Jarod Alper,  Sara Billey, Thomas Brazelton, Ting Gong, and Jackson Morris for many helpful comments.

\section{Background on Oriented Cohomology Theories} \label{subsec:blowup-formula-additive}

Cohomology theories with a \emph{complex orientation} have long played an important role in homotopy theory. Starting from the geometric constructions of Conner--Floyd, the subject was developed further by Quillen's identification of the formal group law of complex cobordism with the universal formal group law, by Landweber's exact functor theorem, and by the role of complex cobordism in the Adams--Novikov spectral sequence. Together, these results established complex cobordism as a fundamental object of study \cite{ConnerFloyd1966,Quillen1969ComplexCobordism,Quillen1971Elementary,Landweber1976,Adams1974}.

It is natural to ask for an algebro-geometric analogue of complex-oriented cohomology theories. Panin \cite{Panin2003Oriented} and then Levine--Morel \cite{LevineMorel} developed the foundations for cohomology theories with an \emph{algebraic orientation} on the category of smooth schemes. At the same time, the $\mathbb A^1$-homotopy theory of Morel--Voevodsky and the stable motivic homotopy category $\mathsf{SH}(k)$ provided a homotopical setting for studying oriented cohomology theories through their representing objects called oriented motivic ring spectra. Panin--Pimenov--R\"ondigs showed that Voevodsky's spectrum $MGL$ is again the universal object among oriented motivic spectra, and further extensions such as motivic Landweber exact theories are due to Naumann--Spitzweck--{\O}stv{\ae}r \cite{MorelVoevodsky1999, PaninPimenovRoendigs2008, NaumannSpitzweckOstvaer2009}. Levine--Morel \cite{LevineMorel} developed the corresponding geometric cohomology and Borel--Moore homology theory called algebraic cobordism. Déglise's work on Gysin morphisms and orientations, together with the six-functor formalism of Ayoub and Cisinski--Déglise, furnished $\mathsf{SH}(k)$ with functoriality and purity \cite{DegliseGysinI, DegliseGysinII, Ayoub2007I, Ayoub2007II, CisinskiDeglise2019}. Déglise--Jin--Khan  \cite{DegliseJinKhanFundamentalClasses} developed intersection-theoretic tools on $\mathsf{SH}(k)$, endowing oriented cohomology theories of schemes with fundamental classes and refined Gysin pullbacks. Thus, oriented cohomology theories provide a very general framework in which one may perform intersection theory-style calculus. In this section, we summarize axioms and facts for oriented cohomology theory of schemes constituting what we refer to as the ``intersection theory package,'' following \cite{LevineMorel, DegliseJinKhanFundamentalClasses}.

\subsection{Oriented Cohomology Theories}

Fix a field $k$ and let $\mathsf{Sm}_k$ denote the category of smooth quasi-projective $k$-schemes. Let $\mathsf{GrCommRing}$ denote the category of commutative graded rings with unit. An \emph{additive functor} from $\mathsf{Sm}_k$ to $\mathsf{GrCommRing}$ is a functor $A$ such that $A(X\amalg Y)\cong A(X)\oplus A(Y)$.

\begin{definition}[Oriented cohomology theory on $\mathsf{Sm}_k$]
 An \emph{oriented cohomology theory} on $\Sm_k$ is a pair $(A^\bullet, f\mapsto f_*)$ consisting of:
\begin{enumerate}
\item an additive contravariant functor
\[
A^\bullet:\Sm_k^{\mathrm{op}}\to \mathsf{GrCommRing},\qquad \bigl(f:X\to Y\bigr) \mapsto \bigl(f^*:A^\bullet(Y)\to A^\bullet(X)\bigr), \quad \text{ and }
\]

\item for each \emph{projective} morphism $f:Y\to X$ in $\Sm_k$ of relative dimension $d$, a
graded $A^\bullet(X)$-module homomorphism
\[
f_*:A^i(Y)\to A^{i-d}(X),
\]
where the $A^\bullet(X)$-module structure on $A^\bullet(Y)$ is induced by $f^*$.
\end{enumerate}

\end{definition}

The oriented cohomology theory ought to satisfy the standard axioms (A1), (A2), (PBF), (EH), which we adopt from \cite[Def.~1.1.2]{LevineMorel}. It follows from (PBF) that the theory admits Chern classes governed by a $1$-dimensional commutative formal group law. In addition, the theory ought to entertain properties (PF), (FC), (RG), which we detail below. 
\begin{enumerate}
\item[(A1)] \textbf{Functoriality of push-forward}. We have $(\mathrm{id}_X)_*=\mathrm{id}$. If $f$ and $g$ are composable projective morphisms, then $(f\circ g)_*=f_*\circ g_*$.
\item[(A2)] \textbf{Transverse base change.} For a Cartesian square
\[
\begin{tikzcd}
W \ar[r,"g'"] \ar[d,"f'"'] & Y \ar[d,"f"]\\
X \ar[r,"g"'] & Z
\end{tikzcd}
\]
with $f$ projective and $(f,g)$ transverse, \footnote{The morphisms $f$ and $g$ are \textit{transverse} if $\operatorname{Tor}_i^{\O_Z}(g_*\O_X,\; f_*\O_Y)$ vanishes for all $i>0$. }
one has $g^*\circ f_* = f'_*\circ g'^*$.
\item[(PBF)] \textbf{Projective Bundle Formula} -- additive version. For a rank $r$ vector bundle $\E\to X$ with associated projective bundle
$\pi:\mathbb P(E)\to X$, there exists an element $\zeta\in A^1(\PP(\E))$ called the \emph{relative hyperplane class}, such that $A^\bullet(\mathbb P(E))$ is the free $A^\bullet(X)$-module given by
\begin{equation}\label{eq:projective-bundle-formula-additive}
    A^\bullet(\PP(\E)) \cong \bigoplus_{i = 0}^{r-1} \zeta^i\cdot A^\bullet(X).
\end{equation}
\item[(EH)] \textbf{Extended homotopy.} for any $E$-torsor $p:V\to X$, the pullback
$p^*:A^\bullet(X)\to A^\bullet(V)$ is an isomorphism. In particular, $A^\bullet(X)\xrightarrow{\sim} A^\bullet(E)$ for a vector
bundle $E\to X$.
\end{enumerate}

Since the proper pushforward $f_*$ is a $A^\bullet(X)$-module homomorphism, the above axioms imply 

\begin{itemize}
    \item[(PF)] \textbf{Projection formula.} For a projective morphism $f:Y\to X$ and $\alpha\in A^\bullet(X)$, $\beta\in A^\bullet(Y)$, one has
\[
f_*(f^*\alpha\cdot \beta)=\alpha\cdot f_*(\beta).
\]
\end{itemize}

Next, we briefly recall how (PBF) endows any oriented cohomology theory with a theory of \emph{Chern classes}. The \emph{additive} projective bundle formula \eqref{eq:projective-bundle-formula-additive} states that $A^\bullet(\PP(\E))$ is a finite free $A^\bullet(X)$-module generated as an $A^\bullet(X)$-algebra by a single element $\zeta$, so there exists a unique monic polynomial $f(\zeta) = \zeta^r + a_1\zeta^{r-1} + a_2\zeta^{r-2} \cdots +a_r$ which is homogeneous of degree $r$, such that $A^\bullet(\PP(\E)) \cong A^\bullet(X)[\zeta] / (f)$. 

\begin{definition}[Chern classes in $A^\bullet$-theory]
 Let $\E$ be a vector bundle of rank $r$ over $X$. For $1\le i\le r$, the $A^\bullet$-theoretic \emph{$i^{\text{th}}$ Chern class} of $\E$ is defined to be the unique coefficient $c_i^A(\E):=a_i \in A^i(X)$. The $A^\bullet$-theoretic \emph{total Chern class} \footnote{The \textit{total Chern polynomial} is the formal sum $c_t^A(\E) := 1 + c_1^A(\E)t + c_2^A(\E)t^2 + \cdots + c_r^A(\E)t^r$ with the indexing variable $t$. Sometimes this is the preferred convention to avoid addition across different degrees.} of $\E$ is the formal sum of all Chern classes \[c^A(\E):= 1+ c_1^A(\E)+ c_2^A(\E) + \cdots + c_r^A(\E).\]
\end{definition}

\begin{theorem}[Projective Bundle Formula -- ring version]\label{thm:projective-bundle-formula}
Let $\E$ be a vector bundle of rank $r$ over $X$. The oriented cohomology ring of the projective bundle $\PP(\E)$ has the quotient presentation

\begin{equation}
    \label{eq:projective-bundle-formula-ring}
    A^\bullet(\PP(\E)) \cong \frac{A^\bullet(X)[\zeta]}{\zeta^r + \zeta^{r-1}c_1^A(\E) + \zeta^{r-2}c_2^A(\E) + \cdots + c_r^A(\E)},
\end{equation}
where $\zeta:=c_1^A(\cO_{\PP(\E)}(1))\in A^1(\PP(\E))$ is the $A^\bullet$-theoretic hyperplane class of $\PP(\E)$.\footnote{We are using the sign convention in the monic polynomial relation that is opposite to Levine--Morel.}
\end{theorem}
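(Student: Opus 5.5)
The proof is essentially formal once we fix what the definition of Chern classes gives us. By the additive projective bundle formula \eqref{eq:projective-bundle-formula-additive}, $A^\bullet(\PP(\E))$ is free over $A^\bullet(X)$ on $1,\zeta,\dots,\zeta^{r-1}$. So the $A^\bullet(X)$-algebra map
\[
\Phi\colon A^\bullet(X)[t]\to A^\bullet(\PP(\E)),\qquad t\mapsto \zeta,
\]
is surjective; its image already contains a basis.

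Next I check that the monic polynomial
\[
f(t)=t^r+c_1^A(\E)t^{r-1}+\cdots+c_r^A(\E)
\]
lies in $\ker\Phi$. Since $\zeta^r\in A^\bullet(\PP(\E))$, freeness lets us write $\zeta^r$ uniquely as an $A^\bullet(X)$-linear combination of $1,\dots,\zeta^{r-1}$. Comparing degrees, the coefficient of $\zeta^{r-i}$ is homogeneous of degree $i$. By definition, $c_i^A(\E)$ is exactly (minus) this coefficient, with the paper's sign convention. Hence $f(\zeta)=0$, and $\Phi$ descends to a surjection
\[
\overline\Phi\colon A^\bullet(X)[t]/(f)\to A^\bullet(\PP(\E)).
\]
Uniqueness of $f$ is also immediate: two monic degree-$r$ relations would differ by a relation of degree $<r$ among $1,\dots,\zeta^{r-1}$, which must vanish by freeness.

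For injectivity, I use that $f$ is monic of degree $r$. Division with remainder by a monic polynomial works over any commutative ring, so $A^\bullet(X)[t]/(f)$ is a free $A^\bullet(X)$-module with basis the classes of $1,t,\dots,t^{r-1}$. The map $\overline\Phi$ sends this basis to the basis $1,\zeta,\dots,\zeta^{r-1}$ of the free module $A^\bullet(\PP(\E))$. It is therefore an $A^\bullet(X)$-module isomorphism, and being a ring map, a ring isomorphism. The map also respects the grading, since $t$ is given degree $1$ and $f$ is homogeneous of degree $r$.

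The main subtlety is checking that the $\zeta$ in (PBF) can be taken to be $c_1^A(\cO_{\PP(\E)}(1))$. This is part of Levine--Morel's formulation of (PBF), where the generator is the first Chern class of the tautological line bundle. The other point to verify is that the grading and commutativity conventions (graded-commutative versus commutative, even degrees only) cause no sign issues in the division algorithm. Since the paper works in $\mathsf{GrCommRing}$ with commutative rings, no such issue arises.
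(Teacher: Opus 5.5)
Your proposal is correct and follows the paper's own reasoning: the paper gives no separate proof, because it defines $c_i^A(\E)$ as the coefficients of the unique monic degree-$r$ relation satisfied by $\zeta$ in the free basis $1,\zeta,\dots,\zeta^{r-1}$ supplied by (PBF), so the presentation is a restatement of that definition. You make explicit what the paper leaves implicit: expanding $\zeta^r$ in this basis produces the relation, and division by the monic $f$ makes $A^\bullet(X)[t]/(f)$ free on $1,t,\dots,t^{r-1}$, so the induced surjection is an isomorphism.
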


The $A^\bullet$-theoretic Chern classes satisfy the following axioms/properties, which we summarize from \cite[\S 1.1]{LevineMorel}.

\begin{itemize}
    \item \textbf{Naturality.} If $f\colon Y\to X$ is a morphism in $\Sm_k$ and $\E$ is a vector bundle on $X$, then
\[
c_i^A(f^\ast \E)=f^\ast c_i^A(\E)
\qquad\text{for all } i\geq 0.
\]
    \item \textbf{Normalization.} If $\E$ is a vector bundle of rank $r$ over $X$, then $c_i^A(\E) = 0$ for all $i > r$. Additionally, $c^A(\cO_X) = 1$. 
    \item \textbf{Whitney Sum Formula.} If $0\to \E \to \F \to \G\to 0$ is a short exact sequence of vector bundles, then \[c(\F) = c(\E)c(\G). \footnote{\text{Equivalently, $c_t^A(\F) = c_t^A(\E)c_t^A(\G)$, where $c_t^A$ is the total Chern polynomial.}}\]
    \item \textbf{Formal group law for tensor product of line bundles} \cite[Remark 4.1.7 and Corollary 4.1.8]{LevineMorel}. There exists a formal power series \[F_A(x,y) = x+ y + \sum_{i,j\ge 1} a_{ij}x^iy^j, \quad |a_{ij}| = 1 - i - j \footnote{\text{Note that the degree $1-i-j$ is to make the whole expression $F_A(x,y)$ remain in $A^1(X)$.}},\] called the \emph{formal group law} of $A^\bullet$, such that the tensor product of any two line bundles $\L$ and $ \M$ has first Chern class \[c_1^A(\L\otimes \M) = F_A(c_1^A(\L),c_1^A(\M)).\] We discuss these formal group laws in detail in the ensuing \Cref{subsec:formal-group-laws}. 
\end{itemize}

For the purposes of this paper, we require the oriented cohomology theories to satisfy two additional properties that are absent from Levine--Morel's definition, which we term the ``Gysin package.'' Fundamental classes of smooth l.c.i. subvarieties are useful for finding a convenient set of generators in the quotient ring presentation of the oriented cohomology ring of a blowup. Refined Gysin pullbacks are essential for studying the $A^\bullet$-product in the exceptional locus of a blowup. Note that Levine--Morel established this Gysin package for algebraic cobordism in \cite[\S6.5-6.6]{LevineMorel}. In \Cref{subsec:oriented-theories-and-oriented-spectra}, we discuss how the results of Déglise--Jin--Khan \cite{DegliseJinKhanFundamentalClasses} guarantee these additional properties for all oriented cohomology theories that we consider. 

\begin{itemize}
    \item[(FC)] \textbf{Fundamental classes and Gysin morphisms for smoothable lci maps.}
For every smoothable local complete intersection morphism $f\colon X\to Y$ between smooth $k$-schemes of virtual relative dimension $d$, there is a functorial Gysin morphism
\[
f_*\colon A^i(X)\to A^{i+d}(Y),\quad  \alpha \mapsto \eta_f\cdot \alpha
\]
where $\eta_f$ is fundamental class of $f$. In particular, if $i\colon Z\hookrightarrow X$ is a regular closed immersion of codimension $r$, then $[Z]_A:=i_*1\in A^r(X)$ is the fundamental class of $Z$ in $X$ \cite[Theorem 3.3.2 and Proposition 3.3.4]{DegliseJinKhanFundamentalClasses}.

\item[(RG)] \textbf{Refined Gysin pullbacks and excess intersection.}
For every cartesian square of smooth schemes
\[
\begin{tikzcd}
Z' \ar[r,"i'"] \ar[d,"g'"'] & X' \ar[d,"g"]\\
Z \ar[r,"i"'] & X
\end{tikzcd}
\]
with $i$ a regular closed immersion of codimension $r$, there is a functorial refined Gysin pullback
\[
g^*\colon A^m(Z)\to A^{m}(Z').
\]
One has the \emph{Excess Intersection Formula}
\begin{equation}\label{eq:excess-intersection-formula}
    g^*\circ i_*\;=\; i'_*\circ\bigl(c_\text{top}^A(\Q)\cdot g'^*\bigr),
\end{equation}
where $\Q$ is the excess normal bundle defined by \[0 \to \N_{Z^{\prime} / X^{\prime}} \to g^{\prime *} \N_{Z / X} \to \Q \to 0.\] In particular, taking $X'=Z$ and $g=i$ yields the \emph{Self Intersection Formula}  \cite[Proposition 4.2.2 and Corollary 4.2.3]{DegliseJinKhanFundamentalClasses}
\begin{equation}\label{eq:self-intersection-formula}
    i^*\circ i_*\;=\; c^A_{\text{top}}(\N_{Z/X})\cdot (-).
\end{equation}

\end{itemize}

\subsection{Formal Group Laws, Algebraic Cobordism, and Other Examples of Oriented Cohomology Theories}
\label{subsec:formal-group-laws}
\begin{definition}[Formal group law]
Let $R$ be a commutative ring. A \emph{one-dimensional commutative formal group law} over $R$ is a formal power series
\[
F(x,y)=x+y+\sum_{i,j\ge 1} a_{ij}x^iy^j\in R\llbracket x,y\rrbracket
\]
satisfying

\begin{itemize}
    \item \textbf{Identity.} $F(x,0)=x$ and $F(0,y)=y$.
    \item \textbf{Commutativity.} $F(x,y)=F(y,x)$.
    \item \textbf{Associativity.} In $R\llbracket x,y,z\rrbracket$, one has $F\bigl(x,F(y,z)\bigr)=F\bigl(F(x,y),z\bigr)$.
    \item \textbf{Formal inverse.} There exists a unique power series $\chi_F(x)\in R\llbracket x\rrbracket$
with $F\bigl(x,\chi_F(x)\bigr)=0$ and $F\bigl(\chi_F(x),x\bigr)=0$.
\end{itemize}
\end{definition}
\paragraph{Notation.} We define shorthands for addition under such a group law $F$ with \[x \Fplus y := F(x,y),\quad x \Fminus y := F(x, \chi_F(y)),  \quad \text{ and } \Fsum_{i = 1}^r x_i = x_1 \Fplus x_2 \Fplus \cdots \Fplus x_r.\]There are many scenarios where one needs to add the same element multiple times under $\Fsum$. We introduce another shorthand notation for this as follows. 

\begin{definition}[$n$-series of $F$]
    Let $R$ and $F$ be as above. Let $n$ be any positive integer. The \textit{$n$-series} $[n]_F\;x$ is the formal power series $\Fsum_{i = 1}^n x$ in $R\llbracket x\rrbracket$.
\end{definition}

\begin{theorem}[Lazard \cite{Lazard1955FormalGroupLaw}]
There exists a graded ring $\mathbb L$ and a formal group law $F_{\text{univ}}$ over $\mathbb L$ with the following property.
For every commutative ring $R$ and every formal group law $F$ over $R$, there exists a unique ring homomorphism
$\varphi_F\colon \mathbb L\to R$ such that $F$ is obtained from $F_{\text{univ}}$ by base change along $\varphi_F$.
\end{theorem}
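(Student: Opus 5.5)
The plan is to construct $\mathbb L$ and $F_{\text{univ}}$ by generators and relations, so that the universal property holds by construction. Everything nontrivial about the ring itself can then be postponed.

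Let $\widetilde{\mathbb L} := \ZZ[A_{ij} \mid i,j\ge 1]$ be the polynomial ring on symbols $A_{ij}$, graded by $|A_{ij}| = 1-i-j$ to match the grading convention of $F_A$ above. Put $\widetilde F(x,y) := x+y+\sum_{i,j\ge1}A_{ij}x^iy^j$. Let $I\subset\widetilde{\mathbb L}$ be the ideal generated by three families:
\begin{itemize}
\item the coefficients of $\widetilde F(x,y)-\widetilde F(y,x)$, namely $A_{ij}-A_{ji}$;
\item the coefficients of every monomial $x^ay^bz^c$ in $\widetilde F(x,\widetilde F(y,z))-\widetilde F(\widetilde F(x,y),z)$.
\end{itemize}
Each such coefficient is a polynomial in finitely many $A_{ij}$ with integer coefficients, and it is homogeneous once $x,y,z$ are given degree $1$. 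Hence $I$ is a homogeneous ideal. Set $\mathbb L := \widetilde{\mathbb L}/I$ and $F_{\text{univ}} := \widetilde F \bmod I$.

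By construction the identity axiom holds, since $\widetilde F(x,0)=x$ and $\widetilde F(0,y)=y$. Commutativity and associativity hold because we quotiented by exactly those coefficients. The formal inverse is then automatic: we solve $F(x,\chi(x))=0$ recursively degree by degree. The linear term of $F$ in $y$ is $1+\sum_i a_{i1}x^i$, which is a unit in $R\llbracket x\rrbracket$, so each coefficient of $\chi$ is determined uniquely. Uniqueness and the two-sided property $F(\chi(x),x)=0$ follow from commutativity.

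For the universal property, let $F=x+y+\sum a_{ij}x^iy^j$ be a formal group law over $R$. The assignment $A_{ij}\mapsto a_{ij}$ defines a ring map $\widetilde{\mathbb L}\to R$. It kills every generator of $I$, because those generators map to the coefficients of the commutativity and associativity identities for $F$, and these vanish. So it factors through $\varphi_F\colon\mathbb L\to R$, and base change of $F_{\text{univ}}$ along $\varphi_F$ gives $F$ coefficientwise. Uniqueness holds because $\mathbb L$ is generated by the images of the $A_{ij}$, and any $\varphi$ with $\varphi_*F_{\text{univ}}=F$ must send $A_{ij}$ to $a_{ij}$.

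The main obstacle is not this existence statement, which is formal. It is the deeper content usually attached to Lazard's name: that $\mathbb L\cong\ZZ[x_1,x_2,\dots]$ with $|x_i|=-i$. That requires Lazard's symmetric $2$-cocycle lemma, namely that homogeneous symmetric $2$-cocycles over $\ZZ$ are multiples of $C_n(x,y)=\frac{(x+y)^n-x^n-y^n}{d_n}$, where $d_n=p$ if $n$ is a power of a prime $p$ and $d_n=1$ otherwise. One then applies it inductively to the indecomposables $I(\mathbb L)/I(\mathbb L)^2$ in each degree. As stated, however, the theorem only asserts existence and universality, so I would stop after the construction above and cite Lazard for polynomiality.
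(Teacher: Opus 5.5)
Your construction is correct and matches the paper, which gives no proof beyond citing Lazard and records exactly this presentation $\mathbb L=\mathbb Z[a_{ij}\mid i,j\ge 1]/I$, with $I$ generated by the commutativity and associativity coefficients; the universal property then holds by construction, and polynomiality of $\mathbb L$ is not needed for the statement. Two cosmetic slips: you announce three families of relations but list two (two suffice, since the identity axiom is automatic), and uniqueness of $\chi$ comes from your coefficient-by-coefficient recursion, not from commutativity.
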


The ring $\mathbb{L}$, called the \emph{Lazard ring}, is the graded ring $\mathbb{L}=\mathbb{Z}[a_{i j} \mid i, j \geq 1]/ I$, where $a_{i j}$ has degree $1-i-j$,\footnote{Note that $a_{ij}$ would be in even degrees for complex cobordism. This degree-doubling phenomenon is also observed in the comparison between gradings in the Chow ring versus in singular cohomology for a complex variety.} and $I$ is the ideal generated by the associativity and commutativity relations. The \emph{universal formal group law} is

$$
F_{\text {univ }}(x, y)=x+y+\sum_{i, j \geq 1} a_{i j} x^i y^j \in \mathbb{L} \llbracket x, y \rrbracket.
$$
The assignment $A^\bullet\mapsto F_A$ is functorial in the sense that a morphism of oriented cohomology theories
induces a homomorphism $A^\bullet(\pt)\to B^\bullet(\pt)$ that carries $F_A$ to $F_B$. Levine--Morel showed that for a field of characteristic zero, there exists an oriented cohomology theory $\Omega^\bullet$, called \emph{algebraic cobordism}, such that $\Omega^\bullet(\pt) \cong \mathbb{L}$, and the formal group law of $\Omega^\bullet$ is $F_{\text{univ}}$. 
For an arbitrary oriented cohomology theory $A^\bullet$, the formal group law $F_A$ therefore determines a canonical
graded ring homomorphism
\[
\varphi_A\colon \mathbb L\to A^\bullet(\pt)
\]
classifying $F_A$, making $A^\bullet(X)$ into a $\Omega^\bullet(X)$-module. In this sense, algebraic cobordism is the \emph{universal oriented cohomology theory} on $\mathsf{Sm}_k$. 

We now give a few examples of oriented cohomology theories and their formal group laws.

\begin{example}[Chow theory]
The Chow ring is a prototypical example of an oriented cohomology theory. The Chow-theoretic first Chern class of a tensor product of line bundles is governed by the \emph{additive formal group law},
\[
c_1^{CH}(\L\otimes \M)=c_1^{CH}(\L)+c_1^{CH}(\M).
\]
Thus, $F_{CH}(x,y)=x+y$ and $\chi_{CH}(x)=-x$.
\end{example}

\begin{example}[Algebraic $K$ theory]
Let $K_0(X) := K_0(\mathsf{VB}_X)$ be the Grothendieck group of vector bundles over $X\in \mathsf{Sm}_k$, where addition is given by direct sum and multiplication is given by tensor product. The \emph{(periodic) algebraic $K$-theory} of $X$ is $K_0(X)\llbracket \beta,\beta^{-1}\rrbracket$, where the \emph{Bott element} $\beta$ is a formal variable of degree $-1$. The $K$-theoretic first Chern class of a line bundle $\L$ is
\[
c_1^K(\L):=\beta^{-1}\bigl(1-[\L^\vee]\bigr).
\]
Then, $$\begin{gathered}
    c_1^K(\L \otimes \M) = \beta^{-1}(1-[ (\L \otimes \M)^{\vee} ]) = \beta^{-1}(1-[\L^{\vee}][\M^{\vee}]) \\ = \beta^{-1}\bigl((1-[\L^{\vee}]) + (1-[\M^{\vee}]) - (1-[\L^{\vee}])(1-[\M^{\vee}])\bigr),
\end{gathered}$$ which implies that
\[
F_K(x,y)=x+y-\beta xy,
\qquad \text{ and }\quad 
\chi_K(x)=\frac{-x}{1-\beta x} = -x \sum_{i \geq 0}(\beta x)^i .
\]
This is the \emph{multiplicative formal group law.} Note that the $-1$-degree of $\beta$ keeps all terms of $F_K$ in degree $1$. Specializing to $\beta \mapsto 1$, we recover the theory of Chern classes in the $K_0$-ring described by, for example, Karoubi \cite{Karoubi1978KTheory}. 
\end{example}

\begin{example}[Algebraic cobordism]
For a quasi-projective $k$-scheme $X$, Levine--Morel defines the algebraic cobordism group $\Omega_i(X)$ to be the quotient of the free abelian group generated by cobordism cycles
\(
[f\colon Y\to X],
\)
where $Y$ is a smooth quasi-projective $k$-scheme of pure dimension $i$ and $f$ is a projective morphism, modulo the following relations.
\begin{itemize}
\item \textbf{Dimension.} If $Y$ is smooth and $\L_1,\dots,\L_r$ are line bundles on $Y$ with $r>\dim(Y)$, then
\[
c_1^\Omega(\L_1)\cdots c_1^\Omega(\L_r)\cdot [\,\mathrm{id}_Y:Y\to Y\,]=0.
\]
\item \textbf{Section.} If $Y$ is smooth and $\L$ is a line bundle on $Y$ admitting a section $s$ transverse to the zero section, with smooth zero locus $i\colon Z(s)\hookrightarrow Y$, then
\[
c_1^\Omega(L)\cdot[\,\mathrm{id}_Y:Y\to Y\,]=i_\ast[\,\mathrm{id}_{Z(s)}:Z(s)\to Z(s)\,].
\]
\item \textbf{Formal group law.} Algebraic cobordism satisfies $F_{\text{univ}}$.
\end{itemize}
The resulting functor $\Omega_\bullet$ on quasi-projective $k$-schemes is an \textit{oriented Borel--Moore homology theory.} If $X$ is smooth of pure dimension $d$, one sets
\[
\Omega^n(X)=\Omega_{d-n}(X),
\]
and this makes $\Omega^\bullet$ into an oriented cohomology theory on $\Sm_k$.
The universality of algebraic cobordism is the algebraic analogue of Quillen's theorem for the universality of complex cobordism among complex-oriented cohomology theories on topological spaces \cite[Theorem 2]{Quillen1969ComplexCobordism}.
\end{example}

\begin{theorem}[Universality of algebraic cobordism, \cite{LevineMorel} Theorem 7.1.3]\label{thm:universality-of-cobordism}
    Let $A^\bullet$ be any oriented cohomology theory on $\Sm_k$, where $\operatorname{char} k = 0$. For any scheme $X\in \Sm_k$, there exists a ring homomorphism $\Omega^\bullet(X) \to A^\bullet(X)$, which induces a natural graded homomorphism of $\mathbb{L}$-algebras
$$
\Omega^{\bullet}(X) \otimes_{\mathbb{L}} A^{\bullet}(\pt) \rightarrow A^{\bullet}(X) .
$$
\end{theorem}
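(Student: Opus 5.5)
The ring homomorphism $\Omega^\bullet(X)\to A^\bullet(X)$ comes from the universal property in Levine--Morel's construction. The map $\Omega^\bullet(X)\otimes_{\mathbb L}A^\bullet(\pt)\to A^\bullet(X)$ then follows formally from it together with the classifying map $\varphi_A\colon\mathbb L\to A^\bullet(\pt)$.

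First, I would define the map on generators. A cobordism cycle $[f\colon Y\to X]$, with $Y$ smooth and $f$ projective, is sent to $\vartheta_A([f]):=f_*(1_Y)\in A^\bullet(X)$, using the projective pushforward from the definition of an oriented cohomology theory. On the free abelian group of such cycles this is well-defined and additive; additivity is automatic since $A$ is an additive functor, so $A(Y\amalg Y')=A(Y)\oplus A(Y')$.

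Next, I would check that the three defining relations of $\Omega_\bullet$ are respected.
\begin{itemize}
\item \emph{Section axiom.} I need $c_1^A(\mathcal L)=i_*1$ when $\mathcal L$ has a section transverse to the zero section with zero locus $Z(s)$. This follows from (FC) and the excess intersection formula (RG): the transverse section realizes $Z(s)$ as the pullback of the zero section of $\mathcal L$.
\item \emph{Formal group law relation.} This holds because $F_A$ is the pushforward of $F_{\mathrm{univ}}$ along $\varphi_A$, and Chern classes of tensor products obey $F_A$.
\item \emph{Dimension axiom.} This is the main obstacle. A general $A^\bullet$ need not satisfy $c_1(\mathcal L_1)\cdots c_1(\mathcal L_r)=0$ for $r>\dim Y$. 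Here I would follow Levine--Morel's actual route rather than a naive one. They prove universality by constructing $\Omega$ as the universal theory built from the free theory $\underline{\Omega}$, localized by the (Sect) and (FGL) relations, and they show that (Dim) is a consequence of those relations rather than an independent constraint. Their argument is in characteristic zero and uses resolution of singularities and weak factorization, which is exactly why the hypothesis $\operatorname{char}k=0$ appears. I would cite \cite[Theorem 7.1.3]{LevineMorel} for this step.
\end{itemize}

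Then I would check multiplicativity and naturality. Products in $\Omega^\bullet(X)$ are external products followed by pullback along the diagonal. Transverse base change (A2) and the projection formula (PF) show that $\vartheta_A$ commutes with pullbacks, pushforwards, and products, so it is a graded ring homomorphism natural in $X$.

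Finally, since the induced map $\vartheta_A(\pt)\colon\mathbb L\to A^\bullet(\pt)$ classifies $F_A$, it equals $\varphi_A$. The $A^\bullet(\pt)$-module structure on $A^\bullet(X)$ then gives the $\mathbb L$-bilinear map $\Omega^\bullet(X)\times A^\bullet(\pt)\to A^\bullet(X)$, $(\omega,a)\mapsto \vartheta_A(\omega)\cdot a$. This descends to the tensor product, and it is graded and natural because $\vartheta_A$ is.
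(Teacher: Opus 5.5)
The paper gives no argument here; it only cites Levine--Morel, so your sketch has to stand on its own. Your last paragraph, tensoring up along $\varphi_A$, is fine once $\vartheta_A$ exists. The reconstruction of $\vartheta_A$ itself has two problems.

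\textbf{The (Dim) step is backwards.} The dimension relation is one of the \emph{defining} relations of $\Omega_\bullet$, as in the paper's own description. It is what makes the $c_1$-operators locally nilpotent, so that the (FGL) relation is a finite expression. Levine--Morel do not derive it from (Sect) and (FGL).

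What you must check is that $\vartheta_A$ kills it, i.e.\ that $A^\bullet$ satisfies (Dim). This holds for \emph{every} oriented cohomology theory; indeed the theorem forces it, since $\vartheta_A$ commutes with $c_1$. It is also easy to prove directly:
\begin{enumerate}
\item Write $\mathcal{L}_i\cong\mathcal{M}_i\otimes\mathcal{N}_i^{\vee}$ with $\mathcal{M}_i,\mathcal{N}_i$ very ample.
\item Expand $c_1^A(\mathcal{L}_i)=F_A\bigl(c_1^A(\mathcal{M}_i),\chi_A(c_1^A(\mathcal{N}_i))\bigr)$. This is a finite sum, because $c_1$ of a globally generated bundle is pulled back from some $\mathbb{P}^N$ and is therefore nilpotent.
\item Then $\prod_i c_1^A(\mathcal{L}_i)$ is an $A^\bullet(\pt)$-combination of products of at least $r>\dim Y$ first Chern classes of very ample bundles.
\item By Bertini, the section relation and the projection formula, each such product is the pushforward of $1$ from an intersection of more than $\dim Y$ general hypersurface sections. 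That intersection is empty, so the product vanishes.
\end{enumerate}
Appealing to Theorem 7.1.3 for this step is circular, since that is the statement being proved. Note also that Levine--Morel's generators are decorated cycles $(f\colon Y\to X;\mathcal{L}_1,\dots,\mathcal{L}_r)$, sent to $f_*\bigl(c_1^A(\mathcal{L}_1)\cdots c_1^A(\mathcal{L}_r)\bigr)$; this is how (Dim) and (Sect) are even formulated.

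\textbf{The real gap is the step you dispatch in one sentence.} (A2) applies only to transverse squares. So it gives $\vartheta_A\circ g^*=g^*\circ\vartheta_A$ only on cycles $[f\colon Y\to X]$ transverse to $g$.
\begin{itemize}
\item For a general $g$, the pullback on $\Omega^\bullet$ is not $[Y\times_X X'\to X']$, since that fiber product need not be smooth of the expected dimension. It is built through Levine--Morel's l.c.i.\ pullback machinery (deformation to the normal cone, resolution of singularities). You supply no moving lemma that would let you assume transversality.
\item The ring structure has the same problem. The product is $\Delta^*$ of an external product, and $\Delta$ is transverse to $f_1\times f_2$ only when $f_1$ and $f_2$ are transverse.
\end{itemize}
Proving that $\vartheta_A$ respects these non-transverse pullbacks, and hence is a ring homomorphism natural in $X$, is where the substance of the theorem lies. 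Characteristic zero enters through the resolution of singularities needed to build this functoriality on $\Omega^\bullet$, not through (Dim).

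Your tensor-product step needs only $\mathbb{L}$-linearity of $\vartheta_A$. That involves only transverse situations (pullback along the smooth map $X\to\pt$), so (A2) covers it. The multiplicativity and naturality asserted in the statement are not covered.
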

\begin{example}[Ordinary cohomology theories]
There are several classical examples of \emph{ordinary} oriented cohomology theories, that is, oriented cohomology theories whose formal group law is additive. Let $k$ be a field. If $\ell\neq \operatorname{char}(k)$ is a prime, one may consider the even part of  $\ell$-adic \'{e}tale cohomology theory \cite{Milne1980Etale}
\[
A^\bullet_\text{et}(X):=\bigoplus_{n\geq 0} H^{2n}_\text{et}(X,\mathbb{Q}_\ell(n)),
\qquad X\in \Sm_k.
\] If $\operatorname{char}(k)=0$, one may also consider the even part of algebraic de Rham cohomology theory \cite{Hartshorne1975deRham}
\[
A^\bullet_\text{dR}(X):=\bigoplus_{n\geq 0} H^{2n}_\text{dR}(X/k),
\qquad
H^i_\text{dR}(X/k):=\mathbb{H}^i(X,\varOmega^\bullet_{X/k}),
\]
where $\varOmega^\bullet_{X/k}$ is the algebraic de Rham complex. 
In both cases, the first Chern classes satisfy the additive formal group law \(
c_1(\mathcal{L}\otimes \mathcal{M})=c_1(\mathcal{L})+c_1(\mathcal{M}).
\)

Over a field of characteristic zero, Chow theory is universal among ordinary oriented cohomology theories. That is, if $A^\bullet_\text{ord}$ is any oriented cohomology theory on $\Sm_k$ with the additive formal group law, then for any $X\in \Sm_k$, there is a natural homomorphism of graded rings $CH^\bullet(X) \to A^\bullet_\text{ord}(X)$.
The usual \textit{cycle class maps} from Chow groups to even \'{e}tale cohomology, even algebraic de Rham cohomology, and, when $k\hookrightarrow \mathbb{C}$ is a subfield, even Betti cohomology, are all instances of this universal homomorphism.
\end{example}

\begin{example}[Landweber exact theories] \label{eg:landweber-exact-theories}
\textit{Landweber exactness} is a flatness condition on a formal group law that guarantees that the naive change of coefficients construction from $\Omega^\bullet$ produces a genuine oriented cohomology theory.
Let $R$ be a graded ring and let $\varphi\colon \mathbb L\to R$ be a graded ring homomorphism classifying a formal group law over $R$.
One may then form the extension of scalars
\[
A_R^\bullet(X)=\Omega^\bullet(X)\otimes_{\mathbb L} R.
\]
When $\varphi$ is Landweber exact, the resulting functor $A_R^\bullet$ satisfies the axioms of an oriented cohomology theory and its formal group law is the base change of the universal formal group law $F_{\text{univ}}$ along $\varphi$. For suitable choices of $(R, \varphi)$, this recovers Chow groups, algebraic $K$-theory, and other Landweber exact theories such as motivic Brown--Peterson $BP$-theories, motivic Johnson--Wilson $E(n)$ theories, and \textit{elliptic cohomology} determined by the formal group of an elliptic curve \cite{NaumannSpitzweck2009Landweber, NaumannEtAl2009MotivicLandweber}. 
\end{example}

Certain oriented cohomology theories satisfy one additional axiom on first Chern classes called the \textit{dimension axiom}. One class of examples are those corresponding to Levine--Morel's oriented Borel--Moore homology theories of \textit{geometric type}. With the dimension axiom, the formal group law always truncates to a finite sum on every smooth scheme. We will assume this additional axiom in \Cref{sec:oriented-cohomology-of-M0nbar}.
\begin{itemize}
\item[(Dim)] \textbf{Dimension axiom (optional).} For every $X\in \Sm_k$ of pure dimension $d$ and every collection of line bundles $\mathcal L_1,\dots,\mathcal L_n$ on $X$ with $n>d$, one has
\[
c_1^A(\mathcal L_1)\cdots c_1^A(\mathcal L_n)=0
\qquad\text{in}\qquad
A^\bullet(X).
\]
Equivalently, for every line bundle $\mathcal L$ on $X$, the class $c_1^A(\mathcal L)\in A^1(X)$ is nilpotent, with \(c_1^A(\mathcal L)^{d+1}=0\).
\end{itemize}
Chow theory and algebraic cobordism are basic examples of theories of geometric type. More generally, any theory obtained from algebraic cobordism by extension of scalars is geometric in this sense, so in particular periodic algebraic $K$-theory is geometric. By contrast, the classical ordinary theories given by even \'{e}tale cohomology, even algebraic de Rham cohomology, even Betti cohomology, and the complex realization $X\mapsto MU^{2,\ast}(X(\mathbb C))$ are oriented cohomology theories on $\Sm_k$, but are not usually presented as restrictions of Levine--Morel geometric-type Borel--Moore theories on all finite type schemes. In particular, \emph{``not geometric''} here does not mean that (Dim) fails on smooth schemes. In these ordinary theories, (Dim) holds on a smooth $d$-fold simply for degree reasons.

\subsection{Oriented Motivic Spectra}
\label{subsec:oriented-theories-and-oriented-spectra}

To establish the ``Gysin package'' of fundamental classes, refined Gysin morphisms, and the excess intersection formula, it is convenient to work in the \emph{stable motivic homotopy category} $\mathsf{SH}(k)$. This is the algebro-geometric analogue of the classical stable homotopy category, obtained by stabilizing the homotopy theory of smooth $k$ schemes with respect to $\PP^1$ suspension and imposing Nisnevich descent and $\AA^1$ homotopy invariance. Objects of $\mathsf{SH}(k)$ are \emph{motivic spectra} that represent cohomology theories on $\mathsf{Sm}_k$. Déglise--Jin--Khan \cite{DegliseJinKhanFundamentalClasses} showed that the six-functors formalism on $\mathsf{SH}(k)$ implies the above ``Gysin package.'' We now summarize some useful facts about $\mathsf{SH}(k)$ following \cite{DegliseJinKhanFundamentalClasses}.

We begin by summarizing some basic notions. A \textit{motivic space} is a presheaf of topological spaces on $\mathsf{Sm}_k$ satisfying Nisnevich descent and $\mathbb{A}^1$-homotopy invariance. A \textit{pointed motivic space} $(X, x)$ is a motivic space $X$ with a chosen basepoint $x \cong \Spec k\to X$. In particular, $X_+$ denotes the motivic space $X$ with a disjoint basepoint. The category of pointed motivic spaces $\mathsf{H}(k)$ admits all limits and colimits. It is symmetric monoidal under the smash product $X\wedge Y := X\times Y / X\vee Y$, where $X\vee Y$ is the wedge sum identifying the basepoints. The inclusion $\Sm_k \to \mathsf{H}(k)$ is fully faithful, given by the Yoneda embedding.

Let $S^1$ denote the pointed simplicial circle, and let $\GG_m = \AA^1-0$ be pointed at $1$. For non-negative integers $p>q$, define the $(p,q)$-\textit{suspension} of $X$ to be $\Sigma^{p,q}X := (S^1)^{\wedge (p-q)}\wedge (\GG_m)^{\wedge q} \wedge X$. In particular, because there is an equivalence $(\PP^1, \infty) \simeq S^1\wedge \GG_m$ in $\mathsf{H}(k)$, the \textit{$\PP^1$-suspension} of $X$ is defined as $\Sigma^{2,1}X$. 

A \textit{$\PP^1$-suspension spectrum}\footnote{Simply, a $\PP^1$-spectrum.} is a sequence of pointed motivic spaces $\mathbb  E = \left(\dots, \mathbb{E}_{-1},\mathbb E_0, \mathbb E_1, \mathbb E_2, \ldots\right)$ equipped with structure maps $\Sigma^{2,1} \mathbb E_n \rightarrow \mathbb E_{n+1}$. The Morel--Voevodsky \textit{stable motivic category} $\mathsf{SH}(k)$ consists of such $\PP^1$-spectra. It again has all limits and colimits, and finite limits canonically coincide with colimits. The category $\mathsf{H}(k)$ embeds into $\mathsf{SH}(k)$ by sending each motivic space $X$ to the spectrum $\Sigma^\infty X_+ := (\dots, \pt,\pt,X_+,\pt, \pt, \dots)$, where $X_+$ is in degree $0$. Each $\PP^1$-spectrum represents a bigraded cohomology theory on $\mathsf{Sm}_k$ via $\mathbb E^{p, q}(X)= \left[\Sigma^{p, q}\Sigma_{\PP^1}^{\infty} X_{+},  \mathbb E\right]_{\mathsf{SH}(k)}$. A spectrum $\mathbb{E}$ is a \textit{motivic ring spectrum} if the associated cohomology theory $\mathbb{E}^{\bullet, \bullet}(X)$ is a bigraded ring. The direction of $\PP^1$-suspension in the $(p,q)$-bigrading where $p = 2q$ is often referred to as the \textit{Chow line}. Restricting to this line, one obtains the associated singly-graded ring $A_{\mathbb{E}}^\bullet(X) := \mathbb{E}^{2\bullet, \bullet}(X)$. 

Let $p\colon X\to \Spec(k)$ be a separated morphism of finite type. Let $\V$ be a vector bundle, and denote its total space by $V$. Then, the \emph{Thom space} of $\V$ is the motivic space
\[
Th_X(V)=V/(V- X),
\]
is the cofiber of the inclusion $V-X \to V$. This definition can be extended to any class $v \in K_0(X)$. The \textit{relative purity theorem} of Morel--Voevodsky states that for a closed immersion of smooth schemes $i: Z\to X$, there is an equivalence $X /(X-Z) \simeq \operatorname{Th}(\N_{Z / X})$ in $\mathsf{SH}(k)$ \cite[Theorem 3.2.23]{MorelVoevodsky1999}. As a consequence, one obtains the localization sequence, which is standard.

\begin{theorem}[Localization sequence for cohomology theories in $\mathsf{SH}(k)$] \label{thm:localization-sequence}
Let $X\in \Sm_k$. Let $i: Z\to X$ be the closed immersion of a smooth subvariety of codimension $r$, and let $j: \, U = X-Z \to X$ be the open complement. Then, for any cohomology theory $A^\bullet$ represented in $\mathsf{SH}(k)$, there is an exact sequence \[A^{\bullet-r}(Z) \to A^\bullet(X) \to A^\bullet(U)\to 0.\]
\end{theorem}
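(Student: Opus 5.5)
The plan is to obtain the sequence from the cofiber sequence of pointed motivic spaces attached to the open--closed decomposition, then apply purity and the Thom isomorphism coming from the orientation. First, in $\mathsf{H}(k)$ the inclusion $j\colon U\to X$ gives a cofiber sequence
\[
U_+\longrightarrow X_+\longrightarrow X/(X-Z),
\]
which stays a cofiber sequence after applying $\Sigma^\infty_{\PP^1}$. Since $\mathsf{SH}(k)$ is stable, mapping into a spectrum $\mathbb E$ representing $A^\bullet$ yields a long exact sequence of bigraded groups
\[
\cdots\to \mathbb E^{p,q}\bigl(X/(X-Z)\bigr)\to \mathbb E^{p,q}(X)\xrightarrow{j^*} \mathbb E^{p,q}(U)\xrightarrow{\partial} \mathbb E^{p+1,q}\bigl(X/(X-Z)\bigr)\to\cdots.
\]

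Next, I invoke the relative purity theorem of Morel--Voevodsky, $X/(X-Z)\simeq \operatorname{Th}(\N_{Z/X})$, to identify the first term with $\mathbb E^{p,q}(\operatorname{Th}(\N_{Z/X}))$. The orientation of $\mathbb E$ gives Thom classes, and hence a Thom isomorphism
\[
\mathbb E^{p,q}(\operatorname{Th}(\N_{Z/X}))\cong \mathbb E^{p-2r,q-r}(Z).
\]
This follows from the projective bundle formula applied to $\PP(\N_{Z/X}\oplus\cO)$ and the cofiber sequence $\PP(\N_{Z/X})\to\PP(\N_{Z/X}\oplus\cO)\to \operatorname{Th}(\N_{Z/X})$. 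Restricting to the Chow line $p=2q$ gives the exact sequence
\[
A^{\bullet-r}(Z)\to A^\bullet(X)\to A^\bullet(U)\to \mathbb E^{2\bullet+1,\bullet}\bigl(\operatorname{Th}(\N_{Z/X})\bigr)\cong \mathbb E^{2(\bullet-r)+1,\bullet-r}(Z).
\]
The composite first map is the Gysin pushforward $i_*$, consistent with (FC).

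The main obstacle is surjectivity of $j^*$, which requires the group $\mathbb E^{2n+1-2r,n-r}(Z)$ (or at least the image of $\partial$) to vanish. In full generality this fails, so I would argue on the Chow line for the theories actually in play. For theories of the form $\Omega^\bullet\otimes_{\mathbb L}R$, and for $MGL$-modules via the Hopkins--Morel--Hoyois identification $MGL^{2n,n}(X)\cong\Omega^n(X)$, surjectivity is part of the localization sequence for Levine--Morel's Borel--Moore theory: every cobordism cycle on $U$ extends to $X$ by taking closures and resolving singularities in characteristic zero.

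Alternatively, I would use the vanishing $\mathbb E^{p,q}(Y)=0$ for $p>2q$ ... more precisely for $p>q+\dim Y$ when $\mathbb E$ is connective in the homotopy t-structure, as for $MGL$ or $H\ZZ$. Then $\mathbb E^{2m+1,m}(Z)=0$, since $2m+1>m+\dim Z$ would need $m\ge\dim Z$, whereas the Chow-line degree bound kills the remaining range; I would check this degree bookkeeping carefully. I would present the statement with this standing hypothesis made explicit, noting that the first two terms hold for every oriented $\mathbb E$.
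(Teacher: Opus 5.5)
Your route is the one the paper intends. The paper gives no argument beyond invoking Morel--Voevodsky relative purity $X/(X-Z)\simeq \operatorname{Th}(\mathcal{N}_{Z/X})$ and calling the sequence standard, which amounts to your cofiber sequence $U_+\to X_+\to X/(X-Z)$ followed by the Thom isomorphism. The identification with $A^{\bullet-r}(Z)$ already presupposes an orientation.

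You are right that the final ``$\to 0$'' does not follow formally, and the paper never addresses it. In fact it fails for a general oriented represented theory. Take the trivial square-zero extension $\mathbb{E}=H\mathbb{Z}\oplus H\mathbb{Z}[-1]$, where $[-1]$ is simplicial desuspension. It is oriented via $MGL\to H\mathbb{Z}\to \mathbb{E}$, and $A^1_{\mathbb{E}}(Y)=\operatorname{Pic}(Y)\oplus \mathcal{O}(Y)^\times$. For $X=\mathbb{A}^1$, $Z=\{0\}$, $U=\mathbb{G}_m$, the unit $t$ on $U$ does not extend to $X$, so $j^*$ is not onto. Restricting to an explicit class of theories, as you do, is therefore necessary and not a weakness of your plan. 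Note also that the paper uses this theorem only in the proof that disjoint divisors multiply to zero, and there only through $j^*\circ i_*=0$. That holds for every oriented $\mathbb{E}$ by your first step.

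Two corrections to your surjectivity discussion:

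\emph{The class of theories.} The example above is an $MGL$-module, even an $MGL$-algebra, so surjectivity cannot be claimed ``for $MGL$-modules'' in general. Restrict to two cases:
\begin{itemize}
\item $MGL$ itself, via Levine's comparison $MGL^{2n,n}\cong \Omega^n$ and Levine--Morel localization;
\item theories of the form $\Omega^\bullet\otimes_{\mathbb{L}}R$, or Landweber exact spectra, using right exactness of $\otimes$.
\end{itemize}

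\emph{The vanishing bound.} Your degree bookkeeping uses the wrong bound. The vanishing for $p>q+\dim Z$ only kills $\mathbb{E}^{2m+1,m}(Z)$ when $m\ge \dim Z$. What you need is $\mathbb{E}^{p,q}(Y)=0$ for $p>2q$, which handles every $m$ at once since $2m+1>2m$. This holds in the three cases you care about:
\begin{itemize}
\item For $H\mathbb{Z}$, it is $CH^q(Y,2q-p)=0$ when $2q-p<0$.
\item For $MGL$, it follows from the Hopkins--Morel--Hoyois slices $\Sigma^{2n,n}H\mathbb{Z}\otimes MU_{2n}$, each of which contributes $H^{p+2n,q+n}(Y)=0$.
\item For $KGL$, it is $K_{-1}(Y)=0$ for regular $Y$.
\end{itemize}
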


Let $Gr(n,\infty)=\operatorname*{colim}_{m\ge n} Gr(n,m)$ be the infinite Grassmannian viewed as a pointed motivic space. Let $MGL$ be the $\PP^1$-spectrum whose $n^\text{th}$ space is the Thom space of the universal rank $n$ bundle $\gamma_n$ on $Gr(n,\infty)$. Equivalently, one may write
\[
MGL\simeq \operatorname*{colim}_{n\ge 0}\,\Sigma^{2n,n}\,\Sigma^\infty_{\PP^1}Th(\gamma_n).
\]
In this context, one can define the notion of orientation on motivic spectra using the universality of cobordism (\Cref{thm:universality-of-cobordism}).

\begin{definition}[Oriented motivic ring spectra]
    A $\PP^1$-ring spectrum $\mathbb{E}\in \mathsf{SH}(k)$ is \textit{oriented} if it receives a morphism of motivic ring spectra $\vartheta: MGL \to \mathbb{E}$. 
\end{definition}

We note that both Panin \cite{Panin2003Oriented} and Déglise--Jin--Khan \cite[Definition 4.4.1]{DegliseJinKhanFundamentalClasses} gave axiomatic definitions of orientation in terms of Thom spaces, which turn out to be equivalent to the above universality statement. The various results of \cite{DegliseJinKhanFundamentalClasses} can be summarized as the following theorem.

\begin{theorem}[Representability of Oriented Cohomology Theories]
\label{thm:OCT-representability}
Let $\mathbb E$ be a motivic ring spectrum in $\SH(k)$. For $X\in \Sm_k$ and $n\in \mathbb{Z}$, set
\[
A_\mathbb E^n(X)=\Hom_{\SH(k)}\bigl(\Sigma^\infty_{\PP^1}X_+,\Sigma^{2n,n}\mathbb E\bigr).
\]
The following statements are equivalent.

\begin{enumerate}
    \item The motivic ring spectrum $\mathbb E$ admits an orientation.
    \item The functor $A_\mathbb E^\bullet$ is an oriented cohomology theory admitting the ``Gysin package.''
\end{enumerate}

\end{theorem}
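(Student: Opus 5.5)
The plan is to prove the two implications separately. Throughout, the orientation is taken in the Thom-class sense of Panin and Déglise--Jin--Khan, and it is matched to the definition ``$\mathbb E$ receives a ring map $MGL\to\mathbb E$'' by the Panin--Pimenov--R\"ondigs universality of $MGL$.

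For (1)$\Rightarrow$(2), start from a ring map $\vartheta\colon MGL\to\mathbb E$. Composing the tautological map $\Sigma^{-2,-1}\Sigma^\infty\PP^\infty\to MGL$ (coming from $Th(\gamma_1)\simeq \PP^\infty$) with $\vartheta$ yields a class $c\in \widetilde A^1_{\mathbb E}(\PP^\infty)$ restricting to the suspension of the unit on $\PP^1$.
\begin{enumerate}
\item Use $c$ to define $c_1^A$ of a line bundle $\L$ by pulling $c$ back along a classifying map $X\to\PP^\infty$.
\item Prove (PBF) by induction on rank, using the cofiber sequences of the standard stratification of $\PP(\E)$ together with Zariski Mayer--Vietoris; this also yields Chern classes and, by the splitting principle, the formal group law.
\item Obtain (EH) directly from $\AA^1$-invariance and Nisnevich descent in $\SH(k)$, since torsors are Zariski-locally trivial.
\item Define proper pushforwards via DJK. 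The orientation produces Thom isomorphisms $\mathbb E\otimes Th(\V)\simeq \Sigma^{2r,r}\mathbb E$, so DJK's fundamental classes $\eta_f\in \mathbb E(X/Y,\langle T_f\rangle)$ become classes in twisted degree $(2d,d)$, and the Gysin maps land in $A^\bullet$.
\item Quote DJK Thm.~3.3.2 and Prop.~3.3.4 for functoriality (A1) and (FC), their base change compatibility for (A2) and (RG), and Prop.~4.2.2 / Cor.~4.2.3 for excess intersection; note that transverse squares have zero excess bundle.
\item Get (PF) from the fact that $f_*$ is defined by cap product with $\eta_f$ and is therefore $A^\bullet(X)$-linear.
\end{enumerate}

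For (2)$\Rightarrow$(1), note that the oriented cohomology theory has Chern classes, so $c_1^A(\O(1))$ on $\PP^n$ forms a compatible system and defines $c\in A^1_{\mathbb E}(\PP^\infty)$ restricting to the unit suspension on $\PP^1$. This uses the Milnor $\lim^1$ sequence, whose $\lim^1$ term vanishes because the restriction maps are surjective by PBF. By Panin--Pimenov--R\"ondigs (equivalently Vezzosi's theorem that $MGL$ classifies such Chern orientations), such a $c$ corresponds to a homotopy-unique ring map $MGL\to\mathbb E$ in the homotopy category, which is exactly an orientation in the sense of the Definition.

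I expect the main obstacle to be step 4 of (1)$\Rightarrow$(2): checking that the Gysin maps built from DJK fundamental classes with the $MGL$-induced Thom isomorphisms agree with the Levine--Morel-style pushforwards. In particular, the degree shifts and the normalization $c_1^A(\L)=s^*s_*1$ for a zero section $s$ have to match, and the orientation-independence must work with the sign conventions of the Theorem. To handle this, I would reduce via deformation to the normal cone to the case of zero sections of vector bundles, where both constructions are the Thom class by definition.
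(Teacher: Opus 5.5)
Your route is the paper's. The paper gives no argument of its own: it cites D\'eglise--Jin--Khan for the Gysin package, and Panin and Panin--Pimenov--R\"ondigs for the equivalence between Thom-class orientations and ring maps $MGL\to\mathbb{E}$. Your (1)$\Rightarrow$(2) is a faithful unpacking of exactly those citations.

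The ``main obstacle'' you flag is smaller than you fear. In that direction you \emph{define} $f_*$ via DJK, so there is no second pushforward to compare against. The only compatibility needed is that $s^*s_*1$, which is an Euler class by DJK's self-intersection formula, equals the $c_1$ induced by $MGL\to\mathbb{E}$.

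Two steps of (2)$\Rightarrow$(1) are not right as written, although both are easy to repair.

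\emph{Normalization.} Nothing in the axioms forces $c_1^A(\mathcal{O}(1))|_{\mathbb{P}^1}$ to be the suspension $\Sigma 1$ of the unit. For $\mathbb{E}=H\mathbb{Z}$ you can replace every pushforward $f_*$ of relative dimension $d$ by $(-1)^d f_*$. This preserves (A1), (A2), (PF), (PBF), (EH) and the Gysin package, but replaces $c_1$ by $-c_1$. What the hypotheses do give is the following.
\begin{enumerate}
\item $\zeta=c_1^A(\mathcal{O}(1))$ restricts to $0$ at the base point, so it lies in $\widetilde{A}^{1}_{\mathbb{E}}(\mathbb{P}^1)$.
\item $\widetilde{A}^{\bullet}_{\mathbb{E}}(\mathbb{P}^1)$ is free of rank one over $A^\bullet_{\mathbb{E}}(\mathrm{pt})$ on $\Sigma 1$, so $\zeta=u\,\Sigma 1$ for some $u$.
\item Freeness of $\{1,\zeta\}$ in (PBF) forces $u\in\mathbb{E}^{0,0}(k)^\times$.
\end{enumerate}
It is therefore $u^{-1}c$, not $c$, that you should feed into the Panin--Pimenov--R\"ondigs universality theorem.

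\emph{The $\lim^1$ term.} In the Milnor sequence for $\mathbb{E}^{2,1}(\mathbb{P}^\infty)$ this term is $\lim^1_n\mathbb{E}^{1,1}(\mathbb{P}^n)$. That group lies off the Chow line and is not controlled by (PBF) for $A^\bullet_{\mathbb{E}}$, so your justification for its vanishing does not work. You do not need it, however. The Milnor sequence always gives surjectivity of $\mathbb{E}^{2,1}(\mathbb{P}^\infty)\to\lim_n\mathbb{E}^{2,1}(\mathbb{P}^n)$, and existence of a lift is all that producing an orientation requires.

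Note also that PPR's theorem needs $\mathbb{E}$ to be a commutative monoid in $\mathsf{SH}(k)$. That is how ``motivic ring spectrum'' must be read here.
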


\begin{example}[A few oriented motivic ring spectra]
    All oriented cohomology theories on $\Sm_k$ considered in this paper are represented by oriented motivic ring spectra in $\SH(k)$. We list some recurring examples below. 

    \begin{itemize}
        \item Chow theory is represented by the motivic Eilenberg--MacLane spectrum $H\mathbb{Z}$.
One convenient model is the $\PP^1$-spectrum formed by the motivic Eilenberg--MacLane spaces $K(\mathbb{Z}(n),2n)$, and one may write
\[
H\mathbb{Z}\simeq \operatorname*{colim}_{n\ge 0}\,\Sigma^{-2n,-n}\,\Sigma^\infty_{\PP^1}K(\mathbb{Z}(n),2n).
\]

\item Algebraic $K$-theory \footnote{Note that the presheaf of Quillen $K$ theory spectra on $\Sm_k$ is famously non-$\AA^1$-invariant in general.
Its $\AA^1$-localization is Weibel's \textit{homotopy invariant $K$-theory} $KH$.
The motivic spectrum $KGL$ represents $KH$, and for $X\in \Sm_k$ one has a canonical identification $KH_\bullet(X)\cong K_\bullet(X)$ because $X$ is regular.} is represented by the Bott periodic spectrum $KGL$.
It can be constructed from the motivic classifying space $\mathbb{Z}\times BGL$ by inverting the Bott element $\beta$, so that
\[
KGL\simeq \operatorname*{colim}_{n\ge 0}\,\Sigma^{-2n,-n}\,\Sigma^\infty_{\PP^1}(\mathbb{Z}\times BGL),
\]
where the transition maps are induced by multiplication by $\beta$. The $KGL^{2p,p}$-cohomology is sometimes referred to as periodic $K$-theory. 
\item Algebraic cobordism $\Omega^\bullet$ is represented by the universal oriented motivic ring spectrum $MGL$. 
    \end{itemize}
\end{example}

\subsection{Fundamental Classes}
In this section, we recall the definition of fundamental classes and record some of their useful properties. 
\begin{definition}[Fundamental class for a regular immersion]
Let $X\in \Sm_k$, and let $i\colon Z\hookrightarrow X$ be a regular closed immersion of codimension $d$ with $Z$ smooth.
The \textit{fundamental class} of $Z$ in $X$ is the $A^\bullet$-cohomology class
\[
[Z]_A \in A^d(X),
\qquad
[Z]_A := i_*1_Z,\footnote{\text{Since $A^\bullet(Z)$ is a unital ring, it has a unit which we denote by $1_Z$.}}
\]
where $i_*\colon A^\bullet(Z)\to A^{\bullet+d}(X)$ is the Gysin morphism attached to $i$.
\end{definition}

\begin{proposition}[The intersection product in $A^\bullet$-theory]\label{prop:intersection-product}
    Let $X\in \Sm_k$. Suppose $i: Z\to X$ and $j: W\to X$ are closed immersions of smooth subschemes of codimensions $c$ and $d$. Further assume $i, j$ are transverse and $Y := Z\times_X W$ is smooth of codimension $c+d$ in $X$. Then, the identity \[[Y]_A = [Z]_A\cdot [W]_A\]
    holds in $A^{c+d}(X)$.
\end{proposition}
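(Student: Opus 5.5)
The plan is to reduce the identity to the Excess Intersection Formula \eqref{eq:excess-intersection-formula} together with the projection formula (PF). The key observation is that the product of fundamental classes can be rewritten as a Gysin pushforward from $Z$ of a pullback:
\[
[Z]_A\cdot[W]_A \;=\; i_*1_Z\cdot j_*1_W \;=\; j_*\bigl(j^*i_*1_Z\cdot 1_W\bigr)\;=\; j_*\bigl(j^*i_*1_Z\bigr),
\]
using (PF) for the projective morphism $j$ (a closed immersion), with $\alpha=i_*1_Z\in A^c(X)$ and $\beta=1_W$. So it suffices to compute $j^*[Z]_A\in A^c(W)$.

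Next I would apply (RG) to the cartesian square
\[
\begin{tikzcd}
Y \ar[r,"i'"] \ar[d,"j'"'] & W \ar[d,"j"]\\
Z \ar[r,"i"'] & X,
\end{tikzcd}
\]
where $i$ is a regular immersion of codimension $c$. Then $j^* i_* 1_Z = i'_*\bigl(c^A_{\mathrm{top}}(\Q)\cdot j'^*1_Z\bigr)$, where $\Q$ is the cokernel of $\N_{Y/W}\to j'^*\N_{Z/X}$. The transversality hypothesis, together with $Y$ smooth of codimension $c+d$ in $X$ (hence of codimension $c$ in $W$), is used exactly here. In that situation the natural map $\N_{Y/W}\to j'^*\N_{Z/X}$ is a surjection of vector bundles of the same rank $c$, hence an isomorphism, so $\Q=0$. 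I would verify this by the standard conormal sequence argument: locally $Z$ is cut out by a regular sequence $f_1,\dots,f_c$, transversality (Tor-vanishing) makes the restrictions $f_k|_W$ a regular sequence cutting out $Y$, and so $\mathcal I_Y/\mathcal I_Y^2 \cong j'^*(\mathcal I_Z/\mathcal I_Z^2)$. With $\Q=0$ we have $c_{\mathrm{top}}^A(\Q)=c_0^A=1$, and so $j^*[Z]_A = i'_*1_Y=[Y\subset W]_A$.

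Finally,
\[
[Z]_A\cdot[W]_A = j_*i'_*1_Y = (j\circ i')_*1_Y = [Y]_A,
\]
by functoriality of Gysin pushforwards (A1)/(FC), since $j\circ i'$ is the inclusion $Y\hookrightarrow X$. Alternatively, one can bypass the refined Gysin map entirely, because $j$ and $i$ are transverse in the sense of (A2): transverse base change directly gives $j^*i_* = i'_*j'^*$. I would present this as the primary route, with (RG) as a consistency check.

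The main obstacle is identifying the refined pullback in (RG), or the pullback in (A2), with the ordinary ring pullback $j^*$ used in (PF), and confirming that the (A2) hypotheses apply to the regular immersion $i$ with its Gysin pushforward. The latter is compatible with the projective pushforward, since $i$ is projective. This compatibility is where the Déglise--Jin--Khan package is invoked.
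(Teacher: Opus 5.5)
Your proof is correct and is essentially the paper's argument with the roles of $Z$ and $W$ interchanged: the paper applies (PF) along $i$ and uses transverse base change (A2) to get $i^*[W]_A=p_*1_Y$, while you apply (PF) along $j$ and use (A2) to get $j^*[Z]_A=i'_*1_Y$, and both arguments conclude by functoriality of pushforward. The (RG) consistency check is not needed, since the (A2) route alone suffices; note also that in that check the map $\mathcal{N}_{Y/W}\to j'^*\mathcal{N}_{Z/X}$ is injective, not surjective, and it is the conormal map that is surjective.
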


\begin{proof}
    Consider the Cartesian square \[ \begin{tikzcd} Y\ar[r,"q"] \ar[d,"p"'] & W \ar[d,"j"]\\ Z \ar[r,"i"'] & X . \end{tikzcd}\]
    By transverse base change (A2), we have $i^*\circ j_* = p_* \circ q^*$. Applying to $1_W \in A^0(W)$ yields \[
       i^*[W]_A  =  i^*(j_*1_W) = p_*(q^*1_W) = p_*1_Y \in A^d(Z).
    \]
    By the projection formula (PF), \[[Z]_A\cdot [W]_A = i_*1_Z \, \cdot [W]_A = i_*(1_Z \cdot i^*[W]_A) = i_*(p_*1_Y) = [Y]_A\in A^{c+d}(X).\]
\end{proof}

\begin{remark}
   The above proposition requires the maps $i$ and $j$ to be transverse. Non-transverse intersections prompt the use of the Excess Intersection Formula (RG). 
\end{remark}

\begin{proposition}[Fundamental class of a divisor]\label{funclass-divisor}
    Let $X\in \Sm_k$ and let $i\colon D\hookrightarrow X$ be a smooth divisor. Then,
\[
[D]_A = c_1^A(\O_X(D))\in A^1(X).
\]
\end{proposition}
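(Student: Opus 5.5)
The plan is to compare the two sides using the self-intersection formula \eqref{eq:self-intersection-formula} together with the structure of the Gysin morphism for a divisor. Set $\L := \O_X(D)$. Since $D$ is the zero locus of the canonical section $s\in H^0(X,\L)$, the normal bundle is $\N_{D/X}\cong \L|_D = i^*\L$. The key input is that, for an oriented spectrum, the fundamental class $[D]_A=i_*1_D$ is the image of the Thom class of $\N_{D/X}$ under the purity map $Th(\N_{D/X})\simeq X/(X-D)\to X$. The Thom class of a line bundle restricts along the zero section to its Euler class, which is $c_1^A$ of that bundle.

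\textbf{Step 1: reduce to the universal case.} A line bundle $\L$ with section $s$ is pulled back from the total space $V$ of $\L$ along $s\colon X\to V$. Under this pullback, the zero section $X_0\subset V$ pulls back to $D$, and $s$ is transverse to $X_0$ because $D=Z(s)$ is a smooth divisor of the expected codimension $1$. By transverse base change (A2) applied to the Cartesian square $D\to X_0$ over $s\colon X\to V$, we get
\[
s^*[X_0]_A=[D]_A .
\]
By naturality of Chern classes, $s^*c_1^A(\pi_V^*\L)=c_1^A(\L)$. So it suffices to prove $[X_0]_A=c_1^A(\pi_V^*\L)$ in $A^1(V)$.

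\textbf{Step 2: the zero section.} By (EH), $s_0^*\colon A^\bullet(V)\to A^\bullet(X)$ is an isomorphism, so it is enough to compare the two classes after applying $s_0^*$. For the left side, the self-intersection formula gives
\[
s_0^*[X_0]_A=s_0^*s_{0*}1=c_1^A(\N_{X_0/V})=c_1^A(\L).
\]
For the right side, $s_0^*c_1^A(\pi_V^*\L)=c_1^A(\L)$ by naturality. The two classes therefore agree, and combining with Step 1 gives $[D]_A=c_1^A(\O_X(D))$.

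\textbf{Main obstacle.} The substantive point is that the self-intersection formula, as quoted from Déglise--Jin--Khan, uses exactly the Chern class normalization $c_{\mathrm{top}}^A$ compatible with the orientation, i.e. the Euler class of the Thom class agrees with the $c_1^A$ defined through (PBF). I would check this compatibility first, via the standard identification in \cite[Definition 4.4.1]{DegliseJinKhanFundamentalClasses} of orientations with Chern classes; the rest is formal.

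Alternatively, without the homotopy trick, one can apply $i^*$ to both sides. This gives $i^*i_*1=c_1^A(\L|_D)=i^*c_1^A(\L)$, but injectivity of $i^*$ fails in general. That is why I route the argument through the deformation to the zero section.
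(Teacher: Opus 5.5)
Your proof is correct and is essentially the paper's argument: both pass to the total space of $\mathcal{O}_X(D)$, apply transverse base change to the square formed by the zero section $z$ and the canonical section $s$ (whose fiber product is $D$), use (EH) to trade $s^*$ for $z^*$, and finish with the self-intersection formula, using that the normal bundle of either section is $\mathcal{O}_X(D)$. The only difference is cosmetic: the paper writes the base change as $z^*s_*=i_*i^*$ and self-intersects $s$, whereas you write it as $s^*z_*1=i_*1_D$ and self-intersect the zero section, which is the same argument with the roles of the two sections swapped.
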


\begin{proof}
    Let $\mathcal{L} := \O_X(D)$, and let $p: L \to X$ be the total space of the line bundle $\mathcal{L}$.  Let $z: X\to L$ be the zero section, and let $s: X \to L$ be the section induced by $D$. Then, 
\[
\begin{tikzcd}
D \ar[r, "i"] & X \ar[r, shift left, "s"] \ar[r, shift right, "z"'] & L
\end{tikzcd}
\]
is a coequalizer diagram, which is in particular cartesian. Furthermore, $s$ and $z$ are transverse because $D$ is smooth. By transverse base change (A2), we have $z^*s_* = i_*i^*$. However, both $z$ and $s$ are sections to $p$, so we have $p\circ z = \operatorname{id}_X = p\circ s$. Furthermore, $p^*$ is an isomorphism by the extended homotopy property (EH), so we conclude that $z^* = s^*$. The transverse base change formula then simplifies to $s^*s_* = i_*i^*$. Applying both sides on the element $1_D$ and using the Self Intersection Formula, we obtain the equality \[[D]_A = i_*1_D = i_*i^*1_X = s^*s_*1_X = c_1^A(\mathcal{L}) \in A^1(X).\]
\end{proof}

\begin{remark}
    We see from the above proposition that divisor classes for any $A^\bullet$-theory are invariant under linear equivalence. 
\end{remark}

\begin{proposition}[Fundamental class of a sum of divisors]
\label{prop:divisor-sum-fgl}
Let $X\in \Sm_k$ and let $D_1$ and $D_2$ be effective Cartier divisors on $X$.
Then,
\[
[D_1+D_2]_A=[D_1]_A\Fplus[D_2]_A\in A^1(X),
\]
where $F$ is the formal group law of $A^\bullet$.
\end{proposition}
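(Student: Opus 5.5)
The plan is to reduce the statement to the formal group law for tensor products of line bundles, once we know what $[D]_A$ means for an effective Cartier divisor that need not be smooth. For a smooth divisor, \Cref{funclass-divisor} already gives $[D]_A=c_1^A(\O_X(D))$. For a general effective Cartier divisor $D\hookrightarrow X$, the inclusion is a regular immersion of codimension one, so (FC) supplies a Gysin map. The natural reading of $[D]_A$ is then the fundamental class $\eta_D$ of this lci inclusion, pushed into $A^1(X)$.

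The first step is to prove $[D]_A=c_1^A(\O_X(D))$ for an arbitrary effective Cartier divisor. I would run the argument of \Cref{funclass-divisor}, now with refined Gysin pullbacks in place of transverse base change. Write $L$ for the total space of $\L=\O_X(D)$, $z$ for its zero section and $s$ for the tautological section cutting out $D$, so that $D=X\times_{s,L,z}X$ is cartesian.
\begin{itemize}
\item Since $z$ is a regular immersion of codimension $1$, (RG) gives a refined Gysin pullback along $s$, and the excess bundle is zero because $\N_{D/X}=\L|_D$.
\item Hence $s^*z_*1=[D]_A$, in the sense of the Déglise--Jin--Khan fundamental class of the lci map.
\item By (EH), $s^*=z^*$, since both sections have the common retraction $p$.
\item The Self Intersection Formula \eqref{eq:self-intersection-formula} then gives $z^*z_*1=c_1^A(\L)$.
\end{itemize}
Comparing the two computations yields $[D]_A=c_1^A(\O_X(D))$.

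The second step is immediate. Since $\O_X(D_1+D_2)\cong\O_X(D_1)\otimes\O_X(D_2)$, the formal group law axiom for Chern classes of tensor products gives
\[
[D_1+D_2]_A=c_1^A(\O_X(D_1)\otimes\O_X(D_2))=F_A\bigl(c_1^A(\O_X(D_1)),c_1^A(\O_X(D_2))\bigr)=[D_1]_A\Fplus[D_2]_A.
\]

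The main obstacle is the first step: justifying that the refined Gysin formalism applies when $D$ is not smooth. The diagram then leaves $\Sm_k$, while (RG) was only stated for squares of smooth schemes. I would handle this by taking the definition of $[D]_A$ for singular $D$ to be the Déglise--Jin--Khan fundamental class in $\SH(k)$. In their framework, $\eta_s$ for a section of a line bundle equals the Euler class $e(\L)=c_1^A(\L)$, which is the compatibility of fundamental classes with base change along $z$, \cite[Prop.~3.3.4]{DegliseJinKhanFundamentalClasses}. If instead one only wants the statement when $D_1$, $D_2$ and $D_1+D_2$ have smooth supports, \Cref{funclass-divisor} plus the tensor-product step already suffice.
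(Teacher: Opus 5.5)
Your proof is correct and has the same skeleton as the paper's. The paper writes $\mathcal{O}_X(D_1+D_2)\cong\mathcal{O}_X(D_1)\otimes\mathcal{O}_X(D_2)$, applies the formal group law, and cites \Cref{funclass-divisor} to read each $c_1^A$ as a fundamental class.

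The genuine difference is your first step. \Cref{funclass-divisor} is only proved for smooth divisors, but the statement allows arbitrary effective Cartier $D_1,D_2$. Even when both are smooth, $D_1+D_2$ is singular along $D_1\cap D_2$ whenever that intersection is nonempty, since its local equation $f_1f_2$ lies in the square of the maximal ideal. This is exactly the situation used later for the normal crossings sums $\sum_S D^S$ in the fundamental relations on $\Mn$. So the paper's one-line citation either leaves a gap or tacitly defines $[D]_A:=c_1^A(\mathcal{O}_X(D))$ for singular $D$.

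Your step supplies the missing justification. You base change the zero section $z$ along the tautological section $s$. This square is Tor-independent because the local equation of $D$ is a nonzerodivisor, so the excess bundle vanishes. You carry this out with Déglise--Jin--Khan fundamental classes in $\SH(k)$, where singular $D$ is allowed. The paper's route buys only brevity.

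One correction to your closing fallback: ``smooth supports'' is not enough. For $D_1=D_2$ smooth, $2D_1$ has smooth support but is non-reduced, so \Cref{funclass-divisor} does not apply to it. Indeed $[2D_1]_A=[2]_F[D_1]_A$, which is not the class of the support. If you instead require $D_1$, $D_2$, $D_1+D_2$ to be smooth as schemes, then $D_1\cap D_2=\varnothing$. That is a degenerate case which does not cover the applications.
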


\begin{proof}
One has $\mathcal{O}_X(D_1+D_2)\simeq \mathcal{O}_X(D_1)\otimes \mathcal{O}_X(D_2)$.
The defining property of the formal group law gives
\[
c_1^A(\mathcal{O}_X(D_1+D_2))
=
c_1^A(\mathcal{O}_X(D_1))\;\Fplus  \;c_1^A(\mathcal{O}_X(D_2)).
\]
By \Cref{funclass-divisor}, we have $[D]_A=c_1^A(\mathcal{O}_X(D))$, which completes the proof.
\end{proof}
The above proposition for algebraic cobordism is recorded in \cite[Example 3.1.1]{LevineMorel}.

\begin{proposition}[Product of disjoint divisors]
\label{prop:disjoint-divisors-zero}
Let $X\in \Sm_k$ and let $D_1$ and $D_2$ be smooth divisors with $D_1\cap D_2=\varnothing$.
Then, $[D_1]_A\cdot [D_2]_A = 0$ in $A^2(X)$. 
\end{proposition}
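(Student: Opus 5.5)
Our plan is to rewrite the product of the two fundamental classes as a pushforward from $D_1$ and then observe that the class being pushed forward lives on the empty scheme. Write $i_1\colon D_1\hookrightarrow X$ and $i_2\colon D_2\hookrightarrow X$ for the inclusions, so that $[D_1]_A = i_{1*}1_{D_1}$ and $[D_2]_A = i_{2*}1_{D_2}$. By the projection formula (PF),
\[
[D_1]_A\cdot [D_2]_A \;=\; i_{1*}\bigl(1_{D_1}\cdot i_1^*[D_2]_A\bigr) \;=\; i_{1*}\bigl(i_1^* i_{2*}1_{D_2}\bigr).
\]
It is therefore enough to show that $i_1^* i_{2*}1_{D_2}=0$ in $A^1(D_1)$.

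For this we use the cartesian square formed by $i_1$ and $i_2$. Its fiber product is $D_1\times_X D_2 = D_1\cap D_2=\varnothing$. The maps $i_1$ and $i_2$ are transverse: the Tor sheaves $\operatorname{Tor}_q^{\O_X}(\O_{D_1},\O_{D_2})$ are supported on $D_1\cap D_2$, which is empty, so they vanish. Transverse base change (A2) then gives
\[
i_1^*\, i_{2*} \;=\; f'_*\, g'^*,
\]
where $g'\colon\varnothing\to D_2$ and $f'\colon \varnothing\to D_1$. This composite factors through $A^\bullet(\varnothing)$.

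It remains to see that $A^\bullet(\varnothing)=0$. This follows from additivity: the decomposition $\varnothing=\varnothing\amalg\varnothing$ gives $A(\varnothing)\cong A(\varnothing)\oplus A(\varnothing)$, with the diagonal map induced by the fold map. Hence $A(\varnothing)$ is the zero ring; in the representable setting this is also immediate, since $\Sigma^\infty\varnothing_+=0$. Alternatively, one can bypass (A2) and apply the excess intersection formula (RG) to the same square, whose upper-left corner is empty. In either case $i_1^*[D_2]_A=0$, and therefore $[D_1]_A\cdot[D_2]_A=0$.

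The only point needing care is the transversality and base-change step, that is, checking that (A2) legitimately applies when the fiber product is empty. The Tor-vanishing argument above handles this, and the rest is formal. Another route avoids base change entirely. By \Cref{funclass-divisor}, $[D_2]_A=c_1^A(\O_X(D_2))$, and naturality of Chern classes gives $i_1^*[D_2]_A=c_1^A(\O_X(D_2)|_{D_1})$. Since $D_1\cap D_2=\varnothing$, the restriction $\O_X(D_2)|_{D_1}$ is trivial, so this class is $c_1^A(\O_{D_1})=0$ by the normalization axiom. We would present this Chern-class argument as the main proof, because it uses only results already established in the paper, and mention the base-change argument as an alternative.
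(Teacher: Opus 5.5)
Your proof is correct, but its key step differs from the paper's. Both arguments use the projection formula to reduce to showing that one divisor class restricts to zero on the other divisor. You restrict $[D_2]_A$ to $D_1$, while the paper restricts $[D_1]_A$ to $D_2$; this difference is only a symmetric relabeling.

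The paper then kills the restriction with the localization sequence (\Cref{thm:localization-sequence}). There $[D_1]_A=i_{1,*}1_{D_1}$ maps to zero in $A^1(X-D_1)$, and $i_2$ factors through $X-D_1$.

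Your main argument instead identifies $[D_2]_A=c_1^A(\mathcal{O}_X(D_2))$ via \Cref{funclass-divisor}. You then use naturality and normalization, together with the fact that the canonical section of $\mathcal{O}_X(D_2)$ trivializes it on $D_1$. This trades the localization sequence, which the paper only states for theories represented in $\mathsf{SH}(k)$, for the divisor-class identity. The cost is that your route is tied to codimension one, since it needs the fundamental class to be a first Chern class.

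The paper's localization argument never uses that $D_1,D_2$ are divisors. It therefore gives $[Z_1]_A\cdot[Z_2]_A=0$ for any disjoint smooth closed subschemes. Your base-change alternative has the same generality and needs only (A2) and $A^\bullet(\varnothing)=0$. You derive the latter correctly from additivity, since the diagonal $A^\bullet(\varnothing)\to A^\bullet(\varnothing)\oplus A^\bullet(\varnothing)$ being an isomorphism forces the ring to vanish. In that sense it is the most elementary of the three arguments.
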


\begin{proof}
Let $i_1\colon D_1\to X$ and $i_2\colon D_2\to X$ be the inclusions, and let $j\colon \, U=X- D_1\to X$ be the open complement. Since $D_2\cap D_1=\varnothing$, the map $i_2$ factors as $i_2=j\circ \widehat{i}_2$ for a unique $\widehat{i}_2\colon D_2\to U$.

By the projection formula for $i_{2,*}$,
\[
[D_1]_A\cdot [D_2]_A=[D_1]_A\cdot i_{2,*}(1)=i_{2,*}\bigl(i_2^\ast[D_1]_A\bigr),
\]
so it suffices to show that $i_2^\ast([D_1]_A)=0$ in $A^1(D_2)$.

The localization sequence \Cref{thm:localization-sequence} yields \[A^0(D_1) \xrightarrow{i_{1,*}} A^1(X) \xrightarrow{j^*} A^1(U) \to 0.\]

It follows that $j^\ast([D_1]_A) = j^*(i_{1,*}1_{D_1})=0$ by exactness at $A^1(X)$. Then
\[
i_2^\ast([D_1]_A)=\widehat{i}_2^\ast\bigl(j^\ast([D_1]_A)\bigr)=0,
\]
and hence $[D_1]_A\cdot [D_2]_A=i_{2,*}(0)=0$ in $A^2(X)$.
\end{proof}

\paragraph{Residue formula.} We now review the \textit{invariant differential} and Quillen's \textit{residue formula} for computing the proper pushforward along a projective bundle. This formula is of crucial importance for computing fundamental classes of varieties embedded in projective spaces (see for example \Cref{prop:fundamental-class-of-veronese}). Quillen's original proof was for complex cobordism in the topological category, but much of the proof translates immediately to the algebraic setting. The appendix in Vishik \cite{Vishik2007OperationsCobordism} contains a nice exposition of these concepts, culminating in Theorem 5.35 which has Quillen's residue formula for algebraic cobordism. Again, let $R$ be a commutative ring and let $F(x,y)\in R\llbracket x,y\rrbracket$ be a one-dimensional commutative formal group law. Write $\partial_2F(x,y)$ for the formal partial derivative of $F$ with respect to the second variable. Let $\operatorname{Res}$ be the usual formal residue operator on Laurent series. For example, if $\alpha(t)\,dt=\sum_{n\geq -N} a_n t^n\,dt$ is a formal Laurent series, then
\[
\operatorname{Res}_{t=0}\bigl(\alpha(t)\,dt\bigr)=a_{-1}.
\]

\begin{definition}[Invariant differential]
\label{def:invariant-differential}
The invariant differential of $F$ is the formal power series
\[
\omega_F(t)= \sum_{r = 0}^\infty \omega_rt^r\; dt :=\frac{dt}{\partial_2F|_{(t,0)}} \in R\llbracket t\rrbracket\,dt.
\]
\end{definition}

If we think of the formal group law as the addition rule in a formal Lie algebra, then its invariant differential can be thought of as the \textit{Haar measure} of the corresponding ``Lie group.''

\begin{example}
The Chow ring satisfies the additive group law
\[
F_{CH}(u,v)=u+v,
\]
so $\partial_2F_{CH}(t,0)=1$. Hence, the invariant differential is simply
\(
\omega_F(t)\,dt=dt.
\)
\end{example}

\begin{example}
Algebraic $K$-theory satisfies the multiplicative group law 
\[
F_K(u,v)=u+v-\beta uv.
\]
We have
\[
\partial_2F_K|_{(t,0)}=\frac{\partial}{\partial v}\bigl(t+v-\beta tv\bigr)\Big|_{v=0}=1-\beta t,
\]
so the invariant differential is
\[
\omega_{F_K}(t)\,dt=\frac{dt}{1-\beta t},
\qquad\text{i.e.}\qquad
\omega_{F_K}(t)=\frac{1}{1-\beta t}=\sum_{r\ge 0}\beta^r t^r.
\]
Thus the coefficients are
\[
\omega_r=\beta^r \in \ZZ[\beta, \beta^{-1}].
\]
\end{example}

\begin{example}
Algebraic cobordism satisfies the universal group law
\[
F(u,v)=u+v+\sum_{i,j\ge 1} a_{ij}u^iv^j,
\qquad a_{ij}\in \mathbb{L}^{\,1-i-j}.
\]
Differentiating in the second variable and setting $v=0$ kills all terms with $j\neq 1$, so
\[
\partial_2F|_{(t,0)}=1+\sum_{i\ge 1} a_{i1}t^i
=1+a_{11}t+a_{21}t^2+a_{31}t^3+\cdots.
\]
Therefore,
\[
\omega_F(t)\,dt=\sum_{r\ge 0}\omega_r t^r dt=\bigl(\partial_2F|_{(t,0)}\bigr)^{-1}dt.
\]
Equating coefficients in
$
\Bigl(1+\sum_{i\ge 1}a_{i1}t^i\Bigr)\Bigl(\sum_{r\ge 0}\omega_r t^r\Bigr)=1
$
gives the recursion
\[
\omega_0=1,\qquad
\omega_r=-\sum_{i=1}^r a_{i1}\,\omega_{r-i}\quad(r\ge 1),
\]
and in particular the first few terms are
\[
\omega_1=-a_{11},\qquad
\omega_2=a_{11}^2-a_{21},\qquad
\omega_3=-a_{31}+2a_{11}a_{21}-a_{11}^3.
\]
Any oriented theory $A^\bullet$ with formal group law $F_A$ is obtained from $\Omega^\bullet$ by a universal morphism, and the same formulas for $\omega_r$ hold after applying the induced ring map $\mathbb{L}\to A^\bullet(\Spec k)$.
\end{example}

\begin{theorem}[Quillen's Residue Formula, {\cite[Theorem 1]{Quillen1969ComplexCobordism}}]
\label{thm:quillen-residue-formula}
Let $A^\bullet$ be an oriented cohomology theory characterized by the formal group law $F_A$.
Let $\E$ be a rank $n$ vector bundle on a smooth scheme $X$. Let $\pi\colon \mathbb{P}(\E)\to X$ be the projective bundle, and let $\xi=c_1^A(\mathcal{O}_{\mathbb{P}(\E)}(1))$ be the hyperplane class.
Let $\alpha_1,\dots,\alpha_n$ denote the Chern roots of $E$, so that $c(\E)=\prod_{i=1}^n (1+ \alpha_i)$.
Then for every power series $\varphi(t)\in A^\bullet(X)\llbracket t\rrbracket$, one has
\[
\pi_\ast\bigl(\varphi(\xi)\bigr)
=
\operatorname{Res}_{t=0}
\frac{\varphi(t)\,\omega_{F_A}(t)}
{\prod_{i=1}^n \bigl(t -_F \alpha_i\bigr)}.
\]
\end{theorem}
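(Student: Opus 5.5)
The plan is to reduce to a universal situation where a Lagrange-interpolation argument works, and then to check the formula on a spanning set of power series.

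\textbf{Linearity and splitting.} Both sides are $A^\bullet(X)$-linear in $\varphi$: the left side by the projection formula (PF), the right side because $\operatorname{Res}$ is linear over the coefficients. By the dimension/nilpotence of $\xi$ (it satisfies the monic relation of \Cref{thm:projective-bundle-formula}), only finitely many monomials matter. So it suffices to treat $\varphi(t)=t^k$ for all $k\ge 0$.

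Next I apply the splitting principle. Pulling back along the complete flag bundle $\mathrm{Fl}(\E)\to X$ is injective by iterated (PBF), and it commutes with $\pi_*$ by transverse base change (A2). Hence I may assume $\E=\L_1\oplus\cdots\oplus\L_n$, with $\alpha_i=c_1^A(\L_i)$.

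\textbf{The two basic identities.} For each $i$, let $s_i\colon X\to\PP(\E)$ be the section given by $\PP(\L_i)$. The hyperplane subbundles $\PP(\bigoplus_{j\ne k}\L_j)$ are zero loci of transverse sections of $\O(1)\otimes\L_k^{\vee}$. With the paper's sign conventions, this line bundle has first Chern class $\xi\Fminus\alpha_k$. By \Cref{funclass-divisor} and \Cref{prop:intersection-product},
\[
\prod_{j\neq i}(\xi\Fminus\alpha_j)=[\PP(\L_i)]_A=s_{i*}1 .
\]
Since $s_i^*\xi=\alpha_i$, (PF) gives
\[
\pi_*\Bigl(\psi(\xi)\prod_{j\ne i}(\xi\Fminus\alpha_j)\Bigr)=\psi(\alpha_i).
\]
Also $f(\xi):=\prod_j(\xi\Fminus\alpha_j)=0$, because the $n$ divisors have empty common intersection.

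On the residue side, the term $\psi(t)\prod_{j\ne i}(t\Fminus\alpha_j)$ gives
\[
\operatorname{Res}\frac{\psi(t)\,\omega(t)\,dt}{t\Fminus\alpha_i}=\psi(\alpha_i).
\]
This holds because $t\Fminus\alpha_i=(t-\alpha_i)\,u(t)$ with $u(\alpha_i)=\partial_1F(\alpha_i,\chi(\alpha_i))=1/\omega(\alpha_i)$, which is the defining property of the invariant differential (\Cref{def:invariant-differential}), obtained by differentiating $F(F(t,y),\chi(y))=t$. Meanwhile $f(t)\psi(t)$ gives a residue of a power series, namely $0$. So both sides agree on the ideal generated by the $\prod_{j\ne i}(t\Fminus\alpha_j)$ and on multiples of $f$.

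\textbf{Main obstacle: universal case.} It remains to know that these elements span, which requires $\alpha_i\Fminus\alpha_j$ to be invertible. That fails in general, for example whenever the classes are nilpotent. I would handle this by passing to a universal case. Take $A=\Omega^\bullet$ and $X=(\PP^N)^n$ with $\L_i=\O(1)$ from the $i$-th factor, for $N\gg0$. Both sides are then elements of $\mathbb L[x_1,\dots,x_n]/(x_i^{N+1})$, computed compatibly as $N\to\infty$, so they define elements of $\mathbb L\llbracket x_1,\dots,x_n\rrbracket$. This ring is a domain since $\mathbb L$ is a polynomial ring, and it injects into its localization at the elements $x_i\Fminus x_j$. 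In that localization the interpolation formula
\[
t^k\equiv\sum_i \alpha_i^k\prod_{j\neq i}\frac{t\Fminus\alpha_j}{\alpha_i\Fminus\alpha_j}\pmod{f}
\]
holds. This follows from the division algorithm for the monic-up-to-unit $f$, using Weierstrass preparation in $t$. Hence both sides agree universally.

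General $(X,\E,A)$ follows by naturality. A split bundle is pulled back from the universal one by a classifying map to $(\PP^N)^n$ after twisting by globally generated bundles (Jouanolou's trick and the extended homotopy property (EH) handle affineness). Universality (\Cref{thm:universality-of-cobordism}) transports the identity from $\Omega^\bullet$ to $A^\bullet$, compatibly with $\pi_*$ and $F_A$. The step I expect to need the most care is this final transport: checking that pushforwards along $\PP(\E)$ and the limit over $N$ commute with the universal morphism, which requires $\operatorname{char}k=0$ for Levine--Morel's theorem.
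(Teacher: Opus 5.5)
The paper gives no proof of this statement; it only cites Quillen and Vishik's appendix. Your outline is essentially Quillen's original argument: the splitting principle, the sections $s_i$ cut out by transverse hyperplane subbundles, a universal base on which the differences $x_i -_F x_j$ are nonzerodivisors, and Lagrange interpolation modulo $f$ via Weierstrass division. The approach is sound, but three steps need repair.

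\emph{The local residue.} With the paper's normalization $\omega(t)=1/\partial_2F(t,0)$, differentiate $F(F(t,y),\chi(y))=t$ in $t$ and set $t=0$. This gives $\partial_1F(y,\chi(y))\cdot\partial_2F(y,0)=1$, so $u(\alpha_i)=\omega(\alpha_i)$, not $1/\omega(\alpha_i)$. It is this value that yields $\operatorname{Res}\,\psi\omega/\bigl((t-\alpha_i)u\bigr)=\psi(\alpha_i)\omega(\alpha_i)/u(\alpha_i)=\psi(\alpha_i)$. With the value you wrote, you would get $\psi(\alpha_i)\omega(\alpha_i)^2$.

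\emph{Splitting.} The complete flag bundle only produces a filtration with line-bundle subquotients. Your sections $s_i$ and the hyperplanes $\mathbb{P}(\bigoplus_{j\neq k}\mathcal{L}_j)$ need an honest direct sum. So you must also pull back along the tower of vector-bundle torsors parametrizing splittings, which is harmless by (EH) and transverse base change.

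\emph{Twisting.} Replacing $\mathcal{E}$ by $\mathcal{E}\otimes\mathcal{M}$ replaces $\xi$ and every $\alpha_i$ by $\xi+_F\mu$ and $\alpha_i+_F\mu$, where $\mu=c_1^A(\mathcal{M})$. Returning to $\mathcal{E}$ would then require the invariance of $\omega(t)\,dt$ and of $\operatorname{Res}$ under $t\mapsto t+_F\mu$, which you do not prove. Drop the twist instead: after splitting, Jouanolou's device alone suffices, since on an affine scheme every line bundle is globally generated and hence pulled back from $\mathcal{O}(1)$ on some $\mathbb{P}^N$.

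\emph{Conventions.} Your identities $s_i^*\xi=\alpha_i$ and $[\mathbb{P}(\bigoplus_{j\ne k}\mathcal{L}_j)]_A=\xi-_F\alpha_k$ hold when $\mathbb{P}(\mathcal{E})$ parametrizes rank-one quotients. That is exactly the convention in which the displayed formula is true (test $\mathcal{E}=\mathcal{L}$). The paper's sign convention in its projective bundle formula, $\zeta^r+c_1\zeta^{r-1}+\cdots$, is the one for lines, where the same argument gives denominators $t+_F\alpha_i$. So you are not using ``the paper's conventions'', though you are using the right ones for this statement.

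\emph{The detour through $\Omega^\bullet$ is unnecessary.} You do not need Levine--Morel universality, and hence you do not need $\operatorname{char}k=0$. In $R\llbracket x_1,\dots,x_n\rrbracket$, for any ring $R$, one has $x_i-_Fx_j=(x_i-x_j)\cdot(\text{unit})$, and $\Delta=\prod_{i<j}(x_i-x_j)$ is a nonzerodivisor. Multiplying your interpolation identity by $\Delta$ clears all denominators. The Vandermonde argument then kills the Weierstrass remainder, giving $\Delta\cdot(\mathrm{LHS}-\mathrm{RHS})=0$, hence $\mathrm{LHS}=\mathrm{RHS}$. This holds directly in $\varprojlim_N A^\bullet((\mathbb{P}^N)^n)=A^\bullet(\operatorname{pt})\llbracket x_1,\dots,x_n\rrbracket$, for an arbitrary theory and base field, with no domain hypothesis on $A^\bullet(\operatorname{pt})$.
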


\begin{corollary}[Pushforward from projective space to a point]
\label{cor:projective-space-residue}
Let $A^\bullet$ be an oriented cohomology theory on $\Sm_k$ with formal group law $F_A$. Let $\pi\colon \PP^{n}\to \Spec k$ be the structure morphism.
Let $\xi=c_1^A(\cO_{\PP^{n}}(1))$ be the hyperplane class and let
\(
\omega_{F_A}(t)=\sum_{r\ge 0}\omega_r t^r.
\)
Then,
\[
\pi_\ast(\xi^m)=\omega_{n-m}\quad (0\le m\le n),
\qquad\text{and}\qquad
\pi_\ast(\xi^m)=0\quad (m > n).
\]
\end{corollary}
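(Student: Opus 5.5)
We apply Quillen's Residue Formula (\Cref{thm:quillen-residue-formula}) with $X=\Spec k$ and $\E=\O^{\oplus(n+1)}$ the trivial bundle of rank $n+1$. Then $\PP(\E)=\PP^n$, the hyperplane class is $\xi=c_1^A(\cO_{\PP^n}(1))$, and $\pi$ is the structure morphism.

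First I identify the Chern roots. By normalization and the Whitney sum formula, $c^A(\E)=c^A(\O)^{n+1}=1$. So we may take all Chern roots $\alpha_1=\dots=\alpha_{n+1}=0$, in the sense that the symmetric-function expression in the residue formula is evaluated with every $\alpha_i=0$. Since $\chi_F(0)=0$ and $F(t,0)=t$, we get $t\Fminus 0=t$. The denominator therefore becomes $t^{n+1}$.

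Next I take $\varphi(t)=t^m$ in the residue formula:
\[
\pi_*(\xi^m)=\operatorname{Res}_{t=0}\frac{t^m\,\omega_{F_A}(t)}{t^{n+1}}=\operatorname{Res}_{t=0}\sum_{r\ge0}\omega_r t^{r+m-n-1}\,dt .
\]
The coefficient of $t^{-1}$ comes from the index $r=n-m$. So for $0\le m\le n$ the residue is $\omega_{n-m}$. For $m>n$ we would need $r<0$, and no such index exists, so the residue is $0$. Equivalently, every exponent $r+m-n-1$ is then $\ge 0$, so the Laurent series is actually a power series and has zero residue.

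The only point needing care is that the residue formula is stated using Chern roots, which live in a splitting extension. For the trivial bundle no splitting is needed, because all roots are literally $0$ and the expression $\prod_i(t\Fminus\alpha_i)$ is unambiguous. As a consistency check, in degree $m>n$ the result also follows from the projective bundle relation $\xi^{n+1}=0$, which holds because all $c_i^A(\E)=0$ in \Cref{thm:projective-bundle-formula}. I expect no real obstacle; the computation reduces to reading off one Laurent coefficient.
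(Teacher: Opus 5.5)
Your proposal is correct and matches the paper's approach: the paper gives no separate proof and states the corollary as an immediate specialization of Quillen's residue formula (\Cref{thm:quillen-residue-formula}) to the trivial rank-$(n+1)$ bundle over $\Spec k$. There all Chern roots vanish, the denominator becomes $t^{n+1}$, and the residue of $t^m\omega_{F_A}(t)/t^{n+1}$ is $\omega_{n-m}$, or $0$ when $m>n$; your observation that $\xi^{n+1}=0$ by the projective bundle formula is a valid independent check of the vanishing case.
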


\begin{example}
    
Continuing with previous examples, let $h\in CH^1(\PP^n)$ be the Chow-theoretic hyperplane class. The residue formula recovers the standard facts $\pi_\ast(h^{n-1})=1 \in CH^\bullet(\pt) \cong \ZZ$ and $\pi_\ast(h^m)=0$ for $m\neq n-1$. 

Let $\xi = \beta^{-1}(1 - [\O(-1)])$ be the $K$-theoretic hyperplane class. The residue formula recovers $\pi_\ast(\xi^m)=\beta^{\,n-m}$ for all $0\le m\le n$.  This computation is a manifestation of the Koszul resolution of the structure sheaf $[\O_{\PP^{n-m}}]$ inside of $\PP^n$. There is a locally free resolution \[
0 \to \cO_{\PP^n}(-m)
\to \cO_{\PP^n}(-(m-1))^{\binom{m}{m-1}}
\to \cdots
\to \cO_{\PP^n}(-2)^{\binom{m}{2}}
\to \cO_{\PP^n}(-1)^{\binom{m}{1}}
\to \cO_{\PP^n}
\to \cO_{\PP^{n-m}}
\to 0,
\]
which produces the relation $[\O_{\PP^{n-m}}] = (1-[\O(-1)])^m \in K_0(\PP^n)$. The pushforward $\pi_*: K_0(\PP^n) \to K_0(\pt) \cong \ZZ$ amounts to computing the Euler characteristics. In particular, we have $\chi(\PP^n, \O_{\PP^{n-m}}) = 1$. Since $\pi_*$ preserves the \textit{dimension} grading, the class $\pi_*(\xi^{m})$ lands in degree $m-n$, which is generated by the element $\beta^{n-m}$. Therefore, $\pi_*(\xi^m) = 1 \cdot \beta^{n-m}$ in $K$-theory. 

\end{example}

\section{The Blowup Presentation for Oriented Cohomology Theories}\label{sec:multiplicative-blowup-formula}

In this section, we first prove a blowup formula for arbitrary (not necessarily $\AA^1$-invariant) oriented cohomology theories represented in the category $\mathsf{MS}_S$, which states that the $\mathbb{E}^{p,q}$-cohomology of the blowup of a smooth scheme along a smooth center decomposes into a direct sum of the cohomology of the base and copies of the cohomology of the blowup center. Here, $S$ can be taken to be an arbitrary scheme. This enables such a decomposition to be applied to non-$\AA^1$-invariant theories like topological Hochschild homology, topological cyclic homology, and Hodge cohomology. 

Secondly, we move back to the $\AA^1$-invariant category $\mathsf{SH}(k)$ and give a general presentation of the oriented cohomology ring (along the degree $(2p,p)$-line) of such a blowup, in terms of geometric generators and four families of relations. Examples of theories here include the Chow ring, algebraic $K$-theory, and algebraic cobordism. The switch back to $\mathsf{SH}(k)$ is necessitated by the absence of established ``intersection theory package'' and ``Gysin package'' in $\mathsf{MS}_S$, to the best of our knowledge. Nevertheless, we remark that intersection-theoretic calculations can still be largely performed. For example, we sketch an argument for the self-intersection formula in $\mathsf{MS}_S$, which has been generously pointed out to us by Toni Annala.

\subsection{A Blowup Formula in $\mathsf{MS}_S$}
\label{subsec:blowup-square-and-additive}

Let $X\in \Sm_k$ and let $i\colon Z\hookrightarrow X$ be a regular closed immersion of codimension $r\ge 2$.
Let $\pi\colon \Bl_Z X\to X$ be the blowup of $X$ along $Z$, and let $j\colon E\hookrightarrow \Bl_Z X$ be the exceptional divisor.
The projection $p\colon E\to Z$ identifies $E$ with $\mathbb{P}(\mathcal{N}_{Z/X})$, where $\mathcal{N}_{Z/X}$ is the normal bundle of $Z$ in $X$.
The \textit{blowup square} is the cartesian square
\[
\begin{tikzcd}
E \ar[r,"j"] \ar[d,"p"'] & \Bl_Z X \ar[d,"\pi"] \\
Z \ar[r,"i"'] & X.
\end{tikzcd}
\]

The additive blowup decomposition is obtained from a Mayer--Vietoris long exact sequence for the blowup square together with the projective bundle formula for $p\colon E\to Z$. While the corresponding result is well-known for $\mathbb A^1$-invariant oriented theories, we prove it in the non-$\mathbb A^1$-invariant setting by working in the category $\mathsf{MS}_S$ of motivic spectra developed by Annala, Hoyois, and Iwasa \cite{AnnalaIwasa2022MotivicSpectra, AHI2024AtiyahDuality, AHI2025CobordismConnerFloyd}. This allows one to treat many non-$\AA^1$-invariant cohomology theories that occur in practice, including topological Hochschild homology, topological cyclic homology, and Hodge cohomology. \textit{For the blowup formula in this subsection, we work over an arbitrary base scheme $S$ and do not assume $\mathbb A^1$-invariance.}

Let $S$ be any scheme and let $\Sm_S$ denote the category of smooth schemes over $S$.
A presheaf $\mathcal{F}\colon \Sm_S^{\op}\to \Sp$ satisfies \textit{smooth blowup excision} (sbu) if it sends the empty scheme to a terminal object and carries every blowup square in $\Sm_S$ to a cartesian square \cite[Definition 2.1]{AHI2025CobordismConnerFloyd}. Annala--Iwasa defined a similar condition called \textit{elementary blowup excision} (ebu). The category of \textit{motivic spectra} is the $\PP^1$-stabilization of Zariski presheaves of spectra on $\Sm_S$ satisfying ebu. In other words,
\[
\MS_S:=\mathsf{PSh}_{\operatorname{Zar},\operatorname{ebu}}(\Sm_S,\Sp)[(\mathbb{P}^1)^{-1}].
\]
For oriented motivic spectra satisfying Nisnevich descent, Annala--Iwasa showed that sbu is equivalent to ebu, which is furthermore equivalent to the projective bundle formula. Moreover, this non-$\AA^1$-invariant category is an enlargement of the usual Morel--Voevodsky motivic category. Its $\AA^1$-invariant full subcategory recovers the classical stable motivic homotopy category $\mathsf{SH}(S)$, so all $\AA^1$-invariant theories live naturally inside this broader framework.

 The category $\mathsf{MS}_S$ also admits Gysin maps, despite the fact that the full ``intersection theory package'' and ``Gysin package'' has not yet been established. As a consequence, one obtains the self-intersection formula, which we discuss below. For a closed immersion \(i\colon Z\hookrightarrow X\) of codimension $r$ in \(\Sm_S\), Annala--Iwasa--Hoyois \cite{AHI2024AtiyahDuality} proved the various functoriality properties of the Gysin map \(\operatorname{gys}_i\colon X_+\to \operatorname{Th}_Z(\N_{Z/X})\) which is based on Tang's construction of the Thom collapse map \cite{Tang2024PrismaticDuality}. On the other hand, for any oriented motivic ring spectrum $\mathbb{E}\in \mathsf{MS}_S$, there is a \textit{Thom isomorphism} $t h_{\N_{Z/X}}: \mathbb{E}^{a, b}(Z) \xrightarrow{\cong} \mathbb{E}^{a+2 r, b+r}( \operatorname{Th}_Z(\N_{Z/X}))$ for the rank-$r$ bundle $\N_{Z/X}$. Thus, one obtains the cohomological pushforward
\[
i_\ast
=
\operatorname{gys}_i^\ast\circ \mathrm{th}_{\N_{Z/X}}
\colon
\mathbb E^{a,b}(Z)\to \mathbb E^{a+2r,b+r}(X).
\]
Thus the composite $i^*i_*$ is obtained by pulling back the Thom class along the composite
\[
Z \longrightarrow X \xrightarrow{\mathrm{gys}_i} \operatorname{Th}_Z(N_{Z/X}),
\]
which is the zero section of the Thom space. It is a standard fact that this pullback is precisely the top Chern class $c_r(\N_{Z/X})$. As a consequence, the self-intersection formula
\begin{equation}\label{eq:MS-self-intersection-formula}
    i^\ast i_\ast y=c_r(\N_{Z/X})\cdot y
\end{equation}
still holds for an oriented cohomology theory in $\mathsf{MS}_S$. See also \cite[Theorem 2.14]{AnnalaShin2025Steenrod}.

We turn to the proof of the blowup formula in the general context of $\mathsf{MS}_S$. We remark that the following arguments transfer verbatim to theories in the $\AA^1$-invariant category $\mathsf{SH}(k)$. Fix a motivic ring spectrum $\mathbb E$ in $\MS_S$ and as before, write
\(
\mathbb E^{p,q}(X)=\left[\Sigma^{p, q}\Sigma_{\PP^1}^{\infty} X_{+},  \mathbb E\right]_{\mathsf{MS}_S}
\).
\begin{proposition}
\label{prop:LES-of-blowup-bigraded}
Let $\mathbb E$ be a motivic ring spectrum in $\MS_S$ satisfying smooth blowup excision.
Then, for any $p,q\in \ZZ$, the blowup square induces a long exact sequence 
\[
\cdots \to \mathbb E^{p,q}(X)\to \mathbb E^{p,q}(\Bl_Z X)\oplus \mathbb E^{p,q}(Z)\to \mathbb E^{p,q}(E)\to \mathbb E^{p+1,q}(X)\to \cdots .
\]
\end{proposition}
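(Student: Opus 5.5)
The plan is to deduce the long exact sequence from the fact that the blowup square becomes a pushout square of motivic spectra in $\MS_S$; mapping that pushout into $\Sigma^{p,q}\mathbb E$ then produces the sequence formally. Write $M(Y):=\Sigma^\infty_{\PP^1}Y_+\in\MS_S$ for $Y\in\Sm_S$.

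\textbf{Step 1: a cocartesian square of spectra.} By construction, $\MS_S$ is the $\PP^1$-stabilization of Zariski presheaves of spectra satisfying ebu. I would invoke the Annala--Iwasa comparison that ebu is equivalent to sbu on $\Sm_S$. Since the localization enforces blowup excision, the blowup square becomes cocartesian in $\MS_S$:
\[
\begin{tikzcd}
M(E) \ar[r,"j"] \ar[d,"p"'] & M(\Bl_Z X) \ar[d,"\pi"] \\
M(Z) \ar[r,"i"'] & M(X).
\end{tikzcd}
\]
Alternatively, one can argue directly. For any object $\mathbb F\in\MS_S$, the presheaf $Y\mapsto \mathrm{map}(M(Y),\mathbb F)$ satisfies sbu; that is, it sends the blowup square to a cartesian square of spectra. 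By Yoneda, this is the same statement as the square above being cocartesian. This step is where the care is needed. One must check that the excision hypothesis in the statement, namely that $\mathbb E$ satisfies smooth blowup excision, is available for the specific object $\Sigma^{p,q}\mathbb E$ and for all $(p,q)$. It is, because suspensions by $S^1$ and $\PP^1$ are invertible in $\MS_S$ and $\GG_m$-twists are handled via $\PP^1\simeq S^1\wedge\GG_m$. So $\Sigma^{p,q}\mathbb E$ is again a presheaf sending the square to a cartesian square.

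\textbf{Step 2: the long exact sequence.} Apply $\mathrm{map}_{\MS_S}(-,\Sigma^{p,q}\mathbb E)$ to get a cartesian square of spectra
\[
\mathbb E(X)\simeq \mathbb E(\Bl_ZX)\times_{\mathbb E(E)}\mathbb E(Z).
\]
In a stable $\infty$-category a cartesian square is also a fiber sequence, namely
\[
\mathbb E(X)\to \mathbb E(\Bl_ZX)\oplus\mathbb E(Z)\xrightarrow{\;j^*-p^*\;}\mathbb E(E).
\]
Taking homotopy groups gives the long exact (Mayer--Vietoris) sequence. The map out of $\mathbb E^{p,q}(X)$ is $(\pi^*,i^*)$. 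The connecting map raises the simplicial degree: $\pi_{-1}$ of the $(p,q)$-mapping spectrum equals $\pi_0$ of the $(p+1,q)$-one, since $\Sigma^{1,0}=S^1\wedge-$. This gives $\mathbb E^{p,q}(E)\to\mathbb E^{p+1,q}(X)$.

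The main obstacle is Step 1, specifically justifying that the representing objects $M(-)$ turn blowup squares into pushouts in $\MS_S$. If quoting the ebu/sbu equivalence is not clean in the non-Nisnevich setting, I would fall back on the hypothesis as stated. That hypothesis says directly that $\mathbb E$, viewed as a presheaf, takes blowup squares to cartesian squares, and the argument for all bigraded twists then follows from stability as described above.
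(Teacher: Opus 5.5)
Your proof is correct and follows the same route as the paper: smooth blowup excision makes the mapping spectra into the twists of $\mathbb E$ form a cartesian square, and the resulting fiber sequence yields the Mayer--Vietoris sequence, with the connecting map landing in $\mathbb E^{p+1,q}(X)$. One small slip: in $\MS_S$ the equivalence $\PP^1\simeq S^1\wedge\GG_m$ needs $\AA^1$-invariance and is not available, but you do not need it, since the bigraded twists in $\MS_S$ are built from $S^1$ and $\PP^1$ alone (as in the paper's $\Sigma^q_{\PP^1}\mathbb E[p]$), and both are invertible there.
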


\begin{proof}
Smooth blowup excision implies that the blowup square is sent to a homotopy cartesian square of spectra.
Applying $\mathsf{Map}_{\MS_S}(\Sigma^\infty_{\PP^1}(-)_+,\Sigma_{\PP^1}^q \mathbb E[p])$ preserves homotopy limits and therefore sends the blowup square to a homotopy cartesian square.
Taking $\pi_0$ and the associated connecting morphisms gives the desired long exact sequence.
\end{proof}
\begin{lemma}
\label{lem:surjectivity-on-exceptional-divisor}
Let $\mathbb{E}$, $Z$, and $X$ be as above. For every $p,q\in \ZZ$, the natural map
\[
\mathbb E^{p,q}(\Bl_ZX)\oplus \mathbb E^{p,q}(Z)\longrightarrow \mathbb E^{p,q}(E)
\]
appearing in \Cref{prop:LES-of-blowup-bigraded} is surjective.
\end{lemma}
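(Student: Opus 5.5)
The plan is to show that the restriction map $j^*\colon \mathbb E^{p,q}(\Bl_Z X)\to \mathbb E^{p,q}(E)$ already hits every class modulo the image of $p^*\colon \mathbb E^{p,q}(Z)\to\mathbb E^{p,q}(E)$. The map in \Cref{prop:LES-of-blowup-bigraded} is, up to sign, $(\beta,\gamma)\mapsto j^*\beta - p^*\gamma$, so this suffices.

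\textbf{Describe the target.} Since $\mathbb E$ is oriented and $\mathbb E$ satisfies the projective bundle formula in $\MS_S$ (by the Annala--Iwasa equivalence recalled above), we have $E\cong\PP(\N_{Z/X})$ with $\N_{Z/X}$ of rank $r$. Hence every class in $\mathbb E^{p,q}(E)$ can be written uniquely as
\[
\sum_{k=0}^{r-1}\zeta^k\cdot p^*\gamma_k, \qquad \gamma_k\in \mathbb E^{p-2k,q-k}(Z),
\]
where $\zeta=c_1(\O_E(1))$. The $k=0$ summand lies in the image of $p^*$, so it suffices to hit $\zeta^k p^*\gamma$ for $1\le k\le r-1$, modulo the image of $p^*$.

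\textbf{Construct the preimages.} Since $E$ is a smooth divisor and $\O_{\Bl_ZX}(E)|_E\cong\O_E(-1)$, set $\xi:=c_1(\O_{\Bl_ZX}(-E))$, so that $j^*\xi=\zeta$. The candidate preimage of $\zeta^k p^*\gamma$ is
\[
\beta_{k,\gamma}:=\xi^{k-1}\cdot j_*(p^*\gamma),
\]
where $j_*$ is the Gysin pushforward $\operatorname{gys}_j^*\circ\mathrm{th}$. By the projection formula for Gysin maps in $\MS_S$ together with the self-intersection formula \eqref{eq:MS-self-intersection-formula} for $j$, we get
\[
j^*j_*(y)=c_1(\N_{E/\Bl_ZX})\cdot y=c_1(\O_E(-1))\cdot y.
\]
Hence
\[
j^*\beta_{k,\gamma}=\zeta^{k-1}\cdot c_1(\O_E(-1))\cdot p^*\gamma.
\]
The factor $c_1(\O_E(-1))$ is the formal inverse $\chi(\zeta)=-\zeta+(\text{higher terms})$, and it is a unit multiple of $\zeta$. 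One can therefore absorb it either by taking $\xi:=c_1(\O(E))$ directly, or by running a triangular induction on $k$: the leading term is $-\zeta^{k}p^*\gamma$, and the error terms involve higher powers of $\zeta$. Those higher powers reduce back into the range $k<r$ through the projective bundle relation. I would run a downward induction on $k$ from $r-1$, or an upward induction with the filtration by powers of $\zeta$, to show that the span of the $j^*\beta$ together with $p^*(\cdot)$ is everything.

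\textbf{Main obstacle.} The hardest step is justifying, inside $\MS_S$, the self-intersection formula $j^*j_*=c_1(\N)\cdot$, and the fact that $c_1(\O_E(-1))$ is a unit times $\zeta$ when the formal group law is not necessarily available in the $\AA^1$-free setting. Orientations in $\MS_S$ still give a formal group law on $c_1$ (by the projective bundle formula for $\PP^\infty$-type objects), so I would first try to cite this from Annala--Iwasa. If that fails, I would avoid the issue by using the Thom-space description of $j^*j_*$ as the pullback of the Thom class of $\O_E(-1)$, which is $c_1(\O_E(-1))$ by definition. I would then note that, by the projective bundle formula, the triangular change of basis with unit diagonal is invertible, since $\zeta$ is nilpotent relative to the basis.
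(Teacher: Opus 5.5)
Your proposal is correct and is essentially the paper's proof: both decompose $\mathbb E^{p,q}(E)$ by the projective bundle formula, take the $k=0$ summand from $p^*$, and reach the summands with $k\ge 1$ through the self-intersection formula $j^*j_*(y)=c_1(\mathcal O_E(-1))\cdot y$. The only difference is that the paper writes the projective bundle basis in powers of $\xi=c_1(\mathcal O_E(-1))$ itself, so $j^*j_*(\xi^{k-1}p^*x_k)=\xi^k p^*x_k$ on the nose (this is what your option $\xi:=c_1(\mathcal O(E))$ amounts to). In your $\zeta$-basis the unit observation already closes the argument: write $c_1(\mathcal O_E(-1))=\zeta v$ with $v$ a unit (as $\zeta$ is nilpotent), so that $\zeta^k p^*\gamma=j^*j_*(v^{-1}\zeta^{k-1}p^*\gamma)$. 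The triangular induction is therefore unnecessary, and as you phrased it (reducing high powers of $\zeta$ through the projective bundle relation, which reintroduces lower powers) it would not actually stay triangular.
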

\begin{proof}
Set $\xi=c_1(\cO_E(-1))\in \mathbb E^{2,1}(E)$. Since $\mathbb E$ satisfies the projective bundle formula and $E\simeq \PP(\N_{Z/X})$ with $\operatorname{rk}(\N_{Z/X})=r$, multiplication by powers of $\xi$ gives an isomorphism
\[
\bigoplus_{k=0}^{r-1}\mathbb E^{p-2k,q-k}(Z)\xrightarrow{\sim}\mathbb E^{p,q}(E),
\qquad
(x_0,\dots,x_{r-1})\mapsto \sum_{k=0}^{r-1}\xi^k p^\ast(x_k).
\]
Thus every class $u\in \mathbb E^{p,q}(E)$ may be written uniquely as $u=p^\ast(x_0)+\sum_{k=1}^{r-1}\xi^k p^\ast(x_k)$ with $x_k\in \mathbb E^{p-2k,q-k}(Z)$.

The first summand $p^\ast(x_0)$ lies in the image of the $\mathbb E^{p,q}(Z)$-summand. Hence, it remains to show that each class $\xi^k p^\ast(x_k)$ for $1\le k\le r-1$ lies in the image of $\mathbb E^{p,q}(\Bl_ZX)$. Since $j\colon E\hookrightarrow \Bl_ZX$ is a codimension-one regular immersion, the orientation on $\mathbb E$ provides a Gysin pushforward $j_\ast\colon \mathbb E^{a,b}(E)\to \mathbb E^{a+2,b+1}(\Bl_ZX)$. Moreover, the normal bundle of the exceptional divisor is $\N_{E/\Bl_ZX}\cong \cO_E(-1)$, so by the self-intersection formula \eqref{eq:MS-self-intersection-formula}, we have $j^\ast j_\ast(y)=c_1(\N_{E/\Bl_ZX})\cdot y=\xi\cdot y$ for all $y\in \mathbb E^{\bullet,\bullet}(E)$. Applying this to $y=\xi^{k-1}p^\ast(x_k)\in \mathbb E^{p-2,q-1}(E)$ gives $j^\ast j_\ast\!\bigl(\xi^{k-1}p^\ast(x_k)\bigr) = \xi^k p^\ast(x_k)$. Hence, each summand $\xi^k p^\ast(x_k)$ for $1\le k\le r-1$ lies in the image of the restriction map $\mathbb E^{p,q}(\Bl_ZX)\to \mathbb E^{p,q}(E)$. Therefore, every class in $\mathbb E^{p,q}(E)$ lies in the image of $\mathbb E^{p,q}(\Bl_ZX)\oplus \mathbb E^{p,q}(Z)\to \mathbb E^{p,q}(E)$, which proves the surjectivity.
\end{proof}
\begin{theorem}[Blowup Formula for motivic spectra in $\mathsf{MS}_S$]
\label{thm:blowup-formula-additive-bigraded}
Let $\mathbb E$ be an oriented motivic ring spectrum in $\MS_S$ satisfying smooth blowup excision and the projective bundle formula.
Let $i\colon Z\hookrightarrow X$ be a regular closed immersion of codimension $r\ge 2$ and let $\pi\colon \Bl_Z X\to X$ be the blowup with exceptional divisor $p\colon E\simeq \PP(\mathcal{N}_{Z/X})\to Z$.
Then, for every $p,q\in \ZZ$, there is a direct sum decomposition
\[
\mathbb E^{p,q}(\Bl_Z X)\cong \mathbb E^{p,q}(X)\oplus \bigoplus_{k=1}^{r-1}\mathbb E^{p-2k,q-k}(Z). \footnote{\text{Examples of these higher $\EE^{p,q}$ groups include higher Chow groups and higher $K$-groups. }}
\]
\end{theorem}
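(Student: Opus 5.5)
The plan is to use the long exact sequence of \Cref{prop:LES-of-blowup-bigraded} together with the surjectivity of \Cref{lem:surjectivity-on-exceptional-divisor}. Because the map $\mathbb E^{p-1,q}(\Bl_ZX)\oplus\mathbb E^{p-1,q}(Z)\to\mathbb E^{p-1,q}(E)$ is surjective for every $(p,q)$, every connecting morphism $\mathbb E^{p-1,q}(E)\to\mathbb E^{p,q}(X)$ vanishes. The long exact sequence therefore breaks into short exact sequences
\[
0\to \mathbb E^{p,q}(X)\xrightarrow{(\pi^*,\,i^*)}\mathbb E^{p,q}(\Bl_ZX)\oplus\mathbb E^{p,q}(Z)\xrightarrow{\;j^*-p^*\;}\mathbb E^{p,q}(E)\to 0.
\]
I will fix the sign convention so that the second map is $j^*-p^*$.

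Next I rewrite this sequence using the projective bundle formula for $E\simeq\PP(\N_{Z/X})$, with $\xi=c_1(\cO_E(-1))$:
\[
\mathbb E^{p,q}(E)\cong\bigoplus_{k=0}^{r-1}\xi^k p^*\,\mathbb E^{p-2k,q-k}(Z).
\]
The $k=0$ summand is exactly the image of $\mathbb E^{p,q}(Z)$ under $p^*$, which is injective since $p^*$ is the inclusion of that summand. Dividing out by the $Z$-factor and this summand gives a short exact sequence
\[
0\to\mathbb E^{p,q}(X)\xrightarrow{\pi^*}\mathbb E^{p,q}(\Bl_ZX)\xrightarrow{\;\overline{j^*}\;}\bigoplus_{k=1}^{r-1}\mathbb E^{p-2k,q-k}(Z)\to 0,
\]
where the last map is $j^*$ followed by projection onto the summands with $k\ge1$. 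Concretely, this is a diagram chase: the kernel of $\overline{j^*}$ consists of classes $b$ with $j^*b\in p^*\mathbb E(Z)$, say $j^*b=p^*z$, and exactness then puts $(b,z)$ in the image of $\mathbb E(X)$. Surjectivity of $\overline{j^*}$ follows from the argument of the lemma, and injectivity of $\pi^*$ follows from injectivity of $(\pi^*,i^*)$ together with the vanishing of the $Z$-component.

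Finally, I will construct an explicit splitting of this sequence. Define
\[
s(x_1,\dots,x_{r-1})=\sum_{k=1}^{r-1} j_*\bigl(\xi^{k-1}p^*x_k\bigr).
\]
By the self-intersection formula \eqref{eq:MS-self-intersection-formula}, $j^*s(x)=\sum_{k}\xi^k p^*x_k$, so $\overline{j^*}\circ s=\mathrm{id}$. This gives the claimed isomorphism, namely $(\alpha,x)\mapsto \pi^*\alpha+s(x)$.

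The main obstacle is the diagram chase and sign bookkeeping in the second step. In particular, I need to verify that the map induced by the cartesian square is $(\pi^*,i^*)$ followed by $j^*-p^*$, so that the kernel of $\overline{j^*}$ is exactly the image of $\pi^*$. Alternatively, since the category of abelian groups is abelian, the splitting could be avoided altogether, but the explicit section $s$ makes the decomposition canonical given $\xi$. I also want to double-check that the Gysin map $j_*$ and the self-intersection formula are available in $\MS_S$ for the divisor $E$. The paper already asserts both, following \cite{AHI2024AtiyahDuality}.
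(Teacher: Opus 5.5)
Your proposal is correct and follows the paper's proof. Both use the long exact sequence, then \Cref{lem:surjectivity-on-exceptional-divisor} to kill the connecting maps, then the projective bundle formula to strip off the $p^*\mathbb E^{p,q}(Z)$ summand; your diagram chase and explicit section $s(x)=\sum_k j_*(\xi^{k-1}p^*x_k)$ make precise the paper's terse ``cancelling'' step. Do keep $s$: your closing remark that the splitting could be avoided because abelian groups form an abelian category is mistaken, since short exact sequences of abelian groups need not split, and $s$ is exactly what upgrades the extension to the claimed direct sum.
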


\begin{proof}
\Cref{prop:LES-of-blowup-bigraded} gives a long exact sequence in bidegree $(p,q)$.
Since $E\cong \PP(\mathcal{N}_{Z/X})$ and $\mathcal{N}_{Z/X}$ has rank $r$, the projective bundle formula gives an isomorphism
\[
\Phi\colon \bigoplus_{k=0}^{r-1}\mathbb E^{p-2k,q-k}(Z)\xrightarrow{\sim}\mathbb E^{p,q}(E),
\qquad
(x_0,\dots,x_{r-1})\mapsto \sum_{k=0}^{r-1} \zeta^k\cdot p^\ast(x_k),
\]
where $\zeta=c_1(\mathcal{O}_E(1))\in \mathbb E^{2,1}(E)$.
The summand $k=0$ is the map $p^\ast\colon \mathbb E^{p,q}(Z)\to \mathbb E^{p,q}(E)$, which admits a splitting by the projective bundle formula. 
The connecting morphism $\mathbb E^{p,q}(E)\to \mathbb E^{p+1,q}(X)$ vanishes by \Cref{lem:surjectivity-on-exceptional-divisor}, and the long exact sequence breaks into short exact sequences
\[
0\to \mathbb E^{p,q}(X)\to \mathbb E^{p,q}(\Bl_Z X)\oplus \mathbb E^{p,q}(Z)\to \mathbb E^{p,q}(E)\to 0.
\]
Cancelling the direct summand $\mathbb E^{p,q}(Z)$ identified by $p^\ast$ yields
\[
\mathbb E^{p,q}(\Bl_Z X)\cong \mathbb E^{p,q}(X)\oplus \bigoplus_{k=1}^{r-1}\mathbb E^{p-2k,q-k}(Z).
\]
\end{proof}

\subsection{Multiplicative Blowup Presentation in $\mathsf{SH}(k)$}

Since the ``intersection theory package'' and the ``Gysin package'' have not been formally established for $\mathsf{MS}_S$ to the best of our knowledge, we return to the $\AA^1$-invariant subcategory $\mathsf{SH}(k)$ over an algebraically closed field $k$ of characteristic zero. Taking $p = 2q$ for an oriented motivic ring spectrum $\mathbb E$ that represents the oriented cohomology theory $A^\bullet$, we recover the additive decomposition for $A^\bullet(Bl_ZX)$ as a $A^\bullet(\pt)$-module.

\begin{theorem}[Additive Blowup Decomposition for Oriented Cohomology Theories]
\label{thm:blowup-formula-additive}
Let $A^\bullet$ be an oriented cohomology theory on $\Sm_k$ represented in $\mathsf{SH}(k)$. 
Let $\pi: Bl_ZX \to X$ be the blowup of a smooth scheme $X$ along a smooth center $Z$.
Then for every $m$, there is a direct sum decomposition
\[
A^m(\Bl_Z X)\cong A^m(X)\oplus \bigoplus_{k=1}^{r-1}A^{m-k}(Z)
\]
 as $A^\bullet(\pt)$-modules. \hfill \qed
\end{theorem}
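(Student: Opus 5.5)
The plan is to obtain this as a specialization of \Cref{thm:blowup-formula-additive-bigraded}. Let $\mathbb E\in\mathsf{SH}(k)$ be an oriented motivic ring spectrum representing $A^\bullet$, so that $A^m(Y)=\mathbb E^{2m,m}(Y)$ for $Y\in\Sm_k$, as in \Cref{thm:OCT-representability}. Since $\mathsf{SH}(k)$ is the $\AA^1$-invariant full subcategory of $\mathsf{MS}_k$, I regard $\mathbb E$ as an oriented motivic ring spectrum in $\mathsf{MS}_k$. I would then verify the two hypotheses of \Cref{thm:blowup-formula-additive-bigraded}.

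\emph{Projective bundle formula.} This is part of the oriented cohomology package (PBF) in $\mathsf{SH}(k)$.

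\emph{Smooth blowup excision.} For an $\AA^1$-invariant Nisnevich sheaf of spectra, the standard argument is as follows. By Zariski descent, one reduces to the case where $\N_{Z/X}$ is trivial and $X$ admits an étale map to $\AA^n$ under which $Z$ is the preimage of a linear subspace. Using Nisnevich descent together with $\AA^1$-invariance, one then deforms to the normal cone. The blowup square for $Z\subset X$ is thereby compared with the square for the zero section $Z\subset \N_{Z/X}$, and then with the square for $Z\subset Z\times\AA^r$. For the latter square, cartesianness follows from the projective bundle formula applied to $\PP(\N\oplus\mathcal O)$ and $\PP(\N)$. Alternatively, I could cite Annala--Iwasa's result, quoted above, that for oriented Nisnevich-local spectra sbu is equivalent to ebu and to the projective bundle formula.

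With both hypotheses in place, I apply \Cref{thm:blowup-formula-additive-bigraded} with $(p,q)=(2m,m)$. Since $(2m-2k,m-k)$ lies again on the Chow line, the summands become $\mathbb E^{2(m-k),m-k}(Z)=A^{m-k}(Z)$, which gives the stated decomposition of abelian groups.

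It remains to check compatibility with the $A^\bullet(\pt)$-module structure. All maps involved are $A^\bullet(\pt)$-linear: the pullbacks $\pi^*$ and $p^*$, the multiplication by $\zeta^k$, the Gysin map $j_*$ (by the projection formula), and the connecting maps of the Mayer--Vietoris sequence. The connecting maps are linear because the sequence comes from mapping spectra into an $\mathbb E$-module, where $\mathbb E^{\bullet,\bullet}(\pt)$ acts on the target. Consequently the short exact sequences and the cancellation step are sequences of $A^\bullet(\pt)$-modules.

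The splitting deserves one explicit remark. One summand is the image of $\pi^*$, which is split, for instance, by the composite of $\pi_*$ with the identity $\pi_*\pi^*=\mathrm{id}$. The other summand is embedded via $y\mapsto j_*(\zeta^{k-1}p^*y)$. Once these embeddings are written down, the decomposition is manifestly $A^\bullet(\pt)$-linear.

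The main obstacle I expect is the justification of smooth blowup excision in $\mathsf{SH}(k)$, that is, making the deformation to the normal cone rigorous. I would cite it rather than reprove it, and would only sketch the argument.
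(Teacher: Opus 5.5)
Your route is the paper's: there the theorem is obtained by restricting \Cref{thm:blowup-formula-additive-bigraded} to the Chow line $(p,q)=(2m,m)$ for an oriented spectrum in $\mathsf{SH}(k)$. Your checks of smooth blowup excision and of $A^\bullet(\pt)$-linearity only make explicit what the paper leaves implicit.

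One aside should be removed: the identity $\pi_*\pi^*=\mathrm{id}$ is false for a general oriented theory. By the projection formula, $\pi_*\pi^*\alpha=\alpha\cdot\pi_*(1)$, and $\pi_*(1)=[\Bl_ZX\to X]_A$ need not equal $1_X$ outside Chow theory and $K_0$. For instance, take $\Omega^\bullet$ and $\pi\colon\Bl_{\pt}\mathbb P^2\to\mathbb P^2$. Pushing $\pi_*(1)$ forward to a point gives $[\Bl_{\pt}\mathbb P^2]$, which differs from $[\mathbb P^2]$ in $\mathbb L^{-2}$ because their Chern numbers $c_1^2$ are $8$ and $9$.

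Nothing in your argument depends on this aside. Injectivity of $\pi^*$ and the splitting both come from the short exact sequence together with the section $y\mapsto j_*(\xi^{k-1}p^*y)$. This section is cleanest with $\xi=c_1(\mathcal O_E(-1))$, as in \Cref{lem:surjectivity-on-exceptional-divisor}, since then $j^*j_*(\xi^{k-1}p^*y)=\xi^k p^*y$ exactly. With $\zeta=c_1(\mathcal O_E(1))$ you only get $\chi_A(\zeta)\zeta^{k-1}p^*y$. You would then need an additional unipotence argument to see that the image is a complement to $\pi^*A^\bullet(X)$.
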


We now give a presentation of the oriented cohomology ring $A^\bullet(Bl_ZX)$ in terms of generators and relations. Let $i:Z\hookrightarrow X$ be a smooth closed embedding of codimension $r\ge2$, $\pi:Y:=\Bl_Z X\to X$ the blowup, $j:E\hookrightarrow Y$ the exceptional divisor, and $p:E\to Z$ the projection. By \Cref{thm:blowup-formula-additive}, all elements of $A^\bullet(Bl_ZX)$ come from pullbacks $\pi^*\alpha$ for $\alpha\in A^\bullet(X)$ and pushforwards $j_*\gamma$ for $\gamma\in A^\bullet(E)$. The following lemmas contain the $4$ kinds of relations we need for the presentation, characterized by the origins of the elements in a product.

\begin{lemma}
    \label{lem:relation-i}
    For $\alpha, \beta\in A^\bullet(X)$, we have $\pi^*\alpha \cdot \pi^*\beta = \pi^*(\alpha\cdot \beta) $.
\end{lemma}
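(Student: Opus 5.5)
This relation is a direct consequence of the definition of an oriented cohomology theory, so the plan is simply to unwind that definition. By the definition above, $A^\bullet$ is a contravariant functor from $\Sm_k$ to $\mathsf{GrCommRing}$. Hence for any morphism $f\colon Y\to X$ in $\Sm_k$, the pullback $f^*\colon A^\bullet(X)\to A^\bullet(Y)$ is a homomorphism of graded commutative unital rings. In particular it is multiplicative.

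The only thing to check is that $\pi$ really is a morphism in $\Sm_k$, so that the functor applies to it. The blowup $\Bl_Z X$ of a smooth quasi-projective $k$-scheme along a smooth closed subscheme is again smooth and quasi-projective; indeed $\pi$ is projective. Thus $\pi\in \Sm_k$, and multiplicativity of $\pi^*$ gives $\pi^*(\alpha\cdot\beta)=\pi^*\alpha\cdot\pi^*\beta$ for all $\alpha,\beta\in A^\bullet(X)$.

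For theories represented in $\mathsf{SH}(k)$ by an oriented motivic ring spectrum $\mathbb E$, there is a second way to see the same fact. The product on $A_{\mathbb E}^\bullet$ is induced by the diagonal $X\to X\times X$ together with the ring structure $\mathbb E\wedge\mathbb E\to\mathbb E$. The identity $(\pi\times\pi)\circ\Delta_{\Bl_ZX}=\Delta_X\circ\pi$ then shows that $\pi^*$ commutes with products.

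There is no real obstacle here. The only point needing attention is confirming that the blowup stays in the category $\Sm_k$ on which $A^\bullet$ is defined, and this is standard.
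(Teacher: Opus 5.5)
Your proposal is correct and matches the paper's proof, which also just notes that $\pi^*$ is a ring homomorphism because $A^\bullet$ is a functor to graded commutative rings. Your extra check that $\Bl_Z X$ lies in $\Sm_k$, and your alternative argument via the diagonal in $\mathsf{SH}(k)$, are harmless additions the paper leaves implicit.
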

\begin{proof}
    The lemma follows from the fact that $\pi^*$ is a ring homomorphism.
\end{proof}

\begin{lemma}
    \label{lem:relation-ii}
    For $\alpha \in A^\bullet(X)$ and $\gamma\in A^\bullet(E)$, we have $\pi^*\alpha \cdot j_*\gamma = j_*(\gamma \cdot p^*i^*\alpha)$.
\end{lemma}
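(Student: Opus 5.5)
The relation follows from the projection formula (PF) for the Gysin pushforward $j_*$ together with the commutativity of the blowup square. I would argue in three short steps.

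First, I apply (PF) to the projective morphism $j\colon E\hookrightarrow \Bl_Z X$. The closed immersion $j$ is projective, and $j_*$ is an $A^\bullet(\Bl_Z X)$-module homomorphism, where $A^\bullet(\Bl_Z X)$ acts on $A^\bullet(E)$ through $j^*$. Taking the class $\pi^*\alpha\in A^\bullet(\Bl_Z X)$, this gives
\[
\pi^*\alpha\cdot j_*\gamma \;=\; j_*\bigl(j^*\pi^*\alpha\cdot \gamma\bigr).
\]
The products here live in graded-commutative rings and all classes have even bidegree on the Chow line, so the order of factors causes no sign issue.

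Second, I use functoriality of pullbacks along the commutative blowup square $\pi\circ j = i\circ p$. This gives $j^*\pi^* = (\pi\circ j)^* = (i\circ p)^* = p^*i^*$, and hence $j^*\pi^*\alpha = p^*i^*\alpha$. Only contravariant functoriality of $A^\bullet$ on $\Sm_k$ is used, since $E$, $Z$, $X$ and $\Bl_Z X$ are all smooth. No transversality or excess-intersection input is needed.

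Third, I substitute the second step into the first and use commutativity of $A^\bullet(E)$ to rewrite the result as $j_*(\gamma\cdot p^*i^*\alpha)$, which is the claimed identity.

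I expect no real obstacle. The only point to check is that the pushforward $j_*$ appearing in the presentation, namely the Gysin morphism from (FC) for the regular codimension-one immersion $j$, agrees with the projective pushforward for which (PF) is an axiom. For a closed immersion of smooth schemes both are the pushforward of the oriented theory represented in $\mathsf{SH}(k)$ as in \Cref{thm:OCT-representability}, so the two coincide and (PF) applies.
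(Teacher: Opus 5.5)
Your proof is correct and follows the paper's argument: the projection formula for $j_*$ gives $\pi^*\alpha\cdot j_*\gamma = j_*(\gamma\cdot j^*\pi^*\alpha)$, and commutativity of the blowup square identifies $j^*\pi^*\alpha$ with $p^*i^*\alpha$. Your remark that the Gysin pushforward along $j$ agrees with the projective pushforward covered by (PF) is a point the paper leaves implicit, and it is harmless.
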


\begin{proof}
    By the projection formula, we have
\[\pi^*\alpha \cdot j_*\gamma = j_*(\gamma\cdot j^*\pi^*\alpha) = j_*(\gamma\cdot p^*i^*\alpha).\]
\end{proof}

\begin{lemma}
    \label{lem:relation-iii}
   For $\gamma, \delta\in A^\bullet(E)$, we have $j_*\gamma \cdot j_*\delta = j_*(\gamma\cdot \delta\cdot \chi_A(\zeta))$, where $\chi_A(\zeta)$ is the formal inverse of $\zeta$ under the group law $F_A$.
\end{lemma}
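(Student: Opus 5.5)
The plan is to reduce the product of two pushforwards to a single pushforward with the projection formula (PF) for $j$. Then we compute $j^*j_*$ with the self intersection formula \eqref{eq:self-intersection-formula}.

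\textbf{Step 1: projection formula.} By (PF) applied to $j$, with the class $j_*\delta\in A^\bullet(\Bl_ZX)$ playing the role of the pulled-back factor, we get
\[
j_*\gamma\cdot j_*\delta \;=\; j_*\bigl(\gamma\cdot j^*j_*\delta\bigr).
\]

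\textbf{Step 2: self intersection.} The self intersection formula \eqref{eq:self-intersection-formula} for the codimension-one regular immersion $j$ gives
\[
j^*j_*\delta=c_1^A(\N_{E/\Bl_ZX})\cdot\delta .
\]
Substituting, we obtain $j_*\gamma\cdot j_*\delta=j_*\bigl(\gamma\cdot\delta\cdot c_1^A(\N_{E/\Bl_ZX})\bigr)$.

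\textbf{Step 3: the normal bundle.} It remains to identify $c_1^A(\N_{E/\Bl_ZX})$ with $\chi_A(\zeta)$. The standard fact (already used in the proof of \Cref{lem:surjectivity-on-exceptional-divisor}) is that $\N_{E/\Bl_ZX}\cong\O_{\Bl_ZX}(E)|_E\cong\O_E(-1)$, the tautological line subbundle of $p^*\N_{Z/X}$. So we must show that $c_1^A(\O_E(-1))=\chi_A\bigl(c_1^A(\O_E(1))\bigr)$.

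This follows from the formal group law property. Since $\O_E(1)\otimes\O_E(-1)\cong\O_E$ and $c_1^A(\O_E)=0$ by normalization, we have
\[
F_A\bigl(\zeta,\,c_1^A(\O_E(-1))\bigr)=0 .
\]
The formal inverse is unique, so $c_1^A(\O_E(-1))=\chi_A(\zeta)$. This uniqueness needs care because $F_A(\zeta,\cdot)$ is only an infinite power series in $\zeta$. It is justified because $c_1^A$ of a line bundle is nilpotent on a smooth quasi-projective scheme (or by the completeness implicit in the formal group law statement of \cite{LevineMorel}). Concretely, $F_A(\zeta,y)=0$ can be solved uniquely for $y$ as a power series in $\zeta$: the coefficient of $y$ is the unit $1+O(\zeta)$, so the two solutions agree.

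\textbf{Main obstacle.} The only delicate point is the identification $\N_{E/\Bl_ZX}\cong\O_E(-1)$ and its sign convention relative to the chosen $\zeta=c_1^A(\O_E(1))$. I would check this via the universal property of the blowup: $\pi^{-1}\mathcal I_Z\cdot\O_{\Bl_ZX}=\O(-E)$ restricts to $\O_E(1)$ on $E=\PP(\N_{Z/X})$. Given this, the lemma follows from Steps 1–3.
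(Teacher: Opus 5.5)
Your proof is correct and follows the paper's argument: the projection formula for $j$ gives $j_*\gamma\cdot j_*\delta=j_*(\gamma\cdot j^*j_*\delta)$, the self-intersection formula gives $j^*j_*\delta=c_1^A(\mathcal{N}_{E/\Bl_ZX})\cdot\delta$, and $\mathcal{N}_{E/\Bl_ZX}\cong\mathcal{O}_E(-1)$ finishes it. Your Step~3 is a genuine improvement over the paper, which only asserts $c_1^A(\mathcal{O}_E(-1))=\chi_A(\zeta)$ and attributes it to the self-intersection formula; you correctly derive it from $F_A\bigl(\zeta,c_1^A(\mathcal{O}_E(-1))\bigr)=c_1^A(\mathcal{O}_E)=0$ and uniqueness of the solution, using nilpotence.
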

\begin{proof}
     By the projection formula, we have \[j_*\gamma \cdot j_*\delta = j_*(\gamma \cdot j^*j_*\delta) = j_*(\gamma\cdot \delta\cdot c_1^A(\mathcal{N}_{E/Bl_ZX})),\]
where $\mathcal{N}_{E/Bl_ZX} \cong \O_E(-1)$ is the normal bundle of $E$ inside $Bl_ZX$. By the self-intersection formula, we have\[c_1^A(\O_E(-1)) = \chi_A(c_1^A(\O_E(1))) = \chi_A(\zeta),\]
which concludes the proof.

\end{proof}

We observe that \Cref{lem:relation-ii} endows $A^\bullet(E)$ with the structure of an $A^\bullet(X)$-algebra by the action $\alpha .\gamma := \gamma \cdot p^*i^*\alpha $. Therefore, we can put a $A^\bullet(X)$-algebra structure on the direct sum $A^\bullet(X)\oplus A^\bullet(E)$ by \[(\alpha, \gamma) \cdot(\beta, \delta):=\left(\alpha \beta, \alpha .\delta+\beta.\gamma+\gamma \delta \cdot \chi_A\left(\zeta\right)\right).\]
The above three lemmas and the additive blowup decomposition (\Cref{thm:blowup-formula-additive}) imply that the map \[\Phi: A^\bullet(X)\oplus A^\bullet(E) \to A^\bullet(Bl_ZX), \quad (\alpha, \gamma) \mapsto \pi^*\alpha + j_*\gamma \]
is a surjective ring homomorphism. For the case of $K_0$, Fulton--Lang referred to this construction as the ``star product $\star$'' \cite{FultonLang1985RiemannRochAlgebra}. In the next theorem, we prove that the kernel of $\Phi$ is generated by precisely the relation coming from the excess intersection formula. 

\begin{lemma}[Excess Intersection Relation]
    \label{lem:blowup-presentation-implicit} Let $\mathcal{Q}:= p^* \mathcal{N}_{Z / X} / \mathcal{O}_E(-1)$ be the quotient bundle of rank $r-1$. The kernel of $\Phi$ is generated by the elements \[\rho(\theta):=\left( i_*(\theta),\; c_{r-1}(\mathcal{Q}) \cdot p^* \theta\right),\quad \text{ for all }\theta \in A^\bullet(Z).\]
\end{lemma}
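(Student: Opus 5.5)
Throughout, I read $\rho(\theta)$ with the sign that makes it lie in $\ker\Phi$, namely $\rho(\theta)=\bigl(i_*\theta,\,-c_{r-1}(\mathcal Q)\cdot p^*\theta\bigr)$. With $\Phi(\alpha,\gamma)=\pi^*\alpha+j_*\gamma$ this is what the excess intersection formula forces; the ideal generated is the same up to this sign convention. The proof has two halves: first show that every $\rho(\theta)$ lies in $\ker\Phi$, then show that these elements generate the whole kernel. The second half uses localization, a normal form coming from the projective bundle formula, and injectivity of $j_*$ on that normal form.

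\emph{Step 1: $\rho(\theta)\in\ker\Phi$.} Apply (RG) to the blowup square with $g=\pi$, $i'=j$ and $g'=p$. The normal bundle $\mathcal N_{E/\Bl_ZX}\cong\mathcal O_E(-1)$ sits inside $p^*\mathcal N_{Z/X}$, so the excess bundle is exactly $\mathcal Q$, which has rank $r-1$. The excess intersection formula \eqref{eq:excess-intersection-formula} then gives $\pi^*i_*\theta=j_*(c_{r-1}(\mathcal Q)\cdot p^*\theta)$, which is precisely $\Phi(\rho(\theta))=0$.

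\emph{Step 2: a normal form modulo the $\rho(\theta)$.} By (PBF), $A^\bullet(E)=\bigoplus_{k=0}^{r-1}\zeta^k p^*A^\bullet(Z)$. From the Whitney formula applied to $0\to\mathcal O_E(-1)\to p^*\mathcal N_{Z/X}\to\mathcal Q\to 0$, we have $c_t(\mathcal Q)=c_t(p^*\mathcal N)/c_t(\mathcal O_E(-1))$. Write $\chi_A(\zeta)=-\zeta\,u(\zeta)$ with $u$ a unit power series. Expanding, I expect
\[
c_{r-1}(\mathcal Q)=\epsilon\,\zeta^{r-1}+\sum_{k<r-1}\zeta^k p^*b_k,
\]
where $\epsilon$ is a unit in $A^\bullet(\pt)$ (namely $\pm1$) after reducing by the projective bundle relation. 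Checking that this leading coefficient really is a unit is the step I expect to be the main technical obstacle. The difficulty is bookkeeping: the formal group law corrections and the sign conventions of \Cref{thm:projective-bundle-formula} have to be tracked through the division. Granting it, any $(\alpha,\gamma)$ can be reduced modulo the elements $\rho(\theta)$, by subtracting $\rho(\epsilon^{-1}x)$ to kill a top coefficient $\zeta^{r-1}p^*x$. The result is a pair $(\alpha',\gamma')$ with $\gamma'\in\bigoplus_{k=0}^{r-2}\zeta^kp^*A^\bullet(Z)$.

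\emph{Step 3: the kernel in normal form.} Suppose $\pi^*\alpha+j_*\gamma=0$. Restrict to $U=X\setminus Z\cong \Bl_ZX\setminus E$. Since $j_*\gamma$ restricts to zero there (as in the proof of \Cref{prop:disjoint-divisors-zero}), we get $\alpha|_U=0$. The localization sequence \Cref{thm:localization-sequence} then gives $\alpha=i_*\theta$ for some $\theta$. Subtracting $\rho(\theta)$ and then performing the reduction of Step 2, we may assume $\alpha=0$ and $\gamma=\sum_{k=0}^{r-2}\zeta^kp^*x_k$ with $j_*\gamma=0$. Apply $j^*$ and the self-intersection formula:
\[
0=j^*j_*\gamma=\chi_A(\zeta)\gamma=-u(\zeta)\sum_{k=0}^{r-2}\zeta^{k+1}p^*x_k.
\]
Because $u$ is a unit, this gives $\sum_{k=0}^{r-2}\zeta^{k+1}p^*x_k=0$. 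The exponents $k+1$ are at most $r-1$, so the freeness in (PBF) forces every $x_k=0$. Hence $\gamma=0$.

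Combining the three steps, the reductions show that every kernel element differs from $0$ by an $A^\bullet(\pt)$-linear combination of the $\rho(\theta)$. Thus $\ker\Phi$ is generated by the $\rho(\theta)$. They generate it even as an $A^\bullet(X)$-submodule, since $\alpha\cdot\rho(\theta)=\rho(\theta\cdot i^*\alpha)$ by the projection formula together with Lemma~\ref{lem:relation-ii}.
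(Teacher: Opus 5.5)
Your sign correction is right: with $\Phi(\alpha,\gamma)=\pi^*\alpha+j_*\gamma$, the element that the excess intersection formula puts into $\ker\Phi$ is $\bigl(i_*\theta,\,-c_{r-1}^A(\mathcal Q)\,p^*\theta\bigr)$. Your Step~1 matches the paper.

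\textbf{The gap in Step~3.} After subtracting $\rho(\theta)$ you have $(0,\gamma_1)$ with $j_*\gamma_1=0$. The Step~2 reduction then adds $\rho(\epsilon^{-1}x)$, where $x$ is the top coefficient of $\gamma_1$, and this changes the first component to $i_*(\epsilon^{-1}x)$. So you cannot assume both $\alpha=0$ and $\zeta$-degree $\le r-2$ at once. Reversing the order of the two reductions has the same circularity. Nothing in your argument shows $i_*(\epsilon^{-1}x)=0$. Applying $j^*$ only tells you $\zeta\gamma_1=0$ (up to a unit), and that does not force the top coefficient to vanish. For example, in Chow theory for a point blown up in a surface, $\gamma_1=c_1(\mathcal Q)=\zeta$ satisfies $\zeta\gamma_1=0$.

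The missing ingredient is injectivity of $\pi^*$, at least on $i_*A^\bullet(Z)$. The paper supplies it by invoking uniqueness of the expression $\pi^*\alpha'+\sum_{s\le r-2}j_*(\zeta^sp^*\delta_s)$ in the additive blowup decomposition. Once $\gamma$ is reduced to $\zeta$-degree $\le r-2$, the relation $\pi^*\alpha'+j_*\gamma'=0$ kills $\alpha'$ and $\gamma'$ at once, with no localization or self-intersection step.

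To keep your route, proceed as follows:
\begin{enumerate}
\item Do Step~2 first, then write $\alpha'=i_*\theta'$ by localization.
\item Use the excess formula to turn the relation into $j_*\bigl(\gamma'+c_{r-1}^A(\mathcal Q)\,p^*\theta'\bigr)=0$.
\item Apply $j^*$ and use $\chi_A(\zeta)\,c_{r-1}^A(\mathcal Q)=p^*c_r^A(\mathcal N_{Z/X})$ to get $\zeta\gamma'=u^{-1}p^*\bigl(c_r^A(\mathcal N_{Z/X})\theta'\bigr)$, where $\chi_A(\zeta)=-\zeta u$.
\item Compare $\zeta^0$-components. The left side has none, and on the right it is a unit multiple of $c_r^A(\mathcal N_{Z/X})\theta'$. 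This gives $c_r^A(\mathcal N_{Z/X})\theta'=0$, hence $\zeta\gamma'=0$, and freeness then gives $\gamma'=0$.
\item Show $\pi^*i_*\theta'=0$ implies $i_*\theta'=0$. One way: $\pi_*\pi^*(i_*\theta')=i_*\theta'\cdot\pi_*1$, where $\pi_*1\in 1+i_*A^\bullet(Z)$ by localization (as $\pi$ is an isomorphism over $U$), and $i_*\theta'\cdot i_*s=i_*\bigl(\theta'\,c_r^A(\mathcal N_{Z/X})\,s\bigr)=0$. Alternatively, use injectivity of $A^\bullet(X)\to A^\bullet(\Bl_ZX)\oplus A^\bullet(Z)$ in the Mayer--Vietoris sequence.
\end{enumerate}

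\textbf{The unit claim in Step~2.} Step~2 is only assumed, and in the form you state it, it is false. The top coefficient of $c_{r-1}^A(\mathcal Q)$ lies in $A^\bullet(Z)$, not in $A^\bullet(\pt)$. In the paper's twisted-cubic example, $c_1^A(\mathcal Q)=(1+10a_{11}\eta)\zeta+10\eta$. What is true, and is all you need, is that the top coefficient is $1$ plus a nilpotent element of $A^\bullet(Z)$, hence a unit there. Then $\rho(\epsilon^{-1}x)$ with $\epsilon^{-1}\in A^\bullet(Z)$ still kills the top term.

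This comes from the paper's expansion $c_{r-1}^A(\mathcal Q)=u^{r-1}+p^*c_1^A(\mathcal N_{Z/X})\,u^{r-2}+\cdots+p^*c_{r-1}^A(\mathcal N_{Z/X})$ with $u=-\chi_A(\zeta)=\zeta+O(\zeta^2)$. Reducing powers $\zeta^{\ge r}$ by the projective bundle relation only adds multiples of the nilpotent classes $c_k^A(\mathcal N_{Z/X})$ with $k\ge1$ to the $\zeta^{r-1}$-coefficient. The paper calls $c_{r-1}^A(\mathcal Q)$ ``monic'', which overstates it for exactly this reason, but the triangular change of basis is the argument you need.
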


\begin{proof}
   Let $R= A^\bullet(X)\oplus A^\bullet(E)$ with the above multiplication, and let $I= (\rho(\theta) \mid \theta \in A^\bullet(Z))\subseteq R$.  A direct application of the excess intersection formula on the blowup square implies that for every $\theta \in A^\bullet(Z)$, we have
$$
\pi^* i_*(\theta)=j_*\left(c_{r-1}^A(\mathcal{Q}) \cdot p^* \theta\right),
$$
so $\rho(\theta) \in \ker \Phi$ for all $\theta \in A^\bullet(Z)$. Therefore, we have $I \subseteq \ker \Phi$.

To prove that $\ker\Phi\subseteq I$, we take $(\alpha, \gamma) \in \ker\Phi$ and show that it is zero after reducing modulo the ideal $I$. To do this, we compute the Chern class $c_{r-1}^A(\Q)$. 
Applying the Whitney sum formula to the short exact sequence \[0\to \O_{E}(-1)\to p^*\N_{Z/X}\to \Q\to 0\]
yields an equality of total Chern polynomials \begin{equation}
        c_t^A(\Q) = \frac{c_t^A(p^*\N_{Z/X})}{c_t^A(\O_E(-1)) } =\frac{1+c_1^A(p^*\N_{Z/X})t + \cdots + c_r^A(p^*\N_{Z/X})t^r}{1 \;+\; \chi_A(\zeta)t}.
\end{equation}

\begin{quote}
    \textit{Claim.} $c_{r-1}^A(\Q)$ is a monic polynomial in $\zeta$ with top $\zeta$-degree term $\zeta^{r-1}$.
\end{quote}
\begin{quote}
    \textit{Proof of the claim.} Let $u := -\chi_A(\zeta)$. By expanding the geometric series $(1+\chi_A(\zeta)t)^{-1} = (1-u)^{-1}$, we have $c_{r-1}^A(\Q) = u^{r-1} + p^*c_1^A(\N_{Z/X}) u^{r-2} + \cdots + p^*c_{r-1}^A(\N_{Z/X})$. Hence, the coefficient in front of $u^{r-1}$ is $1$. Since the change-of-basis matrix of $A^\bullet(E)$ from $\{1,u, \dots, u^{r-1} \}$ to $\{1, \zeta, \dots, \zeta^{r-1}\}$ is upper triangular with diagonal entries all $1$, we conclude that $c_{r-1}^A(\Q)$ is also monic in $\zeta$. \hfill \qed
\end{quote}

By the projective bundle formula, we can expand any $\gamma\in A^\bullet(E)$ uniquely as  \[\gamma=\sum_{s=0}^{r-1} \zeta^s \cdot p^*\left(\gamma_s\right), \quad \text{ for }\gamma_s \in A^{\bullet-1-s}(Z).\]
Let $\operatorname{top}(\gamma) := \gamma_{r-1} \in A^{\bullet}(Z)$. Define the reduction operator \[\operatorname{red}: A^\bullet(E) \to A^\bullet(E), \quad \operatorname{red}(\gamma) := \gamma - p^*(\operatorname{top}(\gamma))\cdot c_{r-1}^A(\Q).\]
Equivalently, writing $\gamma=\sum_{s=0}^{r-1}\zeta^s p^*(\gamma_s)$ and $\theta:=\operatorname{top}(\gamma)=\gamma_{r-1}$, we have
\begin{equation}\label{eq:reduction-congruence}
(\alpha,\gamma)\ =\ \bigl(\alpha-i_*\theta,\ \operatorname{red}(\gamma)\bigr)\qquad \text{in }R/I,
\end{equation}
since $(i_*\theta,\ c_{r-1}^A(\Q)p^*\theta)=\rho(\theta)\in I$. Since $c_{r-1}^A(\Q)$ is monic in $\zeta$, the top degree $\zeta^{d-1}$-terms cancel, so $\operatorname{red}(\gamma)$ has $\zeta$-degree less than or equal to $d-2$.  Now for $(\alpha,\gamma)\in \ker\Phi$, we have
\[
0=\Phi(\alpha,\gamma)=\pi^*\alpha + j_*\gamma \in A^\bullet(\Bl_ZX).
\]
Let $\theta:=\operatorname{top}(\gamma)$ and replace $(\alpha,\gamma)$ by the congruent element
$(\alpha-i_*\theta,\ \operatorname{red}(\gamma))$ mod $I$ as in \eqref{eq:reduction-congruence}. Since $I\subseteq \ker\Phi$, the map $\Phi$ factors through $R/I$, so we still have
\[
0=\Phi(\alpha-i_*\theta,\ \operatorname{red}(\gamma))=\pi^*(\alpha-i_*\theta)+j_*\operatorname{red}(\gamma).
\]
By construction, $\operatorname{red}(\gamma)$ has $\zeta$-degree $\le r-2$, so by the projective bundle formula, we may write uniquely
\[
\operatorname{red}(\gamma)=\sum_{s=0}^{r-2}\zeta^s\cdot p^*(\delta_s),
\qquad \delta_s\in A^{\bullet-1-s}(Z).
\]
Therefore, we have 
\begin{equation}\label{eq:decomposition-of-kernel-element}
    0=\pi^*(\alpha-i_*\theta)+\sum_{s=0}^{r-2} j_*\bigl(\zeta^s p^*\delta_s\bigr).
\end{equation}
Since the additive blowup decomposition
\[
A^\bullet(\Bl_ZX)\ \cong\ \pi^*A^\bullet(X)\ \oplus\ \bigoplus_{s=0}^{r-2} j_*\bigl(\zeta^s p^*A^{\bullet-1-s}(Z)\bigr)
\]
is a direct sum decomposition, every element of $\ABl$ has a \textit{unique} expression in the form of \eqref{eq:decomposition-of-kernel-element}. Hence, $\alpha-i_*\theta=0$ and $ \delta_s=0$ for all $0\le s\le r-2$. In particular, $\operatorname{red}(\gamma)=0$ and $\alpha=i_*\theta$. Returning to \eqref{eq:reduction-congruence}, we conclude that
\[
(\alpha,\gamma)= (0,0)\ \text{ in }R/I,
\]
so $(\alpha,\gamma)\in I$. This proves $\ker\Phi\subseteq I$, and therefore $\ker\Phi=I$.

\end{proof}

The above \Cref{lem:relation-i}, \Cref{lem:relation-ii}, \Cref{lem:relation-iii}, and \Cref{lem:blowup-presentation-implicit} altogether imply the main structural theorem of this paper.

\begin{theorem}[Blowup Presentation for Oriented Cohomology Rings]\label{thm:blowup-presentation}
Let $i:Z\hookrightarrow X$ be a smooth closed embedding of codimension $r\ge2$, $\pi:Y:=\Bl_Z X\to X$ the blowup, $j:E\hookrightarrow Y$ the exceptional divisor, and $p:E\to Z$ the projection. Let $\zeta:=c_1^A(\cO_E(1))\in A^1(E)$ be the relative hyperplane class of the projective bundle $E$ in $A^\bullet$-theory. The ring $A^\bullet(\Bl_ZX)$ is generated over $A^\bullet(\pt)$ by pullbacks $\pi^*\alpha$ for $\alpha\in A^\bullet(X)$ and pushforwards $j_*\gamma$ for $\gamma\in A^\bullet(E)$ subject to the following families of relations.

\begin{enumerate}[(i)]
    \item (\textit{Intersection on $X$}) For $\alpha, \beta\in A^\bullet(X)$, $\pi^*\alpha \cdot \pi^*\beta = \pi^*(\alpha\cdot \beta) $.
    \item (\textit{Intersection between $X $ and $E$}) For $\alpha \in A^\bullet(X)$ and $\gamma\in A^\bullet(E)$, $\pi^*\alpha \cdot j_*\gamma = j_*(\gamma \cdot p^*i^*\alpha)$.
    \item (\textit{Intersection on $E$}) For $\gamma, \delta\in A^\bullet(E)$, $j_*\gamma \cdot j_*\delta = j_*(\gamma\cdot \delta\cdot \chi_A(\zeta))$, where $\chi_A(\zeta)$ is the formal inverse of $\zeta$ under the group law $F_A$. 
    \item (\textit{Excess intersection formula}) For every $\theta \in A^\bullet(Z)$, we have
$$
\pi^* i_*(\theta)=j_*\left(c_{r-1}^A(\mathcal{Q}) \cdot p^* \theta\right), 
$$ where $\mathcal{Q}:= p^* \mathcal{N}_{Z / X} / \mathcal{O}_E(-1).$ \hfill \qed
\end{enumerate}
\end{theorem}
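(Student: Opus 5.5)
The plan is to assemble the presentation from the results just established rather than prove anything new. Set $R := A^\bullet(X)\oplus A^\bullet(E)$ with the star product
\[
(\alpha,\gamma)\cdot(\beta,\delta) = \bigl(\alpha\beta,\ \alpha.\delta+\beta.\gamma+\gamma\delta\,\chi_A(\zeta)\bigr),
\]
where $\alpha.\delta = \delta\cdot p^*i^*\alpha$. The generators and relations of the theorem are exactly a presentation of $R/I$: the free $A^\bullet(\pt)$-algebra on symbols $\pi^*\alpha$ and $j_*\gamma$, modulo (i)–(iv) together with additivity and $A^\bullet(\pt)$-linearity of $\alpha\mapsto\pi^*\alpha$ and $\gamma\mapsto j_*\gamma$. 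So it suffices to show two things: that $\Phi\colon R\to A^\bullet(\Bl_ZX)$ is a surjective ring homomorphism, and that $\ker\Phi$ is generated by the elements $\rho(\theta)$.

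The first step is to check that $R$ is an associative commutative unital $A^\bullet(\pt)$-algebra with unit $(1,0)$ and that $\Phi(\alpha,\gamma)=\pi^*\alpha+j_*\gamma$ is multiplicative. Expanding $(\pi^*\alpha+j_*\gamma)(\pi^*\beta+j_*\delta)$ and applying \Cref{lem:relation-i}, \Cref{lem:relation-ii} and \Cref{lem:relation-iii} term by term gives $\Phi$ of the star product. Associativity of $R$ could be checked directly, since $E$-terms multiply by the commutative ring $A^\bullet(E)$ twisted by the factor $\chi_A(\zeta)$. It is cleaner, though, to deduce it after the fact: relations (i)–(iii) say precisely that the obvious surjection from the free algebra onto $R$, composed with $\Phi$, respects products.

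The second step is surjectivity. By \Cref{thm:blowup-formula-additive}, in the form used inside the proof of \Cref{lem:blowup-presentation-implicit}, every class is $\pi^*\alpha+\sum_{s=0}^{r-2} j_*(\zeta^s p^*\delta_s)$. Note that the statement asks for generation over $A^\bullet(\pt)$, and $\pi^*A^\bullet(X)$ and $j_*A^\bullet(E)$ are $A^\bullet(\pt)$-submodules.

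The third step is the kernel. Relation (iv) is the excess intersection formula (RG) applied to the blowup square, with excess bundle $\mathcal Q$ of rank $r-1$. \Cref{lem:blowup-presentation-implicit} shows that the $\rho(\theta)$ generate $\ker\Phi$ as an ideal of $R$. One subtlety needs attention: the theorem imposes (iv) only for the elements $\rho(\theta)$, whereas the ideal $I$ also contains products $\rho(\theta)\cdot(\beta,\delta)$. So I will check that $I$ equals the additive span of the $\rho(\theta)$. Multiplying $\rho(\theta)$ by $(\beta,0)$ gives $\rho(\theta\, i^*\beta)$ by the projection formula. Multiplying by $(0,\delta)$ gives an element whose first component is $0$ and which lies in $\ker\Phi$. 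The proof of \Cref{lem:blowup-presentation-implicit} already reduces any kernel element to the span of the $\rho(\theta)$ using only additive congruences, namely the $\operatorname{red}$ step together with the uniqueness from the direct sum decomposition. This shows the ideal and the span agree.

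The main obstacle is this bookkeeping: making sure that the additive reduction argument genuinely uses only single $\rho(\theta)$'s, so that the listed relations suffice without implicitly allowing ideal multiples. With that settled, $A^\bullet(\Bl_ZX)\cong R/I$ and the presentation follows.
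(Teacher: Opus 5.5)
Your proposal is correct and is essentially the paper's own argument: the paper likewise combines \Cref{lem:relation-i}, \Cref{lem:relation-ii} and \Cref{lem:relation-iii}, which make $\Phi$ a surjective ring map from the star-product ring $R$, with \Cref{lem:blowup-presentation-implicit} for $\ker\Phi$. Two small corrections. Your ideal-versus-span worry is moot, since the relations in a presentation generate an ideal anyway; in fact the reduction in \Cref{lem:blowup-presentation-implicit} shows every kernel element is a single $\rho(\theta)$, once the paper's sign slip is fixed to $\rho(\theta)=(i_*\theta,\,-c_{r-1}^A(\mathcal Q)\,p^*\theta)$ so that (iv) really puts it in $\ker\Phi$. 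Associativity of $R$, on the other hand, cannot be deduced ``after the fact'' from $\Phi$, because $\Phi$ is not injective, so use the short direct check you mention instead.
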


In addition, if we assume that the pullback $i^*\colon A^\bullet(X)\to A^\bullet(Z)$ is surjective, then the same argument of Keel \cite[Appendix Theorem A]{Keel} applies verbatim and yields a more compact ring presentation for $A^\bullet(Bl_ZX)$. The first polynomial relation comes from a combination of the projective bundle formula and the excess intersection formula. 

\begin{theorem}[Blowup presentation - surjective version]\label{thm:blowup-presentation-surjective}
Assume in addition that the pullback $i^*\colon A^\bullet(X)\to A^\bullet(Z)$ is surjective. Let $\zeta\in A^1(Bl_ZX)$ denote the first Chern class $c_1^A(\mathcal O_{\widetilde X}(E))$.
Choose lifts $\widetilde c_k\in A^k(X)$ for $1\le k\le r-1$ such that $i^*(\widetilde c_k)=c_k^A(\mathcal N_{Z/X})$ in $A^\bullet(Z)$.
Then the canonical map $A^\bullet(X)[\zeta]\to A^\bullet(Bl_Z X)$ given by $\alpha\mapsto \pi^*\alpha$ and $\zeta\mapsto c_1^A(\mathcal O_{Bl_Z X}(E))$
induces an isomorphism
\[
A^\bullet(Bl_ZX)\;\cong\;
\frac{A^\bullet(X)[\zeta]}{\Bigl(\zeta^r-\widetilde c_1\zeta^{r-1}+\cdots+ (-1)^{r-1}\widetilde c_{r-1}\zeta + (-1)^ri_*1,\ \zeta\cdot\ker(i^*)\Bigr)}.
\]
The resulting quotient is independent of the choices of the lifts $\widetilde c_k$. \hfill \qed
\end{theorem}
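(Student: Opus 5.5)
Set $Y:=\Bl_ZX$ and write $\epsilon:=c_1^A(\mathcal O_Y(E))=[E]_A=j_*1$, which holds by \Cref{funclass-divisor}. The plan is to deduce the surjective presentation from \Cref{thm:blowup-presentation} and \Cref{lem:blowup-presentation-implicit}, following Keel.

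\textbf{Generation.} By \Cref{thm:blowup-formula-additive}, $A^\bullet(Y)$ is spanned by $\pi^*A^\bullet(X)$ together with the classes $j_*(\zeta^s p^*\theta)$. The normal bundle of $E$ is $\mathcal O_E(-1)$, so $j^*\epsilon=\chi_A(\zeta)$. Since $\chi_A(\zeta)=-\zeta+(\text{higher order})$, the hyperplane class $\zeta$ on $E$ is a power series in $j^*\epsilon$; this series is finite because $\zeta$ is nilpotent on $E$ by the projective bundle formula. By surjectivity of $i^*$, write $\theta=i^*\alpha$. Then $p^*\theta=j^*\pi^*\alpha$, and the projection formula gives
\[
j_*(\zeta^s p^*\theta)=\pi^*\alpha\cdot j_*\bigl(g_s(j^*\epsilon)\bigr)=\pi^*\alpha\cdot \epsilon\, g_s(\epsilon)
\]
for a polynomial $g_s$. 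Hence $A^\bullet(X)[\zeta]\to A^\bullet(Y)$ is surjective.

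\textbf{Relations hold.} First, $\epsilon\cdot\pi^*\alpha=j_*(p^*i^*\alpha)$ by \Cref{lem:relation-ii}, so $\zeta\cdot\ker(i^*)$ maps to zero. Second, consider the degree-$r$ polynomial relation. Multiply the projective bundle relation on $E$ for $\mathcal N_{Z/X}$ (\Cref{thm:projective-bundle-formula}) by appropriate signs and rewrite it in terms of $j^*\epsilon$. Then apply $j_*$ and use $j_*(j^*x)=\epsilon x$. The lower-degree pieces combine with the excess formula $\pi^*i_*1=j_*(c_{r-1}^A(\mathcal Q))$. This is exactly the computation in Keel's appendix: writing $c_{r-1}^A(\mathcal Q)$ via the Whitney formula as in the Claim of \Cref{lem:blowup-presentation-implicit}, $\epsilon\cdot P(\epsilon)$ equals $(-1)^{r-1}\pi^*i_*1$ for the lifted Chern polynomial $P$.

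\textbf{Main obstacle.} In a general $F_A$ the constant relations need not appear literally with the Chow signs. The relation obtained directly is in the variable $u=-\chi_A(\zeta)$, with coefficients only in $A^\bullet(X)$ modulo $\ker i^*$ (harmless after multiplying by $\epsilon$). I would therefore present the relation in terms of $-j^*\epsilon$ and identify it with the stated form by interpreting $\zeta$ in the statement as the class whose restriction is $u$, or else absorb the difference using $\zeta\ker(i^*)$. This matching is the step I expect to be delicate.

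\textbf{Injectivity.} Let $J$ be the ideal. Modulo $J$, every element reduces to $\alpha+\sum_{s=1}^{r-1}\epsilon^s\pi^*\alpha_s$, where $\alpha_s$ is only defined modulo $\ker i^*$. Such elements correspond bijectively to $A^\bullet(X)\oplus\bigoplus_{s}A^\bullet(Z)$. By the direct sum decomposition of \Cref{thm:blowup-formula-additive}, the map restricted to these reduced forms is injective: the $\epsilon^s$-terms give $j_*$ of classes whose $\zeta$-expansions are upper-triangular with unit diagonal. Hence the map is an isomorphism. Independence of the lifts follows because two lifts differ by elements of $\ker i^*$, which are killed after multiplication by $\zeta$ in the ideal.
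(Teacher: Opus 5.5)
Your outline is Keel's appendix argument, which is exactly what the paper invokes. It has three steps: generation of $A^\bullet(\Bl_ZX)$ over $A^\bullet(X)$ by $\epsilon=[E]_A$ via surjectivity of $i^*$ and the projection formula; the relations from \Cref{lem:relation-ii} and the excess formula; and injectivity against the additive decomposition. Generation, the $\zeta\cdot\ker(i^*)$ relation and independence of lifts are fine as written.

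The weak point is the step you flag as the main obstacle. It is not a real obstacle, but you leave it unresolved, and one of your proposed fixes is wrong. Since $\mathcal O(E)|_E\cong\mathcal O_E(-1)$, you have exactly $u=-\chi_A(\zeta_E)=-c_1^A(\mathcal O_E(-1))=-j^*\epsilon$. Whitney gives $c_{r-1}^A(\mathcal Q)=\sum_{i=0}^{r-1}p^*c_i^A(\mathcal N_{Z/X})\,u^{r-1-i}$. Also $p^*c_i^A(\mathcal N_{Z/X})=j^*\pi^*\widetilde c_i$ (with $\widetilde c_0=1$), so $c_{r-1}^A(\mathcal Q)=j^*\bigl(\sum_{i}\pi^*\widetilde c_i\,(-\epsilon)^{r-1-i}\bigr)$. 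Then $j_*j^*(-)=\epsilon\cdot(-)$ turns $\pi^*i_*1=j_*c_{r-1}^A(\mathcal Q)$ into
\[
\epsilon^r-\widetilde c_1\epsilon^{r-1}+\cdots+(-1)^{r-1}\widetilde c_{r-1}\epsilon+(-1)^r\pi^*i_*1=0
\]
on the nose, with no formal group law corrections. Corrections would only appear if you rewrote the projective bundle relation in $\zeta_E$ in terms of $j^*\epsilon$, and that relation is not needed here.

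Your alternative fix, reading $\zeta$ as the class restricting to $u$, means taking $-\epsilon$. For $-\epsilon$ the true relation is Keel's non-alternating $T^r+\widetilde c_1T^{r-1}+\cdots+\widetilde c_{r-1}T+i_*1$. That is not the displayed polynomial, and it violates the normalization $\zeta\mapsto c_1^A(\mathcal O(E))$. Nor is there any discrepancy for $\zeta\cdot\ker(i^*)$ to absorb.

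The variable $u$ also makes your injectivity step exact. One has $\epsilon^s\pi^*\alpha_s=j_*\bigl((j^*\epsilon)^{s-1}p^*i^*\alpha_s\bigr)=(-1)^{s-1}j_*\bigl(u^{s-1}p^*i^*\alpha_s\bigr)$. The short exact sequence in the proof of \Cref{thm:blowup-formula-additive-bigraded} shows that $j^*$ induces $A^\bullet(\Bl_ZX)/\pi^*A^\bullet(X)\cong A^\bullet(E)/p^*A^\bullet(Z)$, and $j^*j_*(u^kp^*\theta)=-u^{k+1}p^*\theta$. So against the decomposition $\pi^*A^\bullet(X)\oplus\bigoplus_{k=0}^{r-2}j_*\bigl(u^kp^*A^\bullet(Z)\bigr)$, your map is diagonal with entries $\pm1$.

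This uses that $1,u,\dots,u^{r-1}$ is an $A^\bullet(Z)$-basis of $A^\bullet(E)$. They span because $\zeta_E=\chi_A(-u)$ and $c_r^A(\mathcal Q)=0$ is monic in $u$, and $r$ generators of a free module of rank $r$ form a basis.

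In the $\zeta_E$-expansion you describe, $\chi_A(\zeta_E)^{s-1}$ contains powers $\zeta_E^{\ge r-1}$ that must be reduced. So that matrix is triangular only modulo the nilpotent ideal generated by the Chern classes of $\mathcal N_{Z/X}$. The argument still works, but it needs that extra step, which your ``upper-triangular with unit diagonal'' glosses over.
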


Finally, the following lemma is a useful sufficient condition that implies the surjectivity of the pullback homomorphism.

\begin{lemma}\label{lem:section-implies-surjective}
    If the inclusion $i: Z \hookrightarrow X$ admits a section, in the sense that there exists a morphism $r: X \to Z$ with $r \circ i=\operatorname{id}_Z$,
then the pullback map
$
i^*: A^{\bullet}(X) \to A^{\bullet}(Z)
$
is surjective.
\end{lemma}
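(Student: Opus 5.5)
The plan is a purely functorial argument: contravariance of $A^\bullet$ turns the retraction into a splitting of $i^*$.

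Start from the identity $r\circ i=\operatorname{id}_Z$ of morphisms in $\Sm_k$. Since $A^\bullet$ is a contravariant functor $\Sm_k^{\op}\to\mathsf{GrCommRing}$, applying it gives
\[
i^*\circ r^* \;=\; (r\circ i)^* \;=\; (\operatorname{id}_Z)^* \;=\; \operatorname{id}_{A^\bullet(Z)}.
\]
So $r^*\colon A^\bullet(Z)\to A^\bullet(X)$ is a right inverse of $i^*$. Given any $\theta\in A^\bullet(Z)$, the class $r^*\theta\in A^\bullet(X)$ satisfies $i^*(r^*\theta)=\theta$, and therefore $i^*$ is surjective.

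Two small points need checking. First, $r$ must be a morphism in $\Sm_k$, so that $r^*$ is defined. This is automatic, because $X$ and $Z$ are smooth quasi-projective and $r$ is a $k$-morphism between them. Second, no pushforward or Gysin structure is needed: the argument uses only (A1)-type functoriality of pullbacks, i.e.\ that $(g\circ f)^*=f^*\circ g^*$ and $\operatorname{id}^*=\operatorname{id}$.

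There is no real obstacle. The argument also yields a degree-preserving ring splitting $A^\bullet(X)\cong A^\bullet(Z)\oplus\ker(i^*)$ as $A^\bullet(\pt)$-modules, which is convenient when one later identifies $\ker(i^*)$ in \Cref{thm:blowup-presentation-surjective}.
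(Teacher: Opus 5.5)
Your proof is correct and is exactly the paper's argument: applying the contravariant functor $A^\bullet$ to $r\circ i=\operatorname{id}_Z$ gives $i^*\circ r^*=\operatorname{id}_{A^\bullet(Z)}$, so $r^*$ is a right inverse of $i^*$ and $i^*$ is surjective. Two cosmetic points: in the paper, (A1) refers to functoriality of pushforwards, whereas you only use the contravariant functoriality of $A^\bullet$; and the splitting $A^\bullet(X)\cong r^*A^\bullet(Z)\oplus\ker(i^*)$ holds as $A^\bullet(\pt)$-modules, not as rings.
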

\begin{proof}
  The identity $r \circ i=\operatorname{id}_Z$ implies
$
i^* \circ r^*=(r \circ i)^*=\operatorname{id}_{A^{\bullet}(Z)} .
$ Hence, $i^*$ is surjective since it has a right inverse
\end{proof}

\section{First Examples}\label{sec:first-examples}

In this section, we provide some applications of the above Blowup Presentation formulae. We start by considering del Pezzo surfaces, where the surjective Blowup Presentation is applied. Then, we move on to the blowup of $\PP^3$ along the twisted cubic $C$, which prompts the use of the full Blowup Presentation. Furthermore, we demonstrate how to recover the count of secant lines to $C$ simultaneously meeting two general lines in $\PP^3$, using an arbitrary oriented cohomology theory. Throughout the rest of the paper, let $k $ be an algebraically closed field of characteristic zero. 

\subsection{Del Pezzo Surfaces}

 Del Pezzo surfaces are blowups of the projective plane $\PP^2$ or $\PP^1\times \PP^1$ at $n$ general points for $0\leq n\leq 8$. Since the pullback $A^\bullet(\PP^2) \cong A^\bullet(\pt)[\zeta] / (\zeta^3) \to A^\bullet(\pt)$ is always surjective, given by $\zeta\mapsto 0$, we are in a situation to apply the surjective Blowup Presentation (\Cref{thm:blowup-presentation-surjective}). 

\begin{proposition}[Oriented cohomology rings of del Pezzo surfaces]\label{prop:del-pezzo-oriented-rings}
Let $A^\bullet$ be an oriented cohomology theory.
Let $h$ denote $c_1^A(\mathcal O_{\mathbb P^2}(1))\in A^1(\mathbb P^2)$.

\begin{enumerate}
\item The degree $9$ del Pezzo surface $dP_9=\mathbb P^2$ has
\[
A^\bullet(dP_9)\cong \frac{A^\bullet(\pt)[h]}{(h^3)}.
\]

\item The degree $8$ del Pezzo surfaces $Bl_p\PP^2$ and $\PP^1\times \PP^1$ both have
\[
A^\bullet(\PP^1\times \PP^1)\cong A^\bullet(Bl_p\PP^2)\cong \frac{A^\bullet(\pt)[u,v]}{(u^2,\ v^2)}.
\]

\item Let $2\le k\le 8$ and let $dP_{9-k}$ be the blowup of $\mathbb P^2$ at $k$ general closed points.
Let $h\in A^1(dP_{9-k})$ denote the pullback of $c_1^A(\mathcal O_{\mathbb P^2}(1))$, and let $e_1,\dots,e_k\in A^1(dP_{9-k})$ denote the classes $c_1^A(\mathcal O(E_i))$ of the exceptional divisors.
Then
\[
A^\bullet(dP_{9-k})
\cong
\frac{A^\bullet(\pt)\bigl[h,e_1,\dots,e_k\bigr]}
{\Bigl(h^3,\ e_i h,\ e_i e_j\ (i\ne j),\ e_i^2+h^2\ \text{for }1\le i,j\le k\Bigr)}.
\]
\end{enumerate}
\end{proposition}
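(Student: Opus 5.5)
We plan to derive all three parts from the projective bundle formula (\Cref{thm:projective-bundle-formula}) and the surjective Blowup Presentation (\Cref{thm:blowup-presentation-surjective}), inducting on the number of blown-up points. In checking part (2) we found that its assertion about $Bl_p\PP^2$ appears to be false even for Chow rings. So the plan proves (1), (3) and the $\PP^1\times\PP^1$ half of (2), and records what the method gives for $Bl_p\PP^2$ instead.

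\textbf{Part (1).} Write $\PP^2=\PP(\O^{\oplus 3})$ over $\pt$. The trivial bundle has $c^A(\O^{\oplus 3})=1$ by normalization and the Whitney sum formula. So \eqref{eq:projective-bundle-formula-ring} gives $A^\bullet(\pt)[h]/(h^3)$.

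\textbf{Part (2).} For $\PP^1\times\PP^1=\PP(\O^{\oplus 2})\to\PP^1$, apply the projective bundle formula twice with trivial bundles. This gives $A^\bullet(\pt)[u]/(u^2)$ followed by adjoining $v$ with $v^2=0$, where $u,v$ are the two pulled-back hyperplane classes.

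For $Bl_p\PP^2$ the same method does not yield $(u^2,v^2)$. Take $k=1$ in the argument of part (3) below, which is valid for one point. It gives $A^\bullet(\pt)[h,e]/(h^3,\ eh,\ e^2+h^2)$. Specializing to $CH^\bullet$, the degree-one intersection form on $Bl_p\PP^2$ is odd, since $e^2=-[\pt]$ and $h^2=[\pt]$. The form on $\PP^1\times\PP^1$ is even.

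A ring isomorphism of graded $\ZZ$-algebras restricts to an isomorphism on degree-one classes and on degree-two classes. Both top-degree groups are $\ZZ\cdot[\pt]$, so the degree-two identification is $\pm 1$. Hence it carries the quadratic form $x\mapsto x^2$ to $\pm$ the other form. Negating a form does not change its parity, so an odd form cannot become even. Concretely, in $\ZZ[u,v]/(u^2,v^2)$ every square $(au+bv)^2=2ab\,uv$ is an even multiple of $uv$, while $h^2=[\pt]$ in $CH^\bullet(Bl_p\PP^2)$. Therefore $CH^\bullet(Bl_p\PP^2)\not\cong\ZZ[u,v]/(u^2,v^2)$. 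We would record that claim (2) holds for $\PP^1\times\PP^1$ and that the correct statement for $Bl_p\PP^2$ is the $k=1$ case of (3).

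\textbf{Part (3).} We induct on the number $m$ of blown-up points, $1\le m\le k$. Let $X_{m-1}$ be the blowup at $p_1,\dots,p_{m-1}$, and regard $p_m$ as a point $i\colon\pt\hookrightarrow X_{m-1}$ lying off the earlier exceptional divisors. Since any point has a section $X_{m-1}\to\pt$, \Cref{lem:section-implies-surjective} shows that $i^*$ is surjective. Then $i^*$ is the augmentation, and its kernel is the ideal generated by $h,e_1,\dots,e_{m-1}$.

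The normal bundle $\N_{\pt/X_{m-1}}$ is trivial of rank $2$, so we may choose both lifts $\widetilde c_1=\widetilde c_2=0$. It remains to compute the fundamental class $i_*1$.
\begin{itemize}
\item $\pi$ is an isomorphism near $p_m$, so transverse base change (A2) gives $i_*1=\pi^*[p_m]_{\PP^2}$.
\item In $\PP^2$, $[p_m]=h^2$ by \Cref{prop:intersection-product}, since $p_m$ is the transverse intersection of two lines, and each line has class $h$ by \Cref{funclass-divisor}.
\end{itemize}

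\Cref{thm:blowup-presentation-surjective} then adjoins $e_m$ with relations $e_m^2+h^2=0$ and $e_m\cdot(h,e_1,\dots,e_{m-1})=0$. These are exactly the relations $e_m^2+h^2$, $e_mh$ and $e_me_i$ for $i<m$. The relation $h^3=0$ is inherited from part (1).

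We expect the main obstacle to be the careful identification of $\ker(i^*)$ and of $i_*1$ at each inductive stage. Specifically, we must check that the ideal generated by $h$ and the previous $e_i$ really is the full augmentation ideal, using the additive decomposition of \Cref{thm:blowup-formula-additive}. We must also check that no formal-group-law corrections enter $i_*1$.
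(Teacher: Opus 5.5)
Your proofs of (1), (3) and the $\PP^1\times\PP^1$ half of (2) follow the paper's own route: the projective bundle formula with trivial bundles, then induction on the number of points via \Cref{thm:blowup-presentation-surjective} with $r=2$. Your identifications $\ker(i^*)=(h,e_1,\dots,e_{m-1})$ and $i_*1=\pi^*[p_m]=h^2$ are if anything more careful than the paper's.

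Your objection to the $Bl_p\PP^2$ half of (2) is also correct. The paper's own induction at $k=1$ yields $A^\bullet(\pt)[h,e]/(h^3,\ eh,\ e^2+h^2)$, and your parity argument shows this is not isomorphic to $\ZZ[u,v]/(u^2,v^2)$ when $A=CH$. The two presentations become isomorphic only once $2$ is invertible, via $u=h+e$ and $v=(h-e)/2$.
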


\begin{proof}
The presentations for $\mathbb{P}^2$ and $\PP^1\times \PP^1$ follow from the projective bundle formula \cref{thm:projective-bundle-formula}. For the blowups, we argue by induction on $k$. The case $k=0$ is $\mathbb P^2$, already done. Assume $k\ge 1$ and write $dP_{9-k}=\Bl_{p_k}(dP_{10-k})$ for the blowup of $dP_{10-k}$ at a closed point $p_k$, with exceptional divisor $E_k$.
Since the inclusion $p_k\to dP_{10-k}$ has a section given by the structure morphism $dP_{10-k}\to p_k$, the pullback
\(
i^*\colon A^\bullet(dP_{10-k})\to A^\bullet(p_k)
\)
is surjective by \Cref{lem:section-implies-surjective}. The surjective Blowup Presentation therefore applies with $r=2$ and $i:p_k \to dP_{10-k}$. Hence, $A^\bullet(dP_{9-k})$ is obtained from $A^\bullet(dP_{10-k})$ by adjoining the extra generator $e_k=c_1^A(\mathcal O(E_k))$ and relations $e_k^2+ (-1)^2i_*1_{p_k}=0$ and $e_k\cdot \ker(i^*)=0$. Since $i_*1_{p_k}$ is the fundamental class of a point in $dP_{10-k}$, it is equal to $h^2$, yielding $e_k^2+h^2=0$. The second relation implies $e_k\cdot \alpha=0$ for every $\alpha\in A^{>0}(dP_{10-k})$. In particular, $e_kh = 0$, and $e_je_k = 0$ for $j<k$. The result then follows from induction.
\end{proof}

\begin{remark}\label{rmk: delpezzo exceptional}
    The above proposition shows that any oriented cohomology ring $A^\bullet(dP_k)$ has an \textit{exceptional isomorphism}\footnote{The term \textit{exceptional isomorphism} is used in the sense of Larson--Li--Payne--Proudfoot \cite{LLPP2024WonderfulVarieties}, where they showed such isomorphisms from the $K_0$-ring to the Chow ring over the integers.} to the Chow ring, in the sense that \[A^\bullet(dP_k) \cong CH^\bullet(dP_k)\otimes_{\ZZ}A^\bullet(\pt).\]
\end{remark}

\subsection{$\PP^3$ Blown up at the Twisted Cubic}

Next, we consider the blowup of $\PP^3$ along the twisted cubic, which prompts the application of the full Blowup Presentation (\Cref{thm:blowup-presentation}). We also illustrate through this example that one can obtain familiar intersection numbers in enumerative geometry from a general oriented cohomology theory. See \Cref{apx:4-secant-lines} for an exposition to a classical solution to the enumerative problem in question. Much of the geometric setup that follows can also be found in \cite{EisenbudHulekPopescu2003IntersectionVeronese, SidmanVermeire2011SecantVarieties, Peskine2015LineCongruences}.

We demonstrate how to determine the oriented cohomology rings of this blowup. Throughout this section, let $A^\bullet$ be an oriented cohomology theory characterized by the formal group law $F(x,y) = x+y + a_{11}xy + \text{ higher order terms}$. Let $\nu_3:\PP^1 \to \PP^3$ be the cubic Veronese map, which embeds $\PP^1$ as the twisted cubic $C\subseteq\PP^3$.  Let 
\[
\begin{tikzcd}
E \ar[r,"j"] \ar[d,"p"'] & \Bl_C \PP^3 \ar[d,"\pi"] \\
C \ar[r,"{\nu_3}"'] & \PP^3.
\end{tikzcd}
\]
be the relevant blowup square. We begin by studying surjectivity of the pullback $A^\bullet(\PP^3) \to A^\bullet(C)$. By the projective bundle formula, we have \[A^\bullet(\PP^3) \cong \frac{A^\bullet(\pt)[\alpha]}{\alpha^4}, \quad \text{ and } \quad A^\bullet(C) \cong \frac{A^\bullet(\pt)[\eta]}{\eta^2},\]
where $\alpha := c_1^A(\O_{\PP^3}(1))$ and $\eta := c_1^A(\O_{C}(1))$ are the $A^\bullet$-theoretic hyperplane classes. By definition of $\nu_3$, the induced ring map $\nu_3^*$ satisfies 
\[\nu_3^*(\alpha) = c_1^A(\O_C(3)) = [3]_F\eta = 3\eta,\]
where the vanishing of all higher order terms is due to the relation $\eta^2 = 0$. We see that $\nu_3^*$ is \textit{not} surjective if $A^\bullet$ has integer coefficients, as $\eta, 2\eta$ are not in its image. Therefore, we are in a situation to apply the full version of the Blowup Formula (\Cref{thm:blowup-presentation}). We break down the application of \Cref{thm:blowup-presentation} on the above square into the following three general steps. First, we compute the normal bundle $\N_{C/\PP^3}$. Secondly, we use the Projective Bundle Formula to give a presentation to $A^\bullet(E) = A^\bullet(\PP(\N_{C/\PP^3}))$. We work out powers and the formal inverse of $\zeta = c_1^A(\O_E(1))$. Finally, we present the oriented cohomology ring $A^\bullet(Bl_C\PP^3)$ with generators consisting of pullbacks from $\PP^3$ and pushforwards from $E$, subject to relations (i) - (iv).

The following proposition is well-known.

\begin{proposition}[Normal bundle of the twisted cubic]
Let
\(
\nu_3 \colon \mathbb{P}^1 \hookrightarrow \mathbb{P}^3
\)
be the cubic Veronese embedding, equivalently the twisted cubic $C = \nu_3(\PP^1)$. Then,
\(
\N_{C/\mathbb{P}^3} \cong \mathcal{O}_{\mathbb{P}^1}(5) \oplus \mathcal{O}_{\mathbb{P}^1}(5).
\)
\end{proposition}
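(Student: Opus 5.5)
The plan is to compute the normal bundle through the Euler sequences, and then use Grothendieck's theorem to pin down the splitting. Since $C\cong\PP^1$, every vector bundle on $C$ splits as a sum of line bundles. So $\N_{C/\PP^3}\cong\O_{\PP^1}(a)\oplus\O_{\PP^1}(b)$ for some integers $a\le b$, and it remains to show $a=b=5$.

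\textbf{Step 1: the degree.} Degrees are taken with respect to $\O_{\PP^1}(1)$, so $\O_{\PP^3}(1)|_C\cong\O_{\PP^1}(3)$. The standard sequence
\[
0\to \T_{\PP^1}\to \nu_3^*\T_{\PP^3}\to \N_{C/\PP^3}\to 0
\]
gives $a+b=\deg\nu_3^*\T_{\PP^3}-\deg \T_{\PP^1}$. The restricted Euler sequence $0\to\O\to\O_{\PP^1}(3)^{\oplus4}\to\nu_3^*\T_{\PP^3}\to0$ shows $\deg\nu_3^*\T_{\PP^3}=12$, and $\deg\T_{\PP^1}=2$. Hence $a+b=10$.

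\textbf{Step 2: balance.} This step is the main obstacle. We must rule out the unbalanced splittings $(4,6)$, $(3,7)$, and so on. Twisting by $\O(-6)$ turns this into the vanishing $H^0(\N_{C/\PP^3}(-6))=0$: it holds for $(5,5)$ but fails whenever $b\ge6$. Twisting the tangent sequence by $\O(-6)$ gives
\[
H^0(\nu_3^*\T_{\PP^3}(-6))\to H^0(\N(-6))\to H^1(\T_{\PP^1}(-6))=H^1(\O(-4))\to H^1(\nu_3^*\T_{\PP^3}(-6)).
\]

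To use this sequence, I first compute $\nu_3^*\T_{\PP^3}$ explicitly. The Euler sequence presents it as $\O(3)^{4}/\O$, where $\O\to\O(3)^4$ is given by the four cubic monomials $s^3,s^2t,st^2,t^3$. Applying the Koszul/Euler description to these monomials, I expect $\nu_3^*\T_{\PP^3}\cong\O(4)^{\oplus3}$; this bundle has degree $12$, matching Step 1. It follows that $H^0(\nu_3^*\T_{\PP^3}(-6))=0$.

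Vanishing of $H^0(\N(-6))$ then reduces to injectivity of the map $H^1(\O(-4))\to H^1(\O(-2))^{3}$. This map is induced by the inclusion $\T_{\PP^1}\to\nu_3^*\T_{\PP^3}$, which is the derivative of $\nu_3$. By Serre duality it becomes a map $H^0(\O)^3\to H^0(\O(2))$, and I will check directly that this map is surjective. The entries come from the derivative of the monomial map, which is essentially given by the quadratics $s^2,st,t^2$.

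Alternatively, and more cleanly, I will use $SL_2$-equivariance. $C$ is an $SL_2$-orbit closure, so $\N_{C/\PP^3}$ is $SL_2$-equivariant. An equivariant splitting $\O(a)\oplus\O(b)$ with $a\ne b$ would make the subbundle $\O(b)$ canonical. It would therefore be an $SL_2$-invariant line subbundle, i.e.\ a section of $\PP(\N)$ over $C$ fixed by $SL_2$. I will argue that this is impossible because the stabilizer of a point of $C$ is a Borel subgroup that acts on the normal fiber irreducibly-nontrivially (with distinct weights and no fixed line compatible with the orbit). If this representation-theoretic point proves delicate, I fall back on the cohomological computation above. Either way we conclude $a=b=5$.
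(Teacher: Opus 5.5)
Your cohomological route is the paper's argument, and it is correct: Grothendieck splitting, $a+b=10$ from the Euler and normal-bundle sequences, then $H^0(\mathcal{N}_{C/\mathbb{P}^3}(-6))=0$. It is in fact more careful than the paper. The paper passes directly from $H^0(\nu_3^*\mathcal{T}_{\mathbb{P}^3}(-6))=0$ to $H^0(\mathcal{N}_{C/\mathbb{P}^3}(-6))=0$, which does not follow on its own, because $H^1(\mathcal{T}_{\mathbb{P}^1}(-6))\cong H^1(\mathcal{O}(-4))\neq 0$. The injectivity of the connecting map that you isolate is exactly the missing step.

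Your two unverified expectations do hold.
\begin{itemize}
\item The linear syzygies $(t,-s,0,0)$, $(0,t,-s,0)$, $(0,0,t,-s)$ of $s^3,s^2t,st^2,t^3$ define a surjection $\mathcal{O}(3)^{\oplus 4}\to\mathcal{O}(4)^{\oplus 3}$, since two of its maximal minors are $-s^3$ and $t^3$. Its kernel is the Euler line, so $\nu_3^*\mathcal{T}_{\mathbb{P}^3}\cong\mathcal{O}(4)^{\oplus 3}$.
\item Composing this surjection with the Jacobian of $(s^3,s^2t,st^2,t^3)$ gives $(s^2,st,t^2)^{\top}\cdot(t,-s)$. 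So the derivative is $\mathcal{O}(2)\xrightarrow{(s^2,st,t^2)}\mathcal{O}(4)^{\oplus 3}$, and its Serre dual $H^0(\mathcal{O})^{\oplus 3}\to H^0(\mathcal{O}(2))$ is an isomorphism.
\end{itemize}

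The $\mathrm{SL}_2$ alternative, however, rests on a false claim and should be dropped or repaired. The stabilizer $B$ of a point $p\in C$ is solvable, so by Lie--Kolchin it always fixes a line in the $2$-dimensional fiber $\mathcal{N}_p$. That line is the direction of the osculating plane, and these lines form an $\mathrm{SL}_2$-invariant line subbundle $\mathcal{O}(4)\subset\mathcal{N}_{C/\mathbb{P}^3}$. So invariant line subbundles do exist, and "no fixed line" cannot be the contradiction. The route only works if you additionally prove two things. First, this $B$-stable line is unique, which holds because the unipotent radical $t\mapsto t+cs$ moves the $t^3$-direction. Second, the osculating subbundle has degree $4$. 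Then the canonical summand $\mathcal{O}(b)$ with $b\ge 6$ would have to coincide with it, which is impossible.
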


\begin{proof}
We work inside the Chow ring. Let \(h=c_1(\mathcal{O}_{\mathbb{P}^1}(1))\). Since \(\nu_3^*\mathcal{O}_{\mathbb{P}^3}(1)\cong \mathcal{O}_{\mathbb{P}^1}(3)\), pulling back the Euler sequence on \(\mathbb{P}^3\) yields
\(
0 \to \mathcal{O}_{\mathbb{P}^1}
\to \mathcal{O}_{\mathbb{P}^1}(3)^{\oplus 4}
\to \nu_3^*\mathcal T_{\mathbb P^3}
\to 0.
\)
Therefore,
\(
c\bigl(\nu_3^*\mathcal T_{\mathbb P^3}\bigr)
=
(1+3h)^4.
\)

On the other hand, the normal bundle sequence
\(
0 \to T_{\mathbb{P}^1}
\to \nu_3^*\mathcal T_{\mathbb P^3}
\to \N_{C/\PP^3}
\to 0
\)
yields
\(
c_1(\N_{C/\PP^3})
=
10h
\) and $c_2(\N_{C/\PP^3}) = 0$. By Grothendieck splitting, there is an integer \(a\) such that
\(
\mathcal{N}_{C/\mathbb{P}^3}\cong
\mathcal{O}_{\mathbb{P}^1}(a)\oplus \mathcal{O}_{\mathbb{P}^1}(10-a).
\)
We claim that \(a=5\). Indeed, twisting the pullback of the Euler sequence from \(\mathbb{P}^3\) to $\PP^1$ by \(\mathcal{O}_{\mathbb{P}^1}(-6)\) yields $$0 \rightarrow \mathcal{O}_{\mathbb{P}^1}(-6) \rightarrow \mathcal{O}_{\mathbb{P}^1}(-3)^{\oplus 4} \rightarrow \nu_3^* T_{\mathbb{P}^3}(-6) \rightarrow 0.$$ Using the surjectivity of the multiplication map
\(
H^0\bigl(\mathcal{O}_{\mathbb{P}^1}(1)\bigr)^{\oplus 4}\to H^0\bigl(\mathcal{O}_{\mathbb{P}^1}(4)\bigr),
\)
one obtains \(H^0(C, \nu_3^*\mathcal T_{\mathbb{P}^3}(-6))=0\). Twisting the normal bundle sequence by \(\mathcal{O}_{\mathbb{P}^1}(-6)\) then gives
\(
H^0\bigl(C, \mathcal{N}_{C/\mathbb{P}^3}(-6)\bigr)=0.
\)
If \(a\leq 4\), then \(10-a\geq 6\), so
\(
H^0\bigl(C, \mathcal{N}_{C/\mathbb{P}^3}(-6)\bigr)
\neq 0,
\)
which is a contradiction, so \(a\geq 5\). Therefore, \(a=5\) by the symmetry of this argument. Hence,
\(
\mathcal{N}_{C/\mathbb{P}^3}\cong
\mathcal{O}_{\mathbb{P}^1}(5)\oplus \mathcal{O}_{\mathbb{P}^1}(5).
\)
\end{proof}
We use the description of the normal bundle of $C$ in $\mathbb P^3$ above, to give the oriented cohomology ring presentation of the exceptional divisor in $\Bl_C\mathbb P^3$.
\begin{proposition}[Oriented cohomology ring of the exceptional divisor]\label{prop:twisted-cubic-PBF}
   Let $E\xrightarrow{j} Bl_C\PP^3$ be the exceptional divisor. Let $\zeta := c_1^A(\O_E(1))$ be the relative hyperplane class, and with slight abuse of notation, let $\eta := p^*c_1^A(\O_{C}(1))$ be the pullback of the hyperplane class of $C$. The oriented cohomology ring of $E$ has the quotient presentation 
    \[A^\bullet(E) \cong \frac{A^\bullet(\pt)[\eta, \zeta]}{\zeta^2+ 10\eta\zeta}.\]
\end{proposition}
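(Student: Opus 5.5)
We apply the ring version of the Projective Bundle Formula (\Cref{thm:projective-bundle-formula}) to $E=\PP(\N_{C/\PP^3})\to C$, where $\N_{C/\PP^3}\cong \O_{\PP^1}(5)^{\oplus 2}$ is of rank $2$. We identify $C\cong\PP^1$, so that $A^\bullet(C)\cong A^\bullet(\pt)[\eta]/(\eta^2)$, with $\eta=c_1^A(\O_{\PP^1}(1))$. This identification is legitimate because $\O_C(1)$ here means the hyperplane bundle of $\PP^1$, and the presentation of $A^\bullet(\PP^1)$ is again the projective bundle formula over a point. Then
\[
A^\bullet(E)\cong \frac{A^\bullet(C)[\zeta]}{\bigl(\zeta^2+\zeta\, p^*c_1^A(\N_{C/\PP^3})+p^*c_2^A(\N_{C/\PP^3})\bigr)},
\]
and substituting $A^\bullet(C)$ gives a quotient of $A^\bullet(\pt)[\eta,\zeta]$ by $\eta^2$ together with this quadratic relation. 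The whole computation therefore reduces to finding the $A$-theoretic Chern classes of $\O(5)\oplus\O(5)$ on $\PP^1$.

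We compute these by the Whitney sum formula:
\[
c^A(\N_{C/\PP^3})=\bigl(1+c_1^A(\O(5))\bigr)^2 .
\]
By the formal group law, $c_1^A(\O(5))=[5]_F\eta$. Every term of $F$ of total degree at least $2$ in its arguments contributes a multiple of $\eta^2=0$; this is the same truncation argument the paper used for $\nu_3^*\alpha=3\eta$. Hence $[5]_F\eta=5\eta$. It follows that $c_1^A(\N)=10\eta$ and $c_2^A(\N)=25\eta^2=0$. The relation therefore becomes $\zeta^2+10\eta\zeta=0$, which agrees with the sign convention of \Cref{thm:projective-bundle-formula}.

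It remains to account for the relation $\eta^2=0$, which the claimed presentation does not list explicitly. This is the only delicate point. I would check whether $\eta^2$ already lies in the ideal $(\zeta^2+10\eta\zeta)$; it does not, since that ideal is generated by a single element divisible by $\zeta$. So the intended presentation must implicitly include $\eta^2=0$, which is inherited from $A^\bullet(C)$ via $p^*$, and I would state the final answer as
\[
A^\bullet(E)\cong A^\bullet(\pt)[\eta,\zeta]/(\eta^2,\ \zeta^2+10\eta\zeta).
\]
The freeness of $A^\bullet(E)$ over $A^\bullet(C)$ on the basis $\{1,\zeta\}$, guaranteed by the projective bundle formula, then confirms that no further relations occur. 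The same Chern class computation also yields the consequences $\eta\zeta^2=0$ and $\zeta^3=-10\eta\zeta^2=0$, which will be needed in the subsequent calculations.
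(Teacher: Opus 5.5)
Your proposal is correct and follows the paper's proof essentially verbatim. Both use the Whitney sum for $\mathcal{O}(5)^{\oplus 2}$, note that $\eta^2=0$ truncates $[5]_F\eta$ to $5\eta$ (so $c_1^A=10\eta$ and $c_2^A=0$), and then apply the ring version of the projective bundle formula. You are also right that $\eta^2$ is not in the principal ideal $(\zeta^2+10\eta\zeta)$, so the displayed presentation must include it: the paper's own proof and its subsequent basis $\{1,\eta,\zeta,\eta\zeta\}$ tacitly use $\eta^2=0$, and the accurate statement is $A^\bullet(\pt)[\eta,\zeta]/(\eta^2,\ \zeta^2+10\eta\zeta)$.
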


\begin{proof}
    By the Whitney sum formula, \[c^A(\mathcal{N}_{C/\mathbb{P}^3}) = c^A(\mathcal{O}_{\mathbb{P}^1}(5))\cdot c^A(\mathcal{O}_{\mathbb{P}^1}(5)) = (1 + [5]_F\eta)^2.\]
Since $\eta^2 = 0$, all higher order terms in the formal group law vanish, and the above expression simplifies as $(1+5\eta)^2 = 1 + 10\eta$. In particular, $c_1^A(\mathcal{N}_{C/\mathbb{P}^3}) = 10\eta$ and $c_2^A(\mathcal{N}_{C/\mathbb{P}^3}) = 0$. The projective bundle relation then gives the desired presentation.
\end{proof}

Notice that $\eta^2 = 0$ makes this entire calculation collapse to being the same as the Chow ring. In particular, we obtain the additive blowup decomposition as $A^\bullet(\pt)$-modules, \[A^\bullet(E) \cong A^\bullet(\pt)\cdot \{1, \eta, \zeta, \eta\zeta\}.\]
\begin{proposition} \label{prop:twisted-cubic-formal-inverse}
   Let $\zeta$ be as above. The relation $\zeta^3 = 0 $ holds in $A^\bullet(E)$. The formal inverse of $\zeta$ is \[\chi_A(\zeta)=-\zeta-10 a_{11} \eta \zeta \quad  \in A^1(E).\]
\end{proposition}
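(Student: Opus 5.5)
From the presentation in \Cref{prop:twisted-cubic-PBF} we have $\zeta^2 = -10\eta\zeta$ and $\eta^2=0$. Multiplying the first relation by $\zeta$ and substituting it back gives
\[
\zeta^3 = -10\eta\zeta^2 = -10\eta\cdot(-10\eta\zeta) = 100\eta^2\zeta = 0.
\]
This also shows that every monomial $\eta^a\zeta^b$ with $a+b\ge 3$ vanishes in $A^\bullet(E)$. Alternatively, one can note that $E$ is a smooth surface and appeal to degree reasons. I would use the algebraic argument, since it needs no dimension axiom. In the same way, $\eta\zeta^2 = -10\eta^2\zeta = 0$ and $\eta^2\zeta=0$, so the only surviving degree-two monomial is $\zeta^2 = -10\eta\zeta$.

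For the formal inverse, write $\chi_A(x) = -x + b_2x^2 + b_3x^3+\cdots$ with $b_i\in A^{1-i}(\pt)$. Since $\zeta^3=0$, the power series truncates to $\chi_A(\zeta) = -\zeta + b_2\zeta^2$. So the only real step is to determine the quadratic coefficient $b_2$ of $\chi_A$ in terms of the formal group law.

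To find $b_2$, substitute $y=\chi(x)$ into $F(x,y) = x+y+a_{11}xy+\cdots$ and expand to order $x^2$. The expansion gives
\[
F(x,\chi(x)) = x + (-x + b_2x^2) + a_{11}x(-x) + O(x^3),
\]
and setting the $x^2$ coefficient to zero yields $b_2 = a_{11}$. Note that the terms $a_{ij}x^iy^j$ with $i+j\ge 3$ do not contribute at order $x^2$. Hence
\[
\chi_A(\zeta) = -\zeta + a_{11}\zeta^2 = -\zeta -10a_{11}\eta\zeta,
\]
where the last equality uses $\zeta^2=-10\eta\zeta$.

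The main point requiring care is the truncation: all higher terms of $\chi_A$ vanish because of the cube relation $\zeta^3=0$ established in the first step, so it must be proved before the truncation is used. The coefficient computation itself is routine. As a sanity check, in $K$-theory $a_{11}=-\beta$, and the formula $\chi_K(x) = -x-\beta x^2-\cdots$ does give $b_2 = -\beta = a_{11}$.
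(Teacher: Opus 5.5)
Your proof is correct and follows the paper's argument. You derive $\zeta^3=0$ from $\zeta^2=-10\eta\zeta$ and $\eta^2=0$, truncate $\chi_A$ at order two using $\chi_A(x)=-x+a_{11}x^2+O(x^3)$, and substitute $\zeta^2=-10\eta\zeta$. You also justify the quadratic coefficient $a_{11}$, which the paper simply uses, and your sign $100\eta^2\zeta$ is the right one; the paper writes $-100\eta^2\zeta$, which is harmless since the term vanishes.
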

\begin{proof}
    The projective bundle relation implies $\zeta^3 = \zeta \cdot \zeta^2 = \zeta (-10\eta\zeta) = -10\eta(-10\eta\zeta) = -100\zeta\cdot \eta^2 = 0$. Therefore, the formal inverse simplifies to $\chi_A(\zeta) = -\zeta + a_{11}\zeta^2 + 0 = -\zeta - 10a_{11}\eta\zeta$. 
\end{proof}
We now compute the fundamental class of the twisted cubic which will be needed for the excess intersection formula.
\begin{proposition}[Fundamental class of the twisted cubic]\label{prop:fundamental-class-of-twisted-cubic}
  Let $\alpha = c_1^A(\O_{\PP^3}(1)) \in A^1(\PP^3)$ be the fundamental class of a hyperplane.  The $A$-theoretic fundamental class of the twisted cubic in $\PP^3$ is $[C]_A = 3\alpha^2 + 2a_{11}\alpha^3$.
\end{proposition}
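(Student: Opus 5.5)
The plan is to determine $[C]_A=\nu_{3*}1_C\in A^2(\PP^3)$ by pushing forward to a point. By the projective bundle formula (\Cref{thm:projective-bundle-formula}), $A^\bullet(\PP^3)=A^\bullet(\pt)[\alpha]/(\alpha^4)$ is free over $A^\bullet(\pt)$ on $1,\alpha,\alpha^2,\alpha^3$. So every element of $A^2(\PP^3)$ can be written uniquely as $a\alpha^2+b\alpha^3$ with $a\in A^0(\pt)$ and $b\in A^{-1}(\pt)$; the terms $c\cdot 1$ and $d\alpha$ would need coefficients in $A^{2}(\pt)$ and $A^{1}(\pt)$, which vanish because the $a_{ij}$ generating the coefficients have nonpositive degree. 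It therefore suffices to find two independent linear conditions on $(a,b)$. I will obtain them from the structure map $\pi_{\PP^3}\colon\PP^3\to\pt$, applied to $[C]_A\cdot\alpha^k$ for $k=0,1$.

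First, compute the two sides on $\PP^3$ with \Cref{cor:projective-space-residue}. It gives $\pi_{\PP^3*}(\alpha^m)=\omega_{3-m}$, so $\pi_*(\alpha^2)=\omega_1=-a_{11}$, $\pi_*(\alpha^3)=1$, and $\pi_*(\alpha^4)=0$. Hence
\[
\pi_*([C]_A)=-a_{11}a+b,\qquad \pi_*([C]_A\cdot\alpha)=a.
\]

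Second, compute the same quantities through $C$. By the projection formula (PF), $[C]_A\cdot\alpha^k=\nu_{3*}(\nu_3^*\alpha^k)$. By functoriality of pushforward (A1), $\pi_{\PP^3*}\nu_{3*}=\pi_{C*}$. We already know $\nu_3^*\alpha=[3]_F\eta=3\eta$, using $\eta^2=0$. Applying \Cref{cor:projective-space-residue} to $C\cong\PP^1$ gives $\pi_{C*}(1)=\omega_1=-a_{11}$ and $\pi_{C*}(\eta)=\omega_0=1$. Therefore
\[
\pi_*([C]_A)=-a_{11},\qquad \pi_*([C]_A\cdot\alpha)=3.
\]

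Comparing the two computations gives $a=3$, and then $b=-a_{11}+3a_{11}=2a_{11}$. This yields $[C]_A=3\alpha^2+2a_{11}\alpha^3$, as claimed. The computation is linear, so I do not expect any step to be a serious obstacle. The point needing the most care is the justification that these two pushforwards determine the class. This holds because $A^2(\PP^3)$ is free on $\alpha^2,\alpha^3$, and the resulting system in $(a,b)$ is triangular with unit diagonal, so it has the unique solution found above. A secondary point is keeping the sign and degree conventions for $\omega_r$ consistent between $\PP^1$ and $\PP^3$; both use the same corollary, so this is uniform.
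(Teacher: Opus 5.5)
This is essentially the paper's proof. You expand $[C]_A$ in $\alpha^2,\alpha^3$ and read off the coefficients from $\pi_{\PP^3*}([C]_A\alpha)$ and $\pi_{\PP^3*}([C]_A)$, computed via the projection formula, $\pi_{\PP^3*}\nu_{3*}=\pi_{C*}$, and \Cref{cor:projective-space-residue}, which is exactly what the paper does.

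One correction to your justification of the expansion, a step the paper also takes tacitly. $A^\bullet(\pt)$ need not be generated by the $a_{ij}$ and can be nonzero in positive degrees; for example, $\beta^{-1}\in A^1(\pt)$ for the paper's periodic $K$-theory. The fix stays within your triangular argument: also push forward $[C]_A\alpha^3$ and $[C]_A\alpha^2$. The $C$-side of both vanishes because $\nu_3^*\alpha^2=9\eta^2=0$, which forces the possible $1$ and $\alpha$ components of $[C]_A$ to be zero.
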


\begin{proof}
    This is an application of Quillen's residue formula (see \Cref{cor:projective-space-residue}). Let $[C]_A = i_*1_C$ be the fundamental class of $C$. Since $[C]_A \in A^2(\PP^3)$ and $\alpha^4 = 0$, it expands uniquely as $[C]_A=b_2\alpha^2+b_3\alpha^3$. Let $\pi_3\colon\PP^3\to\Spec k$ and $\pi_C\colon C\to\Spec k$ be the structural morphisms. By the projection formula and the functoriality $\pi_{C,*} = \pi_{3,*}\circ i_*$, we have
\(
\pi_{3,\ast}([C]_A\alpha)=\pi_{3,\ast}(i_*1_C\cdot \alpha) = \pi_{C,\ast}(i^\ast\alpha).
\)
Because $\alpha^4=0$, we have $[C]_A \cdot \alpha=b_2\alpha^3$, so
\(
b_2=\pi_{3\ast}(b_2\alpha^3)=\pi_{3,\ast}([C]_A\alpha)=\pi_{C,\ast}(i^\ast\alpha)=\pi_{C,*}([3]_{F_A}\eta).
\)
Since $\eta^2 = 0$, we see that $[3]_{F_A}$ only has the linear part, so $[3]_{F_A}\eta = 3\eta$. Using Corollary~\ref{cor:projective-space-residue} for $\PP^1$ gives $\pi_{C\ast}(\eta)=1$. Hence, $b_2 = 3$. Again by the residue formula, we have \(
\pi_{3\ast}([C]_A)=b_2\,\pi_{3\ast}(\alpha^2)+b_3\,\pi_{3\ast}(\alpha^3)=b_2\omega_1+b_3,
\) while $\pi_{3\ast}([C]_A)=\pi_{C\ast}(1)=\omega_1$. Hence, $b_3=(1-b_2)\omega_1=-2\omega_1$.
\end{proof}

\begin{remark}
    Specializing to $K_0$ by setting $a_{11}\mapsto -1$, we recover the fundamental class $[\O_C] = 3\alpha^2 - 2\alpha^3$ for $\alpha:= 1-[\O_{\PP^3}(-1)]$, manifested by the resolution of the structure sheaf \[0 \to \mathcal{O}_{\PP^3}(-3)^{\oplus 2} \to \mathcal{O}_{\PP^3}(-2)^{\oplus 3} \to \O_{\PP^3} \to \O_C\to 0 .\]
\end{remark}

\begin{theorem}[Oriented cohomology ring of $Bl_C\PP^3$]\label{thm:oriented-coh-of-BLCP3}
Let $A^\bullet$ be an oriented cohomology theory. Let $C\subset \mathbb P^3$ be the twisted cubic and $i=v_3$ be the inclusion map, let
\(
\pi\colon \Bl_{C}\mathbb P^3 \to \mathbb P^3
\)
be the blowup, let $j\colon E\hookrightarrow \Bl_{C}\mathbb P^3$ be the exceptional divisor, and let $p\colon E\to C$ be the projection. With slight abuse of notation, let
\(
\alpha:=\pi^*c_1^A\!\big(\mathcal O_{\mathbb P^3}(1)\big)\in A^1(\Bl_{C}\mathbb P^3),\;
e:=j_*(1)\in A^1(\Bl_{C}\mathbb P^3),
\)
\(
x:=j_*\!\big(p^*c_1^A(\mathcal O_{C}(1))\big)\in A^2(\Bl_{C}\mathbb P^3),\) and \(
z:=j_*\!\big(\zeta\big)\in A^2(\Bl_{C}\mathbb P^3),
\)
where $\zeta:=c_1^A(\mathcal O_E(1))\in A^1(E)$. 
Then, there is an isomorphism of graded $A^\bullet(\pt)$-algebras
\[
A^\bullet(\Bl_{C}\mathbb P^3)\ \cong\
\frac{A^\bullet(\pt)[\alpha,e,x,z]}{
\begin{aligned}
&(\alpha^4,\ \alpha^2e,\ \alpha e-3x,\ \alpha x,\ \alpha z-3\alpha^3,\ z^2,\ x^2, \ xz,\\
&\ e^2+z+10a_{11}\alpha^3,\ ex+\alpha^3,\ ez-10\alpha^3,\ 3\alpha^2-z-10x-8a_{11}\alpha^3).
\end{aligned}}
\]
\end{theorem}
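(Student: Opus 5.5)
We argue in three stages: build a surjection from the polynomial ring, check that the twelve listed relations hold, and then show that they generate the whole kernel by a rank count.

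\textbf{Stage 1: a candidate basis.} Apply \Cref{thm:blowup-formula-additive} with $r=2$. It gives $A^\bullet(\Bl_C\PP^3)\cong \pi^*A^\bullet(\PP^3)\oplus j_*p^*A^\bullet(C)$. Since $A^\bullet(\PP^3)$ is free on $1,\alpha,\alpha^2,\alpha^3$ and $A^\bullet(C)$ is free on $1,\eta$, the target is a free graded $A^\bullet(\pt)$-module with basis $1,\alpha,\alpha^2,\alpha^3,e,x$. In particular the map $A^\bullet(\pt)[\alpha,e,x,z]\to A^\bullet(\Bl_C\PP^3)$ is surjective.

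\textbf{Stage 2: the twelve relations hold.} The inputs are \Cref{prop:twisted-cubic-PBF}, \Cref{prop:twisted-cubic-formal-inverse} and \Cref{prop:fundamental-class-of-twisted-cubic}, fed into relations (i)--(iv) of \Cref{thm:blowup-presentation}.
\begin{itemize}
\item Relation (ii) uses $i^*\alpha=3\eta$ and $\eta^2=0$ in $A^\bullet(E)$. It gives $\alpha e=3x$, $\alpha x=0$, $\alpha^2e=0$ and $\alpha z=3j_*(\eta\zeta)$.
\item Relation (iv) with $\theta=\eta$ evaluates $j_*(\eta\zeta)$. First, $i_*(3\eta)=i_*i^*\alpha=\alpha\cdot[C]_A=3\alpha^3$. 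To conclude $i_*\eta=\alpha^3$ without dividing by $3$, I would instead compute $i_*\eta$ directly in the basis $\alpha^2,\alpha^3$ of $A^{\ge 2}(\PP^3)$, using the residue formula as in \Cref{prop:fundamental-class-of-twisted-cubic}.
\item Next compute $c_1^A(\mathcal Q)$. From $c_t(\mathcal Q)=(1+10\eta t)/(1+\chi_A(\zeta)t)$ we get $c_1^A(\mathcal Q)=10\eta-\chi_A(\zeta)=\zeta+10\eta+10a_{11}\eta\zeta$. Hence $\pi^*i_*\eta=j_*(\eta\zeta)$, so $j_*(\eta\zeta)=\alpha^3$. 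This yields $\alpha z=3\alpha^3$.
\item Relation (iv) with $\theta=1$ gives $\pi^*[C]_A=z+10x+10a_{11}\alpha^3$. Substituting $[C]_A=3\alpha^2+2a_{11}\alpha^3$ gives the last relation.
\item Relation (iii) uses $\chi_A(\zeta)=-\zeta-10a_{11}\eta\zeta$ together with $\zeta^2=-10\eta\zeta$, $\eta^2=0$, $\zeta^3=0$ in $A^\bullet(E)$. It gives
\[
e^2=j_*(\chi_A(\zeta))=-z-10a_{11}\alpha^3,\qquad ex=j_*(-\eta\zeta)=-\alpha^3,\qquad ez=j_*(-\zeta^2)=10\alpha^3,
\]
and $x^2=xz=z^2=0$.
\item Relation (i) gives $\alpha^4=0$.
\end{itemize}

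\textbf{Stage 3: no further relations.} Let $Q$ be the displayed quotient. The last relation expresses $z$ in terms of $\alpha$, $x$ and $\alpha^3$. The other relations reduce every monomial in $\alpha,e,x$ to an $A^\bullet(\pt)$-combination of $1,\alpha,\alpha^2,\alpha^3,e,x$:
\begin{itemize}
\item $e^2$ reduces via $e^2=-z-10a_{11}\alpha^3$ and the expression for $z$;
\item $\alpha e=3x$;
\item products of $\alpha$ or $x$ with $x$ vanish or become multiples of $\alpha^3$.
\end{itemize}
So $Q$ is generated as a module by these six elements. This gives a graded surjection $A^\bullet(\pt)^6\to Q\to A^\bullet(\Bl_C\PP^3)$, and the composite is the basis isomorphism from Stage 1. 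Hence $Q\to A^\bullet(\Bl_C\PP^3)$ is injective, so it is an isomorphism.

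\textbf{Main obstacle.} I expect the delicate step to be the bookkeeping in Stage 2. One must get the formal-inverse and $c_1(\mathcal Q)$ signs right, and compute $i_*\eta$ honestly rather than dividing by $3$, since $A^\bullet(\pt)$ may have torsion. I would redo that last computation by the residue/projection-formula argument: $\pi_{3*}(i_*\eta)=\pi_{C*}(\eta)=1$ and $i_*\eta\cdot\alpha=i_*(3\eta^2)=0$ together pin down $i_*\eta=\alpha^3$.
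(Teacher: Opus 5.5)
Your proof is correct. Stage 2 reproduces the paper's computation: the same inputs and the same twelve relations. The difference is in how you show that no relations are missing.

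The paper feeds the basis $\{1,\eta,\zeta,\eta\zeta\}$ of $A^\bullet(E)$ into relations (i)--(iv) of \Cref{thm:blowup-presentation} and declares the collected relations a presentation. Exhaustiveness is thus inherited from the kernel computation in \Cref{lem:blowup-presentation-implicit}. The passage from the star-product presentation of $A^\bullet(\PP^3)\oplus A^\bullet(E)$ to a presentation of $A^\bullet(\pt)[\alpha,e,x,z]$ is left implicit. That passage includes absorbing the relations internal to $A^\bullet(E)$ and the identification $j_*(\eta\zeta)=\alpha^3$.

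You use only the easy direction, namely that the relations hold. You then prove completeness directly. The quotient $Q$ is spanned by $1,\alpha,\alpha^2,\alpha^3,e,x$, and these map onto the $A^\bullet(\pt)$-basis $\pi^*\{1,\alpha,\alpha^2,\alpha^3\}\cup j_*p^*\{1,\eta\}$. That basis comes from the additive decomposition, in the internal form the paper itself uses inside \Cref{lem:blowup-presentation-implicit}.

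What your route buys:
\begin{itemize}
\item It is self-contained and makes the final step genuinely checkable.
\item It never needs the kernel computation.
\item It yields an explicit basis of $A^\bullet(\Bl_C\PP^3)$.
\end{itemize}
The spanning check amounts to closure of the span under multiplication by $\alpha$, $e$, $x$, with $z$ eliminated by the last relation. Your listed relations cover this, though the step $ex=-\alpha^3$ is left implicit in your summary. The paper's route is more mechanical and scales better to examples like $\Bl_V\PP^5$, where a hand spanning check is heavier.

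Your computation of $i_*\eta$ is also a valid alternative to the paper's identification of $\eta$ with a point class. It uses $\alpha\cdot i_*\eta=i_*(3\eta^2)=0$ together with $\pi_{3*}(i_*\eta)=\pi_{C*}(\eta)=1$. One small point: $i_*\eta$ lies in $A^3(\PP^3)$, so a priori it has components along $1,\alpha,\alpha^2$ with coefficients in $A^{>0}(\pt)$. These are not confined to the span of $\alpha^2,\alpha^3$ as your first bullet suggests. The vanishing of $\alpha\cdot i_*\eta$ kills those coefficients anyway, so your final argument is fine.
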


\begin{proof}
The ring $A^\bullet(\Bl_{C}\mathbb P^3)$ is generated over
$A^\bullet(\pt)$ by the pullback $\alpha$ and pushforwards
$j_*\gamma$ for all $\gamma\in \{1, \eta, \zeta, \eta\zeta\} \subseteq A^\bullet(E)$, subject to the four families of relations (i)--(iv) in \cref{thm:blowup-presentation}.

\begin{enumerate}[(1)]
\item (\emph{Intersection on $\mathbb P^3$}). Relation \emph{(i)} straightforwardly yields $\alpha^4=0$.

\item (\emph{Intersection between $\mathbb P^3$ and $E$}).
Relation \emph{(ii)} gives $\alpha^m\cdot j_*\gamma=j_*\big(\gamma\cdot p^*i^*(\alpha^m)\big)$ for all $m\ge 0$ and $\gamma\in A^\bullet(E)$.
Since $C$ is embedded via $\mathcal O_{\mathbb P^1}(3)$, we have $i^*\alpha=c_1^A\!\big(\mathcal O_{\mathbb P^1}(3)\big)=[3]_F(\eta)=3\eta$, because $\eta^2=0$ on $C$.
Writing $e:=j_*(1)$, $x:=j_*(\eta)$, and $z:=j_*(\zeta)$, we obtain $\alpha e=j_*\big(p^*i^*\alpha\big)=j_*(3\eta)=3x$ and $\alpha^2e=j_*\big(p^*i^*(\alpha^2)\big)=j_*\big((3\eta)^2\big)=0$; similarly $\alpha x=j_*(\eta\cdot 3\eta)=0$.
Moreover, $\alpha z=j_*\big(\zeta\cdot p^*i^*\alpha\big)=j_*(3\eta\zeta)$, and relation \emph{4.} below identifies $j_*(\eta\zeta)=\alpha^3$, hence $\alpha z=3\alpha^3$.

\item (\emph{Intersection on $E$}).
Relation \emph{(iii)} gives $j_*\gamma\cdot j_*\delta=j_*\big(\gamma\delta\chi_A(\zeta)\big)$ for $\gamma,\delta\in A^\bullet(E).$
Using \Cref{prop:twisted-cubic-PBF} and \Cref{prop:twisted-cubic-formal-inverse}, and exhausting all choices of $\gamma,\delta\in \{1,\eta,\zeta,\eta\zeta\}$, we obtain $e^2=-z-10a_{11}\alpha^3$, $ex=-\alpha^3$, $ez=10\alpha^3$, and $x^2=z^2=xz=0$.

\item (\emph{Excess intersection}).
Since $\codim(C,\mathbb P^3)=2$, the bundle $\mathcal Q:=p^*\mathcal N_{C/\mathbb P^3}/\mathcal O_E(-1)$ is a line bundle, and relation \emph{(iv)} reads $\pi^*i_*(\theta)=j_*\!\big(c_1^A(\mathcal Q)\cdot p^*\theta\big)$ for all $\theta\in A^\bullet(C)$.
Using $\mathcal N_{C/\mathbb P^3}\cong \mathcal O_{\mathbb P^1}(5)\oplus \mathcal O_{\mathbb P^1}(5)$ and $\eta^2=0$, we have $c_1^A(\mathcal N_{C/\mathbb P^3})=10\eta$ and $c_2^A(\mathcal N_{C/\mathbb P^3})=0$. Hence, $c_1^A(\mathcal Q)=\zeta+10\eta+10a_{11}\eta\zeta$ in $A^\bullet(E)$.
Since $A^\bullet(C)\cong A^\bullet(\pt)\cdot\{1_C, \eta\}$ additively, it remains to determine $i_*(1_C)$ and $i_*(\eta)$. By \Cref{prop:fundamental-class-of-twisted-cubic}, $i_*1 = 3\alpha^2 + 2a_{11}\alpha^3$. Since $\eta$ pushes forward to the fundamental class of a point, we have $i_*(\eta) = \alpha^3$. Substituting this and $\pi^*i_*(1_C) = 3\alpha^2 + 2a_{11}\alpha^3$ into the $\theta=1$ equation above yields
\(
3\alpha^2 + 2a_{11}\alpha^3 = z+10x+10a_{11}\alpha^3,
\)
which simplifies to $3\alpha^2 = z+10x+8a_{11}\alpha^3$.
For $\theta=\eta$, the excess intersection formula gives $\pi^*i_*(\eta)=j_*\bigl((\zeta+10\eta+10a_{11}\eta\zeta)\eta\bigr)$. Since $\eta^2=0$, the right-hand side simplifies to $j_*(\eta\zeta)$. Substituting $\pi^*i_*(\eta) = \alpha^3$, we obtain the identity $j_*(\eta\zeta) = \alpha^3$.

\end{enumerate}

Collecting the generators $\alpha,e,x,z$ and the relations obtained from \emph{1-4}, produces the stated quotient presentation of $A^\bullet(\Bl_{C}\mathbb P^3)$ over $A^\bullet(\pt)$.
\end{proof}

\begin{remark}
Although \Cref{thm:oriented-coh-of-BLCP3} utilizes codimension-$2$ generators $x$ and $z$, the oriented cohomology ring $A^\bullet(\Bl_C\PP^3)$ is actually generated entirely by the divisor classes $\alpha$ and $e$ over $A^\bullet(\pt)$. The relations allow us to explicitly write $z$ and $x$ as follows:
\begin{align*}
    z &= -e^2 - 10a_{11}\alpha^3, \\
    x &= 10x - 3(3x) = 3\alpha^2 + e^2 - 3\alpha e + 2a_{11}\alpha^3.
\end{align*}
Thus, all generators are polynomials in the divisors $\alpha$ and $e$.
\end{remark}

We note that since Chow theory and $K$-theory are Landweber exact (\Cref{eg:landweber-exact-theories}), tensoring with $\ZZ$ by sending $a_{11}$ to $0$ in the above Blowup Presentation recovers the Chow ring $CH^\bullet(Bl_C\PP^3)$. Tensoring with $\ZZ[\beta, \beta^{-1}]$ by setting $a_{11}\mapsto -\beta$ recovers $K_0(Bl_C\PP^3)[\beta, \beta^{-1}]$. In particular, specializing to $\beta \mapsto 1$ yields a Blowup Presentation for the $K_0$-ring. 

\paragraph{Counting secant lines.} Let $\operatorname{Sect}(C) \cong \PP^2$ be the secant variety of $C\subseteq \PP^3$. It is well-known that $Bl_C\PP^3 \to \operatorname{Sect}(C)$ is the universal secant line to $C$. We can use this moduli interpretation of $Bl_C\PP^3$ to solve the following enumerative problem. 

\begin{quote}
   \begin{itemize}
       \item[$\dagger$]  How many secant lines to $C$ meet two general lines in $\PP^3$?
   \end{itemize}
\end{quote}
A proof using classical Schubert calculus on the Grassmannian $Gr(2,4)$ can be found in \Cref{apx:4-secant-lines}. Here, we detail the proof using an arbitrary oriented cohomology ring.

\begin{theorem}\label{thm:4-secant-lines}
Let $C\subset \PP^3$ be a twisted cubic and let $L_1,L_2\subset \PP^3$ be two general lines. Then there are exactly $4$ secant lines to $C$ meeting both $L_1$ and $L_2$. 
\end{theorem}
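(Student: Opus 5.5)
The plan is to use the moduli interpretation of $\Bl_C\PP^3$ as the universal secant line. Concretely, we work with the morphism $\varphi\colon \Bl_C\PP^3\to \operatorname{Sect}(C)\cong\PP^2$, given by the net of quadrics through $C$, so that $\varphi^*\O_{\PP^2}(1)\cong \O(2H-E)$. The fibers of $\varphi$ are the strict transforms of the secant lines. For a line $L\subset\PP^3$, the secant lines meeting $L$ form a curve $D_L\subset\PP^2$. We first check that $D_L$ is a conic: under $\operatorname{Sect}(C)\cong\operatorname{Sym}^2\PP^1$ the secant lines through a general point of $L$ form a single point, and $L$ meets a general quadric through $C$ in $2$ points. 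Hence $\varphi^*[D_L]_A=c_1^A(\O(4H-2E))=[4]_F\alpha\Fminus[2]_F e$ by \Cref{prop:divisor-sum-fgl} and \Cref{funclass-divisor}.

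The enumerative count then becomes the intersection number $\varphi^*[D_{L_1}]_A\cdot\varphi^*[D_{L_2}]_A\cdot \alpha$, where the extra factor $\alpha$ cuts each fiber line with a general plane. For general $L_1,L_2$ the two conics $D_{L_1},D_{L_2}$ meet transversally in $\PP^2$ at finitely many reduced points. This follows from a Kleiman--Bertini argument, since the family of conics $\{D_L\}$ is base point free as $L$ varies. Each such point corresponds to one secant line meeting both $L_i$, and its fiber meets a general hyperplane transversally in one point. Thus by \Cref{prop:intersection-product} the product equals $N\cdot[\pt]_A$, where $N$ is the desired number. After pushing forward to $\Spec k$, the count is read off from the coefficient of $[\pt]_A=\alpha^3$.

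The algebraic step is to evaluate $([4]_F\alpha\Fminus[2]_Fe)^2\alpha$ in the presentation of \Cref{thm:oriented-coh-of-BLCP3}. First I will show that every monomial of cohomological degree $\ge4$ in $\alpha,e$ vanishes. For example, $\alpha^2e=0$ kills anything containing $\alpha^2e$, and $\alpha e^2=\alpha(-z-10a_{11}\alpha^3)=-3\alpha^3$ gives $\alpha^2e^2=0$ and $\alpha e^3=-3\alpha^3e=0$. Likewise $e^4=(z+10a_{11}\alpha^3)^2=0$ using $z^2=\alpha^3z=\alpha^6=0$, and $e^3\cdot$(anything of positive degree) vanishes similarly. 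Since every FGL correction term in $[4]_F\alpha$, $[2]_Fe$ and in $\Fminus$ has monomial degree $\ge2$, multiplying by the remaining factors of degree $\ge2$ pushes it into degree $\ge4$, so all $a_{ij}$-contributions die. What remains is the Chow-type computation
\[
(4\alpha-2e)^2\alpha=16\alpha^3-16\alpha^2e+4\alpha e^2=16\alpha^3-12\alpha^3=4\alpha^3=4[\pt]_A .
\]
Therefore $N=4$.

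I expect the main obstacle to be the geometric input rather than the algebra. It consists of identifying $\varphi^*[D_L]_A$ with $c_1^A(\O(4H-2E))$ and justifying transversality, so that the $A$-theoretic product really is $N[\pt]_A$ with each point counted once. I would first try to verify $[D_L]=2$ hyperplanes directly on $\operatorname{Sym}^2\PP^1$, as the image of $L$ under the degree-$2$ map. Then I would invoke generic transversality on $\PP^2$ under the $PGL_4$-action moving $L_1,L_2$.
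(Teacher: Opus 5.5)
Your proposal is correct and follows the paper's proof. Both use the universal secant line $\Bl_C\PP^3\to\operatorname{Sect}(C)\cong\PP^2$, identify the relevant divisors with $\O(4H-2E)$, and evaluate $([4]_F\alpha\Fminus[2]_Fe)^2\alpha$ in the presentation of $A^\bullet(\Bl_C\PP^3)$. The differences are that you do the evaluation by hand, noting that every monomial of degree at least $4$ in $\alpha,e$ vanishes so all formal group law corrections drop out (where the paper uses \texttt{Macaulay2}), and that you sketch the birationality and transversality inputs that the paper leaves implicit.
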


\begin{proof}
    The morphism $f:Bl_C\PP^3 \to \operatorname{Sect}(C) \cong \PP^2$ is the resolution of the base locus of the linear system $|\I_C(2)|$. If $H$ is the pullback of a hyperplane from $\PP^3$ \footnote{By this, we mean the effective divisor $1H$ on $Bl_C\PP^3$, not to be confused with the \textit{hyperplane class} $\alpha = [H]_A$ in the oriented cohomology ring.}, then the pullback of $\O_{\PP^2}(1)$ satisfies \begin{equation}
    f^*\O_{\PP^2}(1) \cong \O_{Bl_C\PP^3}(2H-E).
\end{equation}
We need to find the locus $D_{L_1}$ in $Bl_C\PP^3$ corresponding to secant lines meeting a given line $L_1\subseteq \PP^3$. Because $f$ is the universal secant curve, $S_{L_1} = f(L_1)\subseteq \operatorname{Sect}(C)\cong \PP^2$ is the locus of secant lines to $C$ meeting $L_1$. The restriction of the linear system $|\I_C(2)|$ to $L_1$ is the linear system $|\O_{L_1}(2)|$, so $S_{L_1} $ is a plane conic. The locus $D_{L_1}$ then is the pullback $f^*S_{L_1}$, which is a divisor on $Bl_C\PP^3$. It corresponds to the line bundle \begin{equation}
    f^*\O_{\PP^2}(2) \cong \O_{Bl_C\PP^3}(4H-2E).
\end{equation} 
Take two general lines $L_1, L_2\subseteq \PP^3$. Then, the intersection $D_{L_1} \cap D_{L_2}$ corresponds to the universal family over the locus $S_{L_1}\cap S_{L_2}\subseteq \operatorname{Sect}(C)$ consisting of the $4$ secant lines meeting $L_1$ and $L_2$. To recover the secant lines themselves, we need to further intersect with the general pulled-back hyperplane $H$. Therefore, the number of secant lines to $C$ meeting two general lines in $\PP^3$ can be recovered by the fundamental class \[[D_{L_1} \cap D_{L_2} \cap H]_A = [D_{L_1}]_A \cdot [D_{L_2}]_A \cdot [H]_A= ([4]_F \alpha \; -_F\; [2]_F e)^2 \cdot \alpha \quad \in A^3(Bl_C\PP^3).\]
A \texttt{Macaulay2} calculation (\Cref{apx:4-secant-lines-m2}) shows that this class is indeed $4\alpha^3 = 4[\pt]_A$. Therefore, we recover the classical count $4$ by doing intersection theory in an arbitrary oriented cohomology ring of $Bl_C\PP^3$. 
\end{proof}

\section{Oriented Cohomology Ring of the Moduli Space of Complete Conics}\label{sec:oriented-cohomology-of-complete-conics}

In this section, we compute the oriented cohomology ring of the moduli space of stable maps $\Kont$, which is known to be isomorphic to the moduli space of complete conics $\Bl_V\mathbb P^5$, where $V \subset \mathbb P^5$ is the Veronese surface. The moduli space of stable maps is a generalization of the moduli space of stable curves. We refer the reader to \Cref{apx:moduli-of-complete-conics} for a brief exposition to the moduli stack of stable maps $\overline{\M}_{g,n}(X,\beta)$, its coarse moduli space $\overline{M}_{g,n}(X,\beta)$, and in particular $\Kont$. Using the Blowup Presentation of this oriented cohomology ring, we demonstrate how to recover the number $3264$ of conics simultaneously tangent to five general conics in $\PP^2$, using algebraic cobordism. Throughout this section, let $A^\bullet$ be an oriented cohomology theory with formal group law $x+_Fy=x+y+\sum_{i, j \geq 1} a_{i j} x^i y^j$.

\subsection{The Normal Bundle of the Veronese Surface} 

We start by recalling the isomorphism between the moduli space of complete conics and the moduli space of stable maps $\Kont$. 
\begin{theorem}[ \cite{Kock2006}]
    There exists an isomorphism $\Kont \cong Bl_{V}\mathbb P^5$, where $\mathbb P^2 \cong V\hookrightarrow \mathbb P^5$ via the degree $2$ Veronese embedding.
\end{theorem}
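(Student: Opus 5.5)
We would prove the isomorphism $\Kont\cong \Bl_V\PP^5$ by building a morphism $\Kont\to\Bl_V\PP^5$ over $\PP^5$ through the universal property of blowing up, and then checking that it is bijective onto a normal target.

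\textbf{The map to $\PP^5$.} Identify $\PP^5=\PP(\operatorname{Sym}^2 (k^3)^\vee)$ with the space of plane conics, or dually with $\PP(\operatorname{Sym}^2 k^3)$. A degree-$2$ stable map $f\colon C\to\PP^2$ with $C$ of arithmetic genus $0$ is one of three things: an embedding onto a smooth conic, a map from a two-component nodal curve onto a pair of lines, or a double cover of a line branched at two points (possibly with a contracted component). Sending $f$ to the symmetric square of its image cycle $f_*[C]$, viewed in dual coordinates, defines a morphism $\phi\colon\Kont\to\PP(\operatorname{Sym}^2 k^{3\vee})$. In coordinates, a map $C\to\PP^2$ is given by a section of $\O(2)^{\oplus 3}$ on $\PP^1$, and the dual conic is the discriminant of the pulled-back linear forms. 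This construction works in families over the coarse space, for instance by first working on the stack $\overline{\M}_{0,0}(\PP^2,2)$ and then descending. Under $\phi$, the rank $\le 1$ locus, namely the double lines, is exactly the Veronese $V$. Its preimage is the boundary divisor $D$ of maps with reducible domain. Note that the double covers of a line map to rank $2$ dual conics, namely pairs of points.

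\textbf{The lift to $\Bl_V\PP^5$.} By the universal property of the blowup, $\phi$ lifts as soon as $\phi^{-1}(V)$ is an effective Cartier divisor. The stack $\overline{\M}_{0,0}(\PP^2,2)$ is smooth, since $\PP^2$ is convex. The boundary $D$ is a divisor, and I would check that $\phi^*\mathcal I_V=\O(-D)$ by a local computation at a general point of $D$, using the $2\times2$ minors of the dual conic matrix.

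\textbf{Bijectivity.} Over $\PP^5\setminus V$ the lift is bijective, because a rank $3$ or rank $2$ dual conic determines the image conic together with its branch data. Over $V$, the exceptional fiber over a double line $\ell$ is $\PP(\N_{V/\PP^5,\ell})\cong\PP^2$, which parametrizes pairs of branch points on $\ell$, i.e.\ $\operatorname{Sym}^2\ell$. On the other side, the reducible-domain maps with image $\ell$ are determined by their node image together with the ramification data. I would match these two descriptions directly.

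\textbf{Conclusion and main obstacle.} Since $\Bl_V\PP^5$ is smooth and hence normal, and the lift is a bijective proper birational morphism between irreducible varieties, Zariski's main theorem makes it an isomorphism. The main obstacle is the second step. It requires the scheme-theoretic identity $\phi^{-1}(V)=D$ with the correct multiplicity, which is a coordinate computation near double covers and reducible maps, and it requires care with the coarse space of a stack that has automorphisms. Since this statement is cited from \cite{Kock2006}, in the paper itself we would simply invoke that reference.
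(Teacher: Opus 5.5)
Your outline has the same skeleton as the paper's argument in the appendix on complete conics: a morphism to $\PP^5$, a Cartier preimage of $V$, a lift via the universal property of blowing up, bijectivity, and Zariski's main theorem. But your $\phi$ is a different contraction. The paper uses the image-cycle map $\sigma\mapsto\sigma_*[C]$, for which the preimage of $V$ is the divisor of maps whose image is a line. You use the dual-conic map, for which the preimage of the rank $\le 1$ locus is the reducible-domain divisor $D$. That choice is legitimate, since the dual $\PP^5$ with its Veronese surface is isomorphic to the original pair.

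The genuine gap is that your bijectivity step over $V$ describes the fibres of the paper's map, not of yours, and fails as written. For your $\phi$, a point of $V$ is the rank-one dual conic $L\mapsto L(p)^2$ attached to a point $p\in\PP^2$, not a double line $2\ell$. The double covers of a line do not lie over $V$ at all; you send them to rank-two dual conics yourself. Moreover, for fixed $\ell$ the reducible maps with image $\ell$ form only a one-parameter family, indexed by the node. Matching ``pairs of branch points on $\ell$'' with those maps describes the fibre of $\sigma\mapsto\sigma_*[C]$ over $2\ell$, and says nothing about your lift.

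What you need is the following. Write dual conics as $\PP(\operatorname{Sym}^2 W)$ and $p=[v]$. The fibre of $D\to V$ over $p$ is the set of unordered pairs $\{\ell_1,\ell_2\}$ of lines through $p$, with the diagonal being the maps with both components onto one line. You must show this maps bijectively onto the exceptional fibre $\PP\bigl(\operatorname{Sym}^2W/(v\cdot W)\bigr)\cong\PP\bigl(\operatorname{Sym}^2(W/\langle v\rangle)\bigr)$, the space of binary quadrics on the pencil of lines through $p$. A one-parameter degeneration of smooth conics to $\ell_1\cup\ell_2$ shows the lift sends $\{\ell_1,\ell_2\}$ to the quadric vanishing on it. If you would rather keep the branch-point description, you must switch to the image-cycle map, and then Step 2 has to be done for the double-cover divisor instead of $D$.

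Step 2 is also incomplete. The universal property requires the inverse-image ideal $\phi^{-1}\mathcal{I}_V\cdot\mathcal{O}_M$ to be invertible at every point of the coarse space $M$. A computation at a general point of $D$ only identifies it with $\mathcal{O}(-D)$ off a codimension-two set. It cannot exclude an embedded component along the locus where both components map onto one line; compare $(x^2,xy)=x\cdot(x,y)$.

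That locus is exactly where the stack acquires the $\mathbb{Z}/2$ swapping the components. So passing from the smooth stack to $M$ needs a check that this involution fixes the local equation of $D$ (in the local model $xy=t$ it fixes $t$). Alternatively, as the paper does, you can invoke Kock's theorem that $M$ is smooth.

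Finally, on $D$ the domain is nodal, so ``discriminant of the pulled-back binary forms'' does not define $\phi$ there. A uniform definition is to take the dual of $C$ in its own embedding by $|f^*\mathcal{O}(1)|$ and pull it back along $W^\vee\to H^0(C,f^*\mathcal{O}(1))$.
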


Accordingly, we consider the following blowup square.

\[
\begin{tikzcd}
E \ar[r,"j"] \ar[d,"p"'] & \Bl_V\PP^5 \ar[d,"\pi"] \\
V\ar[r,"{\nu_2}"'] & \PP^5
\end{tikzcd}
\]

\begin{proposition}[Additive blowup decomposition of $\Kont$] For a given oriented cohomology theory $A^\bullet$, we have that $A^m(\Kont)\cong\;A^m(\mathbb P^5)\;\oplus\;\bigoplus_{k=1}^{2}A^{m-k}(\mathbb P^2)$. In particular, since all the spaces involved are projective spaces, by the projective bundle formula, $A^\bullet(\Kont)$ is a finitely generated free $A^\bullet(pt)$-module of rank $12$.
    
\end{proposition}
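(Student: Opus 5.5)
The plan is to transport the additive blowup decomposition of \Cref{thm:blowup-formula-additive} along the isomorphism $\Kont \cong \Bl_V\PP^5$ of \cite{Kock2006}. Since $A^\bullet$ is a contravariant functor, an isomorphism of smooth schemes induces an isomorphism $A^\bullet(\Kont)\cong A^\bullet(\Bl_V\PP^5)$ of graded $A^\bullet(\pt)$-algebras. It therefore suffices to decompose $A^\bullet(\Bl_V\PP^5)$.

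For the blowup, I would apply \Cref{thm:blowup-formula-additive} to the blowup square above, with $X=\PP^5$, $Z=V\cong\PP^2$ and $i=\nu_2$. The Veronese embedding is a closed immersion of smooth schemes, hence regular, and its codimension is $r=5-2=3\ge 2$. This gives, for every $m$,
\[
A^m(\Bl_V\PP^5)\cong A^m(\PP^5)\oplus A^{m-1}(V)\oplus A^{m-2}(V),
\]
which is exactly the sum $\bigoplus_{k=1}^{2}A^{m-k}(\PP^2)$ in the statement.

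For the rank count, I would apply the projective bundle formula \eqref{eq:projective-bundle-formula-additive} to $\PP^n$, viewed as the projectivization of the trivial rank-$(n+1)$ bundle over $\pt$. This shows that $A^\bullet(\PP^n)$ is free over $A^\bullet(\pt)$ on $1,h,\dots,h^n$, so it has rank $n+1$. Hence $A^\bullet(\PP^5)$ has rank $6$ and each copy of $A^\bullet(\PP^2)$ has rank $3$. Summing over all degrees, $A^\bullet(\Kont)$ is free of rank $6+3+3=12$ over $A^\bullet(\pt)$. It is finitely generated, because the direct sum of finitely many free modules of finite rank is again free of finite rank.

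The only point requiring care is that the shifts $A^{m-k}$ only regrade the summands. They do not change the fact that each summand is free, so the total module is graded-free of rank $12$, with basis elements in degrees $0,\dots,5$ from $\PP^5$ together with degrees $1,2,3$ and $2,3,4$ from the two copies of $\PP^2$. I do not expect a genuine obstacle here; the result is a direct consequence of earlier theorems.
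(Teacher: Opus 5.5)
Your proposal is correct and follows the same argument as the paper: transport along $\Kont\cong\Bl_V\PP^5$, apply \Cref{thm:blowup-formula-additive} with $X=\PP^5$ and $Z=V\cong\PP^2$ of codimension $3$, and then count ranks $6+3+3=12$ using the projective bundle formula. The paper's proof is a one-line citation of these two facts, so your version just supplies the details (the codimension, the rank count, and the degrees of the basis elements) that it leaves implicit.
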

\begin{proof}
    This follows directly from \Cref{thm:blowup-formula-additive} and the identification $\Kont \cong \Bl_V\mathbb P^5$.
\end{proof}
We now identify the normal bundle of the Veronese surface $V$ in $\mathbb P^5$, with the symmetric square of the tangent bundle of $\mathbb P^2$.
\begin{proposition}
Let
\(
\nu_2 \colon \mathbb{P}^2 \hookrightarrow \mathbb{P}^5
\)
be the quadratic Veronese embedding, and let \(V=\nu_2(\mathbb{P}^2)\subset \mathbb{P}^5\) be the Veronese surface. The normal bundle of this embedding is
\[
\N_{V/\PP^5} \cong \operatorname{Sym}^2\!\bigl(\mathcal{T}_{\PP^2}\bigr).
\]

\end{proposition}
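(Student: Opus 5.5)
We identify the normal bundle by writing both $\nu_2^*\mathcal T_{\PP^5}$ and $\mathcal T_{\PP^2}$ in terms of a single rank-$3$ bundle, and then taking the quotient. Write $\PP^2=\PP(W)$ with $W$ a $3$-dimensional vector space, so $\PP^5=\PP(\operatorname{Sym}^2W)$ and $\nu_2$ is $[w]\mapsto[w^2]$. Use the Euler-sequence description $\mathcal T_{\PP(U)}\cong \mathcal{H}om(\O(-1),U\otimes\O/\O(-1))$. Since $\nu_2^*\O_{\PP^5}(-1)=\O_{\PP^2}(-2)=\O(-1)^{\otimes 2}$, we get
\[
\nu_2^*\mathcal T_{\PP^5}\cong \mathcal{H}om\bigl(\O(-2),\ \operatorname{Sym}^2W\otimes\O/\O(-2)\bigr).
\]

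Let $K:=W\otimes\O/\O(-1)$, so that $\mathcal T_{\PP^2}\cong K\otimes\O(1)$. The tautological line $\O(-1)\subset W\otimes\O$ induces a filtration of $\operatorname{Sym}^2(W\otimes\O)$:
\[
\O(-2)\ \subset\ \O(-1)\cdot W\ \subset\ \operatorname{Sym}^2W\otimes\O .
\]
Its graded pieces are $\O(-2)$, then $\O(-1)\otimes K$, then $\operatorname{Sym}^2K$. This is the standard filtration of the symmetric square of an extension $0\to L\to V\to K\to 0$. The differential of $\nu_2$ at $[w]$ sends a tangent vector $v$ to $2wv$, which is an isomorphism onto the middle piece since $\operatorname{char}k=0$. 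Hence $\mathcal T_{\PP^2}\cong\mathcal{H}om(\O(-2),\O(-1)\otimes K)$ sits inside $\nu_2^*\mathcal T_{\PP^5}$ as the middle graded piece.

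It follows that
\[
\N_{V/\PP^5}\cong \mathcal{H}om\bigl(\O(-2),\operatorname{Sym}^2K\bigr)=\operatorname{Sym}^2K\otimes\O(2)\cong\operatorname{Sym}^2(K\otimes\O(1))=\operatorname{Sym}^2\mathcal T_{\PP^2}.
\]

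The main obstacle is making the identification of the middle graded piece with the image of $d\nu_2$ precise and canonical. This amounts to checking that the map of Euler sequences induced by $w\mapsto w^2$ is compatible with the filtration and that it is a map of bundles rather than merely a pointwise statement. I would first verify it fibrewise in coordinates, using $w=e_0$ and a tangent direction $e_i$, which gives $(e_0+te_i)^2=e_0^2+2te_0e_i+\dots$. I would then note that all maps are $GL(W)$-equivariant, so the identification globalizes.

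As a sanity check, I would compare Chern classes via Whitney. On the one hand $c(\nu_2^*\mathcal T_{\PP^5})=(1+2h)^6$ and $c(\mathcal T_{\PP^2})=(1+h)^3$. On the other hand, the Chern roots of $\mathcal T_{\PP^2}$ give $c(\operatorname{Sym}^2\mathcal T)$, and this should equal the quotient $(1+2h)^6/(1+h)^3=1+9h+30h^2$.
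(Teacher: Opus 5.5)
Your proposal is correct and follows essentially the same route as the paper: both use the filtration of $\operatorname{Sym}^2 W\otimes\mathcal{O}$ induced by the tautological line, with graded pieces $\mathcal{O}(-2)$, $\mathcal{O}(-1)\otimes K$, $\operatorname{Sym}^2 K$, and identify the quotient of $\nu_2^*\mathcal{T}_{\mathbb{P}^5}$ by the middle piece with $\operatorname{Sym}^2\mathcal{T}_{\mathbb{P}^2}$. Your explicit check that $d\nu_2$ sends $v$ to $2wv$, and so maps isomorphically onto the middle piece when $\operatorname{char} k = 0$, supplies a step the paper only asserts through ``comparison with the normal bundle sequence.''
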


\begin{proof}
Let \(W\) be a three-dimensional vector space, such that \(\mathbb{P}^2=\mathbb{P}(W)\) and \(\mathbb{P}^5=\mathbb{P}(\operatorname{Sym}^2 W)\). Let
\[
0 \to \mathcal{L} \to W \otimes \mathcal{O}_{\mathbb{P}^2} \to \mathcal{Q} \to 0
\]
be the universal exact sequence on \(\mathbb{P}^2\), with \(\mathcal{L}:= \mathcal{O}_{\mathbb{P}^2}(-1)\). Then the tangent bundle is given by
\[
\mathcal{T}_{\PP^2}\cong \operatorname{Hom}(\L, \Q) \cong \mathcal{L}^{\vee}\otimes \mathcal{Q}.
\]

The quadratic Veronese map sends a line \([\ell]\subset W\) to the line \([\ell^2]\subset \operatorname{Sym}^2 W\). Hence the pullback of the tautological line bundle on \(\mathbb{P}(\operatorname{Sym}^2 W)\) is \(\mathcal{L}^2\), and therefore
\[
\nu_2^*\T_{\mathbb{P}^5}
\cong
(\mathcal{L}^2)^{\vee}\otimes
\frac{\operatorname{Sym}^2 W \otimes \mathcal{O}_{\mathbb{P}^2}}{\mathcal{L}^2}.
\]

Taking the symmetric square of the universal sequence yields the standard filtration on \(\operatorname{Sym}^2 W \otimes \mathcal{O}_{\mathbb{P}^2}\) whose successive quotients are
\(
\mathcal{L}^2,
\mathcal{L}\otimes \mathcal{Q},\) and \(
\operatorname{Sym}^2 \mathcal{Q}.
\)
Quotienting by \(\mathcal{L}^2\) and tensoring with \((\mathcal{L}^2)^{\vee}\) gives
\[
0 \to \mathcal{L}^{\vee}\otimes \mathcal{Q}
\to
\nu_2^{*}\mathcal{T}_{\mathbb{P}^5}
\to
(\mathcal{L}^2)^{\vee}\otimes \operatorname{Sym}^2 \mathcal{Q}
\to 0.
\]
Since we have the identifications \(\mathcal{L}^{\vee}\otimes \mathcal{Q}\cong \mathcal{T}_{\PP^2}\) and $(\mathcal{L}^2)^{\vee}\otimes \operatorname{Sym}^2 \mathcal{Q} \cong \operatorname{Sym}^2(\L^{\vee} \otimes \Q)$, comparison with the normal bundle sequence
\[
0 \to \mathcal{T}_{\PP^2}
\to
\nu_2^*\T_{\mathbb{P}^5}
\to
\N_{V/\PP^5}
\to 0
\]
proves the asserted description of the normal bundle.
\end{proof}

\subsection{The Exceptional Divisor and its Oriented Cohomology}
 We use the description of the normal bundle to compute the oriented cohomology ring of the exceptional divisor in $\Bl_V\mathbb P^5$. We interchangeably use $i=v_2$ for the Veronese embedding of $\mathbb P^2$ inside $\mathbb P^5$.
\begin{lemma}
 The pullback map $i^* : A^\bullet(\mathbb P^5) \to A^\bullet(\mathbb P^2)$ is not surjective for a general theory $A^\bullet$.
\end{lemma}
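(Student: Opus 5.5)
We will compute the map $i^*\colon A^\bullet(\mathbb P^5)\to A^\bullet(\mathbb P^2)$ explicitly and then exhibit a theory in which its image misses the hyperplane class of $\mathbb P^2$. By the projective bundle formula (\Cref{thm:projective-bundle-formula}), $A^\bullet(\mathbb P^5)\cong A^\bullet(\pt)[\alpha]/(\alpha^6)$ and $A^\bullet(\mathbb P^2)\cong A^\bullet(\pt)[\eta]/(\eta^3)$, where $\alpha=c_1^A(\mathcal O_{\mathbb P^5}(1))$ and $\eta=c_1^A(\mathcal O_{\mathbb P^2}(1))$. The Veronese map satisfies $\nu_2^*\mathcal O_{\mathbb P^5}(1)\cong\mathcal O_{\mathbb P^2}(2)$. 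Naturality of Chern classes and the formal group law then give
\[
i^*\alpha=c_1^A(\mathcal O_{\mathbb P^2}(1)^{\otimes 2})=[2]_F\eta=2\eta+a_{11}\eta^2,
\]
with all higher terms killed by $\eta^3=0$. Since $i^*$ is a ring map of $A^\bullet(\pt)$-algebras, its image is the $A^\bullet(\pt)$-subalgebra generated by $2\eta+a_{11}\eta^2$.

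Next we analyse this image in degree $1$. The module $A^1(\mathbb P^2)$ is free over $A^\bullet(\pt)$ on the basis $\{1,\eta,\eta^2\}$, with coefficients taken in the appropriate degrees. A general element of the image is a polynomial in $i^*\alpha$ with coefficients in $A^\bullet(\pt)$. Expanding $c_0+c_1(2\eta+a_{11}\eta^2)+c_2(2\eta+a_{11}\eta^2)^2$ and reading off the coefficient of $\eta$ shows that it always equals $2c_1$. Consequently, $\eta$ lies in the image only if $1=2c_1$ for some $c_1\in A^0(\pt)$, that is, only if $2$ is invertible in $A^0(\pt)$.

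To conclude, we exhibit a theory where $2$ is not invertible. Take $A^\bullet=CH^\bullet$ or $A^\bullet=\Omega^\bullet$, for which $A^0(\pt)=\mathbb Z$. In both cases the coefficient of $\eta$ in any element of the image is even, so $\eta\notin\operatorname{im}(i^*)$ and $i^*$ is not surjective. This proves the statement, understood as ``$i^*$ fails to be surjective for a general theory'' and in particular for integral theories such as $\Omega^\bullet$, the universal one.

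The only delicate step is making the coefficient extraction rigorous. This requires the uniqueness of the expansion in the basis $\{1,\eta,\eta^2\}$ provided by the projective bundle formula. It also requires noting that the $\eta^2$-terms, including those involving $a_{11}$, cannot contribute to the $\eta$-coefficient.
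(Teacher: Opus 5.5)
Your proposal is correct and takes the same approach as the paper: it uses the projective bundle presentations, computes $i^*\alpha=[2]_F\eta=2\eta+a_{11}\eta^2$, and observes that $\eta$ cannot lie in the image. You go further than the paper's terse ``for a general theory'' by extracting coefficients to show that the $\eta$-coefficient of every element of the image lies in $2A^0(\pt)$, so surjectivity fails whenever $2$ is not a unit in $A^0(\pt)$, as for $CH^\bullet$ or $\Omega^\bullet$.
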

\begin{proof}
 Let $A^\bullet(pt)$ be denoted by $R$. The oriented cohomology rings of $\mathbb P^5$ and $\mathbb P^2$ are given by $R[\alpha]/\alpha^6$ and $R[\eta]/\eta^3$, where $\alpha$ and $\eta$ denote the corresponding fundamental classes of the hyperplanes in the projective spaces. Then we have $i^*(\alpha) = [2]_F \eta = 2\eta + a_{11}\eta^2$. Therefore, for a general theory, the class $\eta$ is not in the image of $i^*$.
\end{proof}

The above lemma justifies why we need the full Blowup Presentation (\cref{thm:blowup-presentation}) for the current example. We now compute the oriented cohomology ring of the exceptional divisor which is the projectivization of the normal bundle, which we computed in the previous section.
\begin{lemma} The oriented cohomology ring of the exceptional divisor, which is $\mathbb P(\N_{\mathbb P^2/\mathbb P^5}) = \mathbb P(\operatorname{Sym}^2 (\T_{\mathbb P^2}))$, is given by \[\frac{A^\bullet(\mathbb P^2)[\zeta]}{\zeta^3+(9\eta+6a_{11}\eta^2)\cdot\zeta^2+30\eta^2\cdot\zeta.}\] Here $\zeta$ is the fundamental class of the hyperplane corresponding to $\mathcal{O}(1)$ on $\mathbb P(\symtangent)$. 

\end{lemma}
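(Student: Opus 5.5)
The plan is to apply the ring version of the Projective Bundle Formula (\Cref{thm:projective-bundle-formula}) to the rank $3$ bundle $\N_{V/\PP^5}\cong\symtangent$ over $V\cong\PP^2$, whose oriented cohomology ring is $A^\bullet(\pt)[\eta]/(\eta^3)$. With the paper's sign convention this gives
\[
A^\bullet(E)\cong A^\bullet(\PP^2)[\zeta]\big/\bigl(\zeta^3+c_1^A(\symtangent)\,\zeta^2+c_2^A(\symtangent)\,\zeta+c_3^A(\symtangent)\bigr).
\]
The whole task therefore reduces to showing
\[
c_1^A(\symtangent)=9\eta+6a_{11}\eta^2,\qquad c_2^A(\symtangent)=30\eta^2,\qquad c_3^A(\symtangent)=0.
\]
Throughout I will truncate every power series using $\eta^3=0$.

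\textbf{Step 1: Chern classes of $\T_{\PP^2}$.} The Euler sequence $0\to\O_{\PP^2}\to\O_{\PP^2}(1)^{\oplus3}\to\T_{\PP^2}\to0$, together with the Whitney sum formula and the normalization $c^A(\O)=1$, gives
\[
c^A(\T_{\PP^2})=(1+\eta)^3=1+3\eta+3\eta^2 .
\]
Let $t_1,t_2$ be formal Chern roots of $\T_{\PP^2}$. Then $t_1+t_2=3\eta$ and $t_1t_2=3\eta^2$.

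\textbf{Step 2: Chern roots of the symmetric square.} By the splitting principle, $\symtangent$ has Chern roots $t_1\Fplus t_1$, $t_1\Fplus t_2$ and $t_2\Fplus t_2$. Every monomial of degree $\geq3$ in the $t_i$ is a symmetric expression in $t_1+t_2$ and $t_1t_2$, hence lies in the ideal $(\eta^3)$ and vanishes. So I may use $x\Fplus y=x+y+a_{11}xy$. The roots then become $2t_1+a_{11}t_1^2$, $t_1+t_2+a_{11}t_1t_2$ and $2t_2+a_{11}t_2^2$.

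\textbf{Step 3: elementary symmetric functions.} For $c_1$, add the three roots:
\[
c_1=3(t_1+t_2)+a_{11}\bigl((t_1+t_2)^2-t_1t_2\bigr)=9\eta+a_{11}(9-3)\eta^2=9\eta+6a_{11}\eta^2 .
\]
For $c_2$, only the linear parts of the roots contribute modulo $\eta^3$:
\[
c_2=4t_1t_2+2(t_1+t_2)^2=12\eta^2+18\eta^2=30\eta^2 .
\]
Finally $c_3$ has degree $3$ in the $t_i$, so it vanishes.

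\textbf{Main obstacle.} The step needing the most care is the formal justification of the splitting principle in a general oriented theory $A^\bullet$. I would do this by pulling back to the flag bundle of $\T_{\PP^2}$. By the projective bundle formula, pullback to this flag bundle is injective. There, $\T_{\PP^2}$ acquires a filtration with line-bundle quotients $\L_1,\L_2$, and $\symtangent$ acquires a filtration with quotients $\L_1^{\otimes2}$, $\L_1\otimes\L_2$ and $\L_2^{\otimes2}$. The Whitney formula and the formal group law then yield exactly the roots used above. One must also check that the truncation modulo $\eta^3$ is legitimate after pulling back. This holds because the final expressions are symmetric in the roots, so they descend to $A^\bullet(\PP^2)$, where $\eta^3=0$.
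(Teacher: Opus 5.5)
Your argument follows the paper's proof step for step: Euler sequence, Chern roots of $\T_{\PP^2}$, truncated formal group law for the roots of $\symtangent$, then the ring version of the projective bundle formula. Your values $c_1^A(\symtangent)=9\eta+6a_{11}\eta^2$, $c_2^A(\symtangent)=30\eta^2$, $c_3^A(\symtangent)=0$ are correct, and your splitting-principle justification is fine.

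The genuine gap is in your very first display, and the paper's proof uses that display in exactly the same way. Put $V=\symtangent$. For a non-additive formal group law, $\zeta=c_1^A(\O_E(1))$ does \emph{not} satisfy $\zeta^3+c_1^A(V)\zeta^2+c_2^A(V)\zeta+c_3^A(V)=0$. You can see the mismatch already for a line bundle $L$: reading $c_1$ off the monic relation of $\zeta$ on $\PP(L)=X$ returns $-\chi_A(c_1^A(L))$, not $c_1^A(L)$.

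Here is what $\zeta$ actually satisfies.
\begin{itemize}
\item The exceptional divisor is the bundle of lines, $\O_E(-1)\subset p^*V$. This is forced by $\N_{E/\Bl_V\PP^5}\cong\O_E(-1)$ and by the sequence defining $\Q$.
\item Hence $p^*V\otimes\O_E(1)$ has a nowhere vanishing section, so its top Chern class is $0$. On the splitting bundle this reads $\prod_j(\zeta\Fplus r_j)=0$ for your roots $r_1,r_2,r_3$.
\item Write $\zeta\Fplus r=(\zeta-\chi_A(r))\,g(\zeta,r)$, where $g$ has constant term $1$ and is therefore a unit. The monic relation is then
\[
\zeta^3+\tilde c_1\zeta^2+\tilde c_2\zeta+\tilde c_3=0,\qquad \tilde c_i=e_i(-\chi_A(r_1),-\chi_A(r_2),-\chi_A(r_3)),
\]
with $e_i$ the elementary symmetric functions. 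It is not the relation with coefficients $c_i^A(V)=e_i(r_1,r_2,r_3)$.
\item Equivalently, $c_3^A(\Q)=0$ together with $c^A(\Q)=c^A(p^*V)/(1+\chi_A(\zeta))$ shows that your polynomial is the correct relation for $u=-\chi_A(\zeta)$, not for $\zeta$. The two agree only for the additive law.
\end{itemize}

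Now compute. Use $-\chi_A(r)=r-a_{11}r^2+\cdots$ and $\sum_j r_j^2=(c_1^A)^2-2c_2^A=21\eta^2$. This gives $\tilde c_1=9\eta-15a_{11}\eta^2$, $\tilde c_2=30\eta^2$, $\tilde c_3=0$. Since $1,\zeta,\zeta^2$ is a basis over $A^\bullet(\PP^2)$, the claimed relation differs from the true one by $21a_{11}\eta^2\zeta^2$. This is nonzero unless $21a_{11}=0$ in $A^\bullet(\pt)$.

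A direct check in $K_0$ (take $\beta=1$, so $a_{11}=-1$) confirms this.
\begin{itemize}
\item Set $\zeta=1-[\O_E(-1)]$. The Koszul complex of the section above gives $\prod_j(1-[M_j]^{-1}[\O_E(-1)])=0$, where $M_j$ are the $K$-theoretic roots of $V$.
\item Multiplying by the unit $\prod_j[M_j]$ gives $\prod_j(\zeta+[M_j]-1)=0$. So the $\zeta^2$-coefficient is $[V]-3$.
\item With $h=[\O_{\PP^2}(1)]=1+\eta+\eta^2$, one gets $[V]-3=(3h-1)^2-h^3-3=9\eta+15\eta^2$.
\item The statement predicts $9\eta-6\eta^2$ instead.
\end{itemize}

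So your proof, like the paper's, establishes the lemma only when $21a_{11}=0$, for example for the Chow ring. To get a correct statement you can do either of the following:
\begin{itemize}
\item keep your polynomial but in the variable $u=-\chi_A(\zeta)=-c_1^A(\O_E(-1))$; or
\item keep $\zeta$ and replace $6a_{11}\eta^2$ by $-15a_{11}\eta^2$.
\end{itemize}
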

\begin{proof}
 To prove this lemma we first find the Chern classes for our given oriented cohomology theory for $\operatorname{Sym}^2(\T_{\mathbb P^2})$ and then use \cref{thm:projective-bundle-formula}.

From the Euler exact sequence on $\mathbb P^2$ given by \[0\to \mathcal{O}\to \mathcal{O}(1)^{\oplus{3}}\to \T_{\mathbb P^2}\to 0, \]
we deduce that the Chern classes of the tangent bundle $\T_{\mathbb P^2}$ are given by \[c_0=1, c_1=3\eta, c_2= 3\eta^2, c_3=0,\] by using the Whitney sum formula. In order to now get the Chern classes for $\symtangent$, we will use the splitting principle. Assume formally $\T_{\mathbb P^2} = \L_1\oplus \L_2$, such that the $c_1(\L_1)=x$ and $c_1(\L_2)=y$, where $\L_i$ are line bundles. We conclude that \[\begin{gathered}x+y= 3\eta \\
xy=3\eta^2.\end{gathered}\] Then we have \[\symtangent = \L_1^{\otimes2} \oplus \L_1\otimes \L_2 \oplus \L_2^{\otimes2}.\]
The total Chern class of $\symtangent$ is thus given by $(1+x+_Fx)(1+x+_Fy)(1+y+_Fy)$, where $+_F$ denotes the formal sum. Note that we have $z+_Fw=z+w+a_{11}zw+\textit{higher order terms}$, however since the higher order terms in this case are all of the form $\eta^i$ such that $i \geq 3$, we can conclude that they are all zero. \\
Expanding the above expression and using the relations between $x$ and $y$, we conclude that the total Chern class is given by 
\[1+(9\eta+6a_{11}\eta^2) +30\eta^2.\]
The result now follows from the projective bundle formula \cref{thm:projective-bundle-formula}.
\end{proof}
\begin{corollary}
The oriented cohomology ring of the exceptional divisor as a free $\pointring$-module \\ has basis $\{1, \eta, \zeta, \eta^2, \eta\zeta, \zeta^2, \eta^2\zeta, \eta\zeta^2,\eta^2\zeta^2 \}$. \hfill \qed
\end{corollary}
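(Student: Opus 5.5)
This is a direct consequence of the ring version of the projective bundle formula (\Cref{thm:projective-bundle-formula}) combined with the presentation of $A^\bullet(\PP^2)$. The plan has three steps.

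\textbf{Step 1: a basis of $A^\bullet(E)$ over $A^\bullet(\PP^2)$.} Since $E\cong \PP(\symtangent)$ and $\symtangent$ has rank $3$, the additive projective bundle formula \eqref{eq:projective-bundle-formula-additive} shows that $A^\bullet(E)$ is free over $A^\bullet(\PP^2)$ with basis $\{1,\zeta,\zeta^2\}$. Equivalently, this follows from the quotient presentation in the preceding lemma, since the relation there is monic of degree $3$ in $\zeta$.

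\textbf{Step 2: a basis of $A^\bullet(\PP^2)$ over $A^\bullet(\pt)$.} Applying the projective bundle formula to $\PP^2\to\pt$ (the trivial bundle of rank $3$) gives $A^\bullet(\PP^2)\cong A^\bullet(\pt)[\eta]/(\eta^3)$. This ring is free over $A^\bullet(\pt)$ with basis $\{1,\eta,\eta^2\}$.

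\textbf{Step 3: combine.} Freeness is transitive: if $M$ is free over $B$ with basis $\{m_j\}$ and $B$ is free over $R$ with basis $\{b_i\}$, then $M$ is free over $R$ with basis $\{b_i m_j\}$. Here $R=A^\bullet(\pt)$ and $B=A^\bullet(\PP^2)$. The $\eta^i$ appearing in the products are understood as $p^*\eta^i$, so the module structure is the one induced by $p^*$. This gives the nine products $\eta^a\zeta^b$ with $0\le a,b\le 2$, which is exactly the listed set. The rank is $9$, consistent with the rank-$12$ count: $12=1\cdot 6$ from $\PP^5$ plus $2\cdot 3$ from the two copies of $A^\bullet(\PP^2)$.

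No step is genuinely hard. The only point needing care is Step 1. One must use that the relation $\zeta^3+(9\eta+6a_{11}\eta^2)\zeta^2+30\eta^2\zeta$ is monic in $\zeta$. This ensures that reducing higher powers of $\zeta$ introduces only $A^\bullet(\PP^2)$-coefficients, which in turn reduce modulo $\eta^3$. Without monicity, the quotient might fail to be free. Since the additive projective bundle formula already asserts freeness directly, this is immediate.
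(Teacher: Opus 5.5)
Your proposal is correct and takes the paper's approach. The paper gives no separate argument (the corollary ends in \qed); it reads the basis off the preceding projective-bundle presentation of $A^\bullet(E)$ over $A^\bullet(\PP^2)\cong A^\bullet(\pt)[\eta]/(\eta^3)$, which is exactly what you do by composing the free bases $\{1,\zeta,\zeta^2\}$ and $\{1,\eta,\eta^2\}$.
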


\begin{lemma}\label{zeta-relations} The relations $\zeta^4 = 51\eta^2\zeta^2$ and $\zeta^5=0$ hold in $A^\bullet(E)$.
\end{lemma}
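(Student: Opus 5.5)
The plan is to work entirely inside the presentation of $A^\bullet(E)$ given in the preceding lemma, as a quotient of $A^\bullet(\mathbb P^2)[\zeta]$ with $A^\bullet(\mathbb P^2)\cong A^\bullet(\pt)[\eta]/(\eta^3)$. The only inputs are the projective bundle relation
\[
\zeta^3 \;=\; -(9\eta+6a_{11}\eta^2)\,\zeta^2 \;-\; 30\eta^2\,\zeta
\]
and the nilpotency $\eta^3=0$. No geometric input is needed beyond these; the statement is a purely algebraic reduction.

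For $\zeta^4$, I multiply the relation by $\zeta$ to get $\zeta^4=-(9\eta+6a_{11}\eta^2)\zeta^3-30\eta^2\zeta^2$. I then substitute the relation once more for the $\zeta^3$ appearing on the right. Expanding, the coefficient of $\zeta^2$ becomes $(9\eta+6a_{11}\eta^2)^2-30\eta^2$. The coefficient of $\zeta$ becomes $30\eta^2(9\eta+6a_{11}\eta^2)$. Using $\eta^3=0$, the square reduces to $81\eta^2$, since the cross term $2\cdot 9\cdot 6a_{11}\eta^3$ and the $\eta^4$ term vanish. The $\zeta$-coefficient vanishes entirely, because every term in it carries at least $\eta^3$. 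This leaves $\zeta^4=(81-30)\eta^2\zeta^2=51\eta^2\zeta^2$.

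For $\zeta^5$, I multiply the previous identity by $\zeta$ to get $\zeta^5=51\eta^2\zeta^3$. Substituting the relation for $\zeta^3$ gives $-51\eta^2(9\eta+6a_{11}\eta^2)\zeta^2-51\cdot 30\,\eta^4\zeta$. Every term here contains $\eta^3$ or a higher power, so $\zeta^5=0$.

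There is no genuine obstacle. The only point requiring care is to track the $a_{11}$-terms, and to note that they are always multiplied by at least one further factor of $\eta$, so they die against $\eta^3=0$. This is exactly why the resulting relations are free of formal group law coefficients.
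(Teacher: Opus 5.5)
Your proof is correct and follows the paper's argument: multiply the projective bundle relation $\zeta^3=-(9\eta+6a_{11}\eta^2)\zeta^2-30\eta^2\zeta$ by $\zeta$, substitute it back in, and use $\eta^3=0$. With $c_1=9\eta+6a_{11}\eta^2$ and $c_2=30\eta^2$ this gives $\zeta^4=(c_1^2-c_2)\zeta^2+c_1c_2\zeta=51\eta^2\zeta^2$, and then $\zeta^5=51\eta^2\zeta^3=0$. You state the relation more accurately than the paper's proof, which misprints the $\zeta^2$-coefficient as $-9(\eta+6a_{11}\eta^2)$; the misprint is harmless, since only the $\eta$-linear part of $c_1$ survives once $\eta^3=0$ is imposed.
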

\begin{proof}
 We iteratively use our previous relations $\zeta^3=-9(\eta+6a_{11}\eta^2)\zeta^2-30\eta^2\zeta$ and $\eta^3=0$. Multiplying by $\zeta$, we get the desired expressions.
\end{proof}

We now compute the formal inverse $\chi_A(\zeta)$ of $\zeta$, which will be needed for writing down the relations for the Blowup Presentation.

\begin{proposition}\label{prop:formal-inverse-zeta}
Assume the formal inverse of $\zeta$ has the form $\chi_A(\zeta) = d_1\zeta+d_2\zeta^2+d_3\zeta^3+d_4\zeta^4$. Then the coefficients are given by $d_1 = -1$, $d_2 = a_{11}$, $d_3 = -a_{11}^2$, and $d_4 = a_{11}^3 + a_{11}a_{12} - a_{22} + 2a_{13}$.
\end{proposition}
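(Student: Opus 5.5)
The plan is to determine the coefficients $d_k$ by solving the defining equation $F_A(\zeta,\chi_A(\zeta))=0$ order by order in $\zeta$. By \Cref{zeta-relations} we have $\zeta^5=0$ in $A^\bullet(E)$. Hence any power series in $\zeta$ may be truncated after the $\zeta^4$ term, and it suffices to compute the universal formal inverse $\chi_F(t)\in \mathbb L\llbracket t\rrbracket$ modulo $t^5$. The resulting coefficients then specialize along $\varphi_A\colon\mathbb L\to A^\bullet(\pt)$, so I will work with the universal formal group law and a formal variable $t$.

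Write $\chi(t)=d_1t+d_2t^2+d_3t^3+d_4t^4+O(t^5)$; the constant term vanishes because $F(0,0)=0$. The only terms of $F(x,y)$ that can contribute up to total order $4$ are
\[
F(x,y)\equiv x+y+a_{11}xy+a_{12}xy^2+a_{21}x^2y+a_{13}xy^3+a_{22}x^2y^2+a_{31}x^3y .
\]
I will use commutativity of $F$ to set $a_{12}=a_{21}$ and $a_{13}=a_{31}$. Then I substitute $x=t$, $y=\chi(t)$ and expand. Note that $y$ has no constant term, so each monomial $x^iy^j$ begins in degree $i+j$. Setting the coefficient of each $t^n$ to zero gives a triangular system: $d_n$ appears linearly with coefficient $1$ (from the term $y$), and otherwise only through lower $d$'s.

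The equations come out as follows.
\begin{itemize}
\item[] At order $1$: $1+d_1=0$, so $d_1=-1$.
\item[] At order $2$: $d_2+a_{11}d_1=0$, so $d_2=a_{11}$.
\item[] At order $3$: $d_3+a_{11}d_2+a_{12}d_1^2+a_{21}d_1=0$. The $a_{12}$ and $a_{21}$ contributions cancel, giving $d_3=-a_{11}^2$.
\item[] At order $4$: the contributions are
\begin{itemize}
\item[] $d_4$ from $y$;
\item[] $a_{11}d_3$ from $xy$;
\item[] $2a_{12}d_1d_2$ from $xy^2$;
\item[] $a_{21}d_2$ from $x^2y$;
\item[] $a_{13}d_1^3$, $a_{22}d_1^2$ and $a_{31}d_1$ from the quartic monomials.
\end{itemize}
Substituting the earlier values, this becomes $d_4-a_{11}^3-a_{11}a_{12}+a_{22}-2a_{13}=0$. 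Hence $d_4=a_{11}^3+a_{11}a_{12}-a_{22}+2a_{13}$.
\end{itemize}

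The only real obstacle is bookkeeping at order $4$. One must not miss the cross term $2d_1d_2$ from $y^2$ in $a_{12}xy^2$, and one must correctly merge $a_{13}$ with $a_{31}$ and $a_{12}$ with $a_{21}$ via commutativity. As a sanity check, I would specialize to the multiplicative law $F_K$ ($a_{11}=-\beta$, all other $a_{ij}=0$). There the formula gives $\chi_K(t)=-t-\beta t^2-\beta^2t^3-\beta^3t^4$, which agrees with the expansion $-t\sum_{i\ge0}(\beta t)^i$ computed earlier.
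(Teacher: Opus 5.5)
Your proposal is correct and follows the paper's proof. Both solve $F(\zeta,\chi_A(\zeta))=0$ order by order, using $\zeta^5=0$ to truncate, and both arrive at the same triangular equations, including the $2a_{12}d_1d_2$ cross term at order $4$ and the merging $a_{12}=a_{21}$, $a_{13}=a_{31}$. Your choice to solve for the coefficients with a formal variable $t$ in $\mathbb{L}\llbracket t\rrbracket$ and only then substitute $\zeta$ is a slightly cleaner framing. Comparing coefficients of powers of $\zeta$ directly inside $A^\bullet(E)$, as the paper phrases it, implicitly treats $\zeta$ as a free variable.
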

\begin{proof}
Expanding the formal group law for $\zeta+_F\chi_A(\zeta)=0$, we have $0=\zeta+ (d_1\zeta+d_2\zeta^2+d_3\zeta^3+d_4\zeta^4)+a_{11}\zeta(d_1\zeta+d_2\zeta^2+d_3\zeta^3+d_4\zeta^4) +a_{12}\zeta(d_1\zeta+d_2\zeta^2+d_3\zeta^3+d_4\zeta^4)^2+\text{higher order terms}.$
Comparing the coefficients of the powers of $\zeta$, and noting that $\zeta^5=0$, we sequentially deduce the values of $d_i$. In degree $1$, we get $d_1=-1$. In degree $2$, the relation $d_2+a_{11}d_1=0$ yields $d_2=a_{11}$. In degree $3$, we have $d_3+a_{11}d_2+a_{12}d_1^2+a_{21}d_1=0$, which gives $d_3= -a_{11}^2$ (note that $a_{12} = a_{21}$ forces a cancellation). In degree $4$, the relation $d_4+a_{11}d_3+a_{12}(2d_1d_2)+a_{21}d_2+a_{31}d_1+a_{22}d_1^2+a_{13}d_1^3=0$ simplifies to $d_4= a_{11}^3 + a_{11}a_{12} - a_{22} + 2a_{13}$.
\end{proof}

\subsection{Blowup Presentation of {$Bl_V\PP^5$}}

In order to write down the Blowup Presentation for the oriented cohomology ring of the moduli space of complete conics, we begin by computing the fundamental classes for the Veronese embedding. These will be required for writing down the excess intersection formula.
\begin{proposition}[Fundamental classes of the Veronese embedding]\label{prop:fundamental-class-of-veronese}
Let
$\alpha:=c_1^A(\cO_{\PP^5}(1))\in A^1(\PP^5)$ and $\eta:=c_1^A(\cO_{\PP^2}(1))\in A^1(\PP^2)$. Then in
$A^\bullet(\PP^5)\cong A^\bullet(\pt)[\alpha]/(\alpha^6)$, one has $i_\ast 1_V=4\alpha^3+3a_{11}\alpha^4+3a_{21}\alpha^5$, $i_*\eta=2\alpha^4+a_{11}\alpha^5$, and $i_*(\eta^2)= \alpha^5$.
\end{proposition}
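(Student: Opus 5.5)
We follow the method of \Cref{prop:fundamental-class-of-twisted-cubic}: expand each unknown class in the basis of $A^\bullet(\PP^5)\cong A^\bullet(\pt)[\alpha]/(\alpha^6)$, and pin down the coefficients by pushing forward to a point with Quillen's residue formula. Let $\pi_5\colon\PP^5\to\Spec k$ and $\pi_V\colon V\cong\PP^2\to\Spec k$ be the structure maps, so $\pi_{V,*}=\pi_{5,*}\circ i_*$. By \Cref{cor:projective-space-residue}, $\pi_{5,*}(\alpha^m)=\omega_{5-m}$ and $\pi_{V,*}(\eta^m)=\omega_{2-m}$. Since $i^*\alpha=c_1^A(\O_{\PP^2}(2))=[2]_F\eta=2\eta+a_{11}\eta^2$ (using $\eta^3=0$), the projection formula gives, for every $\theta\in A^\bullet(V)$ and $m\ge0$,
\[
\pi_{5,*}\bigl(\alpha^m\cdot i_*\theta\bigr)=\pi_{V,*}\bigl(\theta\cdot(2\eta+a_{11}\eta^2)^m\bigr).
\]
This identity is the only input needed.

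We begin with the top class. Write $i_*(\eta^2)=c\,\alpha^5$. Taking $m=0$ gives $c\,\omega_0=\pi_{V,*}(\eta^2)=\omega_0=1$, so $i_*(\eta^2)=\alpha^5$; geometrically this is just the class of a point. Next write $i_*\eta=b_4\alpha^4+b_5\alpha^5$. The case $m=1$ gives $b_4=\pi_{V,*}(2\eta^2)=2$. The case $m=0$ gives $b_4\omega_1+b_5=\pi_{V,*}(\eta)=\omega_1$, so $b_5=-\omega_1=a_{11}$.

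Finally write $i_*1_V=c_3\alpha^3+c_4\alpha^4+c_5\alpha^5$ and use $m=2,1,0$ successively.
\begin{itemize}
\item For $m=2$: $c_3=\pi_{V,*}(4\eta^2)=4$.
\item For $m=1$: $c_3\omega_1+c_4=\pi_{V,*}(2\eta+a_{11}\eta^2)=2\omega_1+a_{11}$. With $\omega_1=-a_{11}$ this gives $c_4=-2\omega_1+a_{11}=3a_{11}$.
\item For $m=0$: $c_3\omega_2+c_4\omega_1+c_5=\omega_2$, so $c_5=-3\omega_2-3a_{11}\omega_1=-3(a_{11}^2-a_{21})+3a_{11}^2=3a_{21}$.
\end{itemize}
This matches the claimed expression.

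The argument is essentially routine triangular linear algebra: each new value of $m$ introduces exactly one new unknown coefficient. The points that need care are bookkeeping ones. First, one must check that $i^*\alpha$ truncates exactly to $2\eta+a_{11}\eta^2$, since $\eta^3=0$ kills all higher formal-group-law terms. Second, one must use the correct invariant-differential coefficients $\omega_1=-a_{11}$ and $\omega_2=a_{11}^2-a_{21}$ from the cobordism example. Third, the cancellation in $c_5$ must be handled correctly; this is the step most prone to sign errors, so it is the one to double-check.
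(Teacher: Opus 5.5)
Your proof is correct. For $i_*1_V$ it is the paper's argument: the same ansatz, the same projection-formula identities for $m=2,1,0$, and the same values $\omega_1=-a_{11}$, $\omega_2=a_{11}^2-a_{21}$.

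For $i_*\eta$ and $i_*(\eta^2)$ the paper argues geometrically instead. It notes that the Veronese image of a line is a conic, cut out transversally by three hyperplanes and a quadric. Then \Cref{prop:intersection-product} and \Cref{funclass-divisor} give $i_*\eta=\alpha^3\cdot[2]_F\alpha=2\alpha^4+a_{11}\alpha^5$. Similarly, the image of a point is cut out by five hyperplanes, so $i_*(\eta^2)=\alpha^5$.

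What your route buys is economy. The single identity $\pi_{5,*}(\alpha^m\cdot i_*\theta)=\pi_{V,*}\bigl(\theta\cdot(2\eta+a_{11}\eta^2)^m\bigr)$ handles all three classes mechanically. You never need to choose smooth transverse hypersurfaces, or to identify $\eta$ with the class of a line so that $i_*\eta$ becomes the class of the conic.

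What the paper's route buys, for the two smaller classes, is independence from the residue coefficients $\omega_r$ and from any ansatz on the shape of $A^k(\PP^5)$. It is therefore a genuine cross-check on your values $b_4=2$ and $b_5=-\omega_1=a_{11}$. It cannot reach $i_*1_V$, though: the Veronese surface is nondegenerate of degree $4$, hence not a complete intersection. That is why the paper falls back on your method there.

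One small point, shared with the paper. Writing $i_*1_V$ with only $\alpha^3,\alpha^4,\alpha^5$ tacitly assumes $A^{>0}(\pt)=0$, which fails for periodic $K$-theory. Your method absorbs this at no cost: also use $m=3,4,5$. The right-hand sides then vanish because $(i^*\alpha)^3=0$, which forces the extra coefficients to be zero.
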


\begin{proof}
We work out each pushforward separately. For $i_*1_V$, since $A^\bullet(\PP^5)\cong A^\bullet(\pt)[\alpha]/(\alpha^6)$ with $\deg(\alpha)=1$, the codimension-3 group decomposes as $A^3(\PP^5) = A^0(\pt)\alpha^3 \oplus A^{-1}(\pt)\alpha^4 \oplus A^{-2}(\pt)\alpha^5$. We may therefore write $i_\ast1_V=b_3\alpha^3+b_4\alpha^4+b_5\alpha^5$. Let $\pi_5\colon \PP^5\to \Spec k$ and $\pi_2\colon \PP^2\to \Spec k$ be the structure morphisms. For $k=0,1,2$, the projection formula gives$$\pi_{5,\ast}\bigl(i_\ast1_V\,\cdot \alpha^k\bigr)=\pi_{2,\ast}\bigl(i^\ast(\alpha^k)\bigr).$$ Since $i^\ast\cO_{\PP^5}(1)\cong \cO_{\PP^2}(2)$ and $\eta^3=0$, we have $i^\ast(\alpha)=c_1^A(\cO_{\PP^2}(2))=[2]_F(\eta)= 2\eta+a_{11}\eta^2$, which immediately yields $i^\ast(\alpha^2)= 4\eta^2$. Next, write the invariant differential as $\omega_F(t)\,dt=\bigl(\partial_2F|_{(t,0)}\bigr)^{-1}dt$ with $\omega_F(t)=\sum_{r\ge 0}\omega_r t^r$. Since $\partial_2F|_{(t,0)}=1+a_{11}t+a_{21}t^2+\cdots$, we find $\omega_1=-a_{11}$ and $\omega_2=a_{11}^2-a_{21}$.By \Cref{cor:projective-space-residue} applied to $\PP^5$ and $\PP^2$, we have $\pi_{5,\ast}(\alpha^5)=1$, $\pi_{5,\ast}(\alpha^4)=\omega_1$, $\pi_{5,\ast}(\alpha^3)=\omega_2$, along with $\pi_{2,\ast}(\eta^2)=1$, $\pi_{2,\ast}(\eta)=\omega_1$, and $\pi_{2,\ast}(1)=\omega_2$.

We now evaluate the projection formula for $k=2,1,0$ to determine the coefficients. For $k=2$, we obtain $b_3=\pi_{2,\ast}(4\eta^2)=4$. For $k=1$, the left side evaluates to $b_3\omega_1+b_4$, while the right side is $\pi_{2,\ast}(2\eta+a_{11}\eta^2)=2\omega_1+a_{11}$; substituting $\omega_1$ gives $b_4=3a_{11}$. Finally, for $k=0$, the relation $b_3\omega_2+b_4\omega_1+b_5=\pi_{2,\ast}(1)=\omega_2$ forces $b_5=3a_{21}$.

For $i_*(\eta)$, we use the observation that the image of a line in $\mathbb P^2$ inside $\mathbb P^5$ under the veronese embedding is the complete intersection of three hyperplanes and a quadric. Let us call the image of the line under the veronese embedding to be $C$. Then we have $C=H_1\cap H_2 \cap H_3 \cap Q$, where $H_i$ are hyperplanes and $Q$ is a quadric hypersurface in $\mathbb P^5$, such that the hypersurfaces are all smooth and intersect transversally. Hence we can use \cref{prop:intersection-product} to get $[C]=[H_1\cap H_2 \cap H_3 \cap Q]=[H_1].[H_2].[H_3].[Q]$. Here $Q$ corresponds to a section of $\mathcal{O}_{\mathbb P^5}(2)$ and $H_i$ corresponds to a section of $\mathcal{O}_{\mathbb P^5}(1)$. Using \cref{funclass-divisor} we conclude that $[Q]=c_1(\mathcal{O}_{\mathbb P^5}(2))=\alpha+_F\alpha$ and $[H_i]=\alpha$. Hence we have $[C]=\alpha^3.(2\alpha+a_{11}\alpha^2)$. Thus we get $i_*(\eta)=2\alpha^4+a_{11}\alpha^5$.

Finally, $\eta^2$ is the fundamental class of a point in $\mathbb P^2$, whose image under the Veronese embedding is also a point, which in turn is the complete intersection of $5$ hyperplanes in $\mathbb P^5$. Therefore, $i_*(\eta^2)=\alpha^5$.

\end{proof}

\begin{theorem}[Blowup Presentation of $Bl_V\PP^5$] \label{thm:oriented-coh-of-BlVP5}
Let $\alpha := \pi^*c_1^A(\mathcal{O}_{\mathbb{P}^5}(1))$ be the pullback of the hyperplane class from $\mathbb{P}^5$. Let $\eta=p^*c_1^A(\mathcal{O}_{\mathbb{P}^2}(1))$ and $\zeta=c_1^A(\mathcal{O}_E(1))$ be the hyperplane classes on the exceptional divisor $E$. Define the $9$ exceptional pushforward classes 
\[ e_{a,b} := j_*\big(\eta^a \zeta^b\big) \quad \text{for } 0 \le a, b \le 2. \]
The oriented cohomology ring $A^\bullet(\Kont)$ is generated as an $A^\bullet(\pt)$-algebra by $\alpha$ and the $9$ classes $e_{a,b}$, subject to the following four families of relations.
For notational convenience in the relations below, we define $e_{a,b} = 0$ for $a \ge 3$ or $b \ge 5$. The symbols $e_{a,3}$ and $e_{a,4}$ are not new generators, but a recursive shorthand for the following linear combinations of the base generators
\begin{align*}
e_{a, 3} &= -9e_{a+1, 2} - 6a_{11}e_{a+2, 2} - 30e_{a+2, 1}, \\
 e_{a, 4} &= 51e_{a+2, 2}.
 \end{align*}

 \begin{enumerate}[(i)]
 \item \emph{(Intersection on $\mathbb P^5$)} $\alpha^6=0$.
 \item \emph{(Intersection between $E$ and $\mathbb P^5$)} For all $0 \le a,b \le 2$,
 \[ \alpha \cdot e_{a,b} = 2e_{a+1, b} + a_{11}e_{a+2, b}. \]
 \item \emph{(Intersection on $E$)} For all $0 \le a,b,c,d \le 2$,
 \[ e_{a,b} \cdot e_{c,d} = -e_{a+c, b+d+1} + a_{11}e_{a+c, b+d+2} + 30a_{11}^2e_{a+c+2, b+d+1} + 9a_{11}^2e_{a+c+1, b+d+2} + s e_{a+c+2, b+d+2}, \]
 where $s := 57a_{11}^3 + 51a_{11}a_{12} - 51a_{22} + 102a_{13}$.
 \item \emph{(Excess intersection)} For $k \in \{0, 1, 2\}$,
 \[ \pi^*i_*(\eta^k) = 30e_{k+2,0} + 9e_{k+1,1} + e_{k,2} + 66a_{11}e_{k+2,1} + 9a_{11}e_{k+1,2} + 78a_{11}^2e_{k+2,2}, \]
where the left-hand side evaluations are given by
 \begin{align*}
\pi^*i_*1 &= 4\alpha^3+3a_{11}\alpha^4+3a_{21}\alpha^5, \\
 \pi^*i_*(\eta) &= 2\alpha^4+a_{11}\alpha^5, \\
 \pi^*i_*(\eta^2) &= \alpha^5.
 \end{align*}
\end{enumerate}
\end{theorem}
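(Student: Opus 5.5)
The proof specializes the general Blowup Presentation (\Cref{thm:blowup-presentation}) to the square $V\hookrightarrow\PP^5$, exactly as in the proof of \Cref{thm:oriented-coh-of-BLCP3}. By the additive decomposition and the corollary giving the basis $\{\eta^a\zeta^b\}_{0\le a,b\le 2}$ of $A^\bullet(E)$, the ring $A^\bullet(\Bl_V\PP^5)$ is generated over $A^\bullet(\pt)$ by $\alpha$ together with the nine classes $e_{a,b}=j_*(\eta^a\zeta^b)$. Every $j_*\gamma$ is an $A^\bullet(\pt)$-linear combination of these. Before writing relations, I would justify the shorthand. Multiplying the projective bundle relation $\zeta^3=-(9\eta+6a_{11}\eta^2)\zeta^2-30\eta^2\zeta$ by $\eta^a$ gives the stated formula for $e_{a,3}$. \Cref{zeta-relations} gives $\zeta^4=51\eta^2\zeta^2$, hence $e_{a,4}=51e_{a+2,2}$, and $\zeta^5=0$. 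The relation $\eta^3=0$ gives $e_{a,b}=0$ for $a\ge 3$.

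Relation (i) comes from \Cref{lem:relation-i} and $A^\bullet(\PP^5)=A^\bullet(\pt)[\alpha]/(\alpha^6)$. For (ii), \Cref{lem:relation-ii} gives
\[
\alpha\cdot e_{a,b}=j_*\bigl(\eta^a\zeta^b\,p^*i^*\alpha\bigr).
\]
Since $i^*\alpha=[2]_F\eta=2\eta+a_{11}\eta^2$ (using $\eta^3=0$), this equals $2e_{a+1,b}+a_{11}e_{a+2,b}$.

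For (iii), \Cref{lem:relation-iii} gives
\[
e_{a,b}e_{c,d}=j_*\bigl(\eta^{a+c}\zeta^{b+d}\chi_A(\zeta)\bigr).
\]
Substituting \Cref{prop:formal-inverse-zeta},
\[
\chi_A(\zeta)=-\zeta+a_{11}\zeta^2-a_{11}^2\zeta^3+d_4\zeta^4 .
\]
The $\zeta^3$ and $\zeta^4$ terms are then rewritten in the basis. The term $-a_{11}^2\zeta^3$ contributes $9a_{11}^2\eta\zeta^2+54a_{11}^3\eta^2\zeta^2+30a_{11}^2\eta^2\zeta$. The term $d_4\zeta^4$ contributes $51d_4\eta^2\zeta^2$. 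Collecting $\eta^2\zeta^2$ coefficients should give $s=54a_{11}^3+51d_4$ plus any correction. This must match $57a_{11}^3+51a_{11}a_{12}-51a_{22}+102a_{13}$, so the bookkeeping has to be checked carefully. I would keep the remaining shifted indices $b+d+1$ and $b+d+2$ formal, reducing them through the shorthand recursively.

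For (iv), \Cref{lem:blowup-presentation-implicit} gives
\[
\pi^*i_*(\eta^k)=j_*\bigl(c_2^A(\mathcal Q)\eta^k\bigr),
\]
and the left-hand sides are taken from \Cref{prop:fundamental-class-of-veronese}. To find $c_2^A(\mathcal Q)$, expand $c_t^A(\mathcal Q)=c_t^A(p^*\N)/(1+\chi_A(\zeta)t)$, with $c_1^A(\N)=9\eta+6a_{11}\eta^2$ and $c_2^A(\N)=30\eta^2$. Writing $u=-\chi_A(\zeta)$, this gives
\[
c_2^A(\mathcal Q)=u^2+c_1 u+c_2 .
\]
Then compute $u^2=\zeta^2-2a_{11}\zeta^3+\cdots$ and reduce everything modulo the $\zeta^3,\zeta^4$ relations. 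The result should be
\[
30\eta^2+9\eta\zeta+\zeta^2+66a_{11}\eta^2\zeta+9a_{11}\eta\zeta^2+78a_{11}^2\eta^2\zeta^2 .
\]

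Exhaustiveness of the relations is not a separate argument: the four families are exactly (i)--(iv) of \Cref{thm:blowup-presentation}, evaluated on the $A^\bullet(\pt)$-bases of $A^\bullet(\PP^5)$, $A^\bullet(E)$ and $A^\bullet(V)$, and they are bilinear in their inputs. The main obstacle is the power-series bookkeeping in (iii) and (iv): truncating $\chi_A(\zeta)$ and $u^2$ correctly, tracking the $a_{11}$ cross terms produced by the $\zeta^3$ reduction, and confirming the constants $s$, $66$ and $78$. I would run this computation in \texttt{Macaulay2} over $\mathbb{L}$ truncated in degree, as in \Cref{apx:4-secant-lines-m2}.
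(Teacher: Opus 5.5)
Your route is the paper's route. You specialize the general Blowup Presentation to $V\hookrightarrow\PP^5$, use the basis $\eta^a\zeta^b$ of $A^\bullet(E)$, justify the shorthand via $\zeta^3=-(9\eta+6a_{11}\eta^2)\zeta^2-30\eta^2\zeta$ and $\zeta^4=51\eta^2\zeta^2$, and evaluate the four families on bases. The generation claim, (i), (ii) and the exhaustiveness-by-linearity remark are all fine.

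\textbf{The gap.} The real content of (iii) and (iv) is the constants $s$, $66$ and $78$, and you never establish them. Worse, your one explicit computation is wrong in a way that makes (iii) look false. The coefficient of $\eta^2\zeta^2$ in $\zeta^3$ is $-6a_{11}$, not $-54a_{11}$. You seem to have expanded $-9(\eta+6a_{11}\eta^2)\zeta^2$; the proof of the lemma giving $\zeta^4=51\eta^2\zeta^2$ has this same misprint. The correct reduction is
\[
-a_{11}^2\zeta^3=9a_{11}^2\eta\zeta^2+6a_{11}^3\eta^2\zeta^2+30a_{11}^2\eta^2\zeta .
\]
So the $\eta^2\zeta^2$-coefficient of $\chi_A(\zeta)$ is $6a_{11}^3+51d_4=57a_{11}^3+51a_{11}a_{12}-51a_{22}+102a_{13}=s$ exactly, with no correction term. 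With your $54a_{11}^3$ you would get $105a_{11}^3$, which contradicts (iii), and your proposal leaves that mismatch unresolved.

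\textbf{Relation (iv).} This is only asserted in your proposal. The quickest check is to square $u=\zeta-a_{11}\zeta^2+a_{11}^2\zeta^3-d_4\zeta^4$ as a power series first and reduce afterwards. Since $\zeta^5=0$, you get $u^2=\zeta^2-2a_{11}\zeta^3+3a_{11}^2\zeta^4$, which reduces to $\zeta^2+18a_{11}\eta\zeta^2+60a_{11}\eta^2\zeta+165a_{11}^2\eta^2\zeta^2$. The $d_4\zeta^4$ term of $u$ is killed by $\eta$ because $\eta\zeta^4=51\eta^3\zeta^2=0$, so
\[
c_1u=(9\eta+6a_{11}\eta^2)(\zeta-a_{11}\zeta^2+a_{11}^2\zeta^3)=9\eta\zeta+6a_{11}\eta^2\zeta-9a_{11}\eta\zeta^2-87a_{11}^2\eta^2\zeta^2 .
\]
Adding $c_2=30\eta^2$ gives $66=6+60$, $9=-9+18$ and $78=-87+165$; in particular $d_4$ does not enter $c_2^A(\mathcal Q)$. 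With these two computations supplied, your argument matches the paper's proof, which does this bookkeeping by hand rather than deferring it to \texttt{Macaulay2}.
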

\begin{proof}

We will establish the oriented cohomology ring of $\Kont$ in the following steps. We first consider the blowup square 
\[
\begin{tikzcd}
E \ar[r,"j"] \ar[d,"p"'] & \Bl_V\PP^5 \ar[d,"\pi"] \\
V\ar[r,"{\nu_2}"'] & \PP^5
\end{tikzcd}
\]
We will workout each family of relations from \cref{thm:blowup-presentation}.
 \begin{enumerate}[(i)]
\item (Intersection on $\mathbb P^5$) We have $\pi^*(\alpha^s.\alpha^t)=\pi^*(\alpha^s).\pi^*(\alpha^t)$, so $\alpha^6=0$.
 \item (Intersection between $E$ and $\mathbb P^5$) For any basis element $e_{a,b} = j_*(\eta^a \zeta^b)$ of $A^\bullet(E)$, we have $e_{a,b} \cdot \pi^*(\alpha) = j_*(\eta^a \zeta^b) \cdot \pi^*(\alpha) = j_*\big(\eta^a \zeta^b \cdot p^*i^*(\alpha)\big)$. Since $i^*(\alpha) = c_1^A(\mathcal{O}_{\mathbb{P}^2}(2)) = 2\eta + a_{11}\eta^2$, this becomes:
 \[ j_*\big(\eta^a \zeta^b (2\eta + a_{11}\eta^2)\big) = j_*(2\eta^{a+1}\zeta^b + a_{11}\eta^{a+2}\zeta^b). \]
 Translating back into our basis notation yields $2e_{a+1, b} + a_{11}e_{a+2, b}$. Note that we are interchangeably using $p^*\eta$ and $\eta$, since $p^*$ is a ring homomorphism.

 \item (Intersection on $E$) From \cref{prop:formal-inverse-zeta}, we have that the formal inverse of $\zeta$ expands to $\chi_A(\zeta) = -\zeta + a_{11}\zeta^2 - a_{11}^2\zeta^3 + d_4\zeta^4$, where $d_4 = a_{11}^3 + a_{11}a_{12} - a_{22} + 2a_{13}$. Simplifying the formal inverse using the relations $\zeta^3 = -(9\eta+6a_{11}\eta^2)\zeta^2 - 30\eta^2\zeta$ and $\zeta^4 = 51\eta^2\zeta^2$, gives us the relation: 
 \[\chi_A(\zeta) = -\zeta + a_{11}\zeta^2 + 30a_{11}^2\eta^2\zeta + 9a_{11}^2\eta\zeta^2 + s\eta^2\zeta^2,\]
 where we define the coefficient $s := 57a_{11}^3 + 51a_{11}a_{12} - 51a_{22} + 102a_{13}$.

 Applying $j_*(\gamma) \cdot j_*(\delta) = j_*\big(\gamma \cdot \delta \cdot \chi_A(\zeta)\big)$ to our basis elements gives
 \[ e_{a,b} \cdot e_{c,d} = j_*(\eta^a \zeta^b) \cdot j_*(\eta^c \zeta^d) = j_*\big(\eta^{a+c}\zeta^{b+d} \chi_A(\zeta)\big). \]
 Substituting the above expression for $\chi_A(\zeta)$ into this pushforward yields the relation
\[ e_{a,b} \cdot e_{c,d} = -e_{a+c, b+d+1} + a_{11}e_{a+c, b+d+2} + 30a_{11}^2e_{a+c+2, b+d+1} + 9a_{11}^2e_{a+c+1, b+d+2} + s e_{a+c+2, b+d+2}. \]

 \item (Excess intersection) We will work out the LHS and RHS of the excess intersection formula separately.
 
 For the right hand side, let $\mathcal{Q}$ be the universal quotient bundle of rank $2$ on $E$ defined by the tautological exact sequence \[0 \to \mathcal{O}_E(-1) \to p^*\mathcal{N}_{\mathbb{P}^2/\mathbb{P}^5} \to \mathcal{Q} \to 0.\]
 By the Whitney sum formula, the total Chern class satisfies $c^A(p^*\mathcal{N}_{\mathbb{P}^2/\mathbb{P}^5}) = c^A(\mathcal{O}_E(-1)) \cdot c^A(\mathcal{Q})$. We substitute the Chern classes to get
 \[1 + (9\eta + 6a_{11}\eta^2) + 30\eta^2 = \big(1 + \chi_A(\zeta)\big)\big(1 + c_1^A(\mathcal{Q}) + c_2^A(\mathcal{Q})\big).\]
 Comparing the graded components of this equation yields:
 \begin{align*}
 \text{Degree 1: } & c_1^A(\mathcal{Q}) + \chi_A(\zeta) = 9\eta + 6a_{11}\eta^2 \implies c_1^A(\mathcal{Q}) = 9\eta + 6a_{11}\eta^2 - \chi_A(\zeta)\\
 \text{Degree 2: } & c_2^A(\mathcal{Q}) + c_1^A(\mathcal{Q})\chi_A(\zeta) = 30\eta^2.
 \end{align*}
 Substituting the degree 1 identity into the degree 2 equation yields the top Chern class
 \[c_2^A(\mathcal{Q}) = 30\eta^2 - (9\eta + 6a_{11}\eta^2)\chi_A(\zeta) + \chi_A(\zeta)^2.\]
The middle term can be simplified using the expansion of $\chi_A(\zeta)$ into
 \[-(9\eta + 6a_{11}\eta^2)\chi_A(\zeta) = 9\eta\zeta - 9a_{11}\eta\zeta^2 + 6a_{11}\eta^2\zeta - 87a_{11}^2\eta^2\zeta^2.\]
Furthermore, we have 
 \[\chi_A(\zeta)^2 = \zeta^2 + 60a_{11}\eta^2\zeta + 18a_{11}\eta\zeta^2 + 165a_{11}^2\eta^2\zeta^2.\]
 Summing the terms together, we get
 \[c_2^A(\mathcal{Q}) = 30\eta^2 + 9\eta\zeta + \zeta^2 + 66a_{11}\eta^2\zeta + 9a_{11}\eta\zeta^2 + 78a_{11}^2\eta^2\zeta^2.\]

 The right-hand side of the excess intersection formula requires evaluating $j_*(\eta^k \cdot c_2^A(\mathcal{Q}))$ for $k \in \{0, 1, 2\}$. The above computations yields
 \[j_*(\eta^k \cdot c_2^A(\mathcal{Q})) = j_*\big(30\eta^{k+2} + 9\eta^{k+1}\zeta + \eta^k\zeta^2 + 66a_{11}\eta^{k+2}\zeta + 9a_{11}\eta^{k+1}\zeta^2 + 78a_{11}^2\eta^{k+2}\zeta^2\big). \]
 Converting this back into our standard basis notation yields the right-hand side of relation (iv):
 \[ 30e_{k+2,0} + 9e_{k+1,1} + e_{k,2} + 66a_{11}e_{k+2,1} + 9a_{11}e_{k+1,2} + 78a_{11}^2e_{k+2,2}. \]

 Finally, for the left hand side, we compute $\pi^*i_*(\theta),\ \text{ for all }\theta \in \{1,\eta,\eta^2\}$. We calculate for each basis element using \cref{prop:fundamental-class-of-veronese}.
 For $\theta=1$, we have $\pi^*i_*1=\pi^*(4\alpha^3+3a_{11}\alpha^4+3a_{21}\alpha^5)$. For $\theta=\eta$, we have $\pi^*i_*(\eta)=\pi^*(2\alpha^4+a_{11}\alpha^5)$. For $\theta=\eta^2$, we have $\pi^*i_*(\eta^2)=\pi^*(\alpha^5)$.
 \end{enumerate}
\end{proof}

For explicit calculations, it is convenient to list the multiplication table (\Cref{tab:mul-table}) of the nine pushforward classes $e_{a,b}$ for $0\le a, b\le 2$. For $0 \leq a,b,c,d \leq 2$, the product $e_{a,b} \cdot e_{c,d}$ depends entirely on the integers $a+c$ and $b+d$. 
\begin{table}[ht]
\centering
\resizebox{\textwidth}{!}{%
\renewcommand{\arraystretch}{2}
\begin{tabular}{|c@{\hspace{\tabcolsep}\vrule width 1.5pt\hspace{\tabcolsep}}c|c|c|c|c|c|c|c|c|}
    \hline
    $\cdot$ & $\mathbf{e_{0,0}}$ & $\mathbf{e_{0,1}}$ & $\mathbf{e_{0,2}}$ & $\mathbf{e_{1,0}}$ & $\mathbf{e_{1,1}}$ & $\mathbf{e_{1,2}}$ & $\mathbf{e_{2,0}}$ & $\mathbf{e_{2,1}}$ & $\mathbf{e_{2,2}}$ \\
    \noalign{\hrule height 1.5pt}
    
    $\mathbf{e_{0,0}}$ & 
    $\begin{array}{@{}c@{}} -e_{0,1} + a_{11}e_{0,2} \\ + 30a_{11}^2e_{2,1} \\ + 9a_{11}^2e_{1,2} + s e_{2,2} \end{array}$ & 
    $\begin{array}{@{}c@{}} -30a_{11}e_{2,1} - e_{0,2} \\ - 9a_{11}e_{1,2} - 57a_{11}^2e_{2,2} \end{array}$ & 
    $\begin{array}{@{}c@{}} 30e_{2,1} + 9e_{1,2} \\ + 57a_{11}e_{2,2} \end{array}$ & 
    $\begin{array}{@{}c@{}} -e_{1,1} + a_{11}e_{1,2} \\ + 9a_{11}^2e_{2,2} \end{array}$ & 
    $-e_{1,2} - 9a_{11}e_{2,2}$ & 
    $9e_{2,2}$ & 
    $-e_{2,1} + a_{11}e_{2,2}$ & 
    $-e_{2,2}$ & 
    $0$ \\
    \hline
    
    $\mathbf{e_{0,1}}$ & 
    $\begin{array}{@{}c@{}} -30a_{11}e_{2,1} - e_{0,2} \\ - 9a_{11}e_{1,2} - 57a_{11}^2e_{2,2} \end{array}$ & 
    $\begin{array}{@{}c@{}} 30e_{2,1} + 9e_{1,2} \\ + 57a_{11}e_{2,2} \end{array}$ & 
    $-51e_{2,2}$ & 
    $-e_{1,2} - 9a_{11}e_{2,2}$ & 
    $9e_{2,2}$ & 
    $0$ & 
    $-e_{2,2}$ & 
    $0$ & 
    $0$ \\
    \hline
    
    $\mathbf{e_{0,2}}$ & 
    $\begin{array}{@{}c@{}} 30e_{2,1} + 9e_{1,2} \\ + 57a_{11}e_{2,2} \end{array}$ & 
    $-51e_{2,2}$ & 
    $0$ & 
    $9e_{2,2}$ & 
    $0$ & 
    $0$ & 
    $0$ & 
    $0$ & 
    $0$ \\
    \hline
    
    $\mathbf{e_{1,0}}$ & 
    $\begin{array}{@{}c@{}} -e_{1,1} + a_{11}e_{1,2} \\ + 9a_{11}^2e_{2,2} \end{array}$ & 
    $-e_{1,2} - 9a_{11}e_{2,2}$ & 
    $9e_{2,2}$ & 
    $-e_{2,1} + a_{11}e_{2,2}$ & 
    $-e_{2,2}$ & 
    $0$ & 
    $0$ & 
    $0$ & 
    $0$ \\
    \hline
    
    $\mathbf{e_{1,1}}$ & 
    $-e_{1,2} - 9a_{11}e_{2,2}$ & 
    $9e_{2,2}$ & 
    $0$ & 
    $-e_{2,2}$ & 
    $0$ & 
    $0$ & 
    $0$ & 
    $0$ & 
    $0$ \\
    \hline
    
    $\mathbf{e_{1,2}}$ & 
    $9e_{2,2}$ & 
    $0$ & 
    $0$ & 
    $0$ & 
    $0$ & 
    $0$ & 
    $0$ & 
    $0$ & 
    $0$ \\
    \hline
    
    $\mathbf{e_{2,0}}$ & 
    $-e_{2,1} + a_{11}e_{2,2}$ & 
    $-e_{2,2}$ & 
    $0$ & 
    $0$ & 
    $0$ & 
    $0$ & 
    $0$ & 
    $0$ & 
    $0$ \\
    \hline
    
    $\mathbf{e_{2,1}}$ & 
    $-e_{2,2}$ & 
    $0$ & 
    $0$ & 
    $0$ & 
    $0$ & 
    $0$ & 
    $0$ & 
    $0$ & 
    $0$ \\
    \hline
    
    $\mathbf{e_{2,2}}$ & 
    $0$ & 
    $0$ & 
    $0$ & 
    $0$ & 
    $0$ & 
    $0$ & 
    $0$ & 
    $0$ & 
    $0$ \\
    \hline
\end{tabular}%
}
\caption{Multiplication table of the pushforward classes of the exceptional divisor in $\Bl_V\mathbb P^5$.}
\label{tab:mul-table}
\end{table}

\begin{remark}
In general, the oriented cohomology ring $A^\bullet(\Bl_V\PP^5)$ is not generated by divisor classes over $A^\bullet(\pt)$. The only degree $1$ generators are the hyperplane pullback $\alpha$ and the exceptional divisor $e_{0,0}$. We have $\alpha \cdot e_{0,0} = 2e_{1,0} + a_{11}e_{2,0}$. Because $2$ is not generally invertible in $A^\bullet(\pt)$, the codimension-$2$ class $e_{1,0}$ cannot be expressed as a polynomial in $\alpha$ and $e_{0,0}$, making it a higher-degree generator.
\end{remark}
\subsection{$3264$ Conics via Cobordism}

We demonstrate how to recover the solution to Steiner's enumerative problem.

\begin{quote}
\begin{itemize}
    \item[$\dagger$] \textit{``Einen Kegelschnitt $K$ zu finden, welche irgend fünf gegebenen Kegelschnitte berührt.'' }

\hfill ----Jakob Steiner, 1848 \cite{Steiner1848Kegelschnitt}.
    \item[$\dagger$] (Glossary) How many conics are simultaneously tangent to $5$ general conics in $\PP^2$?
\end{itemize}
\end{quote}
Steiner then wrote in his article,

\begin{quote}
    \begin{itemize}[>]
        \item \textit{``Daß fünf beliebige gegebene Kegelschnitte im Allgemeinen (und höchstens) von $7776$ andern Kegelschnitten K berührt werden.''}
        \item That five arbitrary given conic sections are in general (and at most) touched by [tangent to] $7776$ other conic sections $K$.
    \end{itemize}
\end{quote}
Steiner's count turned out to be incorrect. Chasles in 1864 revisited this problem and proposed the count $3264$, which is now known to be correct \cite{Chasles1864Construction, Chasles1864Determination}. While a well-known modern solution is given by Fulton's Excess Intersection Formula (\cite{FultonMacPherson1978DefiningAlgebraicIntersections}, see also \cite[Example 9.1.9]{Fulton} and \cite[Chapter 8]{EH3264}), we outline a solution via cobordism-valued intersection theory. Consider the parameter space of plane conics $\PP(H^0(\O_{\PP^2}(2))) \cong \PP^5$. Fix a general conic $C_1\subseteq \PP^2$. A conic $C$ is \textit{tangent} to $C_1$ if each point of its intersection with $C_1$ has multiplicity greater than or equal to $2$. The locus of conics tangent to $C_1$ is a degree $6$ hypersurface in $\PP^5$. Then, given $5$ general conics, the locus $Z$ of conics simultaneously tangent to all five is the intersection of $5$ such sextic hypersurfaces $S_1, \dots, S_5$ in $\PP^5$. If these hypersurfaces were general, then $Z$ is expected to be a zero-dimensional scheme of length $6^5 = 7776$. 

This number can be recovered from cobordism of $\PP^5$ in a straightforward way. The cobordism ring of $\PP^5$ has presentation $\Omega^\bullet(\PP^5) \cong \mathbb{L}[\alpha] / \alpha^6$ given by the Projective Bundle Formula. The fundamental class of a general sextic hypersurface is given by the first Chern class $c_1^\Omega(\O_{\PP^5}(6))$, which equals $[6]_\Omega \alpha$. The fundamental class of the complete intersection $Z$ is then given by $([6]_\Omega\alpha)^5$, which one can easily check to be $7776 \alpha^5 = 7776[\pt]_\Omega$ due to the vanishing of $\alpha^6$. This number would indeed be the naïve solution, recovered from cobordism.

However, the assumption that $Z$ is zero-dimensional is false. Indeed, the parameter space $\PP^5$ contains all degenerate conics, and any conic in the form of a \textit{double line} intersects every given conic $C_i$ at points with multiplicity at least $2$. Therefore, double lines are ``excess'' solutions to Steiner's problem. Since any double line is defined by the square of a linear form on $\PP^2$, the locus of double lines in $\PP^5$ is exactly the Veronese surface $V$. For an exposition to the construction of the moduli space of complete conics, see \Cref{apx:moduli-of-complete-conics}. Blowing up $\PP^5$ at $V$ therefore removes the double line solutions, and the desired number is given by the complete intersection of pullbacks $\widetilde S_i$ of the sextic hypersurfaces $S_i$. 

Let $H$ be the divisor on $Bl_V\PP^5$ corresponding to the pullback of the hyperplane, and let $E $ be the exceptional divisor. Then, the divisor class of each $\widetilde S_i$ is $6H - 2E$.  One checks that $\widetilde Z := \widetilde S_1 \cap \cdots \cap \widetilde S_5$ is indeed zero-dimensional, so its fundamental class is given by 
\begin{equation}
  \begin{split}
        [\widetilde Z]_\Omega & = [\widetilde S_1]_\Omega  \cdots [\widetilde S_5]_\Omega \\
    & = \bigl(c_1^\Omega(\O_{Bl_V\PP^5}(6H - 2E))\bigr)^5 \\
    & = ([6]_\Omega \alpha \; -_\Omega \; [2]_\Omega e_{0,0})^5 \quad \in \Omega^5(Bl_V\PP^5).
  \end{split}
\end{equation}

A \texttt{Macaulay2} calculation (\Cref{apx:3264-conics}) shows that this class evaluates precisely to $3264 \alpha^5 = 3264 [\pt]_\Omega$. Remarkably, because all formal group law coefficients $a_{ij}$ cancel out in the top degree of this intersection, the cobordism-theoretic count yields a pure integer with no correction terms. This demonstrates that the enumerative geometry of this specific problem is rigid across all oriented cohomology theories. Therefore, we recover the well-celebrated result of ``$3264$ conics'' using algebraic cobordism. 

\begin{theorem}[3264 Conics]
    There are $3264$ conics simultaneously tangent to $5$ general conics in $\PP^2$, which are not double lines. \hfill \qed
\end{theorem}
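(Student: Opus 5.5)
Having set up the intersection computation just before the statement, the remaining work is to show three things: that the class $([6]_\Omega \alpha \Fminus [2]_\Omega e_{0,0})^5\in\Omega^5(\Bl_V\PP^5)$ equals $3264\,[\pt]_\Omega$, that it computes the fundamental class of $\widetilde Z$, and that $\widetilde Z$ is a reduced set of points corresponding to the non-degenerate tangent conics.

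For the algebra I would use the presentation of \Cref{thm:oriented-coh-of-BlVP5}. Only $\alpha$ and $e:=e_{0,0}$ enter, so the first step is to expand $[6]_F\alpha$, $[2]_F e$ and $\chi_F([2]_F e)$ as truncated power series. We may truncate in total degree $\le 5$ because $\alpha^6=0$ and every monomial of degree $\ge 6$ in $\alpha$ and the $e_{a,b}$ vanishes in $\Omega^{\ge 6}$ of a fivefold. Next, the fifth power is rewritten as an $\mathbb L$-linear combination of monomials $\alpha^m e^n$, which are reduced as follows. Relation (ii) moves every factor of $\alpha$ onto the exceptional classes. Relation (iii), or equivalently \Cref{tab:mul-table}, collapses products of $e$'s into single classes $e_{a,b}$, using the shorthand for $e_{a,3}$ and $e_{a,4}$. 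In top degree, the excess relation (iv) with $k=2$ identifies $e_{2,2}$ in terms of $\alpha^5$. Concretely, $\pi^*i_*(\eta^2)=\alpha^5$ together with (iv) gives $e_{2,2}=\alpha^5$, because $e_{4,*}=e_{3,*}=0$. Similarly, relation (iv) with $k=1$ combined with (ii) pins down $e_{1,2}$ and the other degree-$5$ classes as multiples of $\alpha^5$. Since $\Omega^5(\Bl_V\PP^5)$ is free of rank one on $[\pt]_\Omega=\alpha^5$ by the additive decomposition, the final answer has the form $N\alpha^5$ with $N\in\mathbb L^0=\ZZ$. The remaining $a_{ij}$ can only appear multiplying classes of degree $>5$, which vanish. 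For this reason the $a_{ij}$ must drop out automatically, and this is a consistency check rather than something to prove separately.

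To get $N=3264$ without tracking the lower-order terms, I would use degree reasons. The integer $N$ is the image of the class under $\pi_*:\Omega^5\to\Omega^0(\pt)=\ZZ$, and it is unchanged by the morphism to Chow theory given by $a_{ij}\mapsto 0$ (\Cref{thm:universality-of-cobordism}). So it suffices to compute $(6H-2E)^5$ in $CH^\bullet$. This reduces to the standard numbers $H^5=1$, $H^4E=H^3E^2=0$, $H^2E^3=-4\cdot 9$, and so on, which follow from the same presentation with $a_{ij}=0$. The result is the classical $3264$. A Macaulay2 verification of the full cobordism expansion, as in the appendix, serves as a cross-check.

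The hard step is geometric. We need that $\widetilde S_1,\dots,\widetilde S_5$ meet transversally in finitely many reduced points, none lying on $E$, so that \Cref{prop:intersection-product} applies and the class of $\widetilde Z$ is $N[\pt]$. Each point must then be a smooth conic tangent to all five given conics. For this I would cite the classical analysis of complete conics in \cite{EH3264} and \cite{Fulton}, which uses Kleiman–Bertini transversality for the $PGL_3$-action on $\Bl_V\PP^5\cong\Kont$ applied to the general translates $\widetilde S_i$. That analysis also uses the fact that $\widetilde S_i$ is linearly equivalent to $6H-2E$, since the tangency sextic vanishes to order exactly $2$ along $V$. Granting this, \Cref{funclass-divisor} and \Cref{prop:divisor-sum-fgl} identify each $[\widetilde S_i]_\Omega$ with $[6]\alpha\Fminus[2]e$, and the theorem follows.
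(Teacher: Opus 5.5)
Your proposal is correct apart from the slips noted in the second paragraph. For the numerical step it takes a genuinely different route from the paper.

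The paper handles the geometry as you do, by deferring to the classical analysis (``one checks that $\widetilde Z$ is zero-dimensional''). It then gets the number by feeding $([6]_\Omega\alpha\Fminus[2]_\Omega e_{0,0})^5$ into the full presentation of \Cref{thm:oriented-coh-of-BlVP5} in Macaulay2, and reports as ``remarkable'' that all $a_{ij}$ cancel.

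You use the grading instead. $\Omega^\bullet(\Bl_V\PP^5)$ is $\mathbb L$-free on a basis in degrees $0,\dots,5$ whose only degree-$5$ element is $\alpha^5$, so $\Omega^5(\Bl_V\PP^5)=\mathbb L^0\alpha^5=\ZZ\alpha^5$. Every $a_{ij}$-term then multiplies a monomial in $\alpha,e$ of degree $\ge 6$; in fact $([6]\alpha\Fminus[2]e)^5=(6\alpha-2e)^5$ already holds in $\Omega^\bullet$. The integer is then read off through the natural transformation $\Omega^\bullet\to CH^\bullet$.

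This gives a short, computer-free proof, and it shows the cancellation is forced. For any smooth projective $d$-fold, $\Omega^d(X)=\Omega_0(X)\cong CH_0(X)$, so top-degree products of divisor classes can never detect the formal group law. What the paper's brute-force computation buys in exchange is a nontrivial consistency check on the explicit presentation itself: relations (ii)--(iv), the expansion of $\chi_A(\zeta)$, and the coefficient $s$. Your argument never uses or tests these.

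Two numerical slips need fixing. First, the Chow numbers are $H^5=1$, $H^4E=H^3E^2=0$, $H^2E^3=4$, $HE^4=18$, $E^5=51$; in particular $H^2E^3=4$, not $-4\cdot 9$. With these values $(6H-2E)^5=7776-11520+8640-1632=3264$. Second, $e_{1,2}$ has degree $4$, and relation (iv) with $k=1$ does not make it a multiple of $\alpha^5$. The only degree-$5$ exceptional class is $e_{2,2}=\alpha^5$, and that paragraph is superfluous given your degree argument anyway.

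There is also a technical point in the geometric step. \Cref{prop:intersection-product} assumes smooth subvarieties, but $\widetilde S_i$ is singular along the conics bitangent to $C_i$. It is cleaner to write $\prod_i c_1^\Omega(\mathcal O(\widetilde S_i))=c_5^\Omega\bigl(\bigoplus_i\mathcal O(\widetilde S_i)\bigr)$. Then identify this with the class of the reduced zero-dimensional zero scheme $\widetilde Z$ of the transverse section $(s_1,\dots,s_5)$, by the same transverse base change and homotopy argument used in \Cref{funclass-divisor}. The paper glosses over the same point.
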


\section{Oriented Cohomology Ring of \texorpdfstring{$\overline{M}_{0,n}$}{M0n-bar}} \label{sec:oriented-cohomology-of-M0nbar}

In this section we give a presentation of the oriented cohomology ring of $\overline{M}_{0,n}$ for any oriented cohomology theory $A^\bullet$ satisfying the dimension axiom $(\mathrm{Dim})$. By specializing $A^\bullet$ to algebraic $K$-theory, the same presentation also recovers one of the presentations of $K_0(\overline{M}_{0,n})$ in \cite{LLPP2024WonderfulVarieties}. Our argument is based on Keel's iterated blowup description of $\overline{M}_{0,n+1}$ via the morphism
\(
\overline{M}_{0,n+1}\longrightarrow \overline{M}_{0,n}\times \PP^1.
\)
We begin by recalling the geometric facts from \cite{Keel} that will be used throughout the section.

\subsection{Keel's iterated blowup description and boundary divisors}

Let $\pi \colon \overline{M}_{0,n+1}\to \overline{M}_{0,n}$ be the universal curve, and let $\sigma_1,\dots,\sigma_n$ be its tautological sections. Forgetting all marked points except $1,2,3,$ and $n+1$ gives a morphism $\overline{M}_{0,n+1}\to \overline{M}_{0,4}$. Together with $\pi$, this defines a morphism
\[
\pi_1\colon \overline{M}_{0,n+1}\longrightarrow \overline{M}_{0,n}\times \overline{M}_{0,4}\cong \overline{M}_{0,n}\times \PP^1.
\]
Since $\overline{M}_{0,4}\cong \PP^1$, we may regard $\pi_1$ as a morphism to $\overline{M}_{0,n}\times \PP^1$.

For a subset $T\subset [n]=\{1,\dots,n\}$ with $2\le |T|\le n-2$, let $D_n^T\subset \overline{M}_{0,n}$ be the boundary divisor whose general point parametrizes a two-component stable curve for which the markings indexed by $T$ lie on one component and those indexed by $T^c$ lie on the other. By Knudsen's description of the boundary, there is an isomorphism
\[
D_n^T \cong \overline{M}_{0,|T|+1}\times \overline{M}_{0,|T^c|+1}.
\]
For any $i\in T$, the composite $\pi_1\circ \sigma_i\colon \overline{M}_{0,n}\to \overline{M}_{0,n}\times \overline{M}_{0,4}$ restricts to an embedding of $D_n^T$, and Keel shows that this restriction is independent of the choice of $i\in T$.

Set $B_1\coloneqq \overline{M}_{0,n}\times \overline{M}_{0,4}$. Define $B_2$ to be the blowup of $B_1$ along the disjoint union of the embedded divisors $D_n^T$ with $|T^c|=2$. Inductively, having defined $B_k$, let $B_{k+1}$ be the blowup of $B_k$ along the disjoint union of the strict transforms of the embedded divisors $D_n^T$ with $|T^c|=k+1$. The centers that appear in this construction are smooth and are isomorphic to products $\overline{M}_{0,i}\times \overline{M}_{0,j}$ with $i,j<n$.

\begin{theorem}
With the above notation, the morphism $\pi_1\colon \overline{M}_{0,n+1}\to \overline{M}_{0,n}\times \overline{M}_{0,4}$ is isomorphic to the composition of the blowups $B_{n-2}\to B_1$.
\end{theorem}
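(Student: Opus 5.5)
This is Keel's theorem \cite[Theorem 1]{Keel}, and the plan is to reproduce his argument rather than find a new one. The approach has three parts. First, build a morphism from $\overline{M}_{0,n+1}$ to the final blowup $B_{n-2}$ lifting $\pi_1$, using the universal property of blowups. Second, show that this lift is finite and birational. Third, conclude it is an isomorphism because the target is normal.

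\textbf{Step 1: identify the centers.} I will begin by recalling why the centers are smooth and disjoint at each stage. The embedded divisors $D_n^T$ with $|T^c|=k+1$ sit in $B_1$ via $\pi_1\circ\sigma_i$. Two such divisors with the same $|T^c|$ are either disjoint in $\overline{M}_{0,n}$, or else their images are separated in the $\overline{M}_{0,4}$ factor after the earlier blowups. This separation comes from the fact that $\pi_1\circ\sigma_i$ records the cross-ratio of $(1,2,3,i)$. Once the strict transforms of the smaller-$|T^c|$ centers have been blown up, the strict transforms at level $k+1$ become disjoint. Each of these strict transforms is isomorphic to $\overline{M}_{0,|T|+1}\times\overline{M}_{0,|T^c|+1}$, and in particular is smooth.

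\textbf{Step 2: lift $\pi_1$ inductively.} I will show that $\pi_1$ lifts through each $B_{k+1}\to B_k$. By the universal property of blowup, it suffices to check that the scheme-theoretic preimage in $\overline{M}_{0,n+1}$ of each center is a Cartier divisor. This preimage should be exactly the boundary divisor $D_{n+1}^{T\cup\{n+1\}}$. To verify this, I will compute $\pi_1^{-1}$ of the center locally. Over a general point of $D_n^T$, the fibre of the universal curve is a two-component curve, and the locus where the point $n+1$ lies on the $T$-component, near $\sigma_i$, maps onto the embedded copy of $D_n^T$. For the reducedness and Cartier property I will use the deformation theory of nodes, namely local coordinates $xy=t$ at the nodes of stable curves. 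This shows the ideal of the center pulls back to the ideal of the smooth divisor $D_{n+1}^{T\cup\{n+1\}}$, possibly after the earlier modifications.

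\textbf{Step 3: isomorphism.} Write $\tilde\pi\colon\overline{M}_{0,n+1}\to B_{n-2}$ for the resulting lift. It is birational, because it is an isomorphism over the open locus $M_{0,n}\times(\PP^1\smallsetminus\{0,1,\infty\})$ minus the diagonal-type loci. It is proper because both spaces are projective. I will then check that $\tilde\pi$ is quasi-finite by counting boundary strata. Every fibre of $\pi_1$ that has positive dimension lies over some center, and after that center is blown up this fibre maps onto a fibre of the exceptional $\PP^1$-bundle; a dimension count on strata shows no positive-dimensional fibre survives. Since $B_{n-2}$ is smooth, hence normal, Zariski's Main Theorem then gives that $\tilde\pi$ is an isomorphism.

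\textbf{Main obstacle.} I expect the hardest part to be Step 2, the Cartier and scheme-theoretic preimage computation, together with the disjointness of the strict transforms in Step 1. Both require careful local analysis near deeper boundary strata, where several $D^T$ meet. If that analysis becomes unwieldy, my fallback is to compare Picard ranks: the number of boundary divisors of $\overline{M}_{0,n+1}$ equals the Picard rank of $B_1$ plus the number of centers. Combined with the fact that $\tilde\pi$ contracts no divisor, this would give the isomorphism directly from the birational structure.
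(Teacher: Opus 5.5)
Your plan is essentially what the paper relies on. The paper gives no argument of its own and simply quotes Keel's Theorem~1, and your outline is the standard Keel-style proof: lift $\pi_1$ stage by stage through the blowups via the universal property, with $D_{n+1}^{T\cup\{n+1\}}$ as the pulled-back centers, then conclude by Zariski's Main Theorem over the smooth $B_{n-2}$; the real work sits in exactly the local checks you flag. One correction to your fallback: the number of boundary divisors of $\overline{M}_{0,n+1}$ is not the Picard rank of $B_1$ plus the number of centers (for $n=4$ it is $10$, versus $2+3=5$), and the count that does hold is for Picard numbers, which you would need to know independently to avoid circularity, while if you already know that no divisor is contracted, purity of the exceptional locus over the smooth target finishes without any count.
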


On the intermediate spaces $B_k$, the strict transforms of pullbacks of boundary divisors from $B_1$, together with the exceptional divisors arising in Keel's construction, are commonly referred to as \emph{boundary divisors}. Under the identification $B_{n-2}\cong \overline{M}_{0,n+1}$, these are exactly the usual boundary divisors on $\overline{M}_{0,n+1}$.

For later use, if $T\subset [n]$ with $2\le |T|\le n-2$, define the boundary divisor $D_{n+1}^{T,n+1}\subset \overline{M}_{0,n+1}$ by
\[
\pi^{-1}(D_n^T)=D_{n+1}^T + D_{n+1}^{T,n+1}.
\]
Following Keel, two subsets $S,T\subset [n]$ are called \emph{compatible}, and we write $S ** T$, if one of the following holds:
\[
S\subset T,\qquad T\subset S,\qquad S\cap T=\emptyset,\qquad \text{or}\qquad S\cup T=[n].
\]

\begin{lemma}\label{lem:boundary-divisors-facts}
With the notation above, the following statements hold.
\begin{enumerate}
    \item For every $T\subset [n]$ with $2\le |T|\le n-2$, the inclusion $D_n^T\hookrightarrow \overline{M}_{0,n}$ admits a section. Consequently, the pullback
    \[
    A^\bullet(\overline{M}_{0,n})\longrightarrow A^\bullet(D_n^T)
    \]
    is surjective.
    \item The exceptional divisors of the morphism $\pi_1\colon \overline{M}_{0,n+1}\to \overline{M}_{0,n}\times \overline{M}_{0,4}$ are precisely the divisors $D_{n+1}^{T,n+1}$.
    \item Two boundary divisors $D_n^S$ and $D_n^T$ have non-trivial intersection if and only if $S ** T$. In particular, the strict transforms of the blowup centers at each stage are pairwise disjoint.
\end{enumerate}
\end{lemma}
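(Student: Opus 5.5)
We prove the three items separately; each reduces to a geometric fact about $\overline{M}_{0,n}$ from Keel, combined with \Cref{lem:section-implies-surjective} for the cohomological consequence in item 1.

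\textbf{Item 1.} Using Knudsen's identification $D_n^T\cong \overline{M}_{0,|T|+1}\times \overline{M}_{0,|T^c|+1}$, we build a retraction $r\colon \overline{M}_{0,n}\to D_n^T$ with $r\circ i=\mathrm{id}$. Fix $a\in T$ and $b\in T^c$. Define the first component of $r$ by forgetting all markings in $T^c$ except $b$, and relabelling $b$ as the gluing point $\star$; this gives a map $\overline{M}_{0,n}\to\overline{M}_{0,T\cup\{b\}}\cong \overline{M}_{0,|T|+1}$. Define the second component symmetrically, forgetting $T\setminus\{a\}$ and relabelling $a$ as $\star$. On a curve in $D_n^T$, forgetting the markings of $T^c\setminus\{b\}$ contracts the $T^c$-component to a single point that carries $b$ and sits at the node. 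So the composite recovers exactly the $T$-side factor, and symmetrically the $T^c$-side factor. This compatibility of forgetful maps with stabilization on the boundary stratum is the only point that needs care, and it follows from Knudsen's stabilization description. Hence $r\circ i=\mathrm{id}$, and \Cref{lem:section-implies-surjective} gives surjectivity of $i^*$.

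\textbf{Item 2.} By Keel's theorem, $\pi_1$ is the composite of blowups $B_{n-2}\to B_1$ with centers the strict transforms of the embedded $D_n^T$. Each exceptional divisor lies over some $\pi_1\sigma_i(D_n^T)$, so its image in $\overline{M}_{0,n}$ is $D_n^T$. We therefore need to identify which boundary divisor of $\overline{M}_{0,n+1}$ over $D_n^T$ is contracted by $\pi_1$. The preimage $\pi^{-1}(D_n^T)$ has two components, $D_{n+1}^T$ and $D_{n+1}^{T,n+1}$.

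On $D_{n+1}^{T,n+1}$ the point $n+1$ lies on the $T$-side. After forgetting everything except $1,2,3,n+1$, the cross-ratio of the image in $\overline{M}_{0,4}$ is determined by the $D_n^T$ datum. That image is the graph point $\pi_1\sigma_i$ with $i\in T$, so this divisor is not dominant onto $D_n^T\times\PP^1$. It therefore has image of codimension $\ge 2$, and it is exceptional.

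The other component, $D_{n+1}^T$, maps dominantly to $D_n^T\times\PP^1$, because the position of $n+1$ on the $T^c$-side varies. (If needed, swap the roles via the convention $x_S=x_{S^c}$ so that the indexing matches Keel's choice of $1,2,3$.)

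Counting confirms the identification. The exceptional divisors are in bijection with the centers, which are indexed by $T$ with $2\le|T|\le n-2$, and that is the same index set as the $D_{n+1}^{T,n+1}$.

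\textbf{Item 3.} This is Keel's standard fact, and we verify it from the dual graphs. If $S**T$, there is a stable tree with three components realizing both splittings, so $D_n^S\cap D_n^T\neq\varnothing$. Conversely, a point of the intersection is a stable tree whose edges realize both partitions $\{S,S^c\}$ and $\{T,T^c\}$. Any two edge-cuts of a tree give nested or complementary-disjoint partitions, and this forces $S**T$.

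For the disjointness of the centers at stage $k$, take distinct $S,T$ with $|S^c|=|T^c|=k+1$. Compatibility would require nesting, which is impossible for distinct sets of equal size, or $S\cap T=\varnothing$ or $S\cup T=[n]$. In these last two cases the earlier blowups along centers of smaller $|T^c|$, which include $D_n^{S\cap T}$-type intersections, separate the strict transforms. This separation step is the main obstacle, and we would follow Keel's inductive argument that each earlier blowup center contains $D^S\cap D^T$.
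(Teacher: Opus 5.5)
Your proposal follows the paper's overall strategy: every item ultimately rests on Keel, and you unpack more of it. Two points need correcting.

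\textbf{Items (1) and the first half of (3).} These match the paper. Your retraction built from the two forgetful maps is exactly Keel's contraction $\pi_T$ (his Fact~2), composed with the inverse of $\pi_T|_{D_n^T}$. The dual-tree criterion is what the paper cites from Keel~\S1.

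\textbf{Item (2).} Here you take a more hands-on route than the paper, which just quotes Keel's Theorem~2. You show $D_{n+1}^{T,n+1}$ is contracted onto $\pi_1\sigma_i(D_n^T)$ and then match exceptional divisors to centers by counting. That argument works, but only under Keel's normalization: $T$ must be the representative of $\{T,T^c\}$ with $|T\cap\{1,2,3\}|\le 1$. This is precisely the condition making $\pi_1\sigma_i|_{D_n^T}$ independent of $i\in T$. It is also precisely when your claim holds that the $\overline{M}_{0,4}$-coordinate is determined by the $D_n^T$-datum. If instead $1,2\in T$ and $3\notin T$, then with $n+1$ on the $T$-side the cross-ratio of $(1,2,3,n+1)$ still varies. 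In that case $D_{n+1}^{T\cup\{n+1\}}$ dominates $D_n^T\times\PP^1$ and is a strict transform, not an exceptional divisor. Because $D_{n+1}^{T,n+1}$ is not symmetric under $T\leftrightarrow T^c$, the identity $x_S=x_{S^c}$ does not resolve this. The normalization has to be fixed in the statement itself, not handled by ``if needed, swap.''

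\textbf{The last sentence of (3).} You and the paper both defer to Keel here, which is acceptable, but the mechanism you sketch would not work.
\begin{enumerate}
\item The blowup centers are the graphs $\Delta_T:=\pi_1\sigma_i(D_n^T)\subset B_1$, not the divisors $D_n^T$.
\item Under the normalization, two distinct indices of the same stage can be compatible only via $S\cap T=\varnothing$. The case $S\cup T=[n]$ is excluded because each set contains at most one of $1,2,3$. So there are no ``$D^{S\cap T}$-type'' loci.
\item No earlier center lies over all of $D_n^S\cap D_n^T$. That locus is an irreducible codimension-two stratum, contained in no boundary divisor other than $D_n^S$ and $D_n^T$.
\end{enumerate}
What is true is the following. $\Delta_S\cap\Delta_T$ lies over the sublocus of $D_n^S\cap D_n^T$ where the two cross-ratio maps agree. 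On that sublocus, $S\cup T$ sits in a single branch cut off by one edge from the median vertex of the legs $1,2,3$. Hence the curve lies in $D_n^R$ for a normalized $R\supseteq S\cup T$. Taking $i\in S\subseteq R$, the point lies on $\Delta_R$, and $|R^c|<|S^c|$, so $\Delta_R$ is an earlier center. Thus $\Delta_S\cap\Delta_T$ is contained in the union of earlier centers. Keel's clean-intersection argument is still needed to show that blowing these up separates the strict transforms.
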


\begin{proof}
For (1), Keel constructs a contraction morphism
\[
\pi_T\colon \overline{M}_{0,n}\longrightarrow \overline{M}_{0,|T|+1}\times \overline{M}_{0,|T^c|+1}
\]
whose restriction to $D_n^T$ is an isomorphism \cite[Fact~2]{Keel}. Composing $\pi_T$ with the inverse of this restriction gives a section of the inclusion $D_n^T\hookrightarrow \overline{M}_{0,n}$. The surjectivity of pullback then follows from \Cref{lem:section-implies-surjective}. Statement (2) is exactly the description of the exceptional divisors in Keel's blowup model \cite[Theorem~2]{Keel}. Statement (3) is the standard combinatorial criterion for intersections of boundary divisors on $\overline{M}_{0,n}$; see \cite[\S1]{Keel}. The final claim follows from the fact that, in Keel's construction, the centers blown up at each fixed stage form a disjoint union.
\end{proof}

\subsection{Generators}

We show that the fundamental classes of boundary divisors $x_S:=[D^S]_A$ generate $A^\bullet(\Mn)$ as an $A^\bullet(\pt)$-algebra. Before that, we work out the rank of $A^\bullet(\Mn)$ as a free $A^\bullet(\pt)$-module. 
\begin{proposition}\label{prop:rank-equals-chow-rank}
Let $A^\bullet$ be an oriented cohomology theory. Then $A^\bullet(\Mn)$ is a finite free
$A^\bullet(\pt)$-module, and
\[
\operatorname{rk}_{A^\bullet(\pt)} A^\bullet(\Mn)
=
\operatorname{rk}_{\mathbb Z} CH^\bullet(\Mn).
\]
\end{proposition}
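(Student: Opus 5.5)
We argue by induction on $n$, using Keel's iterated blowup description $\pi_1\colon \overline{M}_{0,n+1}\to \overline{M}_{0,n}\times \PP^1$ together with the additive blowup decomposition (\Cref{thm:blowup-formula-additive}). More precisely, we prove the following statement by induction: for every $m\ge 3$ and every smooth scheme $Y$ built as a finite product of spaces $\overline{M}_{0,m'}$ with $m'\le m$, the module $A^\bullet(Y)$ is finite free over $A^\bullet(\pt)$, and its rank equals $\operatorname{rk}_\ZZ CH^\bullet(Y)$. The base cases are $\overline{M}_{0,3}=\pt$, where both ranks equal $1$, and $\overline{M}_{0,4}\cong\PP^1$, where both ranks equal $2$ by the projective bundle formula.

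\textbf{Products.} We need the statement to pass to products, and also to $\overline{M}_{0,n}\times\PP^1$. For $\PP^1$ this follows from the projective bundle formula applied to the trivial bundle, which gives $A^\bullet(Y\times\PP^1)\cong A^\bullet(Y)\oplus \zeta A^\bullet(Y)$. For general products $\overline{M}_{0,i}\times\overline{M}_{0,j}$, we avoid a Künneth theorem. Instead, we run the same induction \emph{relative to a smooth base} $W$: we show that $A^\bullet(W\times \overline{M}_{0,m})$ is free over $A^\bullet(W)$ of rank $\operatorname{rk} CH^\bullet(\overline{M}_{0,m})$. Keel's blowup description is compatible with base change along $W$, since the blowup of $W\times B_k$ along $W\times(\text{center})$ is $W\times B_{k+1}$. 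The additive blowup formula, the projective bundle formula, and all the maps involved are $A^\bullet(W)$-linear. This makes the class of spaces closed under the needed products.

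\textbf{Inductive step.} Factor $\pi_1$ as the chain $B_{n-2}\to\cdots\to B_2\to B_1=\overline{M}_{0,n}\times\PP^1$. At each stage the center is a disjoint union of strict transforms of the embedded divisors $D_n^T$, each isomorphic to $\overline{M}_{0,|T|+1}\times\overline{M}_{0,|T^c|+1}$ with both factors having fewer than $n+1$ markings, and each of codimension $r=2$ in $B_k$ (a divisor of $\overline{M}_{0,n}$ embedded in $\overline{M}_{0,n}\times\PP^1$). \Cref{thm:blowup-formula-additive} with $r=2$ then gives
\[
A^\bullet(B_{k+1})\cong A^\bullet(B_k)\oplus \bigoplus_T A^{\bullet-1}(D_n^T),
\]
as $A^\bullet(\pt)$-modules, where additivity of $A^\bullet$ handles the disjoint union. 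By induction each summand is finite free, so $A^\bullet(B_{k+1})$ is finite free and its rank is the sum of the ranks. The same decomposition holds for $CH^\bullet$, since Chow theory is one instance of the theorem, and the summands have identical Chow ranks. Hence the ranks agree at every stage, and therefore for $B_{n-2}\cong\overline{M}_{0,n+1}$.

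\textbf{Main obstacle.} The delicate point is the identification of the centers. We must check that the strict transform of each $D_n^T$ in $B_k$ is still isomorphic to $D_n^T$. This holds because the earlier centers meet it transversally in divisors of $D_n^T$, and blowing up a smooth scheme along a smooth divisor is an isomorphism. We also need the centers at each stage to be disjoint, which is \Cref{lem:boundary-divisors-facts}(3). Granting Keel's geometric description, the remainder of the argument is bookkeeping of ranks.
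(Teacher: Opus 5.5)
Your proposal is correct and follows essentially the same route as the paper: induct along Keel's tower $B_{n-2}\to\cdots\to B_1=\overline{M}_{0,n}\times\PP^1$, apply the additive blowup decomposition with $r=2$ at each stage, and observe that the resulting rank recursion is the same as the one for $CH^\bullet$, since Chow theory is itself an instance of the theorem. Your relative induction over a smooth base $W$ adds real rigor: it supplies the K\"unneth-type statement for the product centers $\overline{M}_{0,i}\times\overline{M}_{0,j}$, which the paper's two-line sketch uses without comment.
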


\begin{proof}
This follows by induction from the additive blowup decomposition of \Cref{thm:blowup-formula-additive}. At each step of Keel's recursive construction of $\Mn$, the free $A^\bullet(\pt)$-rank is obtained from the corresponding ranks of the ambient space and the blowup center by the same additive formula as in Chow theory. Hence the rank recursion for $A^\bullet(\Mn)$ agrees with the Chow-theoretic one, and the claim follows.
\end{proof}

In particular, Aluffi--Marcolli--Nascimento \cite[Corollaries~1.2 and~1.5]{AMN2025GrothendieckClass} computed the Betti numbers of $\Mn$ explicitly in terms of binomial coefficients and Stirling numbers of the second kind. Thus \Cref{prop:rank-equals-chow-rank} identifies the free $A^\bullet(\pt)$-rank of $A^\bullet(\Mn)$ with the total Chow rank, equivalently with the total even Betti number of $\Mn$.

\begin{proposition}\label{prop:generators-for-M0nbar}
The oriented cohomology ring $A^\bullet(\overline{M}_{0,n})$ is generated as an $A^\bullet(\pt)$-algebra by the fundamental classes $x_S \coloneqq [D^S]_A \in A^1(\overline{M}_{0,n})$ of the boundary divisors.
\end{proposition}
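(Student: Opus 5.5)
Induction on $n$, following Keel's iterated blowup $\pi_1\colon \overline{M}_{0,n+1}\to \overline{M}_{0,n}\times\PP^1$, with the surjective Blowup Presentation (\Cref{thm:blowup-presentation-surjective}) applied at every stage. For the base case, $\overline{M}_{0,3}=\pt$, and $\overline{M}_{0,4}\cong\PP^1$ has $A^\bullet(\PP^1)\cong A^\bullet(\pt)[h]/(h^2)$. Here $h$ is the class of a point, and each boundary point $D^{\{i,j\}}$ of $\overline{M}_{0,4}$ has class $h$ by \Cref{funclass-divisor}, since all points of $\PP^1$ are linearly equivalent.

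For the inductive step, assume $A^\bullet(\overline{M}_{0,n})$ is generated by the classes $x_T=[D_n^T]_A$. Write $B_1=\overline{M}_{0,n}\times\overline{M}_{0,4}$. Its ring $A^\bullet(B_1)=A^\bullet(\overline{M}_{0,n})[h]/(h^2)$ by the projective bundle formula for the trivial bundle, so it is generated by the pullbacks $\mathrm{pr}_1^*x_T$ together with $h=\mathrm{pr}_2^*[D^{\{1,2\}}_4]_A$.

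I then move up the tower $B_1\leftarrow B_2\leftarrow\cdots\leftarrow B_{n-2}$. At stage $k$ the center is a disjoint union of strict transforms of the embedded copies of $D_n^T$, each isomorphic to $\overline{M}_{0,|T|+1}\times\overline{M}_{0,|T^c|+1}$. To apply the surjective presentation I need $A^\bullet(B_k)\to A^\bullet(\text{center})$ to be surjective. Since the centers at a given stage are disjoint, the target is a direct sum, and componentwise surjectivity is enough once idempotent-type classes are available. On each component, generators of the center's ring come by induction from boundary divisors of the smaller moduli spaces, and the plan is to show that these are restrictions of boundary divisors of $B_k$. This is where \Cref{lem:boundary-divisors-facts}(1) enters: the section $\pi_T$ of $D_n^T\hookrightarrow\overline{M}_{0,n}$ identifies the pullback along $\pi_T$ of boundary divisors of $\overline{M}_{0,|T|+1}\times\overline{M}_{0,|T^c|+1}$ with sums of boundary divisors of $\overline{M}_{0,n}$, combined with the formal group law through \Cref{prop:divisor-sum-fgl}. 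Those in turn pull back to $B_k$.

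Granting surjectivity, \Cref{thm:blowup-presentation-surjective} says $A^\bullet(B_{k+1})$ is generated over $A^\bullet(B_k)$ by the exceptional classes $c_1^A(\mathcal{O}(E))=[E]_A$, one per component. By \Cref{lem:boundary-divisors-facts}(2) these are exactly the $x_{T\cup\{n+1\}}$. It then remains to express the pulled-back generators in terms of boundary classes on $\overline{M}_{0,n+1}$. From $\pi^{-1}(D_n^T)=D_{n+1}^T+D_{n+1}^{T,n+1}$ and \Cref{prop:divisor-sum-fgl}, $\pi^*x_T=x_T\Fplus x_{T\cup\{n+1\}}$, which is a power series in boundary classes. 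It is a polynomial because of nilpotence: under (Dim), or simply because $A^\bullet(\overline{M}_{0,n+1})$ is finite free by \Cref{prop:rank-equals-chow-rank}, the relevant products vanish in high degree. Finally, $h$ pulls back to a boundary divisor, namely $x_{\{1,2\}}\Fplus\cdots$, using the forgetful map to $\overline{M}_{0,4}$ and the same pullback-of-boundary formula.

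The main obstacle is the surjectivity onto the strict transforms of the centers. At later stages the center is a strict transform rather than $D_n^T$ itself, so \Cref{lem:boundary-divisors-facts}(1) does not apply verbatim. My first attempt would be to exhibit the strict transform at stage $k$ as isomorphic to $\overline{M}_{0,|T|+1}\times\overline{M}_{0,|T^c|+1}$ via Keel's construction. I would then check that the boundary divisors of $B_k$ meeting it, which are compatible with $T$ by \Cref{lem:boundary-divisors-facts}(3), restrict to the product boundary divisors. Failing that, I would fall back on Keel's Appendix argument verbatim, since it uses only formal properties.
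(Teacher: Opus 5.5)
Your strategy is the paper's: induction on $n$ through Keel's tower $B_1\leftarrow B_2\leftarrow\cdots\leftarrow B_{n-2}\cong\overline{M}_{0,n+1}$, applying \Cref{thm:blowup-presentation-surjective} at each stage and identifying the new exceptional classes via \Cref{lem:boundary-divisors-facts}(2). Your handling of the old generators, via $\pi^*x_T=x_T\Fplus x_{T\cup\{n+1\}}$ and $h$ pulled back along a forgetful map to $\overline{M}_{0,4}$, is more precise than the paper's remark about strict transforms. The pullback of a divisor containing a center is not its strict transform. The gap is the surjectivity step: you leave it open, and part of your handling of it is wrong.

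First, componentwise surjectivity onto the disjoint centers of one stage does not give surjectivity onto their union, and no ``idempotent-type classes'' exist to fix this. Already for Chow groups, $A^0(B_k)=\ZZ$ cannot surject onto $A^0(Z_1\sqcup Z_2)=\ZZ^2$, so \Cref{thm:blowup-presentation-surjective} does not apply to the union; blow up the pairwise disjoint components one at a time instead.

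Second, for the later stages you do not need to analyze how boundary divisors restrict to the center. Keel's Appendix is also no fallback, since it assumes the surjectivity. Use a retraction, which is what the paper's citation of \Cref{lem:section-implies-surjective} and \Cref{lem:boundary-divisors-facts}(1) amounts to. The composite $D_n^T\to B_1\xrightarrow{\mathrm{pr}_1}\overline{M}_{0,n}$ is the inclusion, because $\pi\circ\sigma_i=\mathrm{id}$. Hence $\rho_T\circ\mathrm{pr}_1$ retracts $D_n^T\hookrightarrow B_1$, where $\rho_T=(\pi_T|_{D_n^T})^{-1}\circ\pi_T$. In Keel's construction, earlier centers meet the strict transform of $D_n^T$ in divisors of it or not at all; this is why the centers are again $\overline{M}_{0,|T|+1}\times\overline{M}_{0,|T^c|+1}$. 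So the blowdown $\beta\colon B_k\to B_1$ (or from any partial blowup) maps that strict transform isomorphically onto $D_n^T$. Then $\rho_T\circ\mathrm{pr}_1\circ\beta$ is a retraction of the center, and \Cref{lem:section-implies-surjective} gives surjectivity without any inductive information about the center.

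Finally, nilpotence of the boundary classes does not follow from finite freeness by degree reasons: in periodic $K$-theory $A^m\neq 0$ for all $m$. Use (Dim) instead, or the general fact that $c_1^A$ of any line bundle on a smooth quasi-projective scheme is nilpotent.
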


\begin{proof}
We argue by induction on $n$. The cases $n=3$ and $n=4$ are immediate: $\overline{M}_{0,3}$ is a point, and $\overline{M}_{0,4}\cong \PP^1$, whose oriented cohomology ring is generated by the hyperplane class, equivalently by the class of the unique boundary divisor.

Assume now that the statement holds for $\overline{M}_{0,n}$ with $n> 4$. Let
\[
B_1=\overline{M}_{0,n}\times \overline{M}_{0,4}\xleftarrow{\ \pi_2\ } B_2\xleftarrow{\ \pi_3\ }\cdots\xleftarrow{\ \pi_{n-2}\ } B_{n-2}\cong \overline{M}_{0,n+1}
\]
be Keel's iterated blowup description. By the inductive hypothesis, $A^\bullet(\overline{M}_{0,n})$ is generated by boundary divisor classes, and $A^\bullet(\overline{M}_{0,4})=A^\bullet(\PP^1)$ is generated by the class of its boundary divisor. Therefore $A^\bullet(B_1)$ is generated, as an $A^\bullet(\pt)$-algebra, by boundary divisor classes pulled back from the two factors.

Suppose inductively that $A^\bullet(B_{k-1})$ is generated by boundary divisor classes. Write
\[
\pi_k\colon B_k=\Bl_{Z_k}B_{k-1}\longrightarrow B_{k-1}
\]
for the $k$th blowup, and let $E_k\subset B_k$ be its exceptional divisor. By Keel's construction, the center $Z_k$ is a smooth boundary stratum isomorphic to  $\overline{M}_{0,i}\times \overline{M}_{0,j}$ for some $i,j< n$. Moreover, the pullback map $A^\bullet(B_{k-1})\to A^\bullet(Z_k)$ is surjective by \Cref{lem:section-implies-surjective} and \Cref{lem:boundary-divisors-facts}.

We may therefore apply the surjective Blowup Presentation of \Cref{thm:blowup-presentation-surjective}. It follows that $A^\bullet(B_k)$ is generated over $A^\bullet(B_{k-1})$ by the exceptional divisor class $[E_k]_A$. By \Cref{lem:boundary-divisors-facts} (2), this is again a boundary divisor class. Since strict transforms of boundary divisor classes on $B_{k-1}$ remain boundary divisor classes on $B_k$, we conclude that $A^\bullet(B_k)$ is generated by boundary divisor classes.

Induction on $k$ shows that $A^\bullet(B_{n-2})\cong A^\bullet(\overline{M}_{0,n+1})$ is generated by boundary divisor classes. This proves the proposition for $n+1$, and hence for all $n$.
\end{proof}

\subsection{Relations}\label{sec:relation-A(M0n)}
We work out the families of relations in $A^\bullet(\Mn)$ implied by the calculus of boundary divisors. They generalize the families of relations in Keel's original presentation of the Chow ring.  
\begin{proposition}
    For all $S\subseteq [n]$, we have $[D^S]_A = [D^{S^c}]_A$ in $A^1(\Mn)$. 
\end{proposition}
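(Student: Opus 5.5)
The plan is to show that this is not a cohomological identity at all but an equality of subschemes. In that case the fundamental classes agree tautologically.

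By definition, $D^S\subset \Mn$ is the closure of the locus of two-component stable curves in which the markings in $S$ lie on one component and those in $S^c$ lie on the other. This description is symmetric in $S$ and $S^c$, since an unordered pair of components gives the same partition $\{S,S^c\}$ of $[n]$. Hence $D^S=D^{S^c}$ as closed subschemes of $\Mn$, with the same smooth structure. Equivalently, Knudsen's isomorphism $D^S\cong \overline{M}_{0,|S|+1}\times\overline{M}_{0,|S^c|+1}$ and the one for $S^c$ differ only by swapping the factors, and they are compatible with the gluing map into $\Mn$.

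The proof then proceeds in two steps.

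First, I will record that the inclusions $i_S\colon D^S\hookrightarrow\Mn$ and $i_{S^c}\colon D^{S^c}\hookrightarrow\Mn$ coincide: they are the same regular closed immersion of codimension one. The one point needing care is that the boundary divisor should be indexed by the unordered partition, and the gluing morphism should be checked to be invariant under the factor swap. I will verify this directly on the moduli functor. A stable curve with a node separating $S$ from $S^c$ is the same datum regardless of which side is listed first.

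Second, I will apply the definition $[D]_A=i_*1_D$. Since $i_S=i_{S^c}$, the Gysin pushforwards provided by (FC) agree, so $[D^S]_A=[D^{S^c}]_A$ in $A^1(\Mn)$. As a cross-check, one can also invoke \Cref{funclass-divisor}: both classes equal $c_1^A(\O_{\Mn}(D^S))$, and the line bundles $\O(D^S)$ and $\O(D^{S^c})$ are literally the same sheaf.

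I expect no real obstacle here. The only subtlety is notational: it should be made explicit that the paper's indexing $D^S$ by subsets double-counts each divisor. This is exactly why the relation $x_S=x_{S^c}$ appears in the presentation of $R_{0,n}^A$, where the generators $x_S$ are formally indexed by subsets.
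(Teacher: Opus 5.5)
Your proposal is correct and follows the paper's argument. The paper simply notes the geometric fact that $D^S = D^{S^c}$ as subschemes of $\Mn$, so the fundamental classes $i_*1$ coincide; your extra verification on the moduli functor and your line-bundle cross-check only spell out that step in more detail.
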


\begin{proof}
    This follows from the geometric fact that $D^S = D^{S^c}$. 
\end{proof}

\begin{proposition}[Fundamental relations, $F$-version]\label{prop:fundamental-relations-v1}
    The equality \begin{equation}\label{eq:fundamental-relations-v1}
        \Fsum_{\substack{S\subset [n] \\ i, j \in S, k, \ell \notin S}} x_S=\Fsum_{\substack{S\subset [n] \\ i, k \in S, j, \ell \notin S}} x_S=\Fsum_{\substack{S\subset [n] \\ i, \ell \in S, j, k \notin S}} x_S
    \end{equation}
    holds in $A^1(\Mn)$. 
\end{proposition}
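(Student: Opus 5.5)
We pull back along the forgetful map to $\overline{M}_{0,4}\cong\PP^1$ and then apply \Cref{prop:divisor-sum-fgl}. Concretely, for distinct $i,j,k,\ell\in[n]$ let $f\colon \Mn\to \overline{M}_{0,4}\cong\PP^1$ forget all markings except $i,j,k,\ell$. The three boundary points of $\overline{M}_{0,4}$ are $D^{\{i,j\}}$, $D^{\{i,k\}}$ and $D^{\{i,\ell\}}$. Each is a reduced point of $\PP^1$, so the three are linearly equivalent: $\O_{\PP^1}(D^{\{i,j\}})\cong\O_{\PP^1}(D^{\{i,k\}})\cong \O_{\PP^1}(D^{\{i,\ell\}})\cong \O_{\PP^1}(1)$. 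Consequently the three line bundles $f^*\O_{\PP^1}(D^{\{i,j\}})$, $f^*\O_{\PP^1}(D^{\{i,k\}})$ and $f^*\O_{\PP^1}(D^{\{i,\ell\}})$ on $\Mn$ are isomorphic, and so, by naturality of Chern classes, they have the same first Chern class $c_1^A$ in $A^1(\Mn)$.

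The next step is to identify these pullbacks divisorially. By Keel's standard computation (\cite[\S1]{Keel}), the scheme-theoretic pullback of the point $D^{\{i,j\}}$ under $f$ is the reduced divisor
\[
f^*D^{\{i,j\}}=\sum_{\substack{S\subset[n]\\ i,j\in S,\ k,\ell\notin S}} D^S .
\]
The point to check is that it is reduced with multiplicity one on every component. I would check this locally: near a general point of each $D^S$, the map $f$ is the smoothing parameter of the node separating $\{i,j\}$ from $\{k,\ell\}$, so $f$ vanishes to first order along $D^S$. This gives an isomorphism of line bundles $f^*\O_{\PP^1}(D^{\{i,j\}})\cong \O_{\Mn}\bigl(\sum D^S\bigr)\cong\bigotimes_S \O_{\Mn}(D^S)$, and similarly for the other two pairs.

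From here the relation is formal. Iterating \Cref{prop:divisor-sum-fgl}, which expresses $c_1^A$ of a tensor product of line bundles through the formal group law, and combining it with \Cref{funclass-divisor} (each $D^S$ is smooth, so $c_1^A(\O(D^S))=[D^S]_A=x_S$), we get $c_1^A\bigl(f^*\O_{\PP^1}(D^{\{i,j\}})\bigr)=\Fsum_{S} x_S$. Commutativity and associativity of $F$ ensure the formal sum does not depend on the order of the summands. Equating the three first Chern classes then yields \eqref{eq:fundamental-relations-v1}.

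The main obstacle I expect is the reducedness and multiplicity-one claim for $f^*D^{\{i,j\}}$. This is exactly where nonadditive formal group laws are sensitive, since a multiplicity $m$ would contribute $[m]_F x_S$ rather than $x_S$. To settle it, I would cite Keel's proof of the Chow relation, which already establishes the equality of divisors and not merely of divisor classes, and supplement it with the local deformation-theoretic description of $\Mn$ near a boundary point.
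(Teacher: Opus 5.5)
Your proposal is correct and follows essentially the same route as the paper. You pull back along the forgetful map $\Mn\to\overline{M}_{0,4}\cong\PP^1$, use that the three boundary points are linearly equivalent, identify each pullback with the reduced divisor $\sum D^S$ via Keel, and convert to $\Fsum x_S$ using the fundamental-class-of-a-divisor and sum-of-divisors propositions. Your explicit attention to the multiplicity-one issue, which would otherwise produce $[m]_F x_S$ terms, is a point the paper handles simply by citing Keel's equality of effective Cartier divisors.
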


\begin{proof}
    For $\{i,j,k,l\}\subseteq[n]$, let \[\pi_{i,j,k,l}:\Mn \to \overline{M}_{0,4}\] be the morphism forgetting all but the marked points $p_i, \dots, p_l$. Let $p_{12|34}, p_{13|24}$, and $p_{14|23}$ be the three boundary points on $\overline{M}_{0,4} \cong \PP^1$. Keel showed that there are equalities of effective Cartier divisors \[\begin{gathered}
        \pi_{i, j, k, \ell}^{-1}\left(p_{12\mid 34}\right)=\sum_{\substack{S \\ i, j \in S, k, \ell \notin S}} D^S \\
        \pi_{i, j, k, \ell}^{-1}\left(p_{13\mid 24}\right)=\sum_{\substack{S \\ i, k \in S, j, \ell \notin S}} D^S \\ 
        \pi_{i, j, k, \ell}^{-1}\left(p_{14\mid 23}\right)=\sum_{\substack{S \\ i, \ell \in S, j,k\notin S}} D^S.
    \end{gathered}\]

    We now apply the $A$-theoretic first Chern class to the corresponding line bundles for both sides of the equalities. For the left-hand sides, we have \[ [\pi_{i, j, k, \ell}^{-1}\left(p_{12\mid 34}\right)]_A= [\pi_{i, j, k, \ell}^{-1}\left(p_{13\mid 24}\right)]_A = [\pi_{i, j, k, \ell}^{-1}\left(p_{14\mid 23}\right)]_A \]
    in $A^1(\Mn)$, since all points on $\PP^1$ are linearly equivalent. For the right-hand sides by \cref{prop:divisor-sum-fgl}, we obtain 
    \[\begin{gathered}
        \bigl[\sum_{\substack{S \\ i, j \in S, k, \ell \notin S}} D^S\bigr]_A = \Fsum_{\substack{S\subset [n] \\ i, j \in S, k, \ell \notin S}} x_S \\ 
        \bigl[\sum_{\substack{S \\ i, k\in S, j, \ell \notin S}} D^S\bigr]_A = \Fsum_{\substack{S\subset [n] \\ i, k \in S, j, \ell \notin S}} x_S \\ 
        \bigl[\sum_{\substack{S \\ i, \ell \in S, j,k \notin S}} D^S\bigr]_A = \Fsum_{\substack{S\subset [n] \\ i, \ell \in S, j,k\notin S}} x_S.
    \end{gathered} \]
    Comparing the two groups of equalities yields the desired relations.
\end{proof}
Recall the definition of compatible sets from \cref{lem:boundary-divisors-facts}.

\begin{proposition}\label{prop:incompatible-divisors-relation}
For all incompatible $S$ and $T\subseteq [n]$, we have \[x_Sx_T = 0 \in A^2(\Mn).\]
    \end{proposition}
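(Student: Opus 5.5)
The plan is to reduce this to \Cref{prop:disjoint-divisors-zero}. That proposition says the product of fundamental classes of two disjoint smooth divisors vanishes in any oriented cohomology theory represented in $\mathsf{SH}(k)$. The only geometric input needed is that the boundary divisors $D^S$ and $D^T$ are smooth and, for incompatible $S,T$, disjoint.

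\textbf{Step 1: smoothness.} Each $D^S$ is isomorphic to $\overline{M}_{0,|S|+1}\times\overline{M}_{0,|S^c|+1}$ by Knudsen's description, so it is a smooth (irreducible) divisor on the smooth variety $\Mn$. Its $A$-theoretic fundamental class is therefore defined, and $x_S=[D^S]_A=c_1^A(\O_{\Mn}(D^S))$ by \Cref{funclass-divisor}.

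\textbf{Step 2: disjointness.} I will invoke \Cref{lem:boundary-divisors-facts}(3): $D^S\cap D^T\neq\varnothing$ if and only if $S ** T$. Incompatibility of $S$ and $T$ means precisely that none of $S\subset T$, $T\subset S$, $S\cap T=\emptyset$, or $S\cup T=[n]$ holds, so $D^S\cap D^T=\varnothing$.

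One subtlety to handle is that the labelling is only defined up to complement, since $D^S=D^{S^c}$. The compatibility condition as defined is symmetric under replacing $S$ by $S^c$ (inclusion becomes disjointness, and so on). So "incompatible" is well defined on divisors, and Keel's criterion applies regardless of which representative is chosen.

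\textbf{Step 3: conclusion.} Apply \Cref{prop:disjoint-divisors-zero} with $D_1=D^S$ and $D_2=D^T$ to get $x_Sx_T=[D^S]_A\cdot[D^T]_A=0$ in $A^2(\Mn)$. Recall how that proposition works: the localization sequence (\Cref{thm:localization-sequence}) kills $[D^S]_A$ on the open complement of $D^S$, and $D^T$ lies in that open set. Combined with the projection formula for the inclusion of $D^T$, this gives the vanishing.

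I expect no step to be a real obstacle. The only point needing care is the geometric disjointness statement, which is quoted from Keel rather than proved. If one wanted independence from that citation, one could argue directly. A general point of $D^S\cap D^T$ would be a stable curve with two nodes separating the markings as $S|S^c$ and $T|T^c$. Such nodes force the partitions to be nested or disjoint after possibly complementing, which is exactly the relation $S ** T$.
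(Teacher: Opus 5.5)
Your proposal is correct and follows the same route as the paper. The paper's proof simply cites \Cref{lem:boundary-divisors-facts}(3) to get $D^S\cap D^T=\varnothing$ for incompatible $S,T$, and then applies \Cref{prop:disjoint-divisors-zero}. Your additional checks are also correct: smoothness of $D^S$ via Knudsen's isomorphism, and invariance of compatibility under $S\mapsto S^c$. They make explicit hypotheses that the paper's two-line proof leaves implicit.
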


    \begin{proof}
        As mentioned in \cref{lem:boundary-divisors-facts} if $S$ and $T$ are incompatible, then $D^S\cap D^T = \varnothing$.
        Hence by \cref{prop:disjoint-divisors-zero}, we have $x_Sx_T = 0\in A^2(\Mn)$.\end{proof}




\begin{remark}
 We can simplify the fundamental relations (\Cref{prop:fundamental-relations-v1})  using the incompatibility relations. Note that the formal group law can be written as $F(x,y) = x + y + xy \cdot \Phi(x,y)$, where $\Phi$ is a power series. Therefore, if $S$ and $T$ are incompatible, $x_S x_T = 0$, which  reduces their formal sum to $x_S \Fplus x_T = x_S + x_T$.

 When fully expanding the multi-term sum $\Fsum x_S$, the result is the standard additive sum $\sum x_S$ plus a linear combination of higher-degree cross-terms of the form $x_{S_1}x_{S_2}\cdots x_{S_r}$. Such a monomial survives in $A^\bullet(\Mn)$ if and only if the sets $S_1, \dots, S_r$ are pairwise compatible. Since all sets in this specific sum contain $\{i,j\}$ and omit $\{k,\ell\}$, no two sets can be disjoint, and their union cannot be $[n]$. Thus, the only surviving higher-order cross-terms are those corresponding to strictly nested chains of subsets:
 \[ S_1 \subsetneq S_2 \subsetneq \cdots \subsetneq S_r. \]
 Consequently, the $F$-sum splits into the classical Chow sum plus a finite number of nested-chain terms.
\end{remark}

\subsection{Oriented Cohomology Ring of $\Mn$}

In this subsection, we prove that $A^\bullet(\Mn)$ admits a Keel-style presentation. We achieve this by comparing the proposed presentation, its associated graded ring with respect to filtration by powers of the augmentation ideal, and the Chow ring $CH^\bullet(\Mn)$. 

We define the free polynomial algebra
\[
S_{0,n}^A := A^\bullet(\pt)[x_S \mid S\subseteq [n],\, 2\le |S|\le n-2].
\]
Let $J\subset S_{0,n}^A$ denote the augmentation ideal generated by the variables $x_S$.

\begin{lemma}\label{lem:F-linearization}
For any $u,v\in J$ one has $u+_F v \equiv u+v \pmod{J^2}$. More generally, if $z_1,\dots,z_m\in J$, then
\[
\Fsum_{i=1}^m z_i \equiv \sum_{i=1}^m z_i \pmod{J^2}.
\]
\end{lemma}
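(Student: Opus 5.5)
The plan is to reduce everything to the shape of the formal group law. By definition, $F(x,y)=x+y+\sum_{i,j\ge 1}a_{ij}x^iy^j$. Every correction term is divisible by $xy$, so we can write $F(x,y)=x+y+xy\,\Phi(x,y)$ for some power series $\Phi\in A^\bullet(\pt)\llbracket x,y\rrbracket$. This is the same factorization already used in the remark following \Cref{prop:incompatible-divisors-relation}.

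The first thing to settle is that $u+_F v$ makes sense in $S_{0,n}^A$ for $u,v\in J$. The ring $S_{0,n}^A$ is a polynomial ring, so an infinite series need not converge there. I would therefore interpret the statement in the $J$-adic completion $\widehat{S}_{0,n}^A$, where power series in elements of $J$ converge. Alternatively, one can work in the graded sense: in a fixed cohomological degree, only finitely many monomials $u^iv^j$ paired with coefficients $a_{ij}$ of the right degree can contribute. Once the statement is read in the completion, the congruence modulo $J^2$ is unaffected, because $\widehat{S}_{0,n}^A/\widehat{J^2}\cong S_{0,n}^A/J^2$. The same holds after the presentation imposes nilpotency, since the relation $x_S^{n-2}$ makes the relevant sums finite.

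For two summands the argument is direct. Since $u,v\in J$, each correction term $a_{ij}u^iv^j$ with $i,j\ge 1$ lies in $J^{i+j}\subseteq J^2$. Hence $uv\,\Phi(u,v)\in J^2$, which gives $u+_F v\equiv u+v \pmod{J^2}$.

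The general statement then follows by induction on $m$. The iterated sum is defined as $\Fsum_{i=1}^m z_i=\bigl(\Fsum_{i=1}^{m-1}z_i\bigr)+_F z_m$, and associativity of $F$ makes this independent of bracketing. Write $w=\Fsum_{i=1}^{m-1}z_i$. By the induction hypothesis, $w\equiv\sum_{i<m}z_i\pmod{J^2}$, so in particular $w\in J$. Applying the two-term case to $w$ and $z_m$ gives $w+_F z_m\equiv w+z_m\pmod{J^2}$, and combining the two congruences completes the induction. The only point requiring care is the convergence and well-definedness issue from the first paragraph; the algebra itself is routine.
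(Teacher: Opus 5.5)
Your proof is correct and is essentially the paper's: write $F(x,y)=x+y+xy\,\Phi(x,y)$, observe that the correction term lies in $J^2$ when $u,v\in J$, and induct on $m$. Your care about making sense of the series is a welcome precision the paper leaves implicit, either in the $J$-adic completion or in $P_{0,n}^A$, where $J$ is nilpotent because there are finitely many generators each satisfying $x_S^{n-2}=0$. One caveat: drop the ``graded sense'' alternative, since $|a_{ij}|=1-i-j$ means every term $a_{ij}u^iv^j$ has degree $1$ when $u,v$ have degree $1$, so homogeneity does not make the sum finite in $S_{0,n}^A$.
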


\begin{proof}
Write $F(u,v)=u+v+\Phi(u,v)$ where every monomial appearing in $\Phi(u,v)$ has total degree at least $2$.
If $u,v\in J$, then $\Phi(u,v)\in J^2$, hence $u+_F v\equiv u+v\pmod{J^2}$.
The statement then follows by induction on $m$.
\end{proof}

Equip $S_{0,n}^A$ with the $J$-adic filtration $\mathcal F^\bullet S_{0,n}^A$ given by
\[
\mathcal F^d S_{0,n}^A := J^d.
\]
For $f\in S_{0,n}^A$, write
\[
f=\sum_{i\ge m} f_i
\]
as a sum of homogeneous components in the total degree grading, with $f_m\neq 0$.
Define the \emph{initial form} by $\operatorname{in}(f):=f_m$.
For an ideal $I\subset S_{0,n}^A$, define
\[
\operatorname{in}(I):=(\operatorname{in}(f)\mid f\in I).
\]

\begin{lemma}[Associated graded of a quotient]\label{lem:gr-quotient-initial}
Let $I\subset S_{0,n}^A$ be an ideal. Then there is a natural graded $A^\bullet(\pt)$-algebra isomorphism
\[
\operatorname{gr}_{\mathcal F}(S_{0,n}^A/I)\cong S_{0,n}^A/\operatorname{in}(I).
\]
\end{lemma}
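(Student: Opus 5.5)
The plan is to prove this as the standard fact that ``the associated graded of a quotient is the quotient of the associated graded by the ideal of leading forms.'' I will make it precise for the $J$-adic filtration. Here $\operatorname{in}$ is the lowest-degree component with respect to the grading of $S^A_{0,n}$ by polynomial degree in the variables $x_S$. This grading is what makes $J^d/J^{d+1}$ identify with the degree-$d$ part, since $J$ is generated by the variables.

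First I record that $\operatorname{gr}_{\mathcal F}(S_{0,n}^A)\cong S_{0,n}^A$ canonically. Because $S_{0,n}^A$ is a polynomial ring over $A^\bullet(\pt)$, each power $J^d$ is exactly the span of polynomials all of whose monomials have $x$-degree at least $d$. So $J^d/J^{d+1}$ is the free $A^\bullet(\pt)$-module on monomials of degree exactly $d$. Multiplication on the associated graded then matches multiplication of homogeneous polynomials, and the cohomological grading is preserved throughout.

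Second, I give the quotient $Q=S_{0,n}^A/I$ the induced filtration $\mathcal F^dQ=(J^d+I)/I$. This gives
\[
\operatorname{gr}^d Q=\frac{J^d+I}{J^{d+1}+I}\cong \frac{J^d}{J^{d+1}+(I\cap J^d)}.
\]
So $\operatorname{gr}Q$ is the quotient of $\operatorname{gr}S^A_{0,n}=S^A_{0,n}$ by the graded ideal whose degree-$d$ piece is the image of $I\cap J^d$ in $J^d/J^{d+1}$. Under the identification of the first step, that image is the set of degree-$d$ components $f_d$ for $f\in I\cap J^d$. Each such $f_d$ is either $\operatorname{in}(f)$ (when $f_d\ne0$) or zero.

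Third, I show this graded ideal equals $\operatorname{in}(I)$. If $f\in I$ has initial degree $m$, then $f\in I\cap J^m$ and its image is $\operatorname{in}(f)$, so $\operatorname{in}(I)$ is contained in the kernel. Conversely, the degree-$d$ part of the kernel consists of initial forms of elements of $I$ (or $0$), and these lie in $\operatorname{in}(I)$. Since the kernel is spanned by such homogeneous elements, the two ideals agree. A small point to check is that $\operatorname{in}(I)$ is automatically homogeneous, being generated by homogeneous elements, so comparing degree by degree is legitimate. Naturality and $A^\bullet(\pt)$-linearity hold because every map used is induced from the identity of $S^A_{0,n}$.

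The only genuine subtlety, and the step I would double-check, is the equality $J^d=\{\text{polynomials of } x\text{-order}\ge d\}$. The whole argument rests on it, and it holds precisely because $J$ is generated by the indeterminates and $A^\bullet(\pt)$ sits in $x$-degree $0$. Elements of $A^\bullet(\pt)$ of negative cohomological degree do not interfere, since the filtration only uses $x$-degree. No completeness issues arise, because the statement is purely about graded pieces and no convergence is needed.
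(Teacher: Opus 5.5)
Your proposal is correct and follows the same route as the paper. Both arguments pass to $\operatorname{gr}_{\mathcal F}(S_{0,n}^A)/\operatorname{gr}_{\mathcal F}(I)$ for the induced and quotient filtrations, identify $\operatorname{gr}_{\mathcal F}(S_{0,n}^A)$ with $S_{0,n}^A$ via $x$-degree, and identify the image of $I$ with $\operatorname{in}(I)$. The only difference is that you prove the two steps the paper cites from the Stacks Project or simply asserts: the computation $J^d\cap(J^{d+1}+I)=J^{d+1}+(I\cap J^d)$, and the two inclusions showing the image ideal equals $\operatorname{in}(I)$. Your reading of $\operatorname{in}$ as the lowest $x$-degree component, rather than the lowest cohomological-degree component, is the one the lemma actually requires.
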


\begin{proof}
This is a standard fact. For the induced filtration on $I$ and the quotient filtration on
$S_{0,n}^A/I$, one has
\[
\operatorname{gr}_{\mathcal F}(S_{0,n}^A/I)\cong
\operatorname{gr}_{\mathcal F}(S_{0,n}^A)/\operatorname{gr}_{\mathcal F}(I);
\]
see \cite[Tag 0120, Lemma 12.19.12]{Stacks}.
Since $\mathcal F^dS_{0,n}^A=J^d$, the associated graded ring
$\operatorname{gr}_{\mathcal F}(S_{0,n}^A)$ identifies canonically with $S_{0,n}^A$, and under
this identification $\operatorname{gr}_{\mathcal F}(I)$ is precisely $\operatorname{in}(I)$.
\end{proof}

\begin{definition}
\label{def:ring-R0n}
    Let $I_{{geom}}\subset S_{0,n}^A$ be defined by the following relations:
\begin{enumerate}
    \item (\emph{Complements}) $x_S-x_{S^c}$, for all $S$ with $2\le |S|\le n-2$,
    \item (\emph{Incompatibility}) $x_Sx_T$, whenever $S$ and $T$ are incompatible,
    \item (\emph{Nilpotency}) $x_S^{n-2}$, for all $S$ with $2\le |S|\le n-2$
\end{enumerate}
We define
\[{P_{0,n}^A}:= S_{0,n}^A/I_{geom}\]
Let us further define the ideal $I_A\subset {P_{0,n}^A} $ by the $F$-fundamental relation: for all distinct $i,j,k,\ell\in[n]$,
\[
\Fsum_{\substack{S\subset [n]\\ i,j\in S,\ k,\ell\notin S}} x_S \;-\;
\Fsum_{\substack{S\subset [n]\\ i,k\in S,\ j,\ell\notin S}} x_S,
\qquad
\Fsum_{\substack{S\subset [n]\\ i,j\in S,\ k,\ell\notin S}} x_S \;-\;
\Fsum_{\substack{S\subset [n]\\ i,\ell\in S,\ j,k\notin S}} x_S.
\]

Finally, we define
\[
R_{0,n}^A := P_{0,n}^A/I_A.
\]

\end{definition}
\begin{remark}
    Since we already imposed the nilpotency relation in $P_{0,n}^A$, the $F$-sum relations in the ideal $I_A$ are not formal power series, but rather truncations of it, i.e., polynomial relations. Hence they are well defined inside the ring.
\end{remark}
Let $I_{CH}\subset P_{0,n}^A$ be the ideal obtained by replacing each $\Fsum$ in the
$F$-fundamental relations by the ordinary Chow summation $\sum$. We define
\[
\widetilde{R}_{0,n}^{CH} := P_{0,n}^A/I_{CH}.
\] 
Note that $\widetilde{R}_{0,n}^{CH} \cong CH^\bullet(\Mn) \otimes_\ZZ A^\bullet(\pt)$ by Keel's presentation of $CH^\bullet(\Mn)$ \cite[Theorem 1]{Keel}, since we have dimension axiom for Chow ring.
\begin{proposition}\label{prop:gr-is-additive}
With respect to the $J$-adic filtration, there is a natural graded $A^\bullet(\pt)$-algebra isomorphism
\[
\operatorname{gr}_{\mathcal F}(R_{0,n}^A)\cong \widetilde{R}_{0,n}^{CH}.
\]
\end{proposition}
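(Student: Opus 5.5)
The plan is to reduce the statement to \Cref{lem:gr-quotient-initial} by writing $R_{0,n}^A = S_{0,n}^A/I$ with $I := I_{geom} + \widetilde I_A$. Here $\widetilde I_A \subset S_{0,n}^A$ is generated by lifts of the (truncated) $F$-fundamental relations. By \Cref{lem:gr-quotient-initial}, $\operatorname{gr}_{\mathcal F}(R_{0,n}^A)\cong S_{0,n}^A/\operatorname{in}(I)$. Similarly $\widetilde R_{0,n}^{CH} = S_{0,n}^A/I'$ with $I' := I_{geom}+\widetilde I_{CH}$. So it suffices to prove $\operatorname{in}(I) = I'$.

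First, the generators of $I_{geom}$ (the linear binomials $x_S-x_{S^c}$ and the monomials $x_Sx_T$, $x_S^{n-2}$) are homogeneous in the $x$-degree, so they are their own initial forms. By \Cref{lem:F-linearization}, each $F$-fundamental generator $f_A = \Fsum x_S - \Fsum x_T$ satisfies $f_A = f_{CH} + g$ with $f_{CH}$ the linear Chow relation and $g\in J^2$. Hence $\operatorname{in}(f_A)=f_{CH}$, and $I'\subseteq \operatorname{in}(I)$.

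The main obstacle is the reverse inclusion $\operatorname{in}(I)\subseteq I'$. Initial forms of arbitrary combinations $\sum h_k f_k$ could acquire cancellation in lowest degree and produce new relations; this is the usual standard-basis issue. I would handle it by a rank argument rather than a direct Gröbner computation. Since $I'\subseteq\operatorname{in}(I)$, there is a surjection of graded $A^\bullet(\pt)$-algebras $\widetilde R_{0,n}^{CH}\twoheadrightarrow \operatorname{gr}_{\mathcal F}(R_{0,n}^A)$.

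The target has the same size as $R_{0,n}^A$ in each filtration degree. To bound that size from below, use the surjection $R_{0,n}^A\to A^\bullet(\Mn)$ coming from \Cref{prop:generators-for-M0nbar} together with the relations of \S\ref{sec:relation-A(M0n)}; nilpotency holds there by (Dim). By \Cref{prop:rank-equals-chow-rank}, $A^\bullet(\Mn)$ is free of the same rank as $CH^\bullet(\Mn)$, and $\widetilde R_{0,n}^{CH}\cong CH^\bullet(\Mn)\otimes A^\bullet(\pt)$ is free of that rank.

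The filtration is finite because $J^{N}=0$ in these quotients for large $N$, by nilpotency and (Dim). So a surjection between free modules of equal finite rank is an isomorphism, after checking compatibility of gradings degree by degree and counting ranks in each graded piece. This forces $\widetilde R_{0,n}^{CH}\to\operatorname{gr}(R_{0,n}^A)\to \operatorname{gr}(A^\bullet(\Mn))$ to be isomorphisms, and hence $\operatorname{in}(I)=I'$.

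A remaining subtlety is that over a general coefficient ring $A^\bullet(\pt)$, a surjection of free modules of equal rank is indeed an isomorphism for finitely generated modules over commutative rings, so this step goes through. I would also verify that $\operatorname{gr}(A^\bullet(\Mn))$ is free of the Chow rank, using the filtration induced by the boundary generators together with the additive blowup decomposition.
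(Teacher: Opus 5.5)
You have found a real weakness in the paper's own argument. The paper computes the initial forms of the generators of $I_{geom}$ and of the $F$-fundamental relations, then declares $\operatorname{in}(I_A)=I_{CH}$. That only gives $I'\subseteq\operatorname{in}(I)$; the standard-basis issue you describe is never addressed. Your route is genuinely different from the paper's: a global rank comparison through the geometric surjection $\bar\phi\colon R_{0,n}^A\twoheadrightarrow A^\bullet(\Mn)$. It is the right kind of repair.

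As written, however, the decisive step is not established. To invoke ``a surjection of free modules of equal rank is an isomorphism'' you need the target to be known free. For $\operatorname{gr}_{\mathcal F}(R_{0,n}^A)$ that is exactly what is in question. For $\operatorname{gr}(A^\bullet(\Mn))$, taken with respect to powers of the boundary ideal, you only promise to get freeness from the additive blowup decomposition. That decomposition gives a basis of $A^\bullet(\Mn)$ that is not adapted to this filtration, so it does not deliver what you need. Over a coefficient ring such as $\mathbb L$, ``the same size in each filtration degree'' has no content until freeness is known.

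The detour through $\operatorname{gr}(A^\bullet(\Mn))$ is unnecessary; here is how to close the argument.
\begin{enumerate}
\item Write $\Lambda=A^\bullet(\pt)$. Fix a homogeneous $\Lambda$-basis $\{\bar b_\lambda\}$ of $\widetilde R_{0,n}^{CH}$, of size $N=\operatorname{rk}CH^\bullet(\Mn)$. Choose lifts $b_\lambda\in\mathcal F^{d_\lambda}R_{0,n}^A$ of their images under $\widetilde R_{0,n}^{CH}\twoheadrightarrow\operatorname{gr}_{\mathcal F}(R_{0,n}^A)$.
\item $J$ is nilpotent in $R_{0,n}^A$, since it has finitely many generators, each with $x_S^{n-2}=0$. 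So descending induction on $d$ shows that $\mathcal F^d$ is spanned by the $b_\lambda$ with $d_\lambda\ge d$. This is the spanning half of \Cref{lem:basis-lift}, which only needs generation. In particular the map $\Lambda^N\to R_{0,n}^A$, $e_\lambda\mapsto b_\lambda$, is onto.
\item Compose with $\bar\phi$ and with $A^\bullet(\Mn)\cong\Lambda^N$. These use \Cref{prop:generators-for-M0nbar}, the relations of \S\ref{sec:relation-A(M0n)} together with (Dim), and \Cref{prop:rank-equals-chow-rank}. The composite is a surjective endomorphism of $\Lambda^N$, hence an isomorphism, so the $b_\lambda$ form a $\Lambda$-basis of $R_{0,n}^A$.
\item Suppose $\sum_{d_\lambda=d}c_\lambda\bar b_\lambda$ maps to $0$ in $\mathcal F^d/\mathcal F^{d+1}$. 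Then $\sum c_\lambda b_\lambda$ is a combination of $b_\mu$ with $d_\mu>d$, and linear independence forces all $c_\lambda=0$.
\item Hence $\widetilde R_{0,n}^{CH}\to\operatorname{gr}_{\mathcal F}(R_{0,n}^A)$ is injective, and $\operatorname{in}(I)=I'$.
\end{enumerate}

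There is a trade-off. Your argument makes the proposition depend on geometric inputs: the results just listed and Keel's theorem. It also proves \Cref{thm:oriented-cohomology-of-M0nbar} along the way, which makes \Cref{cor:freeness-of-R0nF} and the determinant step redundant. The paper's intended argument would be purely algebraic and valid for any formal group law. Completing it would require showing that the listed generators form a standard basis for the $J$-adic filtration, and the paper does not do this.
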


\begin{proof}
By \Cref{lem:gr-quotient-initial}, it suffices to identify $\operatorname{in}(I_A)$.
Complement relations are linear, hence equal to their own initial forms.
Incompatibility relations are quadratic, hence already homogeneous of degree $2$.
For an $F$-fundamental relation, \Cref{lem:F-linearization} shows that each $F$-sum
agrees with the corresponding ordinary sum modulo $J^2$. Hence the initial form of
an $F$-fundamental relation is the corresponding additive fundamental relation.
Therefore $\operatorname{in}(I_A)=I_{CH}$, which proves the claim.
\end{proof}

The following lemma is well-known.
\begin{lemma}[Lifting homogeneous bases]\label{lem:basis-lift}
Let $M$ be an $A^\bullet(\pt)$-module equipped with an exhaustive separated filtration
$\mathcal F^\bullet M$.
Assume $\operatorname{gr}_{\mathcal F}(M)$ is a finite free $A^\bullet(\pt)$-module with homogeneous basis
$\{\bar b_\lambda\}_\lambda$.
For each $\lambda$, choose a lift $b_\lambda\in \mathcal F^{d_\lambda}M$ of $\bar b_\lambda$,
where $d_\lambda$ is the degree of $\bar b_\lambda$.
Then $\{b_\lambda\}_\lambda$ is an $A^\bullet(\pt)$-basis of $M$.
In particular, $M$ is finite free over $A^\bullet(\pt)$.
\end{lemma}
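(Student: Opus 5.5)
The plan is the standard leading-term argument, run separately for linear independence and for spanning. Throughout, $\mathcal F^\bullet$ is decreasing with $\operatorname{gr}^d_{\mathcal F}M=\mathcal F^dM/\mathcal F^{d+1}M$, each $\bar b_\lambda$ lies in $\operatorname{gr}^{d_\lambda}_{\mathcal F}M$, and the filtration is compatible with the $A^\bullet(\pt)$-action. One caveat first: exhaustive and separated alone do not give spanning in general, since one also needs completeness. I will therefore use the additional property, valid in our application, that the filtration is finite, i.e.\ $\mathcal F^NM=0$ for some $N$. For $M=R_{0,n}^A$ with the $J$-adic filtration this holds: the nilpotency relations $x_S^{n-2}=0$ and the finiteness of the set of generators $x_S$ give $J^N\subseteq I_{geom}+I_A$ for $N$ large. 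It would be enough to have this degreewise, i.e.\ the filtration is finite in each cohomological degree.

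\textbf{Linear independence.} Suppose $\sum_\lambda r_\lambda b_\lambda=0$ with finitely many $r_\lambda\in A^\bullet(\pt)$ nonzero. If some coefficient is nonzero, let $d$ be the minimum of $d_\lambda$ over those $\lambda$ with $r_\lambda\neq 0$. Every term with $d_\lambda>d$ lies in $\mathcal F^{d+1}M$. Projecting the relation to $\operatorname{gr}^d_{\mathcal F}M$ therefore gives
\[
\sum_{d_\lambda=d} r_\lambda\bar b_\lambda=0 .
\]
Since the $\bar b_\lambda$ form a basis of $\operatorname{gr}_{\mathcal F}M$, all these $r_\lambda$ vanish, which contradicts the choice of $d$.

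\textbf{Spanning.} Take $m\in M$. Because the filtration is exhaustive, $m\in\mathcal F^{d}M$ for some $d$. Its image $\bar m\in\operatorname{gr}^{d}_{\mathcal F}M$ can be written as $\bar m=\sum_{d_\lambda=d}r_\lambda\bar b_\lambda$. Homogeneity of the basis lets us restrict to basis elements of filtration degree exactly $d$. It follows that
\[
m-\sum_{d_\lambda=d}r_\lambda b_\lambda\in\mathcal F^{d+1}M .
\]
Iterate this step. After at most $N-d$ steps the remainder lies in $\mathcal F^NM=0$, so $m$ is a finite $A^\bullet(\pt)$-combination of the $b_\lambda$. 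Finiteness of the combination uses that the basis of $\operatorname{gr}_{\mathcal F}M$ is finite.

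The main obstacle is exactly this termination of the iteration. Separatedness alone only produces a convergent infinite sum, and for that to make sense one would need completeness of $M$. I would handle it by verifying $\mathcal F^NM=0$ in the intended application, as explained above, and by stating the lemma under that hypothesis (or under completeness). Everything else is routine bookkeeping with the grading.
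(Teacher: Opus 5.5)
Your argument is the same leading-term argument as the paper's; the linear-independence step is identical, and so is the subtract-and-iterate spanning step. The problem is the step you single out as the main obstacle. It is not an obstacle under the stated hypotheses. By treating it as one, you prove a weaker lemma, with an added finiteness or completeness assumption, instead of the lemma as stated.

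The missing observation is that a finite homogeneous basis of $\operatorname{gr}_{\mathcal F}M$ already forces the filtration to be finite:
\begin{itemize}
\item Since $\operatorname{gr}_{\mathcal F}M=\bigoplus_d \mathcal F^dM/\mathcal F^{d+1}M$ is spanned by the finitely many $\bar b_\lambda$, we have $\mathcal F^dM/\mathcal F^{d+1}M=0$ for every $d>D:=\max_\lambda d_\lambda$.
\item Hence $\mathcal F^{D+1}M=\mathcal F^{d}M$ for all $d\ge D+1$.
\item Separatedness then gives $\mathcal F^{D+1}M=\bigcap_d\mathcal F^dM=0$.
\end{itemize}
So your iteration stops after finitely many steps with no extra hypothesis. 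The paper's sentence ``only finitely many graded pieces are nonzero, so this process terminates'' uses exactly this, with separatedness doing the work implicitly. Your remark that completeness is needed is correct for filtrations whose associated graded has infinitely many nonzero pieces. It does not apply here.

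Your check that the nilpotency relations and the finiteness of the generators $x_S$ give $J^N=0$ in $R_{0,n}^A$ is still worth keeping, but its role is different. It is what verifies the \emph{separatedness} hypothesis of the lemma for the $J$-adic filtration on $R_{0,n}^A$, a point the paper leaves implicit. It is not a hypothesis that needs to be added to the lemma.
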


\begin{proof}
Let $m\in M$. Choose $d$ minimal with $m\in\mathcal F^dM$.
The image of $m$ in $\mathcal F^dM/\mathcal F^{d+1}M$ is an $A^\bullet(\pt)$-linear combination of the
$\bar b_\lambda$ of degree $d$.
Subtracting the corresponding combination of the lifts $b_\lambda$ yields an element of
$\mathcal F^{d+1}M$.
Repeating this argument expresses $m$ as an $A^\bullet(\pt)$-linear combination of the $b_\lambda$.
Since $\operatorname{gr}_{\mathcal F}(M)$ has a finite homogeneous basis, only finitely many graded
pieces are nonzero, so this process terminates.

For linear independence, suppose that
\(
\sum_\lambda a_\lambda b_\lambda=0.
\)
Let $d$ be the minimal filtration degree among indices with $a_\lambda\ne 0$.
Reducing modulo $\mathcal F^{d+1}M$ gives a nontrivial $A^\bullet(\pt)$-linear relation among the
degree-$d$ basis elements $\bar b_\lambda$ in $\operatorname{gr}_{\mathcal F}(M)$, a contradiction.
Hence all $a_\lambda=0$.
\end{proof}
\begin{corollary}[Freeness of $R_{0,n}^A$]\label{cor:freeness-of-R0nF}
The $A^\bullet(\pt)$-algebra $R_{0,n}^A$ is finite free over $A^\bullet(\pt)$, and the same set of monomials maps to an $A^\bullet(\pt)$-basis of $R_{0,n}^A$.
Moreover,
\[
\rk_{A^\bullet(\pt)}(R_{0,n}^A)=\rk_{A^\bullet(\pt)}(\widetilde{R}_{0,n}^{CH}).
\]
\end{corollary}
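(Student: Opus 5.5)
The plan is to apply \Cref{lem:basis-lift} to $M = R_{0,n}^A$ with its $J$-adic filtration. By \Cref{prop:gr-is-additive}, $\operatorname{gr}_{\mathcal F}(R_{0,n}^A)\cong \widetilde{R}_{0,n}^{CH}\cong CH^\bullet(\Mn)\otimes_\ZZ A^\bullet(\pt)$.

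First, Keel's theorem gives a $\ZZ$-basis of $CH^\bullet(\Mn)$ consisting of monomials $\prod x_{S}^{m_S}$ in the boundary classes, each homogeneous of some degree $d$. Tensoring with $A^\bullet(\pt)$ turns these monomials into a finite homogeneous $A^\bullet(\pt)$-basis of $\widetilde{R}_{0,n}^{CH}$. This is the step that makes the phrase ``the same set of monomials'' in the statement meaningful.

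Second, I will check the hypotheses of \Cref{lem:basis-lift}. The filtration $\mathcal F^d R_{0,n}^A$ is the image of $J^d$, so it is exhaustive because $\mathcal F^0$ is everything. For separatedness, I will use the fact that $P_{0,n}^A$ already contains the nilpotency relations $x_S^{n-2}=0$ and that there are only finitely many generators $x_S$. Together these show $J^N=0$ in $P_{0,n}^A$, and hence in $R_{0,n}^A$, for $N$ large; any $N > (n-3)\cdot\#\{S\}$ works by pigeonhole. So the filtration is finite, and in particular separated. This is the point I expect to need the most care. The argument relies on nilpotency being imposed before the $F$-fundamental relations, so that those relations are genuine polynomials and the identification $\operatorname{in}(I_A)=I_{CH}$ in \Cref{prop:gr-is-additive} is legitimate. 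I would briefly remark that $\operatorname{in}$ is taken with respect to this finite filtration.

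Third, for each basis monomial $\bar b_\lambda$ of degree $d_\lambda$, I take as its lift the same monomial $b_\lambda$ in $R_{0,n}^A$. This lies in $\mathcal F^{d_\lambda}$ because a monomial of degree $d_\lambda$ in the $x_S$ belongs to $J^{d_\lambda}$, and it maps to $\bar b_\lambda$ in $\mathcal F^{d_\lambda}/\mathcal F^{d_\lambda+1}$ by construction of the associated graded. \Cref{lem:basis-lift} then shows that these monomials form an $A^\bullet(\pt)$-basis of $R_{0,n}^A$, so $R_{0,n}^A$ is finite free. The rank equality follows immediately, since both modules are free on the same index set of monomials.
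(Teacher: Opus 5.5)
Your proposal is correct and follows the paper's own argument: combine \Cref{prop:gr-is-additive} with \Cref{lem:basis-lift}, lifting the homogeneous monomial basis of $\widetilde{R}_{0,n}^{CH}$ to the same monomials in $R_{0,n}^A$. Your pigeonhole check that $J^N=0$ in $P_{0,n}^A$ for $N>(n-3)\cdot\#\{S\}$ makes the filtration finite, hence exhaustive and separated, and so supplies a hypothesis of \Cref{lem:basis-lift} that the paper's two-line proof leaves implicit.
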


\begin{proof}
The ring $\widetilde{R}_{0,n}^{CH}$ is finite free over $A^\bullet(\pt)$ with homogeneous basis given by monomials in $x_S$. By Proposition~\ref{prop:gr-is-additive}, $\operatorname{gr}_{\mathcal F}(R_{0,n}^A)\cong \widetilde{R}_{0,n}^{CH}$.
Apply \Cref{lem:basis-lift} to $M=R_{0,n}^A$, lifting the homogeneous standard monomial basis from the
associated graded to $R_{0,n}^A$.
\end{proof}

Now, we are ready to prove \Cref{thm:oriented-cohomology-of-M0nbar}.

\begin{theorem}[Oriented Cohomology of $\Mn$]\label{thm:oriented-cohomology-of-M0nbar}
    Let $A^\bullet$ be an oriented cohomology theory, characterized by the formal group law $F$, satisfying the dimension axiom. Let $R_{0,n}^A$ be as in \cref{def:ring-R0n}.
    There is an isomorphism of graded rings \begin{equation}
        A^\bullet(\Mn)\cong R_{0,n}^A
    \end{equation}
    sending the fundamental class of each boundary divisor $[D^S]_A\in A^1(\Mn)$ to the generator $x_S$. 
\end{theorem}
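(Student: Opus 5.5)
The plan is to construct a surjective ring map $R_{0,n}^A \to A^\bullet(\Mn)$ and then conclude that it is an isomorphism by a rank count.

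\textbf{Step 1: the map.} Define $\Psi\colon S_{0,n}^A\to A^\bullet(\Mn)$ by $x_S\mapsto [D^S]_A$. We check that it kills $I_{geom}+I_A$.
\begin{itemize}
\item The complement relations hold since $D^S=D^{S^c}$.
\item The incompatibility relations hold by \Cref{prop:incompatible-divisors-relation}.
\item The nilpotency relation $x_S^{n-2}=0$ follows from the dimension axiom (Dim), because $\dim\Mn=n-3$ and $x_S=c_1^A(\O(D^S))$.
\item The truncated $F$-fundamental relations hold by \Cref{prop:fundamental-relations-v1}. Here one notes that, once nilpotency holds, the power series $\Fsum$ evaluated in $A^\bullet(\Mn)$ agrees with its truncation in $P_{0,n}^A$.
\end{itemize}
So $\Psi$ descends to $\bar\Psi\colon R_{0,n}^A\to A^\bullet(\Mn)$, a map of graded $A^\bullet(\pt)$-algebras. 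It is surjective by \Cref{prop:generators-for-M0nbar}.

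\textbf{Step 2: ranks.} By \Cref{cor:freeness-of-R0nF}, $R_{0,n}^A$ is finite free of rank $\operatorname{rk}_{A^\bullet(\pt)}\widetilde R^{CH}_{0,n}$. This equals $\operatorname{rk}_\ZZ CH^\bullet(\Mn)$, since $\widetilde R^{CH}_{0,n}\cong CH^\bullet(\Mn)\otimes_\ZZ A^\bullet(\pt)$ and $CH^\bullet(\Mn)$ is free by Keel. By \Cref{prop:rank-equals-chow-rank}, $A^\bullet(\Mn)$ is finite free of the same rank $N$.

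\textbf{Step 3: surjective implies injective.} A surjection $A^\bullet(\pt)^N\to A^\bullet(\pt)^N$ of free modules of equal finite rank over a commutative ring is an isomorphism. This is the standard Vasconcelos/determinant argument: the map splits because the target is free, so the kernel $K$ satisfies $K\oplus R^N\cong R^N$. Tensoring with residue fields of $R=A^\bullet(\pt)$ shows $K\otimes\kappa(\mathfrak p)=0$. Since $K$ is a finitely generated direct summand of a free module, Nakayama then gives $K=0$. Alternatively, view the map as an endomorphism of $R^N$ after choosing bases; a surjective endomorphism of a finitely generated module is injective. Gradings cause no trouble, since the bases can be chosen homogeneous and the argument is ungraded. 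Hence $\bar\Psi$ is an isomorphism, and it sends $x_S$ to $[D^S]_A$.

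\textbf{Main obstacle.} The delicate point is Step 1's compatibility of the power-series $F$-sums with the truncated relations. One must check that expanding $\Fsum$ in $A^\bullet(\Mn)$ terminates: every monomial of degree $>n-3$ in first Chern classes vanishes by (Dim). A secondary concern is that the rank equality in \Cref{prop:rank-equals-chow-rank} really yields freeness of $A^\bullet(\Mn)$, not merely a rank count. I would settle this by noting that the additive blowup decomposition is an iterated direct sum of free modules coming from projective bundles and products of smaller $\overline M_{0,k}$, handled by induction.
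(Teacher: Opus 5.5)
Your proposal is correct and follows the paper's proof step for step. The paper likewise obtains the surjection $R_{0,n}^A\twoheadrightarrow A^\bullet(\Mn)$ from \Cref{prop:generators-for-M0nbar} and the verified relations (with nilpotency coming from (Dim)), then matches ranks via \Cref{cor:freeness-of-R0nF} and \Cref{prop:rank-equals-chow-rank}; the only cosmetic difference is at the end, where the paper argues that the determinant of $\bar\phi$ must be a unit, while you use the splitting/Nakayama (or Vasconcelos) argument, which is the same standard fact.
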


\begin{proof}
Let $\phi:S_{0,n}^A\to A^\bullet(\overline{M}_{0,n})$ be the $A^\bullet(\pt)$-algebra homomorphism sending $x_S$ to $[D^S]_A$.
The complement, incompatibility, $F$-fundamental relations hold for $A^\bullet(\Mn)$, as shown in \cref{sec:relation-A(M0n)} and the nilpotency relation holds by the assumption that $A^\bullet$ satisfies dimension axiom.
Hence by \cref{prop:generators-for-M0nbar}, we have that $\phi$ factors through a surjective homomorphism
\[
\bar\phi:R_{0,n}^A \twoheadrightarrow A^\bullet(\overline{M}_{0,n}).
\]

By the Blowup Presentation of $\overline{M}_{0,n}$ and the projective bundle and blowup formulas in $A^\bullet$,
the $A$-module $A^\bullet(\overline{M}_{0,n})$ is finite free, with rank equal to that of the Chow ring
$CH^\bullet(\overline{M}_{0,n})\otimes_{\ZZ}A^\bullet(\pt)$ due to \Cref{prop:rank-equals-chow-rank}.
On the other hand, \Cref{cor:freeness-of-R0nF} shows that $R_{0,n}^A$ is finite free over $A$, with
\[
\rk_{A^\bullet(\pt)}(R_{0,n}^A)=\rk_{A^\bullet(\pt)}(\widetilde{R}_{0,n}^{CH}).
\]
In the additive case, $\widetilde{R}_{0,n}^{CH}$ identifies with the Keel presentation over $\ZZ$ after base change
to $A^\bullet(\pt)$, hence $\rk_{A^\bullet(\pt)}(\widetilde{R}_{0,n}^{CH})=\rk_{A^\bullet(\pt)}(CH^\bullet(\overline{M}_{0,n})\otimes_{\ZZ}A^\bullet(\pt))$.
Therefore, $\bar\phi$ is a surjective homomorphism between finite free $A^\bullet(\pt)$-modules of the same rank, say $r$. Taking the top exterior power yields a surjective homomorphism $\bigwedge^r \bar\phi: \bigwedge^r R_{0,n}^A \twoheadrightarrow \bigwedge^r A^\bullet(\Mn)$ between free $A^\bullet(\pt)$-modules of rank $1$. Because this induced map is given by multiplication by the determinant of $\bar\phi$, surjectivity forces the determinant to be a unit in $A^\bullet(\pt)$. Consequently, $\bar\phi$ is an isomorphism of graded $A^\bullet(\pt)$-modules, and in particular bijective. Thus we get an isomorphism of graded rings.
\end{proof}

\begin{remark}
    Notice that the $F$-fundamental relations truncates to polynomial relations only if $A^\bullet$ satisfies the dimension axiom. Nevertheless, because algebraic cobordism satisfies the dimenison axiom by definition, \Cref{thm:oriented-cohomology-of-M0nbar} specializes and gives a presentation for $\Omega^\bullet(\Mn)$. We thus recover presentations of $A^\bullet(\Mn)$ for any Landweber exact theory $A^\bullet$ (\Cref{eg:landweber-exact-theories}). In particular, $\Omega^\bullet(\Mn) \otimes_\mathbb{L}\ZZ$ by sending all $a_{ij}$ to $0$ recovers Keel's presentation of the Chow ring of $\Mn$. Tensoring with $\ZZ[\beta, \beta^{-1}]$ recovers by setting $a_{11} \mapsto \beta$ and all other $a_{ij}$ to $0$ yields a presentation for $K_0(\Mn)[\beta, \beta^{-1}]$. Specializing to the degree $0$ component by setting $\beta$ to $-1$ recovers the Grothendieck group $K_0(\Mn)$ in \cite{LLPP2024WonderfulVarieties}.
\end{remark}

\begin{example}[Algebraic cobordism of $\overline{M}_{0,5}$]\label{eg:Cobordism-of-M05-bar}
Let $A^\bullet=\Omega^\bullet$ be algebraic cobordism, with universal formal group law
\[
F_{\mathrm{univ}}(u,v)=u+v+\sum_{i,j\ge 1} a_{ij}u^iv^j,
\qquad a_{ij}\in \mathbb{L}=\Omega^\bullet(\mathrm{pt}).
\]
Since $\overline{M}_{0,5}$ is a surface, the dimension axiom implies that every term of total degree at least $3$ vanishes. Hence
\[
F(u,v)=u+v+a_{11}uv
\qquad\text{in } \Omega^\bullet(\overline{M}_{0,5}).
\]

Modulo relations (1), namely complements, $\Omega^\bullet(\overline{M}_{0,5})$ is the polynomial algebra over $\mathbb{L}$ generated by
\[
x_{15}, x_{25}, x_{35}, x_{45},
\qquad
x_{125}, x_{135}, x_{145}, x_{235}, x_{245}, x_{345},
\]
where $x_S=[D^S]_\Omega\in \Omega^1(\overline{M}_{0,5})$.
The incompatibility relations are as follows.
\begin{itemize}
    \item (size 2 \& size 2) \[x_{15} x_{25}=x_{15} x_{35}=x_{15} x_{45}=x_{25} x_{35}=x_{25} x_{45}=x_{35} x_{45}=0 .\]
    \item (size 2 \& size 3) \[\begin{aligned}
& x_{15} x_{235}=x_{15} x_{245}=x_{15} x_{345}=0, \\
& x_{25} x_{135}=x_{25} x_{145}=x_{25} x_{345}=0, \\
& x_{35} x_{125}=x_{35} x_{145}=x_{35} x_{245}=0, \\
& x_{45} x_{125}=x_{45} x_{135}=x_{45} x_{235}=0 .
\end{aligned}\]
\item (size 3 \& size 3) \[\begin{aligned}
& x_{125} x_{135}=x_{125} x_{145}=x_{125} x_{235}=x_{125} x_{245}=x_{125} x_{345}=0, \\
& x_{135} x_{145}=x_{135} x_{235}=x_{135} x_{245}=x_{135} x_{345}=0, \\
& x_{145} x_{235}=x_{145} x_{245}=x_{145} x_{345}=0, \\
& x_{235} x_{245}=x_{235} x_{345}=0, \\
& x_{245} x_{345}=0 .
\end{aligned}\]
\end{itemize} 
The fundamental relations are
\begin{itemize}
    \item For $(i,j,k,\ell)=(1,2,3,4)$,
    \[
    x_{125}+x_{345}+a_{11}x_{125}x_{345}
    =
    x_{135}+x_{245}+a_{11}x_{135}x_{245}
    =
    x_{145}+x_{235}+a_{11}x_{145}x_{235}.
    \]

    \item For $(i,j,k,\ell)=(1,2,3,5)$,
    \[
    x_{145}+x_{15}+a_{11}x_{145}x_{15}
    =
    x_{245}+x_{25}+a_{11}x_{245}x_{25}
    =
    x_{345}+x_{35}+a_{11}x_{345}x_{35}.
    \]

    \item For $(i,j,k,\ell)=(1,2,4,5)$,
    \[
    x_{135}+x_{15}+a_{11}x_{135}x_{15}
    =
    x_{235}+x_{25}+a_{11}x_{235}x_{25}
    =
    x_{345}+x_{45}+a_{11}x_{345}x_{45}.
    \]

    \item For $(i,j,k,\ell)=(1,3,4,5)$,
    \[
    x_{125}+x_{15}+a_{11}x_{125}x_{15}
    =
    x_{235}+x_{35}+a_{11}x_{235}x_{35}
    =
    x_{245}+x_{45}+a_{11}x_{245}x_{45}.
    \]

    \item For $(i,j,k,\ell)=(2,3,4,5)$,
    \[
    x_{125}+x_{25}+a_{11}x_{125}x_{25}
    =
    x_{135}+x_{35}+a_{11}x_{135}x_{35}
    =
    x_{145}+x_{45}+a_{11}x_{145}x_{45}.
    \]
\end{itemize}

Thus
\[
\Omega^\bullet(\overline{M}_{0,5})
\cong
\frac{\mathbb{L}[x_{15},x_{25},\dots,x_{345}]}{\text{complements, incompatibility, and fundamental relations}}.
\]
Tensoring with $\ZZ$ by sending $a_{11}$ to $0$, we recover Keel's presentation of the Chow ring. Tensoring with $\ZZ[\beta^{\pm1}]$ by sending $a_{11}\mapsto \beta$ and then specializing to $\beta = -1$, we recover a presentation of $K_0(\overline{M}_{0,5})$ which has appeared in \cite{LLPP2024WonderfulVarieties}. 
\end{example}

\begin{example}[Algebraic cobordism of $\overline{M}_{0,6}$]\label{eg:Cobordism-of-M06-bar}
As in \Cref{eg:Cobordism-of-M05-bar}, the universal formal group law truncates on the threefold $\overline{M}_{0,6}$ to
\[
F(u,v)=u+v+a_{11}uv+a_{21}u^2v+a_{12}uv^2,
\qquad a_{12}=a_{21}.
\]
Modulo complements, the generators are
\begin{itemize}
    \item size $2$
    \[
    x_{16}, x_{26}, x_{36}, x_{46}, x_{56},
    \]

    \item size $3$
    \[
    x_{126}, x_{136}, x_{146}, x_{156}, x_{236}, x_{246}, x_{256}, x_{346}, x_{356}, x_{456},
    \]

    \item size $4$
    \[
    x_{1236}, x_{1246}, x_{1256}, x_{1346}, x_{1356}, x_{1456},
    x_{2346}, x_{2356}, x_{2456}, x_{3456}.
    \]
\end{itemize}
These satisfy the complement relations, the incompatibility relations, and the fundamental relations. For brevity, we only display only the fundamental relation for $(i,j,k,\ell)=(1,2,3,4)$.
\begin{footnotesize}
\begin{align*}
&
x_{3456}+x_{346}+x_{126}+x_{1256} \\
&\quad
+a_{11}\bigl(
x_{3456}x_{346}
+x_{3456}x_{126}
+x_{3456}x_{1256}
+x_{346}x_{1256}
+x_{126}x_{1256}
\bigr) \\
&\quad
+a_{21}\bigl(
x_{3456}^2x_{346}
+x_{3456}x_{346}^2
+x_{3456}^2x_{126}
+x_{3456}x_{126}^2
+x_{3456}^2x_{1256}
+x_{3456}x_{1256}^2 \\
&\qquad\qquad\qquad
+x_{346}^2x_{1256}
+x_{346}x_{1256}^2
+x_{126}^2x_{1256}
+x_{126}x_{1256}^2
+2x_{3456}x_{346}x_{1256}
+2x_{3456}x_{126}x_{1256}
\bigr) \\
&\quad
+a_{11}^2\bigl(
x_{3456}x_{346}x_{1256}
+x_{3456}x_{126}x_{1256}
\bigr) \\
=\;&
x_{2456}+x_{246}+x_{136}+x_{1356} \\
&\quad
+a_{11}\bigl(
x_{2456}x_{246}
+x_{2456}x_{136}
+x_{2456}x_{1356}
+x_{246}x_{1356}
+x_{136}x_{1356}
\bigr) \\
&\quad
+a_{21}\bigl(
x_{2456}^2x_{246}
+x_{2456}x_{246}^2
+x_{2456}^2x_{136}
+x_{2456}x_{136}^2
+x_{2456}^2x_{1356}
+x_{2456}x_{1356}^2 \\
&\qquad\qquad\qquad
+x_{246}^2x_{1356}
+x_{246}x_{1356}^2
+x_{136}^2x_{1356}
+x_{136}x_{1356}^2
+2x_{2456}x_{246}x_{1356}
+2x_{2456}x_{136}x_{1356}
\bigr) \\
&\quad
+a_{11}^2\bigl(
x_{2456}x_{246}x_{1356}
+x_{2456}x_{136}x_{1356}
\bigr) \\
=\;&
x_{2356}+x_{236}+x_{146}+x_{1456} \\
&\quad
+a_{11}\bigl(
x_{2356}x_{236}
+x_{2356}x_{146}
+x_{2356}x_{1456}
+x_{236}x_{1456}
+x_{146}x_{1456}
\bigr) \\
&\quad
+a_{21}\bigl(
x_{2356}^2x_{236}
+x_{2356}x_{236}^2
+x_{2356}^2x_{146}
+x_{2356}x_{146}^2
+x_{2356}^2x_{1456}
+x_{2356}x_{1456}^2 \\
&\qquad\qquad\qquad
+x_{236}^2x_{1456}
+x_{236}x_{1456}^2
+x_{146}^2x_{1456}
+x_{146}x_{1456}^2
+2x_{2356}x_{236}x_{1456}
+2x_{2356}x_{146}x_{1456}
\bigr) \\
&\quad
+a_{11}^2\bigl(
x_{2356}x_{236}x_{1456}
+x_{2356}x_{146}x_{1456}
\bigr).
\end{align*}
\end{footnotesize}
\end{example}

\subsection{Comparison between different ring descriptions}

In this final section we compare the oriented cohomology ring description of $\Mn$ which we obtained in \cref{thm:oriented-cohomology-of-M0nbar} with the various Chow and $K$-theoretic ring descriptions appearing in literature, and in particular the ring description in \cite{LLPP2024WonderfulVarieties}.
\begin{remark}
    In \cite{LLPP2024WonderfulVarieties} it is established that there is a ring isomorphism between $K_0(\Mn) \to CH^\bullet(\Mn)$. This implies that with our notation from \cref{thm:oriented-cohomology-of-M0nbar}, $R_{0,n}^{K_0}\cong R_{0,n}^{CH}$. However, in our set up this is only an abstract isomorphism. In fact, for $n \geq 6$, we show that the naive bijection between the boundary divisors do not produce an isomorphism.
\end{remark}
\begin{lemma}[The naive map is not a ring homomorphism]
\label{lem:naive-map-fails}
While $K_0(\Mn)$ and $CH^\bullet(\Mn)$ are abstractly isomorphic as rings, the naive bijection of boundary divisors 
\[ \phi: K_0(\Mn) \to CH^\bullet(\Mn), \quad \phi(x_S^K) = x_S^{CH} \]
does not extend to a ring homomorphism for $n \ge 6$, where $x_S^K$ is the $K$-theoretic fundamental class and $x_S^{CH}$ is the Chow theoretic fundamental class.
\end{lemma}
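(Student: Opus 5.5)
We argue by contradiction. Suppose $\phi$ extends to a ring homomorphism $K_0(\Mn)\to CH^\bullet(\Mn)$ with $\phi(x_S^K)=x_S^{CH}$. Every relation among the $x_S^K$ must then map to a relation among the $x_S^{CH}$. We apply this to the $F$-fundamental relation of \Cref{prop:fundamental-relations-v1} for $K_0$, whose formal group law is $F(u,v)=u+v-uv$ after specializing $\beta$ as in the remark following \Cref{thm:oriented-cohomology-of-M0nbar}.

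\textbf{Step 1: extract a quadratic identity in $CH^2$.} By the remark following \Cref{prop:incompatible-divisors-relation}, each $F$-sum expands as the ordinary sum plus correction terms. These corrections are signed products $x_{S_1}\cdots x_{S_r}$ indexed by strictly nested chains $S_1\subsetneq\cdots\subsetneq S_r$ of sets containing $\{i,j\}$ and avoiding $\{k,\ell\}$. Applying $\phi$, the linear parts cancel in $CH^1$ by Keel's additive fundamental relation. Since $CH^\bullet(\Mn)$ is graded, each homogeneous component of what remains must vanish separately. In particular, the degree-$2$ component gives
\[
\sum_{\substack{S\subsetneq T\\ i,j\in S,\ k,\ell\notin T}} x_S^{CH}x_T^{CH}
\;=\;
\sum_{\substack{S\subsetneq T\\ i,k\in S,\ j,\ell\notin T}} x_S^{CH}x_T^{CH}
\quad\text{in } CH^2(\Mn),
\]
with sets taken up to complement. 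It therefore suffices to show that this difference $\Delta_{ijk\ell}$ is nonzero for suitable $i,j,k,\ell$.

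\textbf{Step 2: $n=6$.} Take $(i,j,k,\ell)=(1,2,3,4)$. The nested pairs can be read off from the $a_{11}$-terms displayed in \Cref{eg:Cobordism-of-M06-bar}. To detect $\Delta_{1234}\neq0$, we pair it with a single boundary divisor $x_U$ and take the degree in $CH^3(\overline{M}_{0,6})\cong\ZZ$. Products of three distinct pairwise compatible boundary divisors whose sets form a maximal compatible system are the class of a point, hence have degree $1$. Products with a repeated factor, or non-maximal systems, are computed with Keel's relations: a self-intersection $x_S^2$ is replaced using a fundamental relation for four indices split by $S$, which turns it into a sum of products of distinct divisors. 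We choose $U$ compatible with the pairs in one of the two sums but incompatible with most pairs in the other, for instance $U=\{1,2\}$ or a set like $\{1,3,5\}$, so that $\deg(x_U\cdot\Delta_{1234})$ is a small nonzero integer. If hand evaluation becomes unwieldy, we would instead reduce $\Delta_{1234}$ against Keel's monomial basis of $CH^2(\overline{M}_{0,6})$, for example by a \texttt{Macaulay2} computation like those in the appendices, and check that it is nonzero.

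\textbf{Step 3: $n\ge 6$.} For general $n$ we keep $(i,j,k,\ell)=(1,2,3,4)$. We multiply $\Delta_{1234}$ by a product of $n-6$ boundary divisors whose intersection is a boundary stratum isomorphic to $\overline{M}_{0,6}\times(\text{points})$, with the markings $1,2,3,4$ and two further branches on the $\overline{M}_{0,6}$ factor. By the product structure of boundary strata, restricting $\Delta_{1234}$ to this stratum recovers the $n=6$ class up to terms killed by the incompatibility relations. The $n=6$ nonvanishing then gives a nonzero degree, so $\Delta_{1234}\neq0$ and $\phi$ cannot be a ring homomorphism.

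The main obstacle is Step 2, the explicit verification that $\Delta_{1234}\neq0$ in $CH^2(\overline{M}_{0,6})$. The self-intersection terms and the identification of sets up to complement make sign and bookkeeping errors likely. For $n=5$ the analogous difference vanishes, which is consistent with the restriction to $n\ge6$ in the statement and with the exceptional isomorphism for $\overline{M}_{0,5}$, and is a useful check on the computation.
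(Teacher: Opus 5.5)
Your approach is correct and differs from the paper's after Step 1. The one real omission is that you never carry out the Step 2 computation; I supply it in the second paragraph.

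\textbf{Comparison with the paper.} Your Step 1 is the paper's reduction: push the $K$-theoretic fundamental relation for $(1,2,3,4)$ through $\phi$ and compare degree-$2$ parts to get $Q_A=Q_B$ in $CH^2(\Mn)$. From there the routes differ.
\begin{itemize}
\item The paper treats all $n\ge 6$ at once. It multiplies by $x_{34}$, which kills $Q_B$ since $\{3,4\}$ is incompatible with every set in $B$. It then asserts $Q_Ax_{34}\neq 0$ as a positive combination of classes of strata.
\item You compute on $\overline{M}_{0,6}$ and pull back along a stratum $\iota\colon W\cong\overline{M}_{0,6}\hookrightarrow\Mn$. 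Its six-valent vertex carries $1,2,3,4$ and two branches with leaf sets $P_1\sqcup P_2=\{5,\dots,n\}$.
\end{itemize}
Your reduction is sound, for these reasons.
\begin{itemize}
\item Both sides of every split in $A\cup B$ contain two of $1,2,3,4$. Every edge of the tree of $W$ has a side inside $\{5,\dots,n\}$. So no divisor from $A\cup B$ contains $W$, and no normal-bundle terms arise.
\item The members of $A$ compatible with all edges are $\{1,2\}$, $\{1,2\}\cup P_1$, $\{1,2\}\cup P_2$ and $[n]\setminus\{3,4\}$. They restrict to the four corresponding divisors of $\overline{M}_{0,6}$, and likewise for $B$. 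All other members restrict to $0$.
\end{itemize}
Hence $\iota^*\Delta^{(n)}=\Delta^{(6)}$, and since $\iota^*$ is a ring map, no degree is needed for $n>6$.

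Your route is also more careful exactly where the paper's is thin. Since $[n]\setminus\{3,4\}\in A$, the class $x_{34}$ itself occurs in $Q_A$. So $Q_Ax_{34}$ contains self-intersection terms $x_Sx_{34}^2$, which are not classes of strata; on $\overline{M}_{0,6}$, $\deg x_{12}x_{34}^2=-1$. The paper's positivity sentence does not cover these terms, so the self-intersection bookkeeping you anticipate is genuinely needed.

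\textbf{The missing Step 2.} Asserting that the degree is ``a small nonzero integer'' is the whole content of the lemma, so it has to be computed. It is short.

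Take $U=\{1,2\}$. Each of $\{1,3\},\{1,3,5\},\{1,3,6\},\{1,3,5,6\}$ is incompatible with $\{1,2\}$, so $x_{12}Q_B=0$. Using $x_{1256}=x_{34}$ and $x_{125}x_{126}=0$,
\[
\deg(x_{12}Q_A)=\deg(x_{12}x_{125}x_{34})+\deg(x_{12}x_{126}x_{34})+\deg(x_{12}^2x_{34})+\deg(x_{12}^2x_{125})+\deg(x_{12}^2x_{126})=1+1-1+0+0=1.
\]
The self-intersection values come from $N_{D^{12}}=-\psi_\star$ on $D^{12}\cong\overline{M}_{0,5}$, where $\overline{M}_{0,5}$ has markings $3,4,5,6$ and the node $\star$, and $\psi_\star$ is the cotangent class at $\star$. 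This gives $-\int\psi_\star D^{34}=-1$ and $-\int\psi_\star D^{5\star}=-\int\psi_\star D^{6\star}=0$. Eliminating $x_{12}^2$ with Keel's relation, as you propose, gives the same numbers.

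Your other candidate also works. We have $\deg(x_{135}Q_A)=0$, since $\{1,3,5\}$ is incompatible with every set in $A$. Also $\deg(x_{135}Q_B)=-1+1-1=-1$, from the terms $x_{13}x_{135}^2$, $x_{13}x_{24}x_{135}$ and $x_{135}^2x_{24}$. So $\deg(x_{135}\Delta^{(6)})=1$.

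With this computation inserted, your argument is a complete proof.
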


\begin{proof}
We demonstrate this by showing that $\phi$ fails to preserve the relations of the $K_0$-ring for any $n \ge 6$. We consider the fundamental relations in $\Mn$ for the 4-tuple $(i,j,k,l) = (1,2,3,4)$. 

Let \( A = \{S \subset [n] \mid 2\le |S| \le n-2, \; 1,2 \in S, \text{ and } 3,4 \notin S\}\) and \( B = \{T \subset [n] \mid 2\le |T| \le n-2, \; 1,3 \in T, \text{ and } 2,4 \notin T\} \). Note that we identify $x_S$ and $x_{S^c}$ to avoid double-counting.

In $K_0(\Mn)$, the fundamental relation states that $\Fsum_{S\in A} x_S^K = \Fsum_{T\in B} x_T^K$. Using the $K_0$-formal group law $x \Fplus y = x + y - xy$, this equality translates to the product identity:
\[ 1 - \prod_{S \in A} (1 - x_S^K) = 1 - \prod_{T \in B} (1 - x_T^K). \]
Expanding these products yields graded components. The linear terms identically match the classical Chow relation $\sum_{S \in A} x_S^K = \sum_{T \in B} x_T^K$. If $\phi$ were a well-defined ring homomorphism, it would map the degree-2 terms identically. The negative of the degree-2 terms must therefore be equal, meaning the following equation must hold in $CH^2(\Mn)$:
\[ Q_A := \sum_{\substack{S, S' \in A \\ S < S'}} x_S^{CH} x_{S'}^{CH} \;\; = \;\; \sum_{\substack{T, T' \in B \\ T < T'}} x_T^{CH} x_{T'}^{CH} =: Q_B. \]

We test this equality by multiplying both sides by $x_{34}^{CH}$. For the right-hand side, consider any index set $T \in B$. 
Because $n\ge 6$, $\{3,4\}$ is incompatible with the index set of every element in $B$, so $x_{34}^{CH} \cdot x_T^{CH} = 0$ for all $T \in B$. This forces the entire intersection product $Q_B \cdot x_{34}^{CH} = 0$.

For the left-hand side, because $n \ge 6$, the set $A$ corresponds to at least the divisors $x_{12}^{CH}$ and $x_{125}^{CH}$. Furthermore, since $n \ge 6$, the complement of $\{1,2,5\}$ has size at least 3, meaning $x_{125}^{CH}$ cannot be the same divisor as $x_{34}^{CH}$ (whose complement has size $n-2 \ge 4$). 

The subsets $\{1,2\}$ and $\{1,2,5\}$ are compatible because $\{1,2\} \subset \{1,2,5\}$.  Since both are disjoint from $\{3,4\}$, all three are mutually compatible sets. Therefore, the product $Q_A \cdot x_{34}^{CH}$ contains the monomial $x_{12}^{CH} \cdot x_{125}^{CH} \cdot x_{34}^{CH}$. Because the boundary divisors have simple normal crossings, this monomial is exactly the fundamental class of the geometric intersection $D^{12} \cap D^{125} \cap D^{34}$. For $n \ge 6$, this intersection corresponds to a non-empty codimension-$3$ boundary stratum. Since the fundamental class of a non-empty closed subvariety in a smooth projective variety is non-zero in the Chow ring, this monomial is non-zero. Because $Q_A \cdot x_{34}^{CH}$ is a sum of such fundamental classes with positive integer coefficients, it cannot evaluate to zero, so $Q_A \cdot x_{34}^{CH} \neq 0$ in $CH^3(\Mn)$.

We now proceed by contradiction. If $\phi$ were a well-defined ring homomorphism, we would have $Q_A = Q_B$ in $CH^2(\Mn)$. Multiplying both sides of this equation by the class $x_{34}^{CH}$ must preserve the equality, forcing $Q_A \cdot x_{34}^{CH} = Q_B \cdot x_{34}^{CH}$ in $CH^3(\Mn)$. However, we have just established that $Q_A \cdot x_{34}^{CH} \neq 0$ while $Q_B \cdot x_{34}^{CH} = 0$. This is a contradiction. Therefore, $Q_A \neq Q_B$ in $CH^2(\Mn)$, and $\phi$ cannot be a ring homomorphism.
\end{proof}

\begin{remark} [On exceptional isomorphisms]
      Geometrically, $\overline{M}_{0,5}$ (which Keel describes as the blowup of $\mathbb P^1 \times \mathbb P^1$ at $(0,0), (1,1) \text{ and } (\infty,\infty)$) is isomorphic to the blowup of $\mathbb{P}^2$ at $4$ points in general position. Hence from \cref{rmk: delpezzo exceptional}, we see that in this specific case, we have an exceptional isomorphism (following the terminology of \cite{LLPP2024WonderfulVarieties}) from any oriented cohomology theory to Chow theory; i.e., $A^\bullet(\overline{M}_{0,5}) \cong CH^\bullet(\overline{M}_{0,5})\otimes_{\mathbb Z}A^\bullet(\pt)$. In the following theorem, we give a change of basis for the two different presentations. We remark that this is similar to the change of bases between the Feichtner--Yuzvinsky presentation and the simplicial presentation of the Chow ring and $K_0$-ring of wonderful varieties---see for example \cite{LLPP2024WonderfulVarieties}.
\end{remark}

\begin{theorem}[Change of basis isomorphism for $A^\bullet(\overline{M}_{0,5}) \cong A^\bullet(dP_5)$]
Let $A^\bullet$ be an oriented cohomology theory satisfying the dimension axiom. The geometric identification of $\overline{M}_{0,5}$ with the degree $5$ del Pezzo surface induces an explicit change of basis isomorphism between the ring presentations of \Cref{thm:oriented-cohomology-of-M0nbar} and \Cref{prop:del-pezzo-oriented-rings}, defined on the generators by:
\begin{align*}
    x_{i5} &\mapsto e_i \quad \text{for } i \in \{1, 2, 3, 4\}, \\
    x_{ij} &\mapsto h \Fplus \chi_A(e_k) \Fplus \chi_A(e_l) \quad \text{for } 1 \le i < j \le 4,
\end{align*}
where $\{k, l\}$ is the unique complement such that $\{i, j, k, l\} = \{1, 2, 3, 4\}$, and $\chi_A$ is the formal inverse.
\end{theorem}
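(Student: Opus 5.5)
The plan is to realize the isomorphism geometrically rather than by manipulating presentations. Fix an isomorphism $f\colon dP_5=\Bl_{p_1,\dots,p_4}\PP^2\xrightarrow{\sim}\overline{M}_{0,5}$ under which the four exceptional curves $E_i$ correspond to the boundary divisors $D^{i5}$. Classically the ten $(-1)$-curves on $dP_5$ are the $E_i$ together with the strict transforms $L_{kl}$ of the six lines $\overline{p_kp_l}$. Using $\overline{M}_{0,5}$ as the universal curve over $\overline{M}_{0,4}$, one can arrange that $D^{ij}$ corresponds to $L_{kl}$, where $\{i,j,k,l\}=\{1,2,3,4\}$; this matches the incidence pattern, since $D^{ij}\cap D^{k5}\neq\varnothing$ exactly when $k\notin\{i,j\}$. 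The pullback $f^*$ is then a ring isomorphism $A^\bullet(\overline{M}_{0,5})\to A^\bullet(dP_5)$. By \Cref{thm:oriented-cohomology-of-M0nbar} and \Cref{prop:del-pezzo-oriented-rings}, it suffices to compute $f^*x_S=[f^{-1}D^S]_A$ for each generator.

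For $x_{i5}$, \Cref{funclass-divisor} gives $f^*x_{i5}=[E_i]_A=c_1^A(\O(E_i))=e_i$.

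For $x_{ij}$, the strict transform satisfies $L_{kl}=H-E_k-E_l$ as divisors, where $H$ is the total transform of a line. Hence $\O(L_{kl})\cong\O(H)\otimes\O(E_k)^{\vee}\otimes\O(E_l)^{\vee}$. Applying \Cref{funclass-divisor} together with the formal group law property of $c_1^A$ and the identity $c_1^A(\mathcal L^\vee)=\chi_A(c_1^A(\mathcal L))$ gives
\[
[L_{kl}]_A=h\Fplus\chi_A(e_k)\Fplus\chi_A(e_l).
\]
Both sides are polynomials, because the dimension axiom truncates all power series on a surface.

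Since $f^*$ is a ring isomorphism sending generators to these images, the assignment in the statement extends to a well-defined isomorphism $R^A_{0,5}\cong A^\bullet(dP_5)$. Consistency with the relations is automatic. As a sanity check, the complement relation holds trivially, and an incompatibility such as $x_{12}x_{35}$ maps to $[L_{34}][E_3]$, which should vanish. This can be verified directly with $e_ih=0$, $e_ie_j=0$ and $e_3^2=-h^2$: the expansion $(h-e_3-e_4+a_{11}(\cdots))e_3$ reduces to $-e_3^2+\dots$, and the surviving terms cancel with the $a_{11}$-corrections, giving $h^2-h^2=0$ up to $A^\bullet(\pt)$-multiples of the point class. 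Carrying out this check carefully is worthwhile.

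The main obstacle is the geometric input: producing an isomorphism $f$ with the stated correspondence of labels, and in particular checking the index convention that $D^{ij}\leftrightarrow L_{kl}$ rather than $L_{ij}$. I would settle this by the intersection pattern. Both $L_{kl}$ and $D^{ij}$ meet exactly two of the four curves labelled by $5$, namely $E_k,E_l$ and $D^{k5},D^{l5}$ respectively, which pins down the labelling. Alternatively, I could use Kapranov's model, in which $\overline{M}_{0,5}\to\PP^2$ is given by the $\psi_5$-class and contracts the $D^{i5}$.
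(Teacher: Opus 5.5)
Your proof is correct, and it takes a genuinely different route from the paper. The paper works purely with presentations. It defines $\Phi$ on the generators $x_S$ of $R^A_{0,5}$ and checks by hand that the incompatibility and $F$-fundamental relations go to zero in the presentation of \Cref{prop:del-pezzo-oriented-rings}, using $he_m=0$, $e_me_n=0$ and $e_m^2=-h^2$, hence $e_m^3=0$ and $\chi_A(e_m)=-e_m+a_{11}e_m^2$. It then gets surjectivity from $\Phi(x_{12}\Fplus x_{35})=h\Fplus\chi_A(e_4)$ and concludes with the rank-$7$ argument.

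You instead realize the map as $f^*$ for a labelled isomorphism $f\colon dP_5\to\overline{M}_{0,5}$. Well-definedness, bijectivity and compatibility with every relation then follow automatically from functoriality and \Cref{thm:oriented-cohomology-of-M0nbar}. The formula for $x_{ij}$ also acquires a meaning: it is $c_1^A(\mathcal{O}(H-E_k-E_l))$, computed via \Cref{funclass-divisor} and $c_1^A(\mathcal{L}^\vee)=\chi_A(c_1^A(\mathcal{L}))$.

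The price is the geometric input. Your incidence argument supplies it correctly: $\{i,j\}$ and $\{m,5\}$ are compatible exactly when $m\notin\{i,j\}$, which matches $L_{kl}\cap E_m\neq\varnothing$ exactly when $m\in\{k,l\}$. Kapranov's model gives existence. What you gain is a proof that the isomorphism really is the one \emph{induced by the geometric identification}, as the statement asserts. The paper's algebraic verification shows the displayed assignment is an isomorphism without identifying it with $f^*$. In exchange, the paper needs no knowledge of which $(-1)$-curve is which boundary divisor.

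Your sanity check is wrong and should be discarded: $x_{12}x_{35}$ is not an incompatibility relation. The sets $\{1,2\}$ and $\{3,5\}$ are disjoint, hence compatible. By your own incidence rule $D^{12}\cap D^{35}\neq\varnothing$, and indeed $L_{34}$ meets $E_3$ transversally in one point. Algebraically, all mixed products vanish in $A^\bullet(dP_5)$, so $h\Fplus\chi_A(e_3)\Fplus\chi_A(e_4)=h-e_3-e_4+a_{11}(e_3^2+e_4^2)$. Hence $[L_{34}]_A\cdot e_3=-e_3^2=h^2=[\pt]_A\neq 0$; there is no cancellation of the form $h^2-h^2$. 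If you carried out this check carefully you would get a nonzero answer, which is correct and should not make you doubt your labelling. Genuine incompatibility tests are $x_{12}x_{15}\mapsto [L_{34}]_A\, e_1=0$ and $x_{12}x_{13}\mapsto h^2+e_4^2=0$. Your argument never relies on this check, so the proof itself is unaffected.
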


\begin{proof}
    We identify $\overline{M}_{0,5} \cong dP_5$. We establish the isomorphism of their oriented cohomology rings by explicitly constructing a well-defined, surjective $A^\bullet(\pt)$-algebra homomorphism $\Phi: A^\bullet(\overline{M}_{0,5}) \to A^\bullet(dP_5)$.

    Recall that $A^\bullet(dP_5)$ is generated by $h, e_1, e_2, e_3, e_4$. For $\overline{M}_{0,5}$, the $10$ boundary divisors are indexed by subsets of size $2$. We define $\Phi$ on the generators of $A^\bullet(\overline{M}_{0,5})$ as follows.
    \begin{align*}
        \Phi(x_{i5}) &= e_i \quad \text{for } i \in \{1, 2, 3, 4\}, \\
        \Phi(x_{ij}) &= h \Fplus \chi_A(e_k) \Fplus \chi_A(e_l) \quad \text{for } 1 \le i < j \le 4,
    \end{align*}
    where $\{k, l\}$ is the unique complement such that $\{i, j, k, l\} = \{1, 2, 3, 4\}$, and $\chi_A$ is the formal inverse.

    To show $\Phi$ is a well-defined ring homomorphism, we must verify that it preserves the incompatibility and fundamental relations of $\overline{M}_{0,5}$. 

    \textit{1. Incompatibility relations:} We check the three cases of disjoint index sets.
    \begin{enumerate}[(a)]
        \item $x_{i5} x_{j5} = 0$: Applying $\Phi$ yields $e_i e_j$, which is $0$ in $A^\bullet(dP_5)$ for $i \neq j$.
        \item $x_{i5} x_{ik} = 0$ (where $i, j, k \in \{1,2,3,4\}$ are distinct): Applying $\Phi$ yields 
        \[ e_i \cdot \bigl(h \Fplus \chi_A(e_j) \Fplus \chi_A(e_l)\bigr). \]
        Because $e_i h = 0$, $e_i e_j = 0$, and $e_i e_l = 0$, the entire expression vanishes.
        \item $x_{ij} x_{il} = 0$ (where $\{i,j,k,l\} = \{1,2,3,4\}$): Applying $\Phi$ yields
        \[ \bigl(h \Fplus \chi_A(e_k) \Fplus \chi_A(e_l)\bigr) \cdot \bigl(h \Fplus \chi_A(e_j) \Fplus \chi_A(e_k)\bigr). \]
        Because the relations of $A^\bullet(dP_5)$ dictate that $h^3 = 0$, $e_m h = 0$, and $e_m^2 = -h^2$, we can deduce that $e_m^3 = e_m(-h^2) = -h(e_m h) = 0$. Since $e_m^3 = 0$, the formal inverse truncates to $\chi_A(e_m) = -e_m + a_{11}e_m^2$. Furthermore, because $h \cdot e_m = 0$ and $e_m e_n = 0$, we have $\chi_A(e_k)^2 = (-e_k + a_{11}e_k^2)^2 = e_k^2 - 2a_{11}e_k^3 + a_{11}^2 e_k^4 = e_k^2$. Thus, the product simplifies to $h^2 + e_k^2$, which is zero.
    \end{enumerate}

    \textit{2. Fundamental relations:} We check the two classes of $4$-tuples. For the quadruple $(1, 2, 3, 4)$, the fundamental relation is $x_{12} \Fplus x_{34} = x_{13} \Fplus x_{24} = x_{14} \Fplus x_{23}$. Applying $\Phi$ to the first term yields
        \[ \Phi(x_{12} \Fplus x_{34}) = \bigl(h \Fplus \chi_A(e_3) \Fplus \chi_A(e_4)\bigr) \Fplus \bigl(h \Fplus \chi_A(e_1) \Fplus \chi_A(e_2)\bigr) =[2]_F h \Fplus \Fsum_{m=1}^4 \chi_A(e_m). \]
    This expression is symmetric with respect to the indices $\{1, 2, 3, 4\}$, so $\Phi(x_{13} \Fplus x_{24})$ and $\Phi(x_{14} \Fplus x_{23})$ evaluate to the same element.

    For a quadruple containing $5$, such as $(1, 2, 3, 5)$, the fundamental relation is $x_{12} \Fplus x_{35} = x_{13} \Fplus x_{25} = x_{23} \Fplus x_{15}$. Applying $\Phi$ to the first term yields
        \[ \Phi(x_{12} \Fplus x_{35}) = \bigl(h \Fplus \chi_A(e_3) \Fplus \chi_A(e_4)\bigr) \Fplus e_3 =h \Fplus \chi_A(e_4). \]
    It is easy to verify that $\Phi(x_{13} \Fplus x_{25})$ and $\Phi(x_{23} \Fplus x_{15})$ both equals $h \Fplus \chi_A(e_4)$.

    Since $\Phi$ maps generators to generators and preserves all relations, it is a well-defined $A^\bullet(\pt)$-algebra homomorphism. Its image trivially contains $e_1, \dots, e_4$. Furthermore, since $x_{12} \Fplus x_{35} \mapsto h \Fplus \chi_A(e_4)$, $h$ is also in the image, proving $\Phi$ is surjective. 
    
    Finally, both $A^\bullet(\overline{M}_{0,5})$ and $A^\bullet(dP_5)$ are finite free $A^\bullet(\pt)$-modules of rank $7$, as determined by the additive blowup decomposition. By the same commutative algebra argument used in \Cref{thm:oriented-cohomology-of-M0nbar}, any surjective module homomorphism between two finite free modules of the same rank is necessarily an isomorphism. Thus, $\Phi$ is a ring isomorphism.
\end{proof}


\printbibliography

\begin{appendices}

\addtocontents{toc}{\protect\renewcommand{\protect\cftsecpresnum}{Appendix }}
\addtocontents{toc}{\protect\setlength{\cftsecnumwidth}{6em}} 

\section{Secant Lines to the Twisted Cubic Meeting Two General Lines}
\label{apx:4-secant-lines}

In this appendix, we review the geometry of the blowup $Bl_C\PP^3$ and outline a solution to the problem of counting secant lines to $C$ meeting two general lines in $\PP^3$. Recall that $C\cong \PP^1 \xrightarrow{\nu_3}\PP^3$ is the twisted cubic in $\PP^3$, which is the image of the embedding via the complete linear system $|\O_{\PP^1}(3)|$. In coordinates, this is \[(s:t)\mapsto(s^3:s^2t:st^2:t^3).\] Let $Bl_C\PP^3 $ denote the blowup of $\PP^3$ along $C$. We briefly recap the following moduli-theoretic interpretation of $Bl_C\PP^3$, which is well-known. 

Let $\mathcal I_C\subseteq \O_{\PP^3}$ be the ideal sheaf of $C$. Since $C$ is cut out by the $2\times 2$ minors of the matrix
\[
\begin{pmatrix}
x_0 & x_1 & x_2\\
x_1 & x_2 & x_3
\end{pmatrix},
\]
its homogeneous ideal is generated by three independent quadrics. Hence, $h^0(\PP^3,\mathcal I_C(2))=3$, and 
\[
\PP\big(H^0(\PP^3,\mathcal I_C(2))\big)\ \cong\ \PP^2.
\]
The associated rational map $\varphi \colon \PP^3 \dashrightarrow \PP^2$ is given by evaluating a basis of quadrics through $C$ and is undefined exactly along $C$. The blowup $Bl_C\PP^3$ can be identified with the closure of the graph of $\varphi$ inside $\PP^3\times \PP^2$, and in particular carries a natural morphism
\[
f\colon Bl_C\PP^3 \to \PP^2 \cong |\mathcal I_C(2)|.
\]

Under the standard identification $\operatorname{Hilb}^2(C)\cong \Sym^2(C)\cong \PP^2$, a point of $\PP^2$ corresponds to a length-$2$ subscheme $\xi\subset C$. If $\xi = p\cup q$ where $p\neq q$, then it corresponds to the secant line $L_\xi :=\overline{pq}$ to $C$. If $\xi = 2p$, then it corresponds to the tangent line to $C$ at $p$. Since $C$ is the rational normal curve of $\PP^3$, no $3$ points on $C$ can be collinear, so a length-$2$ subscheme of $C$ uniquely determines a secant line, and vice versa. Therefore, there is a canonical identification \[\PP^2 \cong \operatorname{Hilb}^2(C) \cong \operatorname{Sect}(C), \quad \xi \mapsto L_\xi ,\]
where $\operatorname{Sect}(C)$ denotes the secant variety of $C\subseteq \PP^3$. Closed points in $\operatorname{Sect}(C)$ correspond to secant/tangent lines to $C$. Because the fiber of $f$ over $\xi$ is the proper transform of $L_\xi$, $Bl_C\PP^3$ is the universal family of secant lines to $C$. It parametrizes pairs $(\xi,x)$ where $\xi\in \operatorname{Hilb}^2(C)$ and $x\in L_\xi$.

We can use the above moduli interpretation of $Bl_C\PP^3$ to solve the following enumerative problem. 

\begin{quote}
   \begin{itemize}
       \item[$\dagger$]  How many secant lines to $C$ meet two general lines in $\PP^3$?
   \end{itemize}
\end{quote}
\begin{figure}[ht]
    \centering
    \centering
\begin{tikzpicture}[scale=1.24,line cap=round,line join=round]

\colorlet{mainblue}{blue!45!black}
\colorlet{secantred}{red!60!black}

\coordinate (P1) at (-5.40,0.00);
\coordinate (P2) at (-4.20,1.20);
\coordinate (P3) at (-3.60,2.15);
\coordinate (P4) at (-3.00,2.85);

\coordinate (Q1) at ( 4.45,-0.08);
\coordinate (Q2) at ( 4.95, 1.00);
\coordinate (Q3) at ( 5.05, 2.00);
\coordinate (Q4) at ( 5.15, 3.05);

\draw[secantred,line width=1.45pt] (P1) -- (Q1);
\draw[secantred,line width=1.45pt] (P2) -- (Q2);
\draw[secantred,line width=1.45pt] (P3) -- (Q3);
\draw[secantred,line width=1.45pt] (P4) -- (Q4);

\draw[
    mainblue,
    line width=2.15pt
]
plot[smooth] coordinates {
    (-5.85,-0.12)
    (-5.60,-0.05)
    (-5.40, 0.00)   
    (-4.95, 0.28)
    (-4.20, 1.20)   
    (-3.95, 1.55)
    (-3.60, 2.15)   
    (-3.30, 2.52)
    (-3.00, 2.85)   
    (-2.20, 3.30)
    (-1.10, 3.52)
    ( 0.40, 3.30)
    ( 1.70, 2.75)
    ( 2.80, 1.95)
    ( 3.70, 0.95)
    ( 4.45,-0.08)   
    ( 4.95, 1.00)   
    ( 5.05, 2.00)   
    ( 5.15, 3.05)   
    ( 5.38, 4.10)
    ( 5.72, 5.18)
};

\draw[
    mainblue,
    line width=2.15pt
]
(-4.20,-0.80) -- (-1.00,3.20);

\draw[
    mainblue,
    line width=2.15pt
]
( 2.60,-0.30) -- ( 4.60,4.20);

\fill[secantred] (P1) circle (3.5pt);
\fill[secantred] (P2) circle (3.5pt);
\fill[secantred] (P3) circle (3.5pt);
\fill[secantred] (P4) circle (3.5pt);

\fill[secantred] (Q1) circle (3.5pt);
\fill[secantred] (Q2) circle (3.5pt);
\fill[secantred] (Q3) circle (3.5pt);
\fill[secantred] (Q4) circle (3.5pt);

\fill[mainblue] (-3.572,-0.015) circle (3.2pt);
\fill[mainblue] (-2.627, 1.166) circle (3.2pt);
\fill[mainblue] (-1.864, 2.120) circle (3.2pt);
\fill[mainblue] (-1.246, 2.893) circle (3.2pt);

\fill[mainblue] ( 2.706,-0.066) circle (3.2pt);
\fill[mainblue] ( 3.208, 1.038) circle (3.2pt);
\fill[mainblue] ( 3.656, 2.024) circle (3.2pt);
\fill[mainblue] ( 4.111, 3.024) circle (3.2pt);

\end{tikzpicture}
    \caption{A schematic diagram of the $4$ secant lines (red) meeting two general lines (blue) in $\PP^3$.}
    \label{fig:4-secant-lines}
\end{figure}
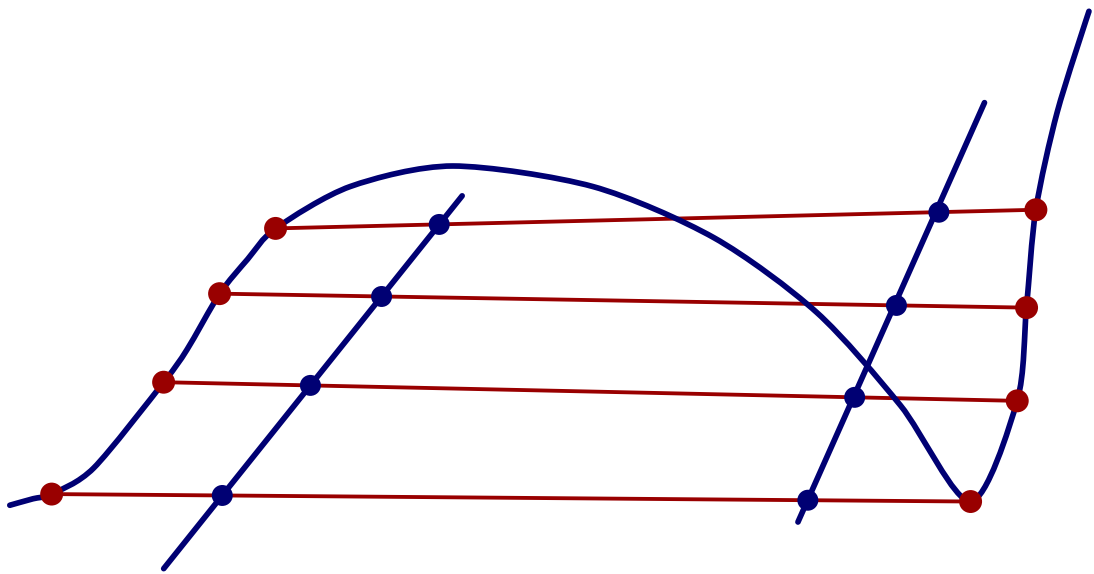

We outline the solution using classical Grassmannian Schubert calculus. The secant variety $\operatorname{Sect}(C) \cong \PP^2$ naturally embeds into the Grassmannian $\GG(1,3):= Gr(2,4)$ of lines in $\PP^3$. The image of $\GG(1,3)$ under the Plücker embedding $p$ is the Plücker quadric in $\PP^5$. Thus, we have the composite 
\[\iota: \operatorname{Sect}(C) \to \GG(1,3) \xrightarrow{p} \PP^5.\]

Fix a general line $L_0 \subseteq \PP^3$. Let $H_{L_0}\subseteq \PP^5$ be the Pl\"ucker hyperplane corresponding to the Schubert divisor
\[
\sigma_1(L_0)\ :=\ \{\ell\in \GG(1,3)\mid \ell\cap L_0\neq\varnothing\}.
\]
Equivalently, $p^*H_{L_0}=\sigma_1(L_0)$, so a hyperplane section of the Pl\"ucker quadric parametrizes lines meeting $L_0$. Restricting to $\operatorname{Sect}(C)$, the pullback $\iota^*H_{L_0}$ is the locus of secant lines to $C$ that meet $L_0$. We argue that under the identification $\operatorname{Sect}(C)\cong \Sym^2(\PP^1)\cong \PP^2$, this locus is a plane conic. 

Indeed, the twisted cubic is scheme-theoretically cut out by a $3$-dimensional space of quadrics, so $|\mathcal I_C(2)|\cong \PP^2$ and the restriction map $H^0(\PP^3,\mathcal I_C(2))\to H^0(L_0,\O_{L_0}(2))$ induces a morphism $L_0\cong\PP^1\to \PP^2$ of degree $2$. Its image is a conic, and it coincides with $\iota^*H_{L_0}$. Consequently, \(
\iota^*\O_{\PP^5}(1)\ \cong\ \O_{\PP^2}(2),
\)
so $\iota$ is given by the complete linear system $|\O_{\PP^2}(2)|$ and hence identifies $\operatorname{Sect}(C)$ with the $2$-uple Veronese surface $V$ in $\PP^5$, lying on the Pl\"ucker quadric.

We now recover the classical enumerative count. Let $L_1,L_2\subset \PP^3$ be two general lines. The condition that a line $\ell\subset \PP^3$ meet $L_i$ is again a Schubert divisor class $\sigma_1(L_i)$ on $\GG(1,3)$. Write $[S]\in CH^2(\GG(1,3))$ for the class of $S:=\iota(\operatorname{Sect}(C))$. Then, $[S] = a\sigma_{2} + b\sigma_{1,1}$ for unique constants $a, b\in \ZZ$. 

To determine $a$ and $b$, we use the geometric interpretation that $\sigma_2$ parametrizes lines in $\PP^3$ through a fixed point, and $\sigma_{1,1}$ parametrizes lines in $\PP^3$ lying on a fixed plane. Since a general point $x\in \PP^3 -C$ lies on a unique secant line to $C$ \footnote{This can also be understood via the fact that $Bl_C\PP^3 \to \PP^3$ has degree $1$ away from $C$.}, we have $a = 1$. For a general plane $\Pi \in \PP^3$, the intersection $\Pi\cap C$ consists of three distinct points, and the secant lines lying on $\Pi$ are exactly the $\binom{3}{2}$ lines joining the $3$ pairs of points. Therefore, $b = 3$, and we have
\[
[S]=\sigma_2+3\sigma_{1,1}\in CH^2(\GG(1,3)).
\]
Thus, 
\begin{equation}\label{eq:twisted-cubic-schubert-count}
    \begin{split}
           \#\{\text{secant/tangent lines to }C\text{ meeting }L_1\text{ and }L_2\}
&= \deg [S]\cdot \sigma_1(L_1)\cdot \sigma_1(L_2) \\ 
&= \deg\sigma_1^2(\sigma_2+3\sigma_{1,1}) \\
& = 4.
    \end{split}
\end{equation}

\begin{theorem}
Let $C\subset \PP^3$ be a twisted cubic and let $L_1,L_2\subset \PP^3$ be two general lines. Then there are exactly $4$ secant lines to $C$ meeting both $L_1$ and $L_2$. \hfill \qed
\end{theorem}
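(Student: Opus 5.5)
The count is a degree computation in Schubert calculus on $\GG(1,3)$. The approach: find the class of the surface of secant lines in $CH^2(\GG(1,3))$, then intersect with two Schubert divisors.

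Let $S=\iota(\operatorname{Sect}(C))\subset\GG(1,3)$ be the surface of secant and tangent lines, identified above with $\operatorname{Hilb}^2(C)\cong\PP^2$. Since $CH^2(\GG(1,3))$ is freely generated by $\sigma_2$ and $\sigma_{1,1}$, I write $[S]=a\sigma_2+b\sigma_{1,1}$. The coefficients are detected by the dual classes, using $\sigma_2^2=\sigma_{1,1}^2=[\pt]$ and $\sigma_2\sigma_{1,1}=0$. Thus $a=\deg([S]\cdot\sigma_2)$ counts secant lines through a general point, and $b=\deg([S]\cdot\sigma_{1,1})$ counts secant lines contained in a general plane.

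For $a$: a general point $x\notin C$ lies on exactly one secant line. One way to see this is that $\pi\colon Bl_C\PP^3\to\PP^3$ is birational and $Bl_C\PP^3\to\operatorname{Sect}(C)$ is the universal secant line. Alternatively, two secants through $x$ would span a plane meeting $C$ in four points, contradicting $\deg C=3$. So $a=1$.

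For $b$: a general plane $\Pi$ meets $C$ transversally in three distinct non-collinear points. The secant lines lying in $\Pi$ are exactly the $\binom32=3$ lines joining pairs of them, since any secant in $\Pi$ meets $C$ in two points of $\Pi\cap C$. Hence $b=3$.

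The main obstacle is transversality: these counts must equal intersection multiplicities. I would handle this with Kleiman's transversality theorem for the homogeneous $PGL_4$-action on $\GG(1,3)$, in characteristic zero. It guarantees that general translates of the Schubert cycles $\sigma_2(x)$, $\sigma_{1,1}(\Pi)$, $\sigma_1(L_1)$, $\sigma_1(L_2)$ meet $S$ transversally in finitely many reduced points. Kleiman's theorem also ensures that the relevant lines avoid the tangent-line locus, which is a curve in $S$, so every counted line is an honest secant.

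Finally, by Pieri's rule $\sigma_1^2=\sigma_2+\sigma_{1,1}$, and therefore
\[
\deg\bigl([S]\cdot\sigma_1(L_1)\cdot\sigma_1(L_2)\bigr)=\deg\bigl((\sigma_2+3\sigma_{1,1})(\sigma_2+\sigma_{1,1})\bigr)=1+3=4.
\]
By transversality, this degree is the number of secant lines to $C$ meeting both $L_1$ and $L_2$.
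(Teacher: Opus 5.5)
Your proposal is correct and follows the same route as the paper's appendix argument: write $[S]=a\sigma_2+b\sigma_{1,1}$ in $CH^2(\GG(1,3))$, get $a=1$ from the unique secant line through a general point and $b=3$ from the $\binom{3}{2}$ secant lines in a general plane, then evaluate $\deg\sigma_1^2(\sigma_2+3\sigma_{1,1})=4$. Your appeal to Kleiman transversality, including the dimension count showing the counted lines avoid the curve of tangent lines, supplies the multiplicity-one and genericity justification that the paper leaves implicit.
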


\section{The Moduli Space of Complete Conics} \label[appendix]{apx:moduli-of-complete-conics}
In this appendix we recall the definition of the moduli stack $\overline{\mathcal{M}}_{g,n}(X,\beta)$ of stable maps and its corresponding coarse moduli space. In particular, we consider the moduli space $\Kont$.
\subsection{The Moduli Space of Stable Maps}

\begin{definition}[Moduli stack of stable maps]
Let $X$ be a given smooth projective scheme over $\mathbb C$ and $\beta \in CH_1(X)$. The Kontsevich moduli stack of stable maps, denoted by $\overline{\mathcal{M}_{g,n}}(X,\beta)$, is defined as follows.  
\begin{enumerate}
    \item For a scheme $S$, the $S$-valued points are given by isomorphism classes of $[\pi:C\to S, \{s_i|i=1,...,n\}, \sigma:C \to X]$, where $C$ is a flat, proper family of at worst nodal singular projective, connected, reduced curves over $S$ of arithmetic genus $g$, $s_i$ are disjoint sections which map to the smooth locus of each fiber and $\sigma_*[C_s]=\beta , \forall s\in S$ where $[C_s]$ is the Chow fundamental class of the fiber curve $C_s$.
    \item (Stability condition) For every $s\in S$ and every irreducible component $E \subset C_s$, we have
    \begin{itemize}
        \item If $E \cong \mathbb P^1$ and $\sigma$ maps $E$ to a point, then $E$ must contain at least $3$ special points (i.e., a marked point or a nodal point)
        \item If $E$ has arithmetic genus $1$ and $\sigma$ maps it to a point, then $E$ must contain at least $1$ special point. 
    \end{itemize}
\end{enumerate}
    
\end{definition}
\begin{remark}
    The stability condition on the fiber is equivalent to finiteness of the automorphism group of the fiber (we only consider those automorphisms which respect the sections $s_i$ and the map $\sigma$).
\end{remark}

\begin{theorem}[\cite{fulton1997notesstablemapsquantum}]
    Let $X$ be a smooth projective convex variety over $\mathbb C$. Then the moduli stack of stable maps admits a projective coarse moduli space $\overline{M}_{g,n}(X,\beta)$. 
\end{theorem}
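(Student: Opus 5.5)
The plan is to construct the coarse space in two stages. First I will produce it for $X=\PP^r$. Then I will cut out the case of a general $X\subset\PP^r$ as a closed subspace. Existence of a coarse space is essentially formal, so projectivity is where the work lies.

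\textbf{Step 1: the stack and its coarse space.} Show that $\overline{\mathcal{M}}_{g,n}(X,\beta)$ is a Deligne--Mumford stack with finite diagonal, and then apply the Keel--Mori theorem to obtain a separated algebraic space $\overline{M}_{g,n}(X,\beta)$ as its coarse moduli space. The ingredients are as follows.
\begin{enumerate}
\item Algebraicity comes from the Hom-scheme of maps from the universal prestable curve to $X$.
\item Finiteness of automorphisms is the remark after the definition: stability forbids infinitesimal automorphisms.
\item Properness follows from the valuative criterion via stable reduction: extend a family over a punctured DVR after base change, by semistable reduction of the source followed by contracting unstable components.
\end{enumerate}
I would take these inputs as standard.

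\textbf{Step 2: projectivity for $X=\PP^r$.} Here I would follow the rigidification trick of Fulton--Pandharipande, with $\beta=d$ times the class of a line.
\begin{enumerate}
\item Fix a basis of hyperplanes $H_0,\dots,H_r$ of $\PP^r$.
\item Consider the open locus of stable maps $f$ meeting every $H_k$ transversally away from nodes and marked points.
\item Adjoin the $(r+1)d$ points of $f^{-1}(H_k)$ as extra markings. The resulting pointed curve is stable, and $f$ is recovered from it up to the torus action by the sections cutting out those points.
\item This identifies the locus with a closed subscheme of a torus-type bundle over $\overline{M}_{g,n+(r+1)d}$, which is a fine, projective-over-the-base moduli problem.
\item The coarse space of the open locus is the quotient of this scheme by $\prod_k S_d$, which is quasi-projective. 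These opens cover as the basis varies.
\end{enumerate}
Gluing these quasi-projective pieces only gives a proper algebraic space. For projectivity I would instead realize $\overline{M}_{g,n}(\PP^r,d)$ as a GIT quotient. Embed each curve by $(\omega_C(\sum s_i)\otimes f^*\O(3))^{\otimes m}$ for $m\gg0$; stability ensures this is ample. This places the curve in a Hilbert scheme of a fixed $\PP^N$, and the graph of $f$ lies in $\PP^N\times\PP^r$. Take the GIT quotient by $PGL_{N+1}$ of the locally closed locus of such graphs, following Gieseker's argument for $\overline{M}_g$.

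\textbf{Step 3: general $X$, and the main obstacle.} For $X\subset\PP^r$, pushforward gives a morphism $\overline{\mathcal{M}}_{g,n}(X,\beta)\to\overline{\mathcal{M}}_{g,n}(\PP^r,\iota_*\beta)$. Its image is closed, being the locus where $f$ factors through $X$. The induced map on coarse spaces is finite, and a finite morphism to a projective scheme has projective source, so $\overline{M}_{g,n}(X,\beta)$ is projective. Convexity, meaning $H^1(C,f^*\mathcal{T}_X)=0$, is not needed for projectivity. Its role is that, in genus $0$, it makes the stack smooth and the coarse space an orbifold, which is what the paper's applications use.

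I expect the main obstacle to be the GIT step in Step 2. One must show that Hilbert points of these embedded graphs are GIT-stable, which is a Gieseker--Mumford type multiplicity estimate. My first attempt would be the quicker route of Kollár's ampleness lemma (semipositivity of $\pi_*\omega^{\otimes m}$ for the universal curve) together with the quasi-finite classifying map. I would fall back on the full Gieseker estimate only if that fails.
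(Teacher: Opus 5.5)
The paper gives no proof of this statement; it is quoted from Fulton--Pandharipande, so your outline has to stand on its own. Most of the architecture is sound:
\begin{itemize}
\item Keel--Mori gives a separated coarse space. This needs boundedness, which the uniform very ampleness of powers of $\mathcal{L}:=\omega_C(\sum s_i)\otimes f^*\mathcal{O}(3)$ supplies.
\item Your Step~3 reduction from $X$ to $\mathbb{P}^r$, via a finite morphism of coarse spaces, is correct.
\item You are right that convexity plays no role in projectivity.
\end{itemize}
The gap is that projectivity of $\overline{M}_{g,n}(\mathbb{P}^r,d)$ is never established, and it is the only non-formal content of the theorem. The rigidification patches, even granted, give only local quasi-projectivity. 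Both global routes you mention are left as the ``main obstacle''.

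Moreover, the route you would try first fails as you formulate it, for two reasons.
\begin{itemize}
\item \emph{Wrong bundle.} $\pi_*\omega_{C/S}^{\otimes m}$ is not the right object, because $\omega$ is not relatively ample for stable maps. For $g=n=0$ and irreducible domain, $\omega_C\cong\mathcal{O}_{\mathbb{P}^1}(-2)$, so this pushforward is generically zero on $\overline{M}_{0,0}(\mathbb{P}^r,d)$, which is the very case the paper uses. You must work with $\pi_*\mathcal{L}^{\otimes m}$ instead. Its semipositivity is not Kollár's theorem for stable curves and needs its own argument because of the $f^*\mathcal{O}(3)$ twist.
\item \emph{Classifying map not quasi-finite.} The map attached to $\operatorname{Sym}^k\pi_*\mathcal{L}^{\otimes m}\to\pi_*\mathcal{L}^{\otimes km}$ records only the embedded curve up to projective equivalence, not $f$ or the markings. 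For $g=n=0$, every map with irreducible domain yields a rational normal curve of degree $m(3d-2)$, so this map contracts the dense open locus of irreducible domains.
\end{itemize}
To use Kollár's ampleness lemma, you must enlarge the data by the multiplication maps for the graph of $f$ in $\mathbb{P}^N\times\mathbb{P}^r$ and by evaluation at the sections. You then have to prove semipositivity for every piece.

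The GIT fallback has a similar cost. You must choose a linearization on the Hilbert scheme of $\mathbb{P}^N\times\mathbb{P}^r$ (together with the marking factors) that balances the factors. Then you must prove that every stable-map graph is GIT-stable. That is a substantial result in its own right, because the $\mathbb{P}^r$ factor and the components contracted by $f$ force new estimates; it is not a formal consequence of Gieseker's argument for curves.

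Once either route is completed, projectivity follows. Kollár's lemma gives ampleness directly. Alternatively, GIT gives quasi-projectivity of the quotient, which together with the properness from your Step~1 yields projectivity.
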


It turns out that these moduli spaces for $g = 0$, $X = \PP^n$, and $d = 2$ have explicit descriptions as blowups.

\begin{theorem}[\cite{kiem2008modulispacestablemaps}]
    Let us consider an $n$-tuple $(f_1,...,f_n)$ of degree $d$ homogeneous binary forms in $x,y$. This induces a rational map $ \mathbb P^1\dashrightarrow  \mathbb P^{n-1}$ which is not defined precisely when all the functions $f_i$ vanish. This induces a map $\psi : \mathbb P(Sym^d(\mathbb C^2)\otimes \mathbb C^n)//SL_2 \dashrightarrow \overline{M}_{0,0}(\mathbb P^{n-1},d) $. For $d=2$, we have that $\psi$ is the inverse of a blowup. In particular, $ \overline{M}_{0,0}(\mathbb P^{n-1},2) $ is isomorphic to the blowup of the GIT quotient  $\mathbb P(Sym^2(\mathbb C^2)\otimes \mathbb C^n)//SL_2 $ at the smooth closed subscheme $Sym^2(\mathbb C^2) \times \mathbb P^{n-1}//SL_2 \cong \mathbb P^{n-1}$.
\end{theorem}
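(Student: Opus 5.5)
The plan is to build a birational morphism from the blowup to $\overline{M}_{0,0}(\PP^{n-1},2)$ and then show it is an isomorphism using Zariski's Main Theorem. The construction goes through the stable-map interpretation of an $n$-tuple of binary quadrics together with Kirwan's partial desingularization. Write $W:=\operatorname{Sym}^2(\mathbb C^2)\otimes\mathbb C^n$.

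\textbf{Step 1: the GIT quotient.} Regard a point $[f_1:\dots:f_n]\in\PP(W)$ as a linear map $\mathbb C^{n\vee}\to \operatorname{Sym}^2\mathbb C^2$ of rank $\le 3$. By the Hilbert--Mumford criterion applied to the one-parameter subgroups of $SL_2$, a point is unstable exactly when all $f_i$ share a common double root. One then checks that the strictly semistable closed orbits are the tuples $f=q\otimes v$ with $q$ a quadric with distinct roots. Their stabilizers are the normalizer of a torus, which is $O(2)$-like. Since all such $q$ form a single $SL_2$-orbit, this polystable locus is $(\operatorname{Sym}^2\mathbb C^2\times\PP^{n-1})/\!\!/SL_2\cong\PP^{n-1}$, indexed by $[v]$. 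This is the center $Z$ of the blowup. A dimension count gives
\[
\dim \PP(W)/\!\!/SL_2=3n-4,\qquad \operatorname{codim} Z=2n-3 .
\]

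\textbf{Step 2: the rational map $\psi$.} On the open set where the $f_i$ have no common zero, $\psi$ sends $f$ to the degree-$2$ map $\PP^1\to\PP^{n-1}$, taken modulo reparametrization. This is an isomorphism onto the locus of stable maps with irreducible source, which covers both smooth conics and double covers of lines. Next I extend $\psi$ across the stable points where the $f_i$ share a common simple root $\ell$, so that $f=\ell\cdot g$. There the stable limit is a reducible map: the line $\PP(g)$ union a component attached at the base point. I would compute this explicitly via the stable reduction of the one-parameter family $f+t h$, and check that the result is independent of $h$, so that $\psi$ is a morphism off $Z$.

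\textbf{Step 3: the Kirwan blowup and the exceptional fibre.} Use the Luna étale slice at a polystable point $q\otimes v$. The normal slice is a representation $N$ of the stabilizer, essentially $(\operatorname{Sym}^2\mathbb C^2/\mathbb C q)\otimes(\mathbb C^n/\mathbb C v)$ modulo the torus weights. Kirwan's partial desingularization replaces $Z$ by $\PP(N)/\!\!/\mathrm{Stab}$. I aim to identify this with $\operatorname{Sym}^2\PP^{n-2}$, the unordered pairs of lines through $v$, which has the expected dimension $2n-4$. Since $Z$ is smooth and the stabilizer acts on the slice nicely enough, Kirwan's blowup should coincide with the ordinary blowup of the quotient along $Z$. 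The extension of $\psi$ then sends a normal direction to the stable map from two $\PP^1$'s glued at a node, each mapped isomorphically onto a line through $v$.

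\textbf{Step 4: conclusion and main obstacle.} With this extension, $\tilde\psi\colon \Bl_Z\bigl(\PP(W)/\!\!/SL_2\bigr)\to \overline{M}_{0,0}(\PP^{n-1},2)$ is a birational morphism. By Steps 2 and 3 it is bijective on points: the irreducible-source maps, the maps with common-root degenerations, and the two-lines-through-$v$ maps are each hit exactly once. The target is normal, being a coarse space of a smooth Deligne--Mumford stack, so Zariski's Main Theorem makes $\tilde\psi$ an isomorphism. The main obstacle is Step 3: identifying the slice representation and its quotient, and verifying that the stable-limit construction along normal directions is well defined, i.e. that $\psi$ really extends over the blowup rather than only over a further modification. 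I would first do this for $n=3$, where the target is the space of complete conics and Kock's isomorphism is available as a check, and then generalize by functoriality in $\mathbb C^n$.
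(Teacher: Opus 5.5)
The paper does not prove this statement; it quotes it from Kiem--Moon, where it comes out of Kirwan's partial desingularization. Your outline has that shape: a Kirwan blowup, a morphism to $\overline{M}_{0,0}(\mathbb P^{n-1},2)$, bijectivity, then Zariski's Main Theorem. However, Step 2 rests on a wrong stability analysis. For $d=2$, a tuple $f=\ell\cdot g$ with a common simple root is \emph{not} stable. Take $\ell=x$: the one-parameter subgroup $\operatorname{diag}(t,t^{-1})$ has weights $2,0,-2$ on $x^2,xy,y^2$, so every weight of $f$ is $\ge 0$, and $\lambda(t)\cdot f\to[xy\otimes g(0,1)]$. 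Three consequences follow:
\begin{itemize}
\item The stable locus is exactly the base-point-free locus. Every semistable common-root tuple is strictly semistable and maps to the point $[g(0,1)]\in Z$.
\item Writing $Y$ for the GIT quotient, $\psi$ is already an isomorphism from $Y\setminus Z$ onto the irreducible-source locus, so there is nothing to extend off $Z$.
\item Your claimed independence of $h$ is false. Substituting $x=ts$ into $x\,g+t\,h$ shows the bubble is the line through $g(0,1)$ and $h(0,1)$, which depends on $h$. This is exactly why such points lie over $Z$.
\end{itemize}
The same confusion breaks the count in Step 4. The ``common-root degenerations'' are exactly the two-lines-through-$v$ maps, so listing them as a separate stratum double counts.

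The more serious gap is that extending $\psi$ over the blowup, which is the actual content of the theorem, is only named as an obstacle. A pointwise limit along normal directions does not give a morphism. You need a family of stable maps over Kirwan's blowup $\widetilde P$ of $\mathbb P(W)^{ss}$ along $\Sigma=SL_2\cdot\{[q\otimes v]\}$ that is $SL_2$-invariant and therefore descends.
\begin{itemize}
\item The normal space to $\Sigma$ at $[xy\otimes v]$ is $N'=(\mathbb C x^2\oplus\mathbb C y^2)\otimes(\mathbb C^n/\mathbb C v)$. The torus acts with weights $\pm 2$, and the Weyl element swaps the two factors.
\item Along the arc $xy\otimes v+t(x^2\otimes u+y^2\otimes w)$, the simple base points $x=0$ and $y=0$ bubble off the lines $\overline{vw}$ and $\overline{vu}$, while the original component is contracted to $v$. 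So the limit depends only on the point of $\mathbb P(N')$. It is stable exactly when $u,w\neq 0$, which is the Kirwan-stable locus.
\item Globally, blow up $\mathbb P^1\times\widetilde P^{s}$ along the two base-point sections over the exceptional divisor, then contract the middle components.
\end{itemize}
Next, identifying $\widetilde P/\!/SL_2$ with $\operatorname{Bl}_Z Y$ does not follow from ``$Z$ is smooth'', because for $n\ge 4$ the quotient $Y$ is singular along $Z$. What makes it work is an invariant computation:
\begin{itemize}
\item $\mathbb C[N']^{N(T)}$ is generated by the quadrics $u_iw_j+u_jw_i$.
\item Hence $Y$ is \'etale-locally $\mathbb C^{n-1}\times C$, where $C$ is the cone of symmetric $(n-1)\times(n-1)$ matrices of rank $\le 2$, and its coordinate ring is generated in degree one.
\item Therefore Kirwan's blowup coincides with the blowup of the reduced vertex, and the exceptional fibre is $\mathbb P(C)\cong\operatorname{Sym}^2\mathbb P^{n-2}$. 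This is what your bijectivity count in Step 4 requires.
\end{itemize}
Your proposed check at $n=3$ cannot detect any of this, since there $Y\cong\mathbb P^5$ is smooth and the fibres are $\mathbb P^2$. There is also no functoriality in $\mathbb C^n$ that would carry the $n=3$ case to $n\ge 4$.
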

\subsection{The Moduli Space of Complete Conics}
Let us denote the moduli stack $\overline{\M}_{0,0}(\mathbb P^2,2)$ by $\M$ and the corresponding coarse moduli space as $M$. Let $H=\mathbb P(H^0(\mathbb P^2, \mathcal{O}(2))) \cong \mathbb P^5$ be the space of plane conics. Then each point on the Veronese surface $V\subset \mathbb P^5$ corresponds to double lines. We can define a map from $\M \to H$ by sending a degree $2$ stable map $[\sigma: C \to \mathbb P^2] $ to the effective $1$-cycle $\sigma_*[C]$ of degree $2$ (i.e., a conic) in $\mathbb P^2$. By universality of coarse moduli space, this induces a map \[\phi: M \to H.\] We now analyze the fibers for the three types of plane conics - smooth, reducible reduced and double lines.

    If the image effective $1$-cycle is a smooth conic $Q \subset \mathbb P^2$, then the corresponding genus $0$ stable map $\sigma: C \to \mathbb P^2$ must be a  map $\sigma:\mathbb P^1 \to \mathbb P^2$. If not, then it is a connected tree of $\mathbb P^1$. Since the image cycle is the irreducible smooth curve $Q$, only one component of the tree can map with degree $1$ (i.e., an isomorphism) and all other components must be contracted to a point (i.e., have degree $0$). But because of stability condition, a contracted component must have at least $3$ special points, which gives a contradiction to the existence of any contracted component with only $1$ node . But this implies the tree cannot have a rational tail, i.e., must be irreducible. Hence in the coarse moduli space $M$, we see that the preimage of a smooth conic is a unique point.

    Similarly, we can argue for reducible reduced conics $L_1\cup L_2 \subset \mathbb P^2$. In this case the corresponding genus $0$ stable map must be $\sigma : \mathbb P^1 \cup \mathbb P^1 \to \mathbb P^2$, where the domain is a nodal union of $\mathbb P^1$ such that each component maps isomorphically to the corresponding $L_i$. Hence the preimage in the coarse moduli space $M$ is a unique point.

    Finally consider a double line effective cycle $2L \subset \mathbb P^2$. Again by stability arguments as above, the corresponding genus $0$ stable curve here is either a $\sigma : \mathbb P^1 \to \mathbb P^2$ such that it restricts to a degree $2$ cover of $\mathbb P^1 \to \mathbb P^1$ onto its image, or a map $\sigma : \mathbb P^1 \cup \mathbb P^1 \to \mathbb P^2$ such that both the components map isomorphically to the same line $L$. In the first case, the stable map is determined up to the branch locus which are $2$ points in $\mathbb P^1$. In the second case, the node maps also to a degree $2$ effective divisor supported at a point in $\mathbb P^1$. Hence the fiber of the double line corresponds to the space of double points in $\mathbb P^1$ given by $\operatorname{Sym}^2(\mathbb P^1) \cong \mathbb P^2$.

    In \cite{Kock2006} it is shown that $M$ is a smooth scheme. By dimension count it can be proved that the codimension of $\phi^{-1}(V)\subset M$ is $1$. But due to smoothness, it is an effective Cartier divisor. Hence by universality of blowups, this induces a map from $M \to Bl_V\mathbb P^5$, which is also bijective and birational by our description. Since both the spaces are smooth projective, Zariski's main theorem implies the result $M \cong Bl_V\mathbb P^5$.

\section{\texttt{Macaulay2} code}

\subsection{$4$ secant lines to the twisted cubic meeting two general lines in $\PP^3$ simultaneously}

\begin{footnotesize}
\label[appendix]{apx:4-secant-lines-m2}
\begin{lstlisting}[language=Macaulay2output]
`\underline{\tt i1}` : -- Truncated Lazard ring
     R = ZZ[a11, a21, a12, Degrees => {0,0,0}]
`\underline{\tt o1}` = `$R$`
`\underline{\tt o1}` : `$\texttt{PolynomialRing}$nullnullnull`
`\underline{\tt i2}` : 
     -- Truncated universal formal group law
     F = (x,y) -> x + y + a11*x*y + a21*x^2*y + a12*x*y^2
`\underline{\tt o2}` = `$\texttt{F}$`
`\underline{\tt o2}` : `$\texttt{FunctionClosure}$`
`\underline{\tt i3}` : 
     -- n-series [n]_F(t)
     nSeries = (n,t) -> (
         if n==0 then 0
         else (
             s := t;
             for k from 2 to n do (
                 s = F(s,t);
                 s = s % ideal(t^4);  -- truncate
             );
             s
         )
     );
`\underline{\tt i4}` : 
     -- [2]_F, [4]_F
     Ru = R[u]
`\underline{\tt o4}` = `$\mathit{Ru}$`
`\underline{\tt o4}` : `$\texttt{PolynomialRing}$nullnullnull`
`\underline{\tt i5}` : twoU  = nSeries(2,u)
`\underline{\tt o5}` = `$\left(\mathit{a21}+\mathit{a12}\right)u^{3}+\mathit{a11}\,u^{2}+2\,u$`
`\underline{\tt o5}` : `$\mathit{Ru}$`
`\underline{\tt i6}` : fourU = nSeries(4,u)
`\underline{\tt o6}` = `$\left(4\,\mathit{a11}^{2}+14\,\mathit{a21}+6\,\mathit{a12}\right)u^{3}+6\,\mathit{a11}\,u^{2}+4\,u$`
`\underline{\tt o6}` : `$\mathit{Ru}$`
`\underline{\tt i7}` : 
     -- formal inverse
     invF = (x) -> (
         y := -x;
         y = y - F(x, y);
         y = y - F(x, y);
         y = y - F(x, y);
         y = y - F(x, y);
         y
     )
`\underline{\tt o7}` = `$\texttt{invF}$`
`\underline{\tt o7}` : `$\texttt{FunctionClosure}$`
`\underline{\tt i8}` : 
     
     -- blowup ring
     -- alpha = pi^* c1^A(O_{P^3}(1)), e = j_*(1), x = j_*(eta), z = j_*(zeta).
     
     S = R[e,x,z,alpha, MonomialOrder => Lex]
`\underline{\tt o8}` = `$S$`
`\underline{\tt o8}` : `$\texttt{PolynomialRing}$nullnullnull`
`\underline{\tt i9}` : 
     I = ideal(
         alpha^4,
         alpha^2*e,
         alpha*e - 3*x,
         alpha*x,
         alpha*z - 3*alpha^3,
         z^2, x^2, x*z,
         e^2 + z + 10*a11*alpha^3,
         e*x + alpha^3,
         e*z - 10*alpha^3,
         3*alpha^2 - z - 10*x - 8*a11*alpha^3
     );
`\underline{\tt o9}` : `$\texttt{Ideal}$ of $S$`
`\underline{\tt i10}` : 
      A = S/I
`\underline{\tt o10}` = `$A$`
`\underline{\tt o10}` : `$\texttt{QuotientRing}$`
`\underline{\tt i11}` : 
      phiA = map(A, Ru, {alpha})
`\underline{\tt o11}` = `$\texttt{map}{}\left(A,\,\mathit{Ru},\,\left\{\mathit{alpha},\:\mathit{a11},\:\mathit{a21},\:\mathit{a12}\right\}\right)$`
`\underline{\tt o11}` : `$\texttt{RingMap}$ $A\,\longleftarrow \,\mathit{Ru}$`
`\underline{\tt i12}` : phiE = map(A, Ru, {e})
`\underline{\tt o12}` = `$\texttt{map}{}\left(A,\,\mathit{Ru},\,\left\{e,\:\mathit{a11},\:\mathit{a21},\:\mathit{a12}\right\}\right)$`
`\underline{\tt o12}` : `$\texttt{RingMap}$ $A\,\longleftarrow \,\mathit{Ru}$`
`\underline{\tt i13}` : 
      class4A = phiA(fourU)
`\underline{\tt o13}` = `$\left(4\,\mathit{a11}^{2}+14\,\mathit{a21}+6\,\mathit{a12}\right)\mathit{alpha}^{3}+6\,\mathit{a11}\,\mathit{alpha}^{2}+4\,\mathit{alpha}$`
`\underline{\tt o13}` : `$A$`
`\underline{\tt i14}` : 
      class2E = phiE(twoU)
`\underline{\tt o14}` = `$2\,e-\mathit{a11}\,z+\left(-10\,\mathit{a11}^{2}-10\,\mathit{a21}-10\,\mathit{a12}\right)\mathit{alpha}^{3}$`
`\underline{\tt o14}` : `$A$`
`\underline{\tt i15}` : 
      minus2E = invF(class2E)
`\underline{\tt o15}` = `$-2\,e-3\,\mathit{a11}\,z+\left(10\,\mathit{a11}^{2}-70\,\mathit{a21}+90\,\mathit{a12}\right)\mathit{alpha}^{3}$`
`\underline{\tt o15}` : `$A$`
`\underline{\tt i16}` : 
      
      -- (4F alpha - 2F e)^2 * alpha
      solution = (class4A + minus2E)^2 * alpha;
`\underline{\tt i17}` : 
      assert(solution == 4*alpha^3)
`\underline{\tt i18}` : 
      solution
`\underline{\tt o18}` = `$4\,\mathit{alpha}^{3}$`
`\underline{\tt o18}` : `$A$`
\end{lstlisting}
    
\end{footnotesize}

\subsection{$3264$ conics via cobordism}
\label[appendix]{apx:3264-conics}
\begin{footnotesize}
\begin{lstlisting}[language=Macaulay2output]
      R = ZZ[a11, a12, a22, a13]
`\underline{\tt o19}` = `$R$`
`\underline{\tt o19}` : `$\texttt{PolynomialRing}$nullnullnull`
`\underline{\tt i20}` : 
      S = R[e00, e10, e20, e01, e11, e21, e02, e12, e22, alpha,  MonomialOrder => Lex]
`\underline{\tt o20}` = `$S$`
`\underline{\tt o20}` : `$\texttt{PolynomialRing}$nullnullnull`
`\underline{\tt i21}` : 
      s = 57*a11^3 + 51*a11*a12 - 51*a22 + 102*a13
`\underline{\tt o21}` = `$57\,\mathit{a11}^{3}+51\,\mathit{a11}\,\mathit{a12}-51\,\mathit{a22}+102\,\mathit{a13}$`
`\underline{\tt o21}` : `$R$`
`\underline{\tt i22}` : 
      getE = (a, b) -> (
          if a == 0 and b == 0 then e00
          else if a == 1 and b == 0 then e10
          else if a == 2 and b == 0 then e20
          else if a == 0 and b == 1 then e01
          else if a == 1 and b == 1 then e11
          else if a == 2 and b == 1 then e21
          else if a == 0 and b == 2 then e02
          else if a == 1 and b == 2 then e12
          else if a == 2 and b == 2 then e22
          else 0
      )
`\underline{\tt o22}` = `$\texttt{getE}$`
`\underline{\tt o22}` : `$\texttt{FunctionClosure}$`
`\underline{\tt i23}` : 
      prodE = (a, b, c, d) -> (
          s := a + c;
          w := b + d;
          if s >= 3 or w >= 5 then 0
          else if s == 0 then (
              if w == 0 then -e01 + a11*e02 + 30*a11^2*e21 + 9*a11^2*e12 + s*e22
              else if w == 1 then -30*a11*e21 - e02 - 9*a11*e12 - 57*a11^2*e22
              else if w == 2 then 30*e21 + 9*e12 + 57*a11*e22
              else if w == 3 then -51*e22
              else 0
          )
          else if s == 1 then (
              if w == 0 then -e11 + a11*e12 + 9*a11^2*e22
              else if w == 1 then -e12 - 9*a11*e22
              else if w == 2 then 9*e22
              else 0
          )
          else if s == 2 then (
              if w == 0 then -e21 + a11*e22
              else if w == 1 then -e22
              else 0
          )
          else 0
      )
`\underline{\tt o23}` = `$\texttt{prodE}$`
`\underline{\tt o23}` : `$\texttt{FunctionClosure}$`
`\underline{\tt i24}` : 
      baseRel = {alpha^6}
`\underline{\tt o24}` = `$\left\{\mathit{alpha}^{6}\right\}$`
`\underline{\tt o24}` : `$\texttt{List}$`
`\underline{\tt i25}` : 
      mixedRels = {}
`\underline{\tt o25}` = `$\left\{\,\right\}$`
`\underline{\tt o25}` : `$\texttt{List}$`
`\underline{\tt i26}` : for a from 0 to 2 do (
          for b from 0 to 2 do (
              mixedRels = mixedRels | {alpha * getE(a,b) - (2*getE(a+1,b) + a11*getE(a+2,b))}
          )
      )
`\underline{\tt i27}` : 
      exceptRels = {}
`\underline{\tt o27}` = `$\left\{\,\right\}$`
`\underline{\tt o27}` : `$\texttt{List}$`
`\underline{\tt i28}` : for a from 0 to 2 do (
          for b from 0 to 2 do (
              for c from 0 to 2 do (
                  for d from 0 to 2 do (
                      exceptRels = exceptRels | {getE(a,b) * getE(c,d) - prodE(a,b,c,d)}
                  )
              )
          )
      )
`\underline{\tt i29}` : 
      excessRels = {
          (4*alpha^3 + 3*a11*alpha^4 + 3*a12*alpha^5) - (30*e20 + 9*e11 + e02 + 66*a11*e21 + 9*a11*e12 + 78*a11^2*e22),
          (2*alpha^4 + a11*alpha^5) - (9*e21 + e12 + 9*a11*e22),
          alpha^5 - e22
      };
`\underline{\tt i30}` : 
      I = ideal join(baseRel, mixedRels, exceptRels, excessRels);
`\underline{\tt i31}` : 
      ABlVP5 = S / I
`\underline{\tt o31}` = `$\mathit{ABlVP5}$`
`\underline{\tt o31}` : `$\texttt{QuotientRing}$`
`\underline{\tt i32}` : use ABlVP5
`\underline{\tt o32}` = `$\mathit{ABlVP5}$`
`\underline{\tt o32}` : `$\texttt{QuotientRing}$`
`\underline{\tt i33}` : 
      assert(e11 * e11 == 0)
`\underline{\tt i34}` : assert(alpha * e22 == 0)
`\underline{\tt i35}` : assert(alpha^5 == e22)
`\underline{\tt i36}` : 
      F = (x,y) -> x + y + a11*x*y + a12*(x^2*y + x*y^2) + a13*(x^3*y + x*y^3) + a22*x^2*y^2
`\underline{\tt o36}` = `$\texttt{F}$`
`\underline{\tt o36}` : `$\texttt{FunctionClosure}$`
`\underline{\tt i37}` : 
      invF = (x) -> (
          y := -x;
          y = y - F(x, y);
          y = y - F(x, y);
          y = y - F(x, y);
          y = y - F(x, y);
          y
      )
`\underline{\tt o37}` = `$\texttt{invF}$`
`\underline{\tt o37}` : `$\texttt{FunctionClosure}$`
`\underline{\tt i38}` : 
      nF = (n, x) -> (
          y := 0_ABlVP5;
          for i from 1 to n do y = F(y, x);
          y
      )
`\underline{\tt o38}` = `$\texttt{nF}$`
`\underline{\tt o38}` : `$\texttt{FunctionClosure}$`
`\underline{\tt i39}` : 
      class6A = nF(6, alpha);
`\underline{\tt i40}` : class2E = nF(2, e00);
`\underline{\tt i41}` : minus2E = invF(class2E);
`\underline{\tt i42}` : T = F(class6A, minus2E);
`\underline{\tt i43}` : solution = T^5;
`\underline{\tt i44}` : 
      assert(solution == 3264*alpha^5)
`\underline{\tt i45}` : 
      solution
`\underline{\tt o45}` = `$3\,264\,\mathit{alpha}^{5}$`
`\underline{\tt o45}` : `$\mathit{ABlVP5}$`
\end{lstlisting}
\end{footnotesize}
\end{appendices}

\textsc{Department of Mathematics, University of Washington, Seattle, WA
98195, USA}

\textit{Email address: }\texttt{arka1996@uw.edu}

\medskip

\textsc{Department of Mathematics, University of Washington, Seattle, WA
98195, USA}

\textit{Email address: }\texttt{zengrf@uw.edu}

\end{document}